\documentclass[10pt]{article}
\usepackage{amssymb,amsmath,amsfonts}
\usepackage{dsfont}
\usepackage{amsthm}
\usepackage{graphicx,color,subfig}
\usepackage{xspace}
\usepackage{mathrsfs}
\usepackage{amscd}

\newtheorem{lemma}{Lemma}[section]
\newtheorem{theorem}[lemma]{Theorem}
\newtheorem{proposition}[lemma]{Proposition}
\newtheorem{corollary}[lemma]{Corollary}

\theoremstyle{definition}

\newtheorem{definition}[lemma]{Definition}
\newtheorem{remark}[lemma]{Remark}

\makeatletter
\def\keywords{
    \vspace{1ex}
    \noindent
    \if@twocolumn
      \small{\bf  Keywords}\/---$\!$    \else
      \begin{center}\small\ {\bf Keywords}\end{center}\quotation\small
    \fi}
\def\endkeywords{\vspace{0.6em}\par\if@twocolumn\else\endquotation\fi
    \normalsize\rm}
\makeatother

\renewcommand{\O}{\ensuremath{\mathcal O}}
\renewcommand{\P}{\ensuremath{\mathcal P}}

\newcommand{\calL}{\ensuremath{\mathcal L}}

\newcommand{\Z}{\ensuremath{\mathbb Z}}
\newcommand{\T}{\ensuremath{\mathbb T}}
\newcommand{\bS}{\ensuremath{\mathbb S}}

\DeclareMathOperator{\tr}{tr}

\DeclareMathOperator{\Sp}{Sp}
\DeclareMathOperator{\loc}{loc}

\newcommand{\mb}[1]{\ensuremath{\mathbb{#1}}}
\newcommand{\N}{{\mb{N}}}

\newcommand{\R}{{\mb{R}}}
\newcommand{\C}{{\mb{C}}}


\newcommand{\de}{\delta}
\newcommand{\eps}{\varepsilon}

\newcommand{\M}{\ensuremath{\mathcal M}}

\newcommand{\A}{\ensuremath{\mathcal A}}

\let \Re \relax
\DeclareMathOperator{\Re}{Re}
\let \Im \relax
\DeclareMathOperator{\Im}{Im}

\newcommand{\ovl}[1]{\overline{#1}}

\newcommand{\Con}{\ensuremath{\mathscr C}}
\newcommand{\Dist}{\ensuremath{\mathscr D}}
\newcommand{\Cinf}{\ensuremath{\Con^\infty}}
\newcommand{\Cinfc}{\ensuremath{\Con^\infty}_{c}}
\newcommand{\Conc}{\ensuremath{\Con^0}_{c}}

\newcommand{\scrS}{\ensuremath{\mathscr S}}



\DeclareMathOperator{\supp}{supp}

\DeclareMathOperator{\Vol}{Vol}

\DeclareMathOperator{\rk}{rk}




\DeclareMathOperator{\Op}{Op}

\DeclareMathOperator{\id}{Id}

\newcommand{\transp}{\ensuremath{\phantom{}^{t}}}

\renewcommand{\d}{\ensuremath{\partial}}

\newcommand{\nhd}{neighbourhood\xspace}







\newcommand{\wrt}{w.r.t.\@\xspace}
\newcommand{\rhs}{r.h.s.\@\xspace}

\newcommand{\ie}{i.e.\@\xspace}
\newcommand{\eg}{e.g.\@\xspace}

\newcommand{\resp}{resp.\@\xspace}




\newcommand{\dsp}{\displaystyle}




\def\tB{\widetilde{B}}
\def\tzeta{\widetilde{\zeta}}


\newcommand{\E}{\mathscr E}
\newcommand{\F}{\mathscr F}
\newcommand{\G}{\mathscr G}


\renewcommand{\H}{\ensuremath{\mathcal H}}
\newcommand{\K}{\ensuremath{\mathcal K}}

\def\bbbone{{\mathchoice {1\mskip-4mu {\rm{l}}} {1\mskip-4mu {\rm{l}}}
{ 1\mskip-4.5mu {\rm{l}}} { 1\mskip-5mu {\rm{l}}}}}




\newcommand{\defi}{\stackrel{\rm{def}}{=}}

\topmargin     -1.4cm
\textheight     23cm
\addtolength{\hoffset}{-1.5cm}
\addtolength{\textwidth}{3.2cm} 

\numberwithin{equation}{section}
\begin{document}

\title{Decay rates for the damped wave equation on the torus \\
{\em With an appendix by St\'{e}phane Nonnenmacher}\footnote{\texttt{snonnenmacher@cea.fr}}}

\author{
Nalini Anantharaman\footnote{\texttt{Nalini.Anantharaman@math.u-psud.fr}}
and Matthieu L\'eautaud\footnote{\texttt{Matthieu.Leautaud@math.u-psud.fr}},
\\
\small{Universit\'e Paris-Sud 11, Math\'ematiques, B\^atiment 425, 91405 Orsay Cedex, France}
}
\maketitle

\begin{abstract}
We address the decay rates of the energy for the damped wave equation when the damping coefficient $b$ does not satisfy the Geometric Control Condition (GCC). First, we give a link with the controllability of the associated Schr\"odinger equation. We prove in an abstract setting that the observability of the Schr\"odinger group implies that the semigroup associated to the damped wave equation decays at rate $1/\sqrt{t}$ (which is a stronger rate than the general logarithmic one predicted by the Lebeau Theorem).

Second, we focus on the 2-dimensional torus. We prove that the best decay one can expect is $1/t$, as soon as the damping region does not satisfy GCC. Conversely, for smooth damping coefficients $b$, we show that  the semigroup decays at rate $1/t^{1-\eps}$, for all $\eps >0$. The proof relies on a second microlocalization around trapped directions, and resolvent estimates.

In the case where the damping coefficient is a characteristic function of a strip (hence discontinuous), 
St\'{e}phane Nonnenmacher computes in an appendix part of the spectrum of the associated damped wave operator, proving that the semigroup cannot decay faster than $1/t^{2/3}$.
In particular, our study shows that the decay rate highly depends on the way $b$ vanishes.

%

%

\end{abstract}

\begin{keywords}
  \noindent
  Damped wave equation, polynomial decay, observability, Schr\"odinger group, torus, two-microlocal semiclassical measures, spectrum of the damped wave operator.
\end{keywords}

\setcounter{tocdepth}{2}
\tableofcontents

\part{The damped wave equation}

\section{Decay of energy: a survey of existing results}
Let $(M,g)$ be a smooth compact connected Riemannian $d$-dimensional manifold with or without boundary $\d M$. We denote by $\Delta$ the (non-positive) Laplace-Beltrami operator on $M$ for the metric $g$. Given a bounded nonnegative function, $b \in L^\infty(M)$, $b(x) \geq0$ on $M$, we want to understand the asymptotic behaviour as $t \to + \infty$ of the solution $u$ of the problem
\begin{equation}
\label{eq: stabilization}
\left\{
\begin{array}{ll}
\d_{t}^2 u - \Delta u + b(x) \d_{t} u = 0 & \text{in }\R^+ \times M , \\
u = 0                                           & \text{on }\R^+ \times \d M 
\ (\text{if} \ \d M \neq \emptyset ) , \\
(u, \d_t u)|_{t=0} = (u_0,u_1)                  & \text{in }M .\\
\end{array}
\right.
\end{equation}
The energy of a solution is defined by 
\begin{align}
\label{eq: definition energy}
E(u,t) = \frac12 (\|\nabla u(t)\|_{L^2(M)}^2 + \|\d_t u(t)\|_{L^2(M)}^2 ).
\end{align}
Multiplying~\eqref{eq: stabilization} by $\d_{t} u$ and integrating on $M$ yields the following dissipation identity 
$$
\frac{d}{dt}E(u,t) = - \int_{M} b |\d_{t} u|^2 dx ,
$$
which, as $b$ is nonnegative, implies a decay of the energy. As soon as $b \geq C >0$ on a nonempty open subset of $M$, the decay is strict 
and $E(u,t) \to 0$ as $t \to + \infty$. The question is then to know at which rate the energy goes to zero.

\bigskip 
The first interesting issue concerns uniform stabilization: under which condition does there exist a function $F(t)$, $F(t) \to 0$, such that 
\begin{equation}
\label{eq: decroissance uniforme energie}
E(u, t)\leq F(t) E(u, 0) \  ?
\end{equation}
The answer was given by Rauch and Taylor~\cite{RT:74} in the case $\d M = \emptyset$ and by Bardos, Lebeau and Rauch~\cite{BLR:92} in the general case (see also~\cite{BG:97} for the necessity of this condition): assuming that $b \in \Con^0(\ovl{M})$, uniform stabilisation occurs if and only if the set $\{b>0\}$ satisfies the Geometric Control Condition (GCC). Recall that a set $\omega \subset M$ is said to satisfy GCC if there exists $L_0>0$ such that every geodesic $\gamma$ (\resp generalised geodesic in the case $\d M \neq \emptyset$) of $M$ with length larger than $L_0$ satisfies $\gamma \cap \omega \neq \emptyset$. Under this condition, one can take $F(t) = C e^{-\kappa t}$ (for some constants $C, \kappa>0$) in \eqref{eq: decroissance uniforme energie}, and the energy decays exponentially. 
Finally, Lebeau gives in~\cite{Leb:96} the explicit (and optimal) value of the best decay rate $\kappa$ in terms of the spectral abscissa of the generator of the semigroup and the mean value of the function $b$ along the rays of geometrical optics. 

\bigskip 
In the case where $\{b>0\}$ does not satisfy GCC, \ie in the presence of ``trapped rays'' that do not meet $\{b>0\}$, what can be said about the decay rate of the energy? As soon as $b \geq C >0$ on a nonempty open subset of $M$, Lebeau shows in~\cite{Leb:96} that the energy (of smoother initial data) goes at least logarithmically to zero (see also~\cite{Burq:98}):
\begin{equation}
\label{eq: decroissance log}
E(u, t) \leq C \big( f(t) \big)^2 \left( \|u_0\|_{H^2(M) \cap H^1_0(M)}^2 
+ \|u_1\|_{H^1_0(M)}^2 \right) , \quad \text{ for all } t >0 ,
\end{equation}
with $f(t) = \frac{1}{\log(2+t)}$ (where $H^2(M) \cap H^1_0(M)$ and $H^1_0(M)$ have to be replaced by $H^2(M)$ and $H^1(M)$ respectively if $\d M = \emptyset$). Note that here, $\big( f(t) \big)^2$ characterizes the decay of the energy, whereas $f(t)$ is that of the associated semigroup.
Moreover, the author constructed a series of explicit examples of geometries for which this rate is optimal, including for instance the case where $M = \bS^2$ is the two-dimensional sphere and $\{b>0\} \cap N_\eps = \emptyset$, where $N_\eps$ is a \nhd of an equator of $\bS^2$. This result is generalised in~\cite{LR:97} for a wave equation damped on a (small) part of the boundary. 
In this paper, the authors also make the following comment about the result they obtain:

\bigskip 
``Notons toutefois qu'une \'etude plus approfondie de la localisation spectrale et des
taux de d\'ecroissance de l'\'energie pour des donn\'ees r\'eguli\`eres doit faire intervenir
la dynamique globale du flot g\'eod\'esique g\'en\'eralis\'e sur $M$. Les th\'eor\`emes \cite[Th\'eor\`eme 1]{LR:97} et \cite[Th\'eor\`eme 2]{LR:97} ne fournissent donc que les bornes {\it a priori} qu'on peut obtenir sans aucune hypoth\`ese sur la dynamique, en n'utilisant que les in\'egalit\'es de Carleman qui traduisent
``l'effet tunnel''.''

\bigskip
In all examples where the optimal decay rate is logarithmic, the trapped ray is a stable trajectory from the point of view of the dynamics of the geodesic flow. This means basically that an important amount of the energy can stay concentrated, for a long time, in a \nhd of the trapped ray, \ie away from the damping region.

If the trapped trajectories are less stable, or unstable, one can expect to obtain an intermediate decay rate, between exponential and logarithmic. We shall say that the energy decays at rate $f(t)$ if~\eqref{eq: decroissance log} is satisfied (more generally, see Definition~\ref{def: stable at rate} below in the abstract setting).
This problem has already been adressed and, in some particular geometries, several different behaviours have been exhibited. Two main directions have been investigated.

On the one hand, Liu and Rao considered in \cite{LR:05} the case where $M$ is a square and the set $\{b >0\}$ contains a vertical strip. In this situation, the trapped trajectories consist in a family of parallel vertical geodesics; these are unstable, in the sense that nearby geodesics diverge at a linear rate. They proved that the energy decays at rate $\Big(\frac{\log(t)}{t}\Big)^\frac12$ (\ie, that~\eqref{eq: decroissance log} is satisfied with $f(t) = \Big(\frac{\log(t)}{t}\Big)^\frac12$). This was extended by Burq and Hitrik~\cite{BH:07} (see also~\cite{Nish:09}) to the case of partially rectangular two-dimensional domains, if the set $\{b >0\}$ contains a \nhd of the non-rectangular part. In \cite{Phung:07b}, Phung proved a decay at rate $t^{-\de}$ for some (unprecised) $\de>0$ in a three-dimensional domain having two parallel faces. 
In all these situations, the only obstruction to GCC is due to a ``cylinder of periodic orbits''. The geometry is flat and the unstabilities of the geodesic flow around the trapped rays are relatively weak (geodesics diverge at a linear rate).

In~\cite{BH:07}, the authors argue that the optimal decay in their geometry should be of the form $\frac{1}{t^{1-\eps}}$, for all $\eps>0$. They provide conditions on the damping coefficient $b(x)$ under which one can obtain such decay rates, and wonder whether this is true in general.
Our main theorem (see Theorem~\ref{th: stabilization torus} below)
extends these results to more general damping functions $b$ on the two-dimensional torus.

\medskip
On the other hand, Christianson~\cite{Christ:10} proved that the
energy decays at rate $e^{-C\sqrt{t}}$ for some $C>0$, in the case
where the trapped set is a hyperbolic closed
geodesic. Schenck~\cite{Sch:11} proved an energy decay at rate
$e^{-Ct}$ on manifolds with negative sectional curvature, if the
trapped set is ``small enough'' in terms of topological pressure (for
instance, a small \nhd of a closed geodesic), and if the damping is
``large enough'' (that is, starting from a damping function $b$,
$\beta b$ will work for any $\beta>0$ sufficiently large).
In these two papers, the geodesic flow near the trapped set enjoys strong instability
properties: the flow on the trapped set is uniformly hyperbolic, in
particular all trajectories are exponentially unstable.

These cases confirm the idea that the decay rates of the energy
strongly depends on the stability of trapped trajectories.

\bigskip

One may now want to compare these geometric situations to situations where the Schr\"odinger group is observable (or, equivalently, controllable), \ie for which there exist $C>0$ and $T>0$ such that, for all $u_0 \in L^2(M)$, we have
\begin{equation}
\label{eq: obs schrodinger}
\|u_0\|_{L^2(M)}^2 \leq C \int_0^T \|\sqrt{b} \ e^{-it \Delta} u_0\|_{L^2(M)}^2 dt.
\end{equation}
The conditions under which this property holds are also known to be related to stability of the geodesic flow. 
In particular, the works \cite{BLR:92}, \cite{LR:05}, \cite{BH:07,Nish:09} and \cite{Christ:10, Sch:11} can be seen as counterparts for damped wave equations of the articles \cite{Leb:92}, \cite{Har:89plaque, Jaffard:90}, \cite{BZ:04} and  \cite{AR:11}, respectively, in the context of observation of the Schr\"odinger group.

\bigskip
Our main results are twofold. First, we clarify (in an abstract
setting) the link between the observability (or the controllability)
of the Schr\"odinger equation and polynomial decay for the damped wave
equation. This follows the spirit of~\cite{Har:89}, \cite{Miller:05},
exploring the links between the different equations and their control
properties (\eg observability, controllability,
stabilization...). More precisely, we prove that the controllability
of the Schr\"odinger equation implies a polynomial decay at rate
$\frac{1}{\sqrt{t}}$ for the damped wave equation (Theorem~\ref{th: schrodinger-waves}).

Second, we study precisely the damped wave equation on the flat torus
$\T^2$ in case GCC fails. We give the following {\it a priori} lower bound on the decay rate, revisiting the argument of~\cite{BH:07}: \eqref{eq: stabilization} is not stable at a better rate than $\frac{1}{t}$, provided that GCC is not satisfied. In this situation, the Schr\"odinger group is known to be controllable (see~\cite{Jaffard:90}, \cite{Kom:92} and the more recent works~\cite{AM:11} and \cite{BZ:11}). Thus, one cannot hope to have a decay better than polynomial in our previous result, \ie under the mere assumption that the Schr\"odinger flow is observable. 

The remainder of the paper is devoted to studying the gap between the {\it a priori} lower and upper bounds given respectively by $\frac{1}{t}$ and $\frac{1}{\sqrt{t}}$ on flat tori.
For {\em smooth} nonvanishing damping coefficient $b(x)$, we prove that the energy decays at rate $\frac{1}{t^{1-\eps}}$ for all $\eps >0$. This result holds without making any dynamical assumption on the damping coefficient, but only on the order of vanishing of $b$. It generalises a result of~\cite{BH:07}, which holds in the case where $b$ is invariant in one direction.
Our analysis is, again, inspired by the recent microlocal approach proposed in~\cite{AM:11} and \cite{BZ:11} for the observability of the Schr\"odinger group. More precisely, we follow here several ideas and tools introduced in \cite{Macia:10} and \cite{AM:11}.

In the situation where $b$ is a characteristic function of a vertical
strip of the torus (hence discontinuous), St\'ephane Nonnenmacher
proves in Appendix~\ref{app: stephane} that the decay rate cannot be
faster than $\frac{1}{t^{2/3}}$. This is done by explicitly computing
the high frequency eigenvalues of the damped wave operator which are
closest to the imaginary axis (see for instance the figures
in~\cite{AL:03, AL:12}). The fact that the decay rate $1/t$ is not
achieved in this situation was observed in the numerical computations presented in~\cite{AL:12}. 

In contrast to the control problem for the Sch\"odinger equation, this
result shows that the stabilization of the wave equation is not only sensitive to the global properties of the geodesic flow, but also to the rate at which the damping function vanishes.

\section{Main results of the paper}
Our first result can be stated in a general abstract setting that we now introduce. We come back to the case of the torus afterwards.

\subsection{The damped wave equation in an abstract setting}
\label{sub: abstract setting}
Let $H$ and $Y$ be two Hilbert spaces (\resp the state space and the observation/control space) with norms $\| \cdot \|_H$ and $\| \cdot \|_Y$, and associated inner products $( \cdot , \cdot )_H$ and $( \cdot , \cdot )_Y$. 

We denote by $A : D(A)\subset H \to H$ a {\em nonnegative} selfadjoint operator with compact resolvent, and $B \in \calL(Y;H)$ a control operator. We recall that $B^* \in \calL(H;Y)$ is defined by $(B^* h ,y)_Y = (h, B y)_H$ for all $h \in H$ and $y \in Y$.

\begin{definition}
We say that the system
\begin{equation}
\label{eq: obs abstract Schrodinger}
\d_t u + i A u = 0 , \quad y = B^*u  ,
\end{equation}
is observable in time $T$ if there exists a constant $K_T>0$ such that, for all solution of \eqref{eq: obs abstract Schrodinger}, we have
$$
\|u(0)\|_H^2 \leq K_T \int_0^T \|y(t)\|_Y^2 dt .
$$
\end{definition}
We recall that the observability of \eqref{eq: obs abstract Schrodinger} in time $T$ is equivalent to the exact controllability in time $T$ of the adjoint problem
\begin{equation}
\label{eq: control abstract Schrodinger}
\d_t u + i A u = B f , \quad u(0) = u_0  ,
\end{equation}
(see for instance \cite{Leb:92} or \cite{RTTT:05}). More precisely, given $T>0$, the exact controllability in time $T$ is the ability of finding for any $u_0 , u_1 \in H$ a control function $f \in L^2(0,T; Y)$ so that the solution of \eqref{eq: control abstract Schrodinger} satisfies $u(T) = u_1$. 

\bigskip
We equip $\H= D(A^\frac12)\times H$ with the graph norm 
$$
\|(u_0,u_1)\|_\H^2 = \|(A+\id)^\frac12 u_0 \|_H^2 + \| u_1 \|_H^2, 
$$
and define the seminorm
$$
|(u_0,u_1)|_\H^2 = \|A^\frac12 u_0 \|_H^2 + \| u_1 \|_H^2.
$$
Of course, if $A$ is coercive on $H$, $|\cdot|_\H$ is a norm on $\H$ equivalent to $\|\cdot \|_\H$.

We also introduce in this abstract setting the damped wave equation on the space $\H$,
\begin{align}
\label{eq: damped abstract waves}
\begin{cases}
\d_t^2 u + A u + B B^* \d_t u = 0, \\
(u, \d_t u)|_{t=0} = (u_0 , u_1) \in \H ,
\end{cases}
\end{align}
which can be recast on $\H$ as a first order system
\begin{equation}
\label{eq: first order eqation}
\left\{
\begin{array}{l}
\d_t U = \A U , \\
U|_{t=0} = \transp(u_0 , u_1) ,
\end{array}
\right.
\quad 
U = 
\left(
\begin{array}{c}
u \\
\d_t u
\end{array}
\right) , \quad
\A = 
\left(
\begin{array}{cc}
0   &  \id \\
- A & - BB^*
\end{array}
\right)  , \quad
D(\A) =  D(A)\times D(A^\frac12) .
\end{equation}
The compact injections $D(A)\hookrightarrow D(A^\frac12)\hookrightarrow H$ imply that $D(\A)\hookrightarrow \H$ compactly, and that the operator $\A$ has a compact resolvent.

We define the energy of solutions of \eqref{eq: damped abstract waves} by
$$
E(u,t) = \frac{1}{2} \big( \|A^\frac12 u\|_H^2 + \|\d_t u\|_H^2 \big)
= \frac12 |(u,\d_t u)|^2_{\H^2}.
$$

\begin{definition}
\label{def: stable at rate}
Let $f$ be a function such that $f(t) \to 0$ when $t \to +\infty$.
We say that System \eqref{eq: damped abstract waves} is stable at rate $f(t)$ if there exists a constant $C>0$ such that for all $(u_0 , u_1) \in D(\A)$, we have 
$$
E(u,t)^{\frac12} \leq C f(t) |\A (u_0 , u_1)|_{\H} , 
\quad \text{for all } t>0 .
$$
If it is the case, for all $k>0$, there exists a constant $C_k >0$ such that for all $(u_0 , u_1) \in D(\A^k)$, we have (see for instance \cite[page 767]{BD:08})
$$
E(u,t)^{\frac12} \leq C_k \big(f(t)\big)^k\|\A^k (u_0 , u_1)\|_{\H}  , 
\quad \text{for all } t>0 .
$$
\end{definition}

\begin{theorem}
\label{th: schrodinger-waves}
Suppose that there exists $T>0$ such that System~\eqref{eq: obs abstract Schrodinger} is observable in time $T$. Then System~\eqref{eq: damped abstract waves} is stable at rate $\frac{1}{\sqrt{t}}$.
\end{theorem}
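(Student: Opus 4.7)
The approach is to reduce the problem to a resolvent estimate on the imaginary axis via the Borichev--Tomilov characterization of polynomial semigroup decay (sharpening~\cite{BD:08}): for a bounded $C_0$-semigroup $e^{t\A}$ on a Hilbert space with $i\R \subset \rho(\A)$, one has $\|e^{t\A}\A^{-1}\| = O(t^{-1/\alpha})$ as $t \to +\infty$ if and only if $\|(i\lambda - \A)^{-1}\| = O(|\lambda|^\alpha)$ as $|\lambda| \to +\infty$. To obtain the rate $1/\sqrt{t}$ it therefore suffices to prove (i) $i\R \subset \rho(\A)$ and (ii) $\|(i\lambda - \A)^{-1}\|_{\H \to \H} = O(\lambda^2)$.

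Point (i) is immediate from observability: an eigenpair $\A U = i\lambda U$ with $U = \transp(u,v)$ forces $v = i\lambda u$ and $Au + i\lambda BB^* u = \lambda^2 u$; pairing with $u$ in $H$ yields $Au = \lambda^2 u$ and $\lambda \|B^*u\|_Y^2 = 0$, so for $\lambda \neq 0$ the element $u$ is an eigenfunction of $A$ with $B^*u = 0$, and observability applied to $e^{-itA} u = e^{-i\lambda^2 t} u$ forces $u = 0$ (the value $\lambda = 0$ is handled by working modulo $\ker A$, or assuming $A$ coercive). For (ii), write $(i\lambda - \A)U = F$ with $U = \transp(u,v)$, $F = \transp(f,g)$; eliminating $v = i\lambda u - f$ gives the elliptic equation
$$
(A - \lambda^2) u + i\lambda BB^* u = G, \qquad G := g + i\lambda f + BB^* f,
$$
with $\|G\|_H \leq C(1+|\lambda|)\|F\|_\H$. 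The real part of the natural $\H$-energy inner product of the resolvent equation with $U$ yields the dissipation identity
$$
\|B^* v\|_Y^2 \leq C \|F\|_\H \|U\|_\H,
$$
and, since $B^* v = i\lambda B^* u - B^* f$, the algebraic relation provides the key factor of $\lambda^{-2}$:
$$
\|B^* u\|_Y^2 \leq \frac{C}{\lambda^2}\bigl(\|F\|_\H \|U\|_\H + \|F\|_\H^2\bigr), \qquad |\lambda| \geq 1.
$$

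The principal analytic input is the resolvent characterization of Schr\"odinger observability (Miller~\cite{Miller:05}, Ramdani--Takahashi--Tenenbaum--Tucsnak~\cite{RTTT:05}): observability of~\eqref{eq: obs abstract Schrodinger} in some time $T>0$ is equivalent to the existence of $C, c>0$ such that
$$
\|u\|_H^2 \leq C \|(A - \mu)u\|_H^2 + c \|B^* u\|_Y^2, \qquad \forall \mu \in \R,\ \forall u \in D(A).
$$
Specializing $\mu = \lambda^2$, using $(A - \lambda^2)u = G - i\lambda BB^* u$ and inserting the bound on $\|B^* u\|^2$ above gives $\|u\|_H^2 \leq C(1+\lambda^2)\|F\|_\H^2 + C\|F\|_\H\|U\|_\H$. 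Complementing this with $\|v\|_H^2 \leq 2\lambda^2\|u\|_H^2 + 2\|F\|_\H^2$ and the identity $\|A^{1/2}u\|_H^2 = \lambda^2\|u\|_H^2 + \Re(G,u)_H$ (the real part of $((A-\lambda^2)u,u)_H = (G,u)_H$) produces $\|U\|_\H^2 \leq C(1+\lambda^2)\|u\|^2 + C(1+|\lambda|)\|F\|_\H \|u\| + C\|F\|_\H^2$; substituting the preceding bound on $\|u\|^2$ and absorbing the $\|U\|_\H$-terms by Young's inequality yields $\|U\|_\H \leq C(1+\lambda^2)\|F\|_\H$, and Borichev--Tomilov concludes.

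The main delicacy is the bookkeeping of powers of $\lambda$: a direct use of the imaginary-part identity $\lambda\|B^* u\|^2 = \Im(G,u)_H$ (obtained by testing the elliptic equation against $u$) bounds $\|B^* u\|^2$ only in terms of $\|u\|$ itself, and when inserted into the Hautus test produces the weaker estimate $\|(i\lambda - \A)^{-1}\| = O(|\lambda|^3)$ and hence only decay $t^{-1/3}$. The gain of one power of $\lambda$ requires working with the first-order energy identity of the wave system, which controls $\|B^* v\|_Y$ rather than $\|B^* u\|_Y$; the algebraic relation $v = i\lambda u - f$ then produces the decisive extra $\lambda^{-2}$ factor in $\|B^* u\|_Y^2$.
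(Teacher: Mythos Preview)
Your proof is correct and follows the same overall strategy as the paper: reduce to a resolvent estimate on the imaginary axis via Borichev--Tomilov, and supply that estimate from the Hautus-test characterization of Schr\"odinger observability. The packaging differs. The paper introduces an intermediate equivalent condition, stated in terms of the quadratic pencil $P(is)=A-s^2+isBB^*$ (Proposition~\ref{prop: CNS poly sable}, item~\eqref{eq: CNS poly stable resolvent}), namely $\|u\|_H^2\leq C\bigl(s^2\|P(is)u\|_H^2+s^2\|B^*u\|_Y^2\bigr)$, and observes that this follows from the Hautus inequality in two lines by writing $(A-s^2)u=P(is)u-isBB^*u$ and using boundedness of $B$. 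All of the work linking this $P(is)$-level estimate to the full resolvent bound $\|(is-\A)^{-1}\|_{\calL(\H)}=O(s^2)$ is delegated to the proof of Proposition~\ref{prop: CNS poly sable}. You instead bypass that proposition and estimate $(i\lambda-\A)^{-1}$ on $\H$ directly; your energy-identity step, which controls $\|B^*v\|_Y$ and then trades it for a factor $\lambda^{-2}$ on $\|B^*u\|_Y^2$ via $v=i\lambda u-f$, plays the role that the paper assigns to the equivalence \eqref{eq: CNS poly stable resolvent 0}$\Leftrightarrow$\eqref{eq: CNS poly stable resolvent} together with Lemma~\ref{lemma: conditions stable resolvent}. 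Both routes are sound; the paper's is shorter at the point of use because the translation lemma is stated once and reused, while yours is more self-contained and makes explicit where the crucial power of $\lambda$ is gained.
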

Note that the gain of the $\log(t)^\frac12$ with respect to~\cite{LR:05, BH:07} is not essential in our work. It is due to the optimal characterization of polynomially decaying semigroups obtained by Borichev and Tomilov~\cite{BT:10}.

This Theorem may be compared with the works (both presented in a similar abstract setting) \cite{Har:89} by Haraux, proving that the controllability of wave-type equations in some time is equivalent to uniform stabilization of~\eqref{eq: damped abstract waves}, and \cite{Miller:05} by Miller, showing that the controllability of wave-type equations in some time implies the controllability of Schr\"odinger-type equations in any time.

\bigskip
Note that the link between this abstract setting and that of Problem~\eqref{eq: stabilization} is $H=Y=L^2(M)$, $A = -\Delta$ with $D(A) = H^2(M)$ if $\d M = \emptyset$ and $H^2(M) \cap H^1_0(M)$ otherwise, $B$ is the multiplication in $L^2(M)$ by the bounded function $\sqrt{b}$.

\bigskip
As a first application of Theorem~\ref{th: schrodinger-waves} we obtain a different proof of the polynomial decay results for wave equations of \cite{LR:05} and \cite{BH:07} as consequences of the associated control results for the Schr\"odinger equation of \cite{Har:89plaque} and \cite{BZ:04} respectively.

Moreover, Theorem~\ref{th: schrodinger-waves} provides also several new stability results for System~\eqref{eq: stabilization} in particular geometric situations; namely, in all following situations, the Schr\"odinger group is proved to be observable, and Theorem~\ref{th: schrodinger-waves} gives the polynomial stability at rate $\frac{1}{\sqrt{t}}$ for \eqref{eq: stabilization}:
\begin{itemize}
\item For any nonvanishing $b(x)\geq 0$ in the $2$-dimensional square (\resp torus), as a consequence of \cite{Jaffard:90} (\resp \cite{Macia:10, BZ:11}); for any nonvanishing $b(x)\geq 0$ in the $d$-dimensional rectangle (\resp $d$-dimensional torus) as a consequence of \cite{Kom:92} (\resp \cite{AM:11});
\item If $M$ is the Bunimovich stadium and $b(x)> 0$ on the \nhd of one half disc and on one point of the opposite side, as a consequence of \cite{BZ:04};
\item If $M$ is a $d$-dimensional manifold of constant negative curvature and the set of trapped trajectories (as a subset of $S^*M$, see \cite[Theorem 2.5]{AR:11} for a precise definition) has Hausdorff dimension lower than $d$, as a consequence of \cite{AR:11};
\end{itemize}

Moreover, Lebeau gives in~\cite[Th\'eor\`eme 1 (ii)]{Leb:96} several $2$-dimensional examples for which the decay rate $\frac{1}{\log(2+t)}$ is optimal. For all these geometrical situations, Theorem~\ref{th: schrodinger-waves} implies that the Schr\"odinger group is not observable.

\bigskip
The proof of Theorem~\ref{th: schrodinger-waves} relies on the following characterization of polynomial decay for System~\eqref{eq: damped abstract waves}. 
For $z \in \C$, we define on $H$ the operator $P(z) = A + z^2\id + z BB^*$, with domain $D(P(z)) = D (A)$.
We prove in Lemma~\ref{lemma: check assumptions} below that $P(is)$ is invertible for all $s \in \R$, $s \neq 0$. 

\begin{proposition}
\label{prop: CNS poly sable}
Suppose that 
\begin{equation}
\label{eq: unique continuation eigenvectors}
\text{for any eigenvector } \varphi \text{ of } A \text{, we have } B^* \varphi \neq 0. 
\end{equation}Then, for all $\alpha >0$, the five following assertions are equivalent:
\begin{equation}
\label{eq: CNS poly sable}
 \text{The system~\eqref{eq: damped abstract waves} is stable at rate } \frac{1}{t^\alpha},
 \end{equation}
 \begin{equation}
\label{eq: CNS Borichev tomilov}
 \text{There exist } C>0\text{ and } s_0 \geq 0 \text{ such that for all } s\in \R, |s|\geq s_0 , \
\|(is\id - \A)^{-1}\|_{\calL(\H)} \leq C |s|^{\frac{1}{\alpha}}, 
 \end{equation}
\begin{equation}
\label{eq: CNS Borichev tomilov complex}
\begin{array}{c}
 \text{There exist } C>0\text{ and } s_0 \geq 0 \text{ such that for all } z\in \C, \text{ satisfying } |z|\geq s_0 , \\ 
 \text{and }
|\Re(z)| \leq \frac{1}{C |\Im(z)|^{\frac{1}{\alpha}}}, \ \text{we have }
\|(z \id - \A)^{-1}\|_{\calL(\H)} \leq C |\Im(z)|^{\frac{1}{\alpha}}, 
\end{array}
 \end{equation}
 \begin{equation}
\label{eq: CNS poly stable resolvent 0}
 \text{There exist } C>0\text{ and } s_0 \geq 0 \text{ such that for all } s\in \R, |s|\geq s_0 , \
\|P(is)^{-1}\|_{\calL(H)} \leq C |s|^{\frac{1}{\alpha}-1}, 
\end{equation}
\begin{equation}
\label{eq: CNS poly stable resolvent}
\begin{array}{c}
 \text{There exists } C>0\text{ and } s_0 \geq 0 \text{ such that for all } s\in \R, |s|\geq s_0 \text{ and } u \in D(A), \\
 \|u\|_H^2 \leq C \big( |s|^{\frac{2}{\alpha}-2} \|P(is)u\|_H^2 + |s|^\frac{1}{\alpha} \|B^*u\|_Y^2 \big) .
 \end{array}
\end{equation}
\end{proposition}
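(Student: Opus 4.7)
The plan is to prove the equivalences via the chain \eqref{eq: CNS poly sable} $\iff$ \eqref{eq: CNS Borichev tomilov} $\iff$ \eqref{eq: CNS Borichev tomilov complex} and \eqref{eq: CNS Borichev tomilov} $\iff$ \eqref{eq: CNS poly stable resolvent 0} $\iff$ \eqref{eq: CNS poly stable resolvent}, hinging on two central passages: the Borichev--Tomilov characterization of polynomial decay, and an explicit computation of the resolvent of $\A$ in terms of $P(is)^{-1}$ on $H$.

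For \eqref{eq: CNS poly sable} $\iff$ \eqref{eq: CNS Borichev tomilov}, I would invoke \cite{BT:10}: for a bounded $C_0$-semigroup on a Hilbert space with $i\R \subset \rho(\A)$, the rate $\|R(is, \A)\|_{\calL(\H)} = O(|s|^{1/\alpha})$ is equivalent to $t^{-\alpha}$-decay for $e^{t\A}\A^{-1}$. The prerequisites are (i) that $\A$ generates a contraction semigroup, which follows from the dissipativity $\Re (\A U, U)_\H = -\|B^* v\|_Y^2 \leq 0$ for $U = (u,v)$, and (ii) that $i\R \subset \rho(\A)$, which reduces to the invertibility of $P(is)$ for $s \neq 0$ established in Lemma~\ref{lemma: check assumptions}, together with the unique continuation hypothesis~\eqref{eq: unique continuation eigenvectors} to rule out $0$ as an eigenvalue of $\A$ (passing to the orthogonal complement of $\ker\A$ if necessary). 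The passage \eqref{eq: CNS Borichev tomilov} $\iff$ \eqref{eq: CNS Borichev tomilov complex} is then a standard Neumann series argument: $R(z, \A) = R(is, \A)\sum_{k \geq 0}((is - z)R(is,\A))^k$ converges precisely on the parabolic region $|\Re z| \cdot C |\Im z|^{1/\alpha} < 1$.

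The equivalence \eqref{eq: CNS poly stable resolvent 0} $\iff$ \eqref{eq: CNS poly stable resolvent} follows from the imaginary-part identity $s \|B^* u\|_Y^2 = \Im (P(is) u, u)_H$, obtained by pairing $P(is) u = (A - s^2) u + is BB^* u$ with $u$: this yields $\|B^* u\|_Y^2 \leq \|P(is) u\|_H \|u\|_H / |s|$, and inserting this bound into~\eqref{eq: CNS poly stable resolvent} followed by Young's inequality on the resulting cross term absorbs $\|u\|^2$ to produce~\eqref{eq: CNS poly stable resolvent 0}; the reverse implication is trivial.

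The crux is \eqref{eq: CNS Borichev tomilov} $\iff$ \eqref{eq: CNS poly stable resolvent 0}. Solving $(is - \A) U = F$ componentwise for $U = (u,v)$, $F = (f,g) \in \H$ gives $v = is u - f$ and the reduced equation $P(is) u = g + isf + BB^* f$. For \eqref{eq: CNS Borichev tomilov} $\Rightarrow$ \eqref{eq: CNS poly stable resolvent 0}, I would apply $(is - \A)^{-1}$ to $F = (0, g)$, so that $U = (P(is)^{-1} g, is P(is)^{-1} g)$; the lower bound $\|U\|_\H \geq |s| \|P(is)^{-1} g\|_H$ combined with~\eqref{eq: CNS Borichev tomilov} forces the desired $|s|^{1/\alpha - 1}$ bound on $P(is)^{-1}$. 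The reverse implication is the main obstacle, since a naive estimate applied to the formula for $u$ loses an extra factor $|s|$ through the term $isf$. I would circumvent this using the algebraic identity
\[
P(is)^{-1}(isf + BB^* f) = \frac{f}{is} - \frac{1}{is} P(is)^{-1} A f,
\]
valid on $D(A)$ and extended to $D(A^{1/2})$ by density via the auxiliary bound $\|A^{1/2} P(is)^{-1}\|_{\calL(H)} + \|P(is)^{-1} A^{1/2}\|_{\calL(H)} \leq C|s|^{1/\alpha}$. The latter follows from~\eqref{eq: CNS poly stable resolvent 0} combined with the real-part identity $\|A^{1/2} w\|_H^2 = s^2 \|w\|_H^2 + \Re (P(is) w, w)_H$ applied to $w = P(is)^{-1} h$, together with duality (using $A^* = A$ and $P(is)^* = P(-is)$) for the other side. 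One then controls each entry of the block-matrix resolvent $(is - \A)^{-1}$ in the appropriate graph norm, yielding $\|(is - \A)^{-1}\|_{\calL(\H)} \leq C|s|^{1/\alpha}$.
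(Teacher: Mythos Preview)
Your proposal is correct and follows the same architecture as the paper's proof: Borichev--Tomilov for \eqref{eq: CNS poly sable}$\iff$\eqref{eq: CNS Borichev tomilov}, a Neumann series for \eqref{eq: CNS Borichev tomilov}$\iff$\eqref{eq: CNS Borichev tomilov complex}, the imaginary-part identity $s\|B^*u\|_Y^2=\Im(P(is)u,u)_H$ for \eqref{eq: CNS poly stable resolvent 0}$\iff$\eqref{eq: CNS poly stable resolvent}, and the block resolvent formula combined with the algebraic identity $P(is)^{-1}(is\id+BB^*)=\frac{1}{is}(\id-P(is)^{-1}A)$ for \eqref{eq: CNS Borichev tomilov}$\iff$\eqref{eq: CNS poly stable resolvent 0}. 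One small correction: hypothesis~\eqref{eq: unique continuation eigenvectors} only rules out the \emph{nonzero} purely imaginary eigenvalues of $\A$; since $\ker\A=\ker A\times\{0\}$ can be nontrivial (e.g.\ $A=-\Delta$ on $\T^2$), the paper passes to $\dot{\H}=(\id-\Pi_0)\H$ via the Riesz spectral projector $\Pi_0$ before applying Borichev--Tomilov --- your parenthetical anticipates this, but note that $\Pi_0$ is not in general an orthogonal projection since $\A$ is not selfadjoint.
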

This proposition is proved as a consequence of the characterization of polynomial decay for general semigroups in terms of resolvent estimates given in \cite{BT:10}, providing the equivalence between~\eqref{eq: CNS poly sable} and~\eqref{eq: CNS Borichev tomilov}. See also~\cite{BD:08} for general decay rates in Banach spaces.
Note in particular that the proof of a decay rate is reduced to the proof of a resolvent estimate on the imaginary axes. By the way, this estimate implies the existence of a ``spectral gap'' between the spectrum of $\A$ and the imaginary axis, given by~\eqref{eq: CNS Borichev tomilov complex}.

Note finally that the estimates~\eqref{eq: CNS Borichev tomilov},~\eqref{eq: CNS poly stable resolvent 0} and~\eqref{eq: CNS poly stable resolvent} can be equivalently restricted to $s>0$, since $P(-is)\ovl{u} = \ovl{P(is) u}$. 

\subsection{Decay rates for the damped wave equation on the torus}
The main results of this article deal with the decay rate for Problem \eqref{eq: stabilization} on the torus $\T^2 := (\R / 2 \pi \Z)^2$.
In this setting, as well as in the abstract setting, we shall write $P(z)= -\Delta +z^2 +zb(x)$.

First, we give an {\it a priori} lower bound for the decay rate of the
damped wave equation, on $\T^2$, when GCC is ``strongly violated'',
\ie assuming that $\supp(b)$ does not satisfy GCC (instead of $\{b
>0\}$). This theorem is proved by constructing explicit {\it
  quasimodes} for the operator $P(is)$.

\begin{theorem}
\label{th: lower bound torus}
Suppose that there exists $(x_0 , \xi_0) \in T^*\T^2$, $\xi_0 \neq 0$, such that 
$$
\ovl{\{b>0\}} \cap \{x_0 + \tau \xi_0, \tau \in \R\} = \emptyset .
$$
Then there exist two constants $C>0$ and $\kappa_0>0$ such that for all $n \in \N$, 
\begin{align} 
\|P(i n \kappa_0)^{-1}\|_{\calL(L^2(\T^2))} \geq C.
\end{align}
\end{theorem}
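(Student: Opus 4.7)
The plan is to produce an explicit family of quasimodes $u_n\in L^2(\T^2)$ concentrated along the undamped line, and to deduce the bound from the elementary inequality
\[
\|P(in\kappa_0)^{-1}\|_{\calL(L^2(\T^2))}\;\geq\;\frac{\|u_n\|_{L^2}}{\|P(in\kappa_0)u_n\|_{L^2}}.
\]
First, a geometric observation: under the hypothesis, the geodesic $\Gamma=\{x_0+\tau\xi_0 : \tau\in\R\}\subset\T^2$ cannot be dense in $\T^2$ (else it would meet the nonempty interior of $\overline{\{b>0\}}$), so $\xi_0$ must be proportional to a primitive integer vector $\zeta_0=(p,q)\in\Z^2$ with $\gcd(p,q)=1$, which makes $\Gamma$ a closed geodesic of length $L_0:=2\pi|\zeta_0|$. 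Let $\eta\in\R^2$ be a unit vector orthogonal to $\zeta_0$. Since $\Gamma$ and the closed set $\overline{\{b>0\}}$ are disjoint in the compact $\T^2$, one may choose $\delta_0\in(0,\pi/|\zeta_0|)$ small enough that the Fermi map
\[
\Phi:\R/L_0\Z\times(-\delta_0,\delta_0)\to\T^2,\qquad \Phi(\sigma,r):=x_0+\tfrac{\sigma}{|\zeta_0|}\zeta_0+r\eta\ (\mathrm{mod}\ 2\pi\Z^2),
\]
is a smooth embedding onto a tube $\mathcal T\subset\T^2$ on which $b\equiv 0$; since $\{\zeta_0/|\zeta_0|,\eta\}$ is orthonormal, $\Phi^*g=d\sigma^2+dr^2$, so $\Delta=\partial_\sigma^2+\partial_r^2$ in these coordinates.

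Next, fix any nontrivial $\chi\in C_c^\infty((-\delta_0,\delta_0))$, set $\kappa_0:=1/|\zeta_0|$, and define for $n\geq 1$
\[
u_n(\Phi(\sigma,r)):=\chi(r)\,e^{in\sigma/|\zeta_0|}\ \text{on }\mathcal T,\qquad u_n\equiv 0\ \text{on }\T^2\setminus \mathcal T.
\]
The longitudinal phase is $L_0$-periodic because $(n/|\zeta_0|)L_0=2\pi n\in 2\pi\Z$, and $u_n\in C^\infty(\T^2)$ because $\chi$ vanishes to all orders near $r=\pm\delta_0$. A direct computation in the Fermi coordinates gives
\[
-\Delta u_n=(n/|\zeta_0|)^2 u_n-\chi''(r)e^{in\sigma/|\zeta_0|};
\]
the first summand cancels exactly the $(in\kappa_0)^2 u_n$ contribution in $P(in\kappa_0)u_n$, and $bu_n\equiv 0$ by construction. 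Hence $P(in\kappa_0)u_n=-\chi''(r)e^{in\sigma/|\zeta_0|}$ on $\mathcal T$ and vanishes elsewhere.

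Both $\|u_n\|_{L^2(\T^2)}^2=L_0\|\chi\|_{L^2(\R)}^2$ and $\|P(in\kappa_0)u_n\|_{L^2(\T^2)}^2=L_0\|\chi''\|_{L^2(\R)}^2$ are independent of $n$, so using the invertibility of $P(is)$ for real $s\neq 0$ (from the lemma stated just before Proposition~\ref{prop: CNS poly sable}), one obtains
\[
\|P(in\kappa_0)^{-1}\|_{\calL(L^2(\T^2))}\;\geq\;\frac{\|\chi\|_{L^2(\R)}}{\|\chi''\|_{L^2(\R)}}\;=:C>0,
\]
which is the claim. The only genuinely subtle point is the geometric step, namely checking that the undamped line is a \emph{closed} geodesic so that a flat tubular neighborhood disjoint from $\overline{\{b>0\}}$ actually exists. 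Once that tube is fixed the rest is routine: a plane wave along $\Gamma$ modulated by a transverse cutoff $\chi$ that is kept \emph{independent of $n$}. It is precisely the absence of any $n$-dependent shrinking of the transverse scale that produces a \emph{constant} lower bound on the resolvent, and thereby the $\frac{1}{t}$ obstruction to decay anticipated in Theorem~\ref{th: stabilization torus}.
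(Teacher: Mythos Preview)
Your proof is correct and follows essentially the same idea as the paper's: build quasimodes of the form (transverse cutoff)$\times$(longitudinal plane wave) along the undamped closed geodesic, so that the $z^2$-term cancels the longitudinal Laplacian and only the $n$-independent transverse term $-\chi''$ survives. The paper works with the covering $\pi_\Lambda:\T_{\Lambda^\perp}\times\T_\Lambda\to\T^2$ and a globally defined phase $e^{ink\cdot x}$ with $k\in\Z^2$ parallel to $\xi_0$ (so $\kappa_0=|k|$), while you use Fermi coordinates on a tube and a phase $e^{in\sigma/|\zeta_0|}$ that is only defined on the tube (yielding $\kappa_0=1/|\zeta_0|$); both choices are legitimate and lead to the same $n$-independent ratio $\|\chi\|/\|\chi''\|$.
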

As a consequence of Proposition~\ref{prop: CNS poly sable}, polynomial stabilization at rate $\frac{1}{t^{1 + \eps}}$ for $\eps >0$ is not possible if there is a strongly trapped ray (\ie that does not intersect $\supp(b)$). 
More precisely, in such geometry, Theorem~\ref{th: lower bound torus} combined with Lemma~\ref{lemma: conditions stable resolvent} and \cite[Proposition~1.3]{BD:08} shows that  $m_1(t) \geq \frac{C}{1+t}$, for some $C>0$ (with the notation of \cite{BD:08} where $m_1(t)$ denotes the best decay rate).

\bigskip
Then, the main goal of this paper is to explore the gap between the {\it a priori} upper bound $\frac{1}{\sqrt{t}}$ for the decay rate, given by Theorem~\ref{th: schrodinger-waves}, and the {\it a priori} lower bound $\frac{1}{t}$ of Theorem~\ref{th: lower bound torus}. Our results are twofold (somehow in two opposite directions) and concern either the case of smooth damping functions $b$, or the case $b= \mathds{1}_U$, with $U\subset \T^2$.

\subsubsection{The case of smooth damping coefficients}

Our main result deals with the case of smooth damping coefficients. Without any geometric assumption, but with an additional hypothesis on the order of vanishing of the damping function $b$, we prove a weak converse of Theorem~\ref{th: lower bound torus}.

\begin{theorem}
\label{th: stabilization torus}
Let $M = \T^2$ with the standard flat metric. There exists $\eps_0 >0$ satisfying the following property. Suppose that $b$ is a nonnegative nonvanishing function on $\T^2$ satisfying $\sqrt{b}\in \Con^\infty(\T^2)$ and that there exist $\eps \in (0, \eps_0)$ and $C_\eps>0$ such that
\begin{equation}
\label{eq: assumption b}
|\nabla b (x)| \leq C_\eps b^{1-\eps}(x) , \quad  \text{for } x \in \T^2 .
\end{equation}
Then, there exist $C>0$ and $s_0 \geq 0$ such that for all $s\in \R$, $|s|\geq s_0$,
\begin{align} 
\label{eq: resolvent estimates torus}
\|P(is)^{-1}\|_{\calL(L^2(\T^2))} \leq C|s|^{\delta} , \quad \text{with } \delta = 8 \eps
\end{align}
As a consequence of Proposition~\ref{prop: CNS poly sable}, in this situation, the damped wave equation~\eqref{eq: stabilization} is stable at rate $\frac{1}{t^{\frac{1}{1+\delta}}}$.
\end{theorem}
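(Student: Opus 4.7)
The plan is to establish the resolvent bound \eqref{eq: resolvent estimates torus} by contradiction, using a semiclassical defect measure refined by a two-microlocal analysis around the rational directions on $\T^2$, in the spirit of \cite{Macia:10,AM:11}. By the equivalence \eqref{eq: CNS poly stable resolvent 0}$\Leftrightarrow$\eqref{eq: CNS poly stable resolvent} of Proposition~\ref{prop: CNS poly sable} (taking $\frac{1}{\alpha}=1+\delta$), it suffices to show that for $|s|$ large and $u \in H^2(\T^2)$,
\begin{equation*}
\|u\|_{L^2}^2 \leq C\big(|s|^{2\delta}\|P(is)u\|_{L^2}^2 + |s|^{1+\delta}\|\sqrt{b}\,u\|_{L^2}^2\big).
\end{equation*}
Assume this fails: there exist $s_n \to +\infty$ and $u_n\in H^2(\T^2)$ with $\|u_n\|_{L^2}=1$, $\|P(is_n)u_n\|_{L^2} = o(s_n^{-\delta})$ and $\|\sqrt{b}\,u_n\|_{L^2} = o(s_n^{-(1+\delta)/2})$. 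Set $h_n := 1/s_n$ and let $\mu$ be a semiclassical measure on $T^*\T^2$ attached to $(u_n)$ at scale $h_n$. Since $h_n^2 P(is_n) = -h_n^2 \Delta - 1 + i h_n\, b$, the standard commutator argument with the principal symbol $|\xi|^2 - 1$ shows that $\mu$ is a probability measure supported on the unit cotangent bundle $S^*\T^2$ and invariant under the geodesic flow; moreover $\int b\,d\mu = 0$.

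I would next exploit the fact that the geodesic flow on $\T^2$ decomposes according to the direction $\xi/|\xi|$, each direction giving an invariant $2$-torus on which the flow is either uniquely ergodic (irrational directions) or foliated by closed orbits (rational directions). On an irrational torus, unique ergodicity forces the restriction of $\mu$ to be a multiple of the Haar measure; since $b\not\equiv 0$, the identity $\int b\,d\mu = 0$ then forces this multiple to be zero. Hence $\mu$ is carried by the countable union of the rational tori $\T^2\times\R\omega$, $\omega \in \Z^2$ primitive, and it remains to prove that the contribution of each such rational direction vanishes; summing would contradict $\int d\mu = 1$.

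For each primitive $\omega \in \Z^2$ I would carry out a second microlocalization of $(u_n)$ around the fiber $\{\xi \parallel \omega\}$ at an intermediate scale $h_n^{1-\theta}$, $0<\theta<1$, yielding a two-microlocal measure $\nu_\omega$ on the corresponding blown-up phase space. Decomposing $u_n$ along the Fourier modes in the direction $\omega$, the restriction of $h_n^2 P(is_n)$ to each such mode reduces, at leading order, to a one-dimensional semiclassical Schr\"odinger-type operator in the transverse variable $x_\perp \in \T$, in which the damping enters through the closed-orbit average $\langle b\rangle_\omega(x_\perp) := \frac{1}{L_\omega}\int_0^{L_\omega} b(x_\perp + t\,\omega/|\omega|)\,dt$. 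A propagation and positivity argument then produces a transverse invariance together with an absorption identity for $\nu_\omega$ driven by $\langle b\rangle_\omega$; since $\langle b\rangle_\omega$ is smooth, nonnegative and nontrivial on $\T$, this forces $\nu_\omega = 0$.

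The main obstacle is to justify this two-microlocal calculus rigorously in the presence of a damping which is only smoothly vanishing. The cut-offs localized at scale $h_n^{1-\theta}$ must be commuted through $b$ and $\sqrt{b}$, and in order not to lose more than $s_n^\delta$ in the final bound one needs to control these commutators with gains expressed in powers of $b$ itself rather than in powers of $h_n$ alone, so that they can be absorbed using the strengthened damping information $\|\sqrt{b}\,u_n\|_{L^2} = o(s_n^{-(1+\delta)/2})$. This is exactly the role of hypothesis \eqref{eq: assumption b}: the inequality $|\nabla b| \leq C_\eps b^{1-\eps}$ allows one to trade one derivative falling on $b$ for a small power of $b$. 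The admissible range of the two-microlocal parameter $\theta$ is then governed by $\eps$, and a careful bookkeeping of the trade-offs between $\theta$, the commutator losses coming from \eqref{eq: assumption b}, and the bootstrap on the power $|s|^\delta$ produces the explicit exponent $\delta = 8\eps$ in the theorem.
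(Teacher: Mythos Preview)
Your overall strategy coincides with the paper's: argue by contradiction, extract a semiclassical measure $\mu$ supported in $S^*\T^2$, invariant under the geodesic flow and satisfying $\int b\,d\mu=0$; kill the irrational directions by unique ergodicity; then, for each rational direction, perform a second microlocalization to show the corresponding piece of $\mu$ vanishes, contradicting $\mu(T^*\T^2)=1$. So the architecture is right.

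Two points where your sketch diverges from, or underestimates, what the paper actually does. First, the second microlocalization yields not a single measure $\nu_\omega$ but \emph{two} objects for each primitive $\Lambda$: a scalar measure $\nu^\Lambda\in\M^+(T^*\T^2\times\bS_\Lambda)$ capturing the mass escaping to infinity in the transverse frequency, and an \emph{operator-valued} measure $\rho_\Lambda\in\M^+(T^*\T_{\Lambda^\perp};\calL^1(L^2(\T_\Lambda)))$ capturing the finite-frequency part; both must be shown to vanish, by different mechanisms (a transverse-flow invariance for $\nu^\Lambda$, a commutation $[\Delta_\Lambda,M_\Lambda]=0$ plus unique continuation on $\T_\Lambda$ for $\rho_\Lambda$).

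Second, and more importantly, your description of the ``main obstacle'' as commuting phase-space cutoffs at scale $h^{1-\theta}$ through $b$ and $\sqrt b$ is not quite the mechanism that works. Commuting $\sqrt b$ through a two-microlocal cutoff at a coarse scale $h^\alpha$ is indeed used (this is how one shows a preliminary localization $w_h=\Op_h(\chi(|P_\Lambda\xi|/h^\alpha))v_h$ is still an $o(h^{2+\delta})$-quasimode, and how one proves $\nu^\Lambda_\alpha|_{\Lambda^\perp}=0$), but it fails at the fine scale $h$ relevant to the propagation laws for $\nu^\Lambda$ and $\rho_\Lambda$, because there the commutator $[\Op_h(a(x,\xi,P_\Lambda\xi/h)),\sqrt b]$ is only $O(1)$. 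The paper's key technical contribution to overcome this is the construction, for each $\Lambda$, of a \emph{physical-space} cutoff $\chi_h^\Lambda(y)$ that is $\Lambda^\perp$-invariant, supported in $\{b\le c_0 h\}$, and such that $\|(1-\chi_h^\Lambda)w_h\|$, $\|\partial_y\chi_h^\Lambda\,w_h\|$, $\|\partial_y^2\chi_h^\Lambda\,w_h\|$ are all $o(1)$. With this cutoff one replaces $A$ by $\chi_h^\Lambda A\chi_h^\Lambda$, and the dangerous term $\tfrac1h(A\,b w_h,w_h)$ becomes $\le c_0\|A\|$, with $c_0$ arbitrary. The construction of $\chi_h^\Lambda$ itself uses a propagation lemma (transporting the smallness of $w_h$ on $\{b>c_0 h\}$ along $\Lambda^\perp$ to its $\Lambda^\perp$-saturation) together with a pointwise consequence of \eqref{eq: assumption b} (if $0<b(x)<b_0$ then $b\ge b(x)/2$ on $B(x,b(x)^{2\eps})$), and it is precisely in the bookkeeping of this construction --- balancing the microlocalization parameter $\alpha$, the partition scale $h^{2\eps}$, and the quasimode exponent $2+\delta$ --- that the relation $\delta=8\eps$ and the bound $\eps_0=\tfrac{1}{76}$ emerge. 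This invariant cutoff is the step that does not generalize to $\T^d$, $d\ge3$, and it is the part of the argument your outline does not account for.
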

Following carefully the steps of the proof, one sees that $\eps_0 = \frac{1}{76}$ works, but the proof is not optimized with respect to this parameter, and it is likely that it could be much improved. 

\medskip
One of the main difficulties in understanding the decay rates is that there exists no general monotonicity property of the type ``$b_1(x) \leq b_2(x)$ for all $x \ \Longrightarrow$ the decay rate associated to the damping $b_2$ is larger (or smaller) than the decay rate associated to the damping $b_1$''.
This makes a significant difference with observability or controllability problems of the type~\eqref{eq: obs schrodinger}.

\medskip
Assumption~\eqref{eq: assumption b} is only a local assumption in a \nhd of $\d \{b>0\}$ (even if it is stated here globally on $\T^2$). Far from this set, \ie on each compact set $\{b\geq b_0\}$ for $b_0>0$, the constant $C_\eps$ can be choosen uniformly, depending only on $b_0$, and not on $\eps$.
Hence, $\eps$ somehow quantifies the vanishing rate of the damping function $b$.

\medskip
An interesting situation is when the smooth function $b$ vanishes like
$e^{-\frac{1}{x^\alpha}}$ in smooth local coordinates, for some
$\alpha >0$. In this case, Assumption~\eqref{eq: assumption b} is
satisfied for any $\eps>0$, and the associated damped wave
equation~\eqref{eq: stabilization} is stable at rate
$\frac{1}{t^{1-\delta}}$ for any $\delta >0$. This shows that the
lower bound given by Theorem~\ref{th: lower bound torus}, as well as
the decay rate $\frac{1}{t}$, are sharp in general. This phenomenon
had already been remarked by Burq and Hitrik in~\cite{BH:07} in the
case where $b$ is invariant in one direction. 

Typical smooth functions not satisfying Assumption~\eqref{eq: assumption b} are for instance functions vanishing like $\sin(\frac{1}{x})^2 e^{-\frac{1}{x}}$. We do not have any idea of the decay rate achieved in this case (except for the {\it a priori} bounds $\frac{1}{\sqrt{t}}$ and $\frac{1}{t}$).

\bigskip
Theorem~\ref{th: stabilization torus} generalises the result of ~\cite{BH:07}, which only holds if $b$ is assumed to be invariant in one direction.
Our proof is based on ideas and tools developped in~\cite{Macia:10, AM:11} and especially on two-microlocal semiclassical measures. One of the key technical points appears in Section~\ref{section: existence chi}: we have to construct, for each trapped direction, a cutoff function invariant in that direction and adapted to the damping coefficient $b$. We do not know how to adapt this technical construction to tori of higher dimension, $d>2$; hence we do not know whether Theorem~\ref{th: stabilization torus} holds in higher dimension (although we have no reason to suspect it should not hold). Only in the particular case where $b$ is invariant in $d-1$ directions can our methods (or those of~\cite{BH:07}) be applied to prove the analogue of 
Theorem~\ref{th: stabilization torus}.

\bigskip
Note that if GCC is satisfied, one has (on a general compact manifold $M$) for some $C>1$ and all $|s| \geq s_0$ the estimate
\begin{align} 
\label{eq: resolvent uniform decay}
\|P(is)^{-1}\|_{\calL(L^2(M))} \leq C |s|^{-1}.
\end{align}
instead of \eqref{eq: resolvent estimates torus}. Estimate~\eqref{eq: resolvent uniform decay} is in turn equivalent to uniform stabilization (see \cite{Hua:85} together with Lemma~\ref{lemma: conditions stable resolvent} below).

\begin{remark}
\label{rem: square}
As a consequence of Theorem~\ref{th: stabilization torus} on the
torus, we can deduce that the decay rate $t^{-\frac{1}{1+\delta}}$
also holds for Equation~\eqref{eq: stabilization} if $M= (0,\pi)^2$ is
the square, one takes
with Dirichlet or Neumann boundary conditions, and the damping
function $b$ is smooth, vanishes near $\partial M$ and satisfies Assumption~\eqref{eq: assumption b}.
First, we extend the function $b$ as an even (with respect to both
variables) smooth function on the larger square $(-\pi,\pi)^2$, and using the injection $\imath :(-\pi,\pi)^2 \to \T^2$, as a smooth function on $\T^2$, still satisfying~\eqref{eq: assumption b}. Moreover, $D(\Delta_D)$ (\resp $D(\Delta_N)$) on $(0,\pi)^2$ can be identified as the closed subspace of odd (\resp even) functions of $D(\Delta_D)$ (\resp $D(\Delta_N)$) on $(-\pi,\pi)^2$. Using again the injection $\imath$, it can also be identified with a closed subspace of $H^2(\T^2)$. 
The estimate
\begin{align*} 
\|u\|_{L^2(\T^2)} \leq C |s|^\delta \|P(is) u\|_{L^2(\T^2)} \quad \text{for all } u \in H^2(\T^2) ,
\end{align*}
is thus also true on the square $(0,\pi)^2$ for Dirichlet or Neumann boundary conditions.
In particular, this strongly improves the results of~\cite{LR:05}.

The lower bound of Theorem~\ref{th: lower bound torus} can be
similarly extended to the case of a square with Dirichlet or Neumann
boundary conditions, implying that the rate $\frac{1}{t}$ is optimal if
GCC is strongly violated.
\end{remark}


\subsubsection{The case of discontinuous damping functions}

%
%
%

Appendix~\ref{app: stephane} (by St\'ephane Nonnenmacher) deals with
the case where $b$ is the characteristic function of a vertical strip,
\ie $b =\tB \mathds{1}_U$, for some $\tB>0$ and $U = (a, b)\times \T
\subset \T^2$. 
Due to the invariance of $b$
in one direction, the spectrum of the damped wave operator $\A$ splits
into countably many ``branches'' of eigenvalues. This structure of
the spectrum is illustrated in the numerics of~\cite{AL:03, AL:12}.

The branch closest to the imaginary axis is explicitly computed, it contains a sequence of eigenvalues $(z_i)_{i \in \N}$ such that $\Im z_i\to\infty$ and $|\Re z_i|\leq \frac{C_0}{(\Im z_i)^{3/2}}$.
This result is in agreement with the numerical tests given in~\cite{AL:12}.

As a consequence, for any $\eps>0$ and $C>0$, the strip  
$\big\{|\Re z|\leq C |\Im(z)|^{-3/2+\eps}\big\}$ contains infinitely
many poles of the resolvent $(z \id - \A)^{-1}$, so item~\eqref{eq: CNS Borichev tomilov complex} in Proposition~\ref{prop: CNS poly
  sable} implies the following obstruction to the
stability of this damped system~:
\begin{corollary}\label{cor:obstruction}
For any $\eps>0$, the damped wave equation~\eqref{eq: stabilization} on
$\T^2$ with the damping function \eqref{e:b(x)} cannot be stable at
the rate $\frac{1}{t^{2/3+\eps}}$.

The same result holds on the square with Dirichlet or Neumann
boundary conditions.
\end{corollary}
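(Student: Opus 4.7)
\medskip\noindent\textbf{Proof plan.} The strategy is a contradiction argument combining the eigenvalue asymptotics produced in the appendix with the implication $\eqref{eq: CNS poly sable}\Rightarrow\eqref{eq: CNS Borichev tomilov complex}$ of Proposition~\ref{prop: CNS poly sable}. Fix $\eps>0$ and set $\alpha=\frac{2}{3}+\eps$, so that $\frac{1}{\alpha}<\frac{3}{2}$. Suppose the damped wave equation on $\T^2$ with $b=\tB\mathds{1}_U$ is stable at rate $t^{-\alpha}$. Then Proposition~\ref{prop: CNS poly sable} furnishes constants $C>0$ and $s_0\geq 0$ such that every $z\in\C$ with $|z|\geq s_0$ and
$$
|\Re z|\leq \frac{1}{C\,|\Im z|^{1/\alpha}}
$$
belongs to the resolvent set of $\A$.

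\medskip
The appendix provides a sequence $(z_i)_{i\in\N}$ of eigenvalues of $\A$ with $\Im z_i\to+\infty$ and $|\Re z_i|\leq C_0(\Im z_i)^{-3/2}$. Since $\tfrac{1}{\alpha}<\tfrac{3}{2}$, one has
$$
C\,|\Re z_i|\,|\Im z_i|^{1/\alpha}\;\leq\; C C_0\,(\Im z_i)^{1/\alpha-3/2}\;\xrightarrow[i\to\infty]{}\;0,
$$
so for all sufficiently large $i$, the eigenvalue $z_i$ satisfies both $|z_i|\geq s_0$ and $|\Re z_i|\leq (C|\Im z_i|^{1/\alpha})^{-1}$. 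It therefore lies inside the alleged resolvent region, contradicting the fact that it is an eigenvalue of $\A$. The verification of hypothesis \eqref{eq: unique continuation eigenvectors} of Proposition~\ref{prop: CNS poly sable} is straightforward for $b=\tB\mathds{1}_U$ since no finite sum of exponentials $e^{i(k_1x_1+k_2x_2)}$ on a common energy shell of $-\Delta$ can vanish identically on the strip $U$; in any event only the direction \eqref{eq: CNS poly sable}$\Rightarrow$\eqref{eq: CNS Borichev tomilov complex} is used, and that implication only requires the decay assumption.

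\medskip
For the square $(0,\pi)^2$ with Dirichlet or Neumann boundary conditions and damping $\tB\mathds{1}_{(a,b)\times(0,\pi)}$, the invariance of $b$ in the $x_2$ direction allows separation of variables: looking for eigenfunctions of $\A$ of the form $\phi(x_1)\psi_n(x_2)$ with $\psi_n$ a Dirichlet or Neumann eigenfunction of $-\partial_{x_2}^2$ on $(0,\pi)$ with eigenvalue $n^2$, the problem reduces to the one-dimensional damped eigenvalue equation on an interval with characteristic damping, with spectral parameter shifted by $n^2$. This is precisely the one-dimensional reduction carried out in the appendix, and the same branch of eigenvalues $z_i$ with $|\Re z_i|\leq C_0(\Im z_i)^{-3/2}$ arises for the square; the contradiction argument above then concludes verbatim. (Equivalently, reflecting odd/even across the sides as in Remark~\ref{rem: square} embeds these square eigenfunctions into eigenfunctions of the torus problem with an even extended damping, reducing to the torus case already treated.)

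\medskip
The only sharp point in the argument is the comparison $\tfrac{1}{\alpha}<\tfrac{3}{2}$, which is exactly the content of $\eps>0$; this is also what prevents the corollary from being strengthened to rule out stability at the exact rate $t^{-2/3}$.
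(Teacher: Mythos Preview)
Your argument is correct and follows the same route as the paper: the eigenvalue sequence $(z_i)$ from Appendix~\ref{app: stephane} satisfies $|\Re z_i|\leq C_0(\Im z_i)^{-3/2}$, so for $\alpha=\tfrac{2}{3}+\eps$ these eigenvalues eventually fall inside the spectral-free strip $\{|\Re z|\leq C^{-1}|\Im z|^{-1/\alpha}\}$ guaranteed by item~\eqref{eq: CNS Borichev tomilov complex} of Proposition~\ref{prop: CNS poly sable}, a contradiction; the square case is handled exactly as in the appendix via the odd/even eigenmodes. One small remark: your parenthetical claim that the implication \eqref{eq: CNS poly sable}$\Rightarrow$\eqref{eq: CNS Borichev tomilov complex} ``only requires the decay assumption'' is true in substance (any nonzero purely imaginary eigenvalue would produce a non-decaying solution, contradicting stability), but Proposition~\ref{prop: CNS poly sable} as stated assumes~\eqref{eq: unique continuation eigenvectors}, so it is cleaner simply to keep your verification that eigenfunctions of $-\Delta$ are trigonometric polynomials and cannot vanish on the open strip $U$.
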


More precisely, in this situation, Lemma~\ref{lemma: conditions stable resolvent} and \cite[Proposition~1.3]{BD:08} yield that $m_1(t) \geq \frac{C}{(1+t)^{2/3}}$, for some $C>0$ (with the notation of \cite{BD:08} where $m_1(t)$ denotes the best decay rate).

This corollary shows in particular that the regularity conditions in Theorem~\ref{th: stabilization torus}
cannot be completely disposed of if one wants a stability at the rate $1/t^{1-\eps}$ for small $\eps$.

\subsection{Some related open questions}

The various results obtained in this article lead to several open questions.  

\begin{enumerate}
\item In the case where $b$ is the characteristic function of a
  vertical strip, our analysis shows that the best decay rate lies
  somewhere between $\frac{1}{t^{\frac12}}$ and
  $\frac{1}{t^{\frac23}}$, but the ``true'' decay rate is not yet clear.
\item 
It would also be interesting to investigate the spectrum and the decay
rates for damping functions $b$ invariant in one direction, but having
a less singular behaviour than a characteristic function. In
particular, is it possible to give a precise link between the
vanishing rate of $b$ and the decay rate?

\item In the general setting of Section~\ref{sub: abstract setting}
  (as well as in the case of the damped wave equation on the torus),
  is the {\it a priori} upper bound $\frac{1}{t^{\frac12}}$ for the
  decay rate optimal? 

\item For smooth damping functions vanishing like $e^{-\frac{1}{x^\alpha}}$, Theorem~\ref{th: stabilization torus} yields stability at rate $\frac{1}{t^{1-\delta}}$ for all $\delta>0$. Is the decay rate $\frac{1}{t}$ reached in this situation? Can one find a damping function $b$ such that the decay rate is exactly $\frac{1}{t}$?

\item The lower bound of of Theorem~\ref{th: lower bound torus} is still valid in higher dimensional tori. Is there an analogue of Theorem~\ref{th: stabilization torus} (\ie for general ``smooth'' damping functions) for  $\T^d$, with $d\geq 3$?

\end{enumerate}

%
%
%

\part{Resolvent estimates and stabilization in the abstract setting}

\section{Proof of Theorem~\ref{th: schrodinger-waves} assuming Proposition~\ref{prop: CNS poly sable}}
To prove Theorem~\ref{th: schrodinger-waves}, we express the observability condition as a resolvent estimate (also known as the Hautus test), as introduced by Burq and Zworski~\cite{BZ:04}, and further developed by Miller~\cite{Miller:05} and Ramdani, Takahashi, Tenenbaum and Tucsnak~\cite{RTTT:05}. For a survey of this notion, we refer to the book~\cite[Section~6.6]{TW:09}.

In particular \cite[Theorem~5.1]{Miller:05} (or \cite[Theorem~6.6.1]{TW:09}) yields that System~\eqref{eq: obs abstract Schrodinger} is observable in some time $T>0$ if and only if there exists a constant $C>0$ such that we have
$$
\|u\|_H^2 \leq C \big( \|(A - \lambda \id)u \|_H^2 + \|B^*u\|_Y^2 \big), \quad \text{ for all } \lambda \in \R \text{ and } u \in D(A). 
$$

As a first consequence, Assumption~\eqref{eq: unique continuation eigenvectors} is satisfied and Proposition~\ref{prop: CNS poly sable} applies in this context.
Moreover, we have, for all $s \in \R$ and $u \in D(A)$,
\begin{align}
\label{eq: estimate loss schrodinger-waves}
\|u\|_H^2 
& \leq C \big( \|(A - s^2 \id + is BB^* - is BB^*)u\|_H^2 + \|B^*u\|_Y^2 \big) \nonumber \\
& \leq C \big( \|P(is) u\|_H^2 + s^2 \|BB^*u\|_H^2 + \|B^*u\|_Y^2 \big) 
\end{align}
Since $B \in \calL(Y ;H)$, we obtain for $s \geq 1$ and for some $C>0$,
\begin{align*}
\|u\|_H^2  \leq C \big( \|P(is) u\|_H^2 + s^2 \|B^*u\|_Y^2\big)  
\leq C \big( s^2 \|P(is) u\|_H^2 + s^2 \|B^*u\|_Y^2\big) .
\end{align*}
Proposition~\ref{prop: CNS poly sable} then yields the polynomial stability at rate $\frac{1}{\sqrt{t}}$ for \eqref{eq: damped abstract waves}. This concludes the proof of Theorem~\ref{th: schrodinger-waves}.
\hfill \qedsymbol \endproof


\section{Proof of Proposition~\ref{prop: CNS poly sable}}
Our proof strongly relies on the characterization of polynomially stable semigroups, given in \cite[Theorem 2.4]{BT:10}, which can be reformulated as follows. 

\begin{theorem}[\cite{BT:10}, Theorem 2.4]
\label{th: BorTom}
Let $(e^{t\dot{\A}})_{t\geq 0}$ be a bounded $\Con^0$-semigroup on a Hilbert space $\dot{\H}$, generated by $\dot{\A}$. Suppose that $i \R \cap \Sp(\dot{\A}) = \emptyset$. Then, the following conditions are equivalent:
\begin{equation}
\label{eq: BorTom stable}
\|e^{t\dot{\A}} \dot{\A}^{-1}\|_{\calL(\dot{\H})} = \O(t^{- \alpha}), \quad \text{as } t \to + \infty ,
\end{equation}
\begin{equation}
\label{eq: BorTom resolvent}
\|(is\id - \dot{\A})^{-1}\|_{\calL(\dot{\H})} = \O(|s|^{\frac{1}{\alpha}}), \quad \text{as } s \to \infty .
\end{equation}
\end{theorem}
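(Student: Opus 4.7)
The theorem asserts the equivalence of a polynomial decay estimate \eqref{eq: BorTom stable} on the semigroup restricted to smooth data and a polynomial bound \eqref{eq: BorTom resolvent} on the resolvent along the imaginary axis. The two implications are of very different nature: the direction \eqref{eq: BorTom stable} $\Rightarrow$ \eqref{eq: BorTom resolvent} holds on any Banach space and reduces to a Laplace transform computation, whereas \eqref{eq: BorTom resolvent} $\Rightarrow$ \eqref{eq: BorTom stable} is the main content and exploits crucially the fact that $\dot{\H}$ is a Hilbert space.

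\textbf{From decay to resolvent bound.} Assuming \eqref{eq: BorTom stable} and using the boundedness of $(e^{t\dot{\A}})_{t\geq 0}$, the plan is to write for $\Re \lambda > 0$ and $x \in \dot{\H}$ the Laplace representation
\[
\dot{\A}^{-1}(\lambda - \dot{\A})^{-1} x = \int_0^\infty e^{-\lambda t} e^{t\dot{\A}} \dot{\A}^{-1} x \, dt.
\]
An integration by parts (to ensure absolute convergence when $\alpha \leq 1$), followed by passage to the limit $\lambda \to is$ using $i\R \cap \Sp(\dot{\A}) = \emptyset$, and a splitting of the integral at $t = |s|^{1/\alpha}$ (trivial semigroup bound for the head, decay bound for the tail), yields $\|\dot{\A}^{-1}(is - \dot{\A})^{-1}\|_{\calL(\dot{\H})} = \O(|s|^{1/\alpha - 1})$ as $|s| \to \infty$. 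Combined with the algebraic identity $(is - \dot{\A})^{-1} = is \dot{\A}^{-1}(is - \dot{\A})^{-1} - \dot{\A}^{-1}$, this gives \eqref{eq: BorTom resolvent}.

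\textbf{From resolvent bound to decay.} The bound \eqref{eq: BorTom resolvent}, together with a Neumann series expansion around each point $is \in i\R$, extends the resolvent analytically to a region
\[
\Omega = \{ \lambda \in \C : \Re \lambda > -c(1+|\Im \lambda|)^{-1/\alpha} \},
\]
with comparable polynomial growth. For $x \in D(\dot{\A})$, the idea is to start from the contour representation
\[
e^{t\dot{\A}} \dot{\A}^{-1} x = \frac{1}{2\pi i} \int_{\Gamma} e^{\lambda t} \lambda^{-1} (\lambda - \dot{\A})^{-1} x \, d\lambda,
\]
with $\Gamma$ initially a vertical line in the right half-plane, and to deform $\Gamma$ to the left boundary of $\Omega$. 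The horizontal pieces give contributions that decay exponentially in $t$; the remaining oscillatory integral along the curved part is analyzed via Plancherel on $\dot{\H}$, which bounds the $L^2$-norm in $t$ of $e^{t\dot{\A}}\dot{\A}^{-k}x$ by the $L^2$-norm in $s$ of $\lambda(s)^{-k}(\lambda(s)-\dot{\A})^{-1}x$, finite for $k$ sufficiently large. This yields pointwise decay of $e^{t\dot{\A}}\dot{\A}^{-k}$ at rate $t^{-k\alpha}$, and the case $k=1$ with the sharp exponent is reached by a moment interpolation argument that uses once more the uniform boundedness of $(e^{t\dot{\A}})_{t\geq 0}$.

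\textbf{Main obstacle.} The delicate point is matching the exponent $\alpha$ without loss. Earlier approaches of Liu-Rao and B\'atkai-Engel-Pr\"uss-Schnaubelt obtained only the weaker rate $t^{-\alpha}(\log t)^{\beta}$ for some $\beta>0$; the Borichev-Tomilov refinement removes this logarithmic loss by pairing the Plancherel identity with a sharper choice of contour deformation, yielding the optimal exponent on Hilbert spaces. The use of Plancherel is indispensable: Borichev and Tomilov themselves exhibit counterexamples showing that the statement fails on general Banach spaces, so the argument cannot be purely contour-theoretic and must genuinely use the Hilbert structure of $\dot{\H}$.
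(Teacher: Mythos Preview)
The paper does not prove this theorem. It is quoted verbatim from \cite[Theorem~2.4]{BT:10} and used as a black box in the proof of Proposition~\ref{prop: CNS poly sable}; no argument is given or sketched in the paper itself. So there is nothing to compare your proposal against.

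That said, your outline is a fair summary of the Borichev--Tomilov strategy: the easy direction via the Laplace representation of the resolvent, and the hard direction via analytic extension of the resolvent to a region $\{\Re\lambda > -c(1+|\Im\lambda|)^{-1/\alpha}\}$ followed by a Plancherel argument that is specific to Hilbert spaces. You correctly identify that the logarithmic loss present in earlier work is removed precisely by this Hilbert-space step, and that the result is false on general Banach spaces. One caveat: your description of the hard direction is quite schematic---the actual argument in \cite{BT:10} does not literally deform a contour and apply Plancherel to a curved integral, but rather works with the function $t\mapsto e^{t\dot{\A}}\dot{\A}^{-N}x$ on $(0,\infty)$, uses the Hilbert-space Plancherel theorem to bound its $L^2$ norm by that of its Fourier--Laplace transform, and then interpolates down to $N=1$. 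Your sketch captures the spirit but would need tightening to be a proof.
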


\bigskip
Let us first describe some spectral properties of the operator $\A$ defined in~\eqref{eq: first order eqation}.

\begin{lemma}
\label{lemma: check assumptions}
The spectrum of $\A$ contains only isolated eigenvalues and we have 
$$
\Sp(\A) \subset \left( \big( - \frac12 \|B^*\|_{\calL(H;Y)}^2 , 0 \big) + i\R\right) \cup 
\left([ - \|B^*\|_{\calL(H;Y)}^2 , 0] + 0 i \right)  ,
$$
with $\ker(\A) = \ker(A)\times \{0\}$.

Moreover, the operator $P(z)$ is an isomorphism from $D(A)$ onto $H$ if and only if $z \notin \Sp(\A)$. If this is satisfied, we have 
\begin{equation}
\label{eq: resolvent A wrt P}
(z\id - \A)^{-1} =
\left(
\begin{array}{cc}
P(z)^{-1}(BB^* + z\id) & P(z)^{-1} \\
P(z)^{-1}(zBB^* + z^2\id) -\id& z P(z)^{-1} 
\end{array}
\right) .
\end{equation}
\end{lemma}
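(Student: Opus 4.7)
The plan is to prove the lemma in three moves: reduce spectrum to eigenvalues, locate eigenvalues via a scalar quadratic, then pass between $\A$ and $P(z)$ through the explicit block formula.

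First, since $\A$ has compact resolvent (already noted in the paragraph defining $\A$), its spectrum consists only of isolated eigenvalues of finite multiplicity, so it suffices to analyze the eigenvalue equation. If $\A(u_0,u_1) = z(u_0,u_1)$ with $(u_0,u_1) \in D(\A) \setminus\{0\}$, the first row yields $u_1 = z u_0$ and the second row then becomes $P(z) u_0 = 0$; in particular $u_0 \neq 0$ (otherwise $u_1 = 0$ as well). This already gives the kernel description: $z = 0$ forces $u_1 = 0$ and $A u_0 = 0$, and conversely $\A(u_0,0) = 0$ whenever $u_0 \in \ker(A)$.

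Second, to locate eigenvalues I would take the $H$-inner product of $P(z) u_0 = 0$ with $u_0$. Writing $a = (Au_0,u_0)_H/\|u_0\|_H^2 \geq 0$ and $\beta = \|B^* u_0\|_Y^2/\|u_0\|_H^2 \in [0, \|B^*\|_{\calL(H;Y)}^2]$, this reduces to the scalar quadratic $z^2 + \beta z + a = 0$. Case $\beta^2 - 4a \geq 0$: both roots are real; their sum $-\beta$ lies in $[-\|B^*\|^2, 0]$ and their product $a$ is nonnegative, forcing both roots into $[-\|B^*\|^2, 0]$. Case $\beta^2 - 4a < 0$: the roots are non-real complex conjugates with $\Re z = -\beta/2 \in (-\tfrac{1}{2}\|B^*\|^2, 0)$. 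The union of these two possibilities is exactly the claimed inclusion.

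Third, for the equivalence with $P(z)^{-1}$ and the formula \eqref{eq: resolvent A wrt P}, I would proceed by double implication. If $P(z): D(A) \to H$ is an isomorphism, a direct $2\times 2$ block computation verifies that the operator $R(z)$ defined by the right-hand side of \eqref{eq: resolvent A wrt P} satisfies $R(z)(z\id-\A) = \id_{D(\A)}$ and $(z\id-\A)R(z) = \id_{\H}$, so $z \notin \Sp(\A)$ and this is the resolvent. Conversely, suppose $z \notin \Sp(\A)$. Injectivity of $P(z)$: if $P(z)u = 0$ then $(u, zu) \in \ker(z\id - \A) = \{0\}$. Surjectivity of $P(z)$: given $f \in H$, set $(u_0,u_1) = (z\id - \A)^{-1} \transp(0,f)$; the two components give $u_1 = z u_0$ and $P(z) u_0 = f$. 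Boundedness of $P(z)^{-1}: H \to D(A)$ (with the graph norm) then follows from the open mapping theorem.

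The argument is essentially routine; the only mildly delicate points are keeping track of open versus closed endpoints in the discriminant analysis, and carrying out the block-matrix verification that \eqref{eq: resolvent A wrt P} really inverts $z\id - \A$. Neither constitutes a serious obstacle.
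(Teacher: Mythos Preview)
Your approach is essentially the same as the paper's --- reduce to the scalar identity obtained by pairing $P(z)u_0=0$ with $u_0$, then read off the location of $z$ --- but there is one genuine gap.

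In the case $\beta^2-4a<0$ you assert $\Re z=-\beta/2\in(-\tfrac12\|B^*\|^2,0)$. The strict inequality $\Re z<0$ requires $\beta>0$, i.e.\ $B^*u_0\neq 0$, and nothing in your argument forces this. If $\beta=0$ and $a>0$ the roots are $\pm i\sqrt{a}$, pure imaginary with $\Re z=0$, which lies outside the set claimed in the lemma. To exclude this you must invoke Assumption~\eqref{eq: unique continuation eigenvectors}: if $B^*u_0=0$ then $P(z)u_0=0$ collapses to $Au_0=-z^2u_0$, so $u_0$ is an eigenvector of $A$ annihilated by $B^*$, which~\eqref{eq: unique continuation eigenvectors} forbids. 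The paper's proof does exactly this, first showing $\Sp(\A)\cap i\R\subset\{0\}$ via~\eqref{eq: unique continuation eigenvectors} and then using that to get the strict inequality $\Re z<0$ when $\Im z\neq 0$. Without this step the lemma is simply false (any eigenvector of $A$ in $\ker B^*$ produces a nonzero imaginary eigenvalue of $\A$), so the omission is not cosmetic.

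A secondary remark: your strict inequality at the left endpoint, $\Re z>-\tfrac12\|B^*\|^2$, is also unjustified (it would need $\beta<\|B^*\|^2$, which can fail). The paper's proof only obtains $\Re z\geq -\tfrac12\|B^*\|^2$ there, so this is a harmless looseness already present in the statement. The third part of your argument (the $P(z)\leftrightarrow(z\id-\A)$ equivalence and the block formula) is fine and matches the paper's computation.
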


The localization properties for the spectrum of $\A$, stated in the first part of this lemma are illustrated for instance in~\cite{AL:03} or~\cite{AL:12}. 

This Lemma leads us to introduce the spectral projector of $\A$ on $\ker(\A)$, given by 
$$
\Pi_0 = \frac{1}{2i\pi}\int_{\gamma} (z\id - \A)^{-1} dz \in \calL(\H),
$$
where $\gamma$ denotes a positively oriented circle centered on $0$ with a radius so small that $0$ is the single eigenvalue of $\A$ in the interior of $\gamma$.
We set $\dot{\H} = (\id - \Pi_0)\H$ and equip this space with the norm
$$
\|(u_0 , u_1)\|_{\dot{\H}}^2 := |(u_0 , u_1)|_{\H}^2 
= \|A^{\frac12}u_0\|_{H}^2 + \|u_1 \|_{H}^2 ,
$$
and associated inner product.
This is indeed a norm on $\dot{\H}$ since $\|(u_0 , u_1)\|_{\dot{\H}} =0$ is equivalent to $(u_0 , u_1)\in \ker(A)\times \{0\} = \Pi_0\H$.

Besides, we set $\dot{\A} = \A|_{\dot{\H}}$ with domain $D(\dot{\A}) = D(\A) \cap \dot{\H}$. 
A first remark is that $\Sp(\dot{\A}) = \Sp(\A) \setminus \{0\}$, so that $\Sp(\dot{\A}) \cap i\R = \emptyset$.

The remainder of the proof consists in applying Theorem~\ref{th: BorTom} to the operator $\dot{\A}$ in $\dot{\H}$. We first check the assumptions of Theorem~\ref{th: BorTom} and describe the solutions of the evolution problem~\eqref{eq: first order eqation} (or equivalently~\eqref{eq: damped abstract waves}).

\begin{lemma}
\label{lemma: semigroups}
The operator $\dot{\A}$ generates a contraction $\Con^0$-semigroup on~$\dot{\H}$, denoted $(e^{t\dot{\A}})_{t\geq 0}$. Moreover, for all initial data $U_0 \in \H$, Problem~\eqref{eq: first order eqation} (or equivalently~\eqref{eq: damped abstract waves}) has a unique solution $U \in \Con^0(\R^+ ;\H)$, issued from $U_0$, that can be decomposed as 
\begin{equation}
\label{eq: decomposition semigroup}
U(t) = e^{t\dot{\A}} (\id -\Pi_0) U_0 + \Pi_0 U_0 , \quad \text{for all } t\geq 0 .
\end{equation}
\end{lemma}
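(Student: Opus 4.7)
The plan is to apply the Lumer--Phillips theorem to $\dot\A$ on the Hilbert space $\dot\H$ equipped with $|\cdot|_\H$, and then to lift the resulting semigroup back to $\H$ by using that $\Pi_0\H = \ker(\A)$ is a trivial invariant subspace for the flow. The two ingredients required for Lumer--Phillips are the dissipativity of $\dot\A$ and a range condition for $\id-\dot\A$.

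For dissipativity, for $U = \transp(u_0,u_1) \in D(\dot\A) = D(\A)\cap \dot\H$, I would expand
\begin{align*}
(\dot\A U, U)_{\dot\H}
&= (A^{\frac12} u_1, A^{\frac12} u_0)_H + (-Au_0 - BB^*u_1, u_1)_H \\
&= (u_1, Au_0)_H - (Au_0, u_1)_H - \|B^* u_1\|_Y^2,
\end{align*}
using the selfadjointness of $A^{\frac12}$ and the definition of $B^*$, so that $\Re (\dot\A U, U)_{\dot\H} = -\|B^* u_1\|_Y^2 \leq 0$. For the range condition, Lemma~\ref{lemma: check assumptions} places $\Sp(\A)$ in the closed left half-plane; in particular $\id - \A : D(\A) \to \H$ is bijective. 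Since $\Pi_0$ is a spectral projector of $\A$ (so it commutes with $\A$) and $\A\Pi_0=0$ (because $\Pi_0\H = \ker(\A)$), applying $\Pi_0$ to the identity $(\id-\A) U = V$ gives $\Pi_0 U = \Pi_0 V$. Thus $V \in \dot\H$ forces $\Pi_0 U = 0$, so $U \in D(\dot\A)$ and $\dot\A U = \A U$. This shows that $\id-\dot\A: D(\dot\A) \to \dot\H$ is bijective, and the Lumer--Phillips theorem produces the contraction $\Con^0$-semigroup $(e^{t\dot\A})_{t\geq 0}$ on $\dot\H$.

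For existence, uniqueness and decomposition on $\H$, given $U_0 \in \H$ I would set $V_0 := (\id-\Pi_0) U_0 \in \dot\H$, $W_0 := \Pi_0 U_0 \in \ker(\A)$ and define $U(t) := e^{t\dot\A} V_0 + W_0$. The constant map $t \mapsto W_0$ trivially solves $\d_t U = \A U$, and $t \mapsto e^{t\dot\A} V_0$ solves the same equation in $\dot\H$, so the sum is a candidate solution in $\H$. Uniqueness follows by applying $\Pi_0$ and $\id-\Pi_0$ to any alternative solution $\tilde U$: commutation with $\A$ forces $\Pi_0 \tilde U$ to be the constant $W_0$, while $(\id-\Pi_0)\tilde U$ lies in $\dot\H$ and coincides with $e^{t\dot\A} V_0$ by the semigroup uniqueness on $\dot\H$.

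The only subtlety is to transfer continuity of $t \mapsto e^{t\dot\A} V_0$ from $(\dot\H, |\cdot|_\H)$ to $(\H, \|\cdot\|_\H)$, since the graph norm on $\H$ involves an additional $\|u_0\|_H^2$ term that is not controlled by $|\cdot|_\H$. Writing $e^{t\dot\A} V_0 = (v_0(t), v_1(t))$ and exploiting the relation $\d_t v_0 = v_1$ built into the first-order system, one has $v_0(t) - v_0(s) = \int_s^t v_1(\tau)\, d\tau$ in $H$, which furnishes the missing $H$-norm continuity thanks to the boundedness of $t \mapsto v_1(t)$ arising from the contraction property of the semigroup.
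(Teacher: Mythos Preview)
Your proof is correct and follows essentially the same Lumer--Phillips strategy as the paper: the dissipativity computation is identical, and your range argument via commutation of $\Pi_0$ with $\A$ is just a mild rephrasing of the paper's step of applying $(\id-\Pi_0)$ to the identity $(\A-\id)U=F$.

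One remark: your final paragraph on transferring continuity from $(\dot\H,|\cdot|_\H)$ to $(\H,\|\cdot\|_\H)$ is unnecessary. As the paper observes later (in the proof of Lemma~\ref{lemma: conditions stable resolvent}), the norms $|\cdot|_\H$ and $\|\cdot\|_\H$ are equivalent on $\dot\H$, so continuity in one norm is automatically continuity in the other. Your workaround via $v_0(t)-v_0(s)=\int_s^t v_1$ is valid for classical solutions but would still require a density argument for general $V_0\in\dot\H$; the norm equivalence avoids this altogether.
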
 

As a consequence, we can apply Theorem~\ref{th: BorTom} to the semigroup generated by $\dot{\A}$. The proof of Proposition~\ref{prop: CNS poly sable} will be achieved when the following lemmata are proved.

\begin{lemma}
\label{lemma: conditions stable}
Conditions \eqref{eq: CNS poly sable} and \eqref{eq: BorTom stable} are equivalent.
\end{lemma}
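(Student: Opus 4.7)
The plan is to reduce \eqref{eq: CNS poly sable} directly to a bound on $e^{t\dot{\A}}\dot{\A}^{-1}$ using the decomposition from Lemma~\ref{lemma: semigroups}, after peeling off the kernel of $\A$. Given $U_0=(u_0,u_1) \in D(\A)$, Lemma~\ref{lemma: semigroups} writes the solution as $U(t) = e^{t\dot{\A}}(\id-\Pi_0)U_0 + \Pi_0 U_0$. The first step is to observe, using Lemma~\ref{lemma: check assumptions}, that $\ker \A = \ker A\times\{0\}$, hence $\Pi_0 U_0 \in \ker A\times\{0\}$ has zero $|\cdot|_\H$-seminorm. Since $|\cdot|_\H$ is a seminorm on $\H$ that restricts to the norm $\|\cdot\|_{\dot{\H}}$ on $\dot{\H}$, this immediately gives $E(u,t)^{1/2} = \tfrac{1}{\sqrt{2}}|U(t)|_\H = \tfrac{1}{\sqrt{2}}\|e^{t\dot{\A}}(\id-\Pi_0)U_0\|_{\dot{\H}}$.

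Next, since $\Pi_0$ is the spectral projector of $\A$ onto its kernel, it commutes with $\A$ and $\A\Pi_0=0$, so $\A U_0 = \dot{\A}(\id-\Pi_0)U_0 \in \dot{\H}$ and $|\A U_0|_\H = \|\dot{\A}(\id-\Pi_0)U_0\|_{\dot{\H}}$. Setting $V := (\id-\Pi_0)U_0$, as $U_0$ ranges over $D(\A)$ the vector $V$ ranges over $D(\dot{\A}) = D(\A)\cap\dot{\H}$. Condition \eqref{eq: CNS poly sable} (with $f(t)=t^{-\alpha}$) thus becomes: there exists $C>0$ such that
\[
\|e^{t\dot{\A}}V\|_{\dot{\H}} \leq C\, t^{-\alpha}\,\|\dot{\A}V\|_{\dot{\H}},\qquad V\in D(\dot{\A}),\ t>0.
\]

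Finally, since $\Sp(\dot{\A})=\Sp(\A)\setminus\{0\}$, one has $0\notin\Sp(\dot{\A})$ and $\dot{\A}^{-1}\in\calL(\dot{\H})$; the bijective substitution $W=\dot{\A}V$ transforms the previous inequality into $\|e^{t\dot{\A}}\dot{\A}^{-1}W\|_{\dot{\H}} \leq C\, t^{-\alpha}\|W\|_{\dot{\H}}$ for all $W\in\dot{\H}$, which is precisely \eqref{eq: BorTom stable}. All of these manipulations are reversible, yielding the equivalence in both directions. No step is genuinely difficult; the only conceptually delicate point, which one must notice at the outset, is that the $\Pi_0 U_0$ piece of the decomposition is invisible to the energy seminorm, so it does not interfere with the estimate and one is allowed to work entirely on $\dot{\H}$.
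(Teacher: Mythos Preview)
Your proof is correct and follows essentially the same approach as the paper's own proof: both use the decomposition from Lemma~\ref{lemma: semigroups}, observe that $\Pi_0 U_0$ is invisible to the seminorm $|\cdot|_\H$ (so $E(u,t)^{1/2}=\tfrac{1}{\sqrt 2}\|e^{t\dot{\A}}(\id-\Pi_0)U_0\|_{\dot{\H}}$ and $|\A U_0|_\H=\|\dot{\A}(\id-\Pi_0)U_0\|_{\dot{\H}}$), and then identify \eqref{eq: CNS poly sable} with the $D(\dot{\A})$-reformulation of \eqref{eq: BorTom stable} via the bijection $W=\dot{\A}V$.
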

\begin{lemma}
\label{lemma: equivalent conditions lemma}
Conditions \eqref{eq: CNS poly stable resolvent 0} and \eqref{eq: CNS poly stable resolvent} are equivalent. Conditions~\eqref{eq: CNS Borichev tomilov} and~\eqref{eq: CNS Borichev tomilov complex} are equivalent.
\end{lemma}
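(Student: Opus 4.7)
The plan is to prove the two equivalences separately, each boiling down to a simple algebraic manipulation.

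\textbf{Equivalence of \eqref{eq: CNS poly stable resolvent 0} and \eqref{eq: CNS poly stable resolvent}.} The implication \eqref{eq: CNS poly stable resolvent 0} $\Rightarrow$ \eqref{eq: CNS poly stable resolvent} is immediate: setting $f = P(is)u$, the resolvent bound yields $\|u\|_H^2 \leq C^2 |s|^{\frac{2}{\alpha}-2}\|P(is)u\|_H^2$, which is even stronger than \eqref{eq: CNS poly stable resolvent} (the term involving $B^{\ast}u$ being nonnegative). For the converse, I would exploit the dissipation structure: expanding
$$
(P(is)u,u)_H = \|A^{\frac12}u\|_H^2 - s^2\|u\|_H^2 + is\|B^{\ast}u\|_Y^2,
$$
and taking imaginary parts gives $|s|\,\|B^{\ast}u\|_Y^2 = |\Im(P(is)u,u)_H| \leq \|P(is)u\|_H\|u\|_H$. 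Therefore
$$
|s|^{\frac{1}{\alpha}}\|B^{\ast}u\|_Y^2 = |s|^{\frac{1}{\alpha}-1} \cdot |s|\,\|B^{\ast}u\|_Y^2 \leq |s|^{\frac{1}{\alpha}-1}\|P(is)u\|_H \|u\|_H.
$$
Plugging this into \eqref{eq: CNS poly stable resolvent} and applying Young's inequality $ab \leq \tfrac{1}{2C'}a^2 + \tfrac{C'}{2}b^2$ with an appropriate $C'$, the $\|u\|_H^2$ contribution is absorbed into the left-hand side, leaving $\|u\|_H^2 \leq C''|s|^{\frac{2}{\alpha}-2}\|P(is)u\|_H^2$. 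Since $P(is)$ is an isomorphism on $D(A)$ for $|s| \geq s_0$ by Lemma~\ref{lemma: check assumptions}, this yields \eqref{eq: CNS poly stable resolvent 0}.

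\textbf{Equivalence of \eqref{eq: CNS Borichev tomilov} and \eqref{eq: CNS Borichev tomilov complex}.} The implication \eqref{eq: CNS Borichev tomilov complex} $\Rightarrow$ \eqref{eq: CNS Borichev tomilov} is trivial by restriction to $z = is$ (which satisfies $\Re(z)=0$). For the converse, I would use a standard Neumann-series perturbation off the imaginary axis. Writing $z = \eta + is$ with $s = \Im(z)$, $\eta = \Re(z)$,
$$
z\id - \A = (is\id - \A) + \eta\id = (is\id - \A)\bigl(\id + \eta(is\id - \A)^{-1}\bigr).
$$
Assuming \eqref{eq: CNS Borichev tomilov} and choosing $|\eta| \leq \tfrac{1}{2C}|s|^{-\frac{1}{\alpha}}$, we have $|\eta|\,\|(is\id-\A)^{-1}\|_{\calL(\H)} \leq \tfrac{1}{2}$, so the Neumann series converges and gives
$$
\|(z\id-\A)^{-1}\|_{\calL(\H)} \leq \frac{\|(is\id-\A)^{-1}\|_{\calL(\H)}}{1 - |\eta|\,\|(is\id-\A)^{-1}\|_{\calL(\H)}} \leq 2C|s|^{\frac{1}{\alpha}} = 2C|\Im(z)|^{\frac{1}{\alpha}}.
$$
Replacing the constant $C$ in \eqref{eq: CNS Borichev tomilov complex} by $2C$ (and accordingly adjusting the constraint on $|\Re z|$), we obtain the desired estimate in the whole parabolic-like neighborhood of the imaginary axis.

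\textbf{Main obstacle.} Neither step is truly difficult; the only technical care needed is (i) bookkeeping the constants in Young's inequality so that the $\|u\|_H^2$ term can be absorbed, and (ii) matching the constant $C$ appearing in the width of the strip in \eqref{eq: CNS Borichev tomilov complex} with the constant arising from the Neumann series argument. Both are routine once the dissipation identity and the Neumann-series identity are written down.
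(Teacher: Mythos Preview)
Your proposal is correct and follows essentially the same approach as the paper: for the first equivalence you use the imaginary part of $(P(is)u,u)_H$ together with Young's inequality to absorb the $\|B^*u\|_Y^2$ term, and for the second you use the Neumann-series perturbation of $(is\id-\A)^{-1}$ off the imaginary axis. The paper's argument is identical up to cosmetic differences in how Young's inequality is applied.
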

\begin{lemma}
\label{lemma: conditions stable resolvent}
There exist $C>1$ and $s_0 >0$ such that for $s \in \R$, $|s|\geq s_0$, 
\begin{align}
\label{eq: estimate A estimate dotA}
 \|(is\id - \dot{\A})^{-1}\|_{\calL(\dot{\H})} - \frac{C}{|s|} 
\leq \|(is\id - \A)^{-1}\|_{\calL(\H)} 
\leq  \|(is\id - \dot{\A})^{-1}\|_{\calL(\dot{\H})} + \frac{C}{|s|}  ,
\end{align}
and 
\begin{align}
\label{eq: estimate P equiv A}
C^{-1}|s| \|P(is)^{-1}\|_{\calL(H)} \leq  \|(is\id - \A)^{-1}\|_{\calL(\H)}
\leq C \left( 1 + |s|\|P(is)^{-1}\|_{\calL(H)} \right).
\end{align}
In particular this implies that \eqref{eq: BorTom resolvent}, \eqref{eq: CNS Borichev tomilov} and~\eqref{eq: CNS poly stable resolvent 0} are equivalent.
\end{lemma}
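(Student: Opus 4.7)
The lemma splits into two essentially independent parts, and I would treat them in order. For the first estimate~\eqref{eq: estimate A estimate dotA}, the idea is to exploit the $\A$-invariant decomposition $\H = \Pi_0\H \oplus \dot\H$ given by the finite-rank spectral projector $\Pi_0$. Since $A$ has compact resolvent, $\ker A$, and hence $\ker\A = \ker A \times \{0\}$, is finite-dimensional, so $\Pi_0$ is bounded on $\H$. On $\Pi_0\H$ the operator $\A$ vanishes and $(is\id - \A)^{-1}$ acts simply as $\frac{1}{is}\id$, contributing $O(1/|s|)$ to the norm. On $\dot\H$, $(is\id - \A)^{-1}$ coincides with $(is\id - \dot\A)^{-1}$, and the two norms $\|\cdot\|_\H$ and $|\cdot|_\H = \|\cdot\|_{\dot\H}$ are equivalent on $\dot\H$ since $|\cdot|_\H$ vanishes on $\H$ exactly on $\ker\A$, which has been removed. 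Decomposing $U = \Pi_0 U + (\id - \Pi_0)U$ and using the boundedness of these projectors then yields both inequalities.

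For the second estimate~\eqref{eq: estimate P equiv A}, I would work directly from the matrix formula~\eqref{eq: resolvent A wrt P}. Setting $V = (v_0, v_1) = (is\id - \A)^{-1}U$ with $U = (u_0, u_1)$, the system $(is\id - \A)V = U$ gives the two relations $v_1 = isv_0 - u_0$ and $P(is)v_0 = u_1 + (is + BB^*)u_0$; an algebraic manipulation (substituting the first into the second and solving for $v_1$) also yields the cleaner representation $v_1 = P(is)^{-1}(is u_1 - A u_0)$. The lower bound is then immediate: applying the resolvent to $U = (0, u_1)$ produces $v_1 = is P(is)^{-1}u_1$, so $\|V\|_\H \geq \|v_1\|_H = |s|\,\|P(is)^{-1} u_1\|_H$ while $\|U\|_\H = \|u_1\|_H$, which delivers $|s|\,\|P(is)^{-1}\|_{\calL(H)} \leq \|(is\id - \A)^{-1}\|_{\calL(\H)}$.

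The main obstacle is the upper bound, where a naive term-by-term estimation of the matrix formula produces a spurious $s^2\|P(is)^{-1}\|$ factor that I need to avoid. My plan is first to estimate $\|v_1\|_H$ directly from $v_1 = P(is)^{-1}(isu_1 - Au_0)$: bounding the contribution of $u_0$ requires the a priori estimate $\|P(is)^{-1} A^{1/2}\|_{\calL(H)} \leq C(1 + |s|\,\|P(is)^{-1}\|)$, which one obtains by duality (using $P(is)^* = P(-is)$) together with the key energy identity $\|A^{1/2}\eta\|_H^2 = s^2\|\eta\|_H^2 + \Re(P(-is)\eta, \eta)_H$ applied to $\eta = P(-is)^{-1}\phi$, giving $\|v_1\|_H \leq C(1 + |s|\,\|P(is)^{-1}\|)\|U\|_\H$. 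Next, expanding $\|v_1\|_H^2 = \|is v_0 - u_0\|_H^2 = s^2\|v_0\|_H^2 - 2s\Im(u_0, v_0)_H + \|u_0\|_H^2$ and absorbing the cross term by Young's inequality yields $|s|\,\|v_0\|_H \leq C(\|v_1\|_H + \|U\|_\H)$; the parallel identity $\|A^{1/2}v_0\|_H^2 = s^2\|v_0\|_H^2 + \Re(P(is)v_0, v_0)_H$ together with $P(is)v_0 = u_1 + (is + BB^*)u_0$ then controls $\|A^{1/2}v_0\|_H$ by the same quantity. Assembling, one finds $\|V\|_\H \leq C(\|v_1\|_H + \|U\|_\H) \leq C(1 + |s|\,\|P(is)^{-1}\|)\|U\|_\H$. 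Combining \eqref{eq: estimate A estimate dotA} and \eqref{eq: estimate P equiv A} then yields the equivalence of \eqref{eq: BorTom resolvent}, \eqref{eq: CNS Borichev tomilov} and~\eqref{eq: CNS poly stable resolvent 0} for large $|s|$.
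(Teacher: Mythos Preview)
Your proposal is correct and relies on the same ingredients as the paper: for \eqref{eq: estimate A estimate dotA} you use exactly the paper's decomposition via $\Pi_0$ and the equivalence of norms on $\dot\H$; for \eqref{eq: estimate P equiv A} both arguments hinge on the matrix formula~\eqref{eq: resolvent A wrt P}, the energy identity $\|A^{1/2}\eta\|_H^2 = s^2\|\eta\|_H^2 + \Re(P(\pm is)\eta,\eta)_H$, and a duality step through $P(is)^* = P(-is)$.

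The organization of the upper bound differs slightly. The paper estimates each of the four block entries of~\eqref{eq: resolvent A wrt P} as operators between the scales $H_1, H, H_{-1}$, which requires extending $P(is)^{-1}$ to $H_{-1}$ by transposition. You instead exploit the algebraic simplification $v_1 = P(is)^{-1}(is\,u_1 - A u_0)$ (which the paper does not write explicitly), bound $\|v_1\|_H$ via $\|P(is)^{-1}A^{1/2}\|_{\calL(H)}$, and then recover $|s|\,\|v_0\|_H$ and $\|A^{1/2}v_0\|_H$ from $\|v_1\|_H$ by elementary manipulations. Your route is a bit more streamlined (it avoids introducing $H_{-1}$), while the paper's entry-by-entry bookkeeping makes the dependence of each block on $\|P(is)^{-1}\|_{\calL(H)}$ more transparent; the underlying analysis is the same.
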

The proof of Lemma~\ref{lemma: conditions stable resolvent} is more or less classical and we follow~\cite{Leb:96, BH:07}.

\begin{proof}[Proof of Lemma~\ref{lemma: check assumptions}]
As $\A$ has compact resolvent, its spectrum contains only isolated eigenvalues. 
Suppose that $z \in \Sp (\A)$, then we have, for some $(u_0 ,u_1) \in D(\A) \setminus \{0\}$,
\begin{equation*}
\left\{
\begin{array}{rl}
u_1 &= z u_0  ,\\
- A u_0 - BB^* u_1 &= z u_1,
\end{array}
\right.
\end{equation*}
and in particular
\begin{equation}
\label{eq: Pzu0}
A u_0 + z^2 u_0 + zBB^* u_0 = 0,
\end{equation}
with $u_0 \in D(A)\setminus \{0\}$. 

Suppose that $z \in i\R$, then, this yields $A u_0 - \Im(z)^2 u_0 + i \Im(z) BB^* u_0 = 0$. Following~\cite{Leb:96}, taking the inner product of this equation with $u_0$ yields $i\Im(z) \|B^* u_0 \|_Y^2 = 0$. Hence, either $\Im(z) = 0$, or $B^* u_0 = 0$. In the first case, $A u_0 = 0$, \ie $u_0 \in \ker(A)$, and $u_1 = 0$. This yields $\ker(\A) \subset \ker(A) \times \{0\}$ (and the other inclusion is clear). In the second case, $u_0$ is an eigenvector of $A$ associated to the eigenvalue $\Im(z)^2$ and satisfies $B^* u_0 = 0$, which is absurd, according to Assumption~\eqref{eq: unique continuation eigenvectors}. Thus, $\Sp (\A) \cap i \R \subset \{0\}$.

Now, for a general eigenvalue $z \in \C$, taking the inner product of \eqref{eq: Pzu0} with $u_0$ yields
\begin{equation}
\label{eq: system Im Re}
\left\{
\begin{array}{rl}
(Au_0 , u_0)_H + (\Re(z)^2 - \Im(z)^2) \|u_0\|_H^2 + \Re(z)\|B^* u_0\|_Y^2 &= 0 ,\\
2 \Re(z) \Im(z) \|u_0\|_H^2 + \Im(z)\|B^* u_0\|_Y^2 &= 0 . 
\end{array}
\right.
\end{equation}
If $\Im(z) \neq 0$, then, the second equation of \eqref{eq: system Im Re} together with $\Sp (\dot{\A}) \cap i \R \subset \{0\}$ gives
\begin{equation*}
0 > \Re(z)= -\frac{1}{2} \frac{\|B^* u_0\|_Y^2}{\|u_0\|_H^2} \geq -\frac{1}{2} \|B^*\|_{\calL(H;Y)}^2 .
\end{equation*}
If $\Im(z) = 0$, then, the first equation of \eqref{eq: system Im Re} together with $(\dot{A}u_0 , u_0)_H \geq 0$ gives $-\Re(z)\|B^* u_0\|_Y^2 \geq \Re(z)^2 \|u_0\|_H^2$, which yields
\begin{equation*}
0 \geq \Re(z)\geq - \|B^*\|_{\calL(H;Y)}^2 .
\end{equation*}

Following~\cite{Leb:96}, we now give the link between $P(z)^{-1}$ and $(z\id - \A)^{-1}$ for $z \notin \Sp(\A)$. Taking $F = (f_0,f_1)\in \H$, and $U = (u_0 , u_1)$, we have
\begin{equation}
\label{eq: A invert iff P invert}
F = (z\id - \A)U
\Longleftrightarrow
\left\{
\begin{array}{l}
u_1 =  z u_0 -f_0 , \\
P(z) u_0 = f_1 + (BB^* + z\id) f_0.
\end{array}
\right.
\end{equation}
As a consequence, we obtain that $P(z) : D(A) \to H$ is invertible if and only if $(z\id - \A) : D(\A) \to \H$ is invertible, \ie if and only if $z \notin \Sp(\A)$. Moreover, for such values of $z$, System \eqref{eq: A invert iff P invert} is equivalent to 
\begin{equation*}
\left\{
\begin{array}{l}
u_0 = P(z)^{-1}f_1 + P(z)^{-1}(BB^* + z\id) f_0 ,\\
u_1 = z P(z)^{-1}f_1 + z P(z)^{-1}(BB^* + z\id) f_0 -f_0 ,
\end{array}
\right.
\end{equation*}
which can be rewritten as \eqref{eq: resolvent A wrt P}. This concludes the proof of Lemma~\ref{lemma: check assumptions}.
\end{proof}

\begin{proof}[Proof of Lemma~\ref{lemma: semigroups}]
Let us check that $\dot{\A}$ is a maximal dissipative operator on $\dot{\H}$ \cite{Pazy:83}. First, it is dissipative since, for $U = (u_0 ,u_1) \in D(\dot{\A})$,
$$
( \dot{\A} U ,U)_{\dot{\H}} = ( A^\frac12 u_1 , A^\frac12 u_0)_{H} 
-( A u_0 , u_1)_{H} - ( BB^* u_1 , u_1)_{H} =  - \|B^* u_1\|_Y^2 \leq 0.
$$
Next, the fact that $\A - \id$ is onto is a consequence of Lemma~\ref{lemma: check assumptions}. Hence, for all $F \in \dot{\H} \subset \H$, there exists $U\in D(\A)$ such that $(\A - \id) U = F$. Applying $(\id -\Pi_0)$ to this identity yields $(\dot{\A} - \id) (\id -\Pi_0)U = F$, so that $\dot{\A} - \id : D(\dot{\A}) \to \dot{\H}$ is onto.
According to the Lumer-Phillips Theorem (see for instance~\cite[Chapter 1, Theorem 4.3]{Pazy:83}) $\dot{\A}$ generates a contraction $\Con^0$-semigroup on $\dot{\H}$. Then, Formula \eqref{eq: decomposition semigroup} directly comes from the linearity of Equation~\eqref{eq: first order eqation} (or equivalently~\eqref{eq: damped abstract waves}) together with the decomposition of the initial condition $U_0 = (I-\Pi_0)U_0 + \Pi_0 U_0$.
\end{proof}

\begin{proof}[Proof of Lemma~\ref{lemma: conditions stable}]
Condition~\eqref{eq: BorTom stable} is equivalent to the existence of $C>0$ such that for all $t>0$, and $\dot{U}_0 \in \dot{\H}$, we have
\begin{equation*}
\|e^{t\dot{\A}} \dot{\A}^{-1} \dot{U}_0\|_{\dot{\H}} \leq  \frac{C}{t^{\alpha}}\|\dot{U}_0\|_{\dot{\H}} .
\end{equation*}
This can be rephrased as 
\begin{equation}
\label{eq: poly decay dot}
\|e^{t\dot{\A}} \dot{U}_0\|_{\dot{\H}} \leq  \frac{C}{t^{\alpha}}\|\dot{\A}\dot{U}_0\|_{\dot{\H}}, 
\end{equation}
for all $t>0$, and $\dot{U}_0 \in D(\dot{\A})$. Now, take any $U_0 = (u_0,u_1) \in D(\A)$, and associated projection $\dot{U}_0 = (\id - \Pi_0) U_0 \in D(\dot{\A})$. According to \eqref{eq: decomposition semigroup}, we have
$$
E(u,t) = \frac{1}{2} \big( \|A^\frac12 u(t)\|_H^2 + \|\d_t u(t)\|_H^2 \big) 
= \frac12 |e^{t\dot{\A}} \dot{U}_0 + \Pi_0 U_0|_{\H}^2
= \frac12 \|e^{t\dot{\A}}\dot{U}_0\|_{\dot{\H}}^2,
$$
and 
$$
|\A U_0|_{\H} = | \dot{\A} \dot{U}_0 + \A \Pi_0 U_0 |_{\H}
= \|\dot{\A}\dot{U}_0\|_{\dot{\H}} .
$$
This shows that \eqref{eq: poly decay dot} is equivalent to~\eqref{eq: CNS poly sable}, and concludes the proof of Lemma~\ref{lemma: conditions stable}.
\end{proof}

\begin{proof}[Proof of Lemma~\ref{lemma: equivalent conditions lemma}]
First, \eqref{eq: CNS poly stable resolvent 0} clearly implies~\eqref{eq: CNS poly stable resolvent}. To prove the converse, 
for $u \in D(\A)$, we have
$$
(P(is)u, u)_H = \big((A-s^2\id)u, u\big)_H + i s \|B^* u\|_Y^2 .
$$
Taking the imaginary part of this identity gives $ s \|B^* u\|_Y^2 = \Im (P(is)u, u)_H$, so that, using the Young inequality, we obtain for all $\eps>0$,
$$
 |s|^{\frac{1}{\alpha}} \|B^* u\|_Y^2 
 = |s|^{\frac{1}{\alpha}-1} |\Im (P(is)u, u)_H| 
 \leq \frac{|s|^{\frac{2}{\alpha}-2}}{4 \eps}\|P(is)u\|_H^2 + \eps \|u\|_H^2 .
$$
Plugging this into \eqref{eq: CNS poly stable resolvent} and taking $\eps$ sufficiently small, we obtain that for some $C>0$ and $s_0 \geq 0$, for any $s\in \R, |s|\geq s_0$, 
$$
\|u\|_{H}^2 \leq C |s|^{\frac{2}{\alpha}-2}\|P(is)u\|_H^2, 
$$
which yields~\eqref{eq: CNS poly stable resolvent 0}. Hence \eqref{eq: CNS poly stable resolvent 0} and~\eqref{eq: CNS poly stable resolvent} are equivalent. 

\bigskip
Second, Condition~\eqref{eq: CNS Borichev tomilov complex} clearly implies~\eqref{eq: CNS Borichev tomilov} and it only remains to prove the converse. For $z \in \C$, we write $r = \Re(z)$ and $s = \Im(z)$. We have the identity
\begin{equation}
\label{eq: estimate r+is}
((r+is)\id -\A)^{-1} = (is\id -\A)^{-1}\big(\id + r (is\id -\A)^{-1}\big)^{-1} .
\end{equation}
Hence, assuming 
\begin{equation}
\label{eq: condition r s}
\|r(is\id -\A)^{-1} \|_{\calL(\H)} \leq \frac12 ,
\end{equation}
this gives 
$$
\left\|\big(\id + r (is\id -\A)^{-1}\big)^{-1} \right\|_{\calL(\H)}
= \left\|\sum_{k=0}^\infty (-1)^k \big(r (is\id -\A)^{-1}\big)^k\right\|_{\calL(\H)}
\leq 2 .
$$
As a consequence of~\eqref{eq: estimate r+is} and~\eqref{eq: CNS Borichev tomilov}, we then obtain 
\begin{equation*}
\left\|((r+is)\id -\A)^{-1}\right\|_{\calL(\H)}
\leq 2\|(is\id -\A)^{-1} \|_{\calL(\H)} \leq 2C |s|^{\frac{1}{\alpha}},
\end{equation*}
for all $s\geq s_0$, under Condition~\eqref{eq: condition r s}. Finally,~\eqref{eq: CNS Borichev tomilov}  also yields
$$
\|r(is\id -\A)^{-1} \|_{\calL(\H)}\leq |r|C|s|^{\frac{1}{\alpha}}, 
$$
so that Condition~\eqref{eq: condition r s} is realised as soon as $|r|\leq \frac{1}{2C|s|^{\frac{1}{\alpha}}}$. This proves~\eqref{eq: CNS Borichev tomilov complex} and concludes the proof of Lemma~\ref{lemma: equivalent conditions lemma}.
\end{proof}

\begin{proof}[Proof of Lemma~\ref{lemma: conditions stable resolvent}]
To prove \eqref{eq: estimate A estimate dotA}, we first remark that the norms $\| \cdot \|_{\dot{\H}}$ and $\|\cdot\|_{\H}$ are equivalent on $\dot{\H}$, so that the norms $\| \cdot \|_{\calL(\dot{\H})}$ and $\| \cdot \|_{\calL(\H)}$ are equivalent on $\calL(\dot{\H})$. Next, we have $(is\id - \dot{\A})^{-1}(\id - \Pi_0) = (is\id - \A)^{-1}(\id - \Pi_0)$ and
\begin{align*}
\|(is\id - \dot{\A})^{-1}\|_{\calL(\H)} 
& = \|(is\id - \dot{\A})^{-1}(\id - \Pi_0)\|_{\calL(\H)} 
 =\|(is\id - \A)^{-1}(\id - \Pi_0)\|_{\calL(\H)}\\
& \leq  \|(is\id - \A)^{-1}\|_{\calL(\H)} + \|(is\id - \A)^{-1}\Pi_0\|_{\calL(\H)} ,
\end{align*}
together with 
\begin{align*}
\|(is\id - \A)^{-1}\|_{\calL(\H)} 
& = \|(is\id - \dot{\A})^{-1}(\id - \Pi_0) + (is\id - \A)^{-1} \Pi_0\|_{\calL(\H)} \\
& \leq \|(is\id - \dot{\A})^{-1}\|_{\calL(\H)} + \|(is\id - \A)^{-1}\Pi_0\|_{\calL(\H)} .
\end{align*}
Moreover, for $|s|\geq 1$, we have
$$
\|(is\id - \A)^{-1}\Pi_0\|_{\calL(\H)} 
= \|(is)^{-1}\Pi_0\|_{\calL(\H)}
= \frac{1}{|s|} \|\Pi_0\|_{\calL(\H)} = \frac{C}{|s|},
$$
which concludes the proof of \eqref{eq: estimate A estimate dotA}.

\bigskip
Let us now prove~\eqref{eq: estimate P equiv A}. For concision, we set $H_1 = D(A^\frac12)$ endowed with the graph norm $\|u\|_{H_1} = \|(A+ \id)^\frac12 u\|_{H}$ and denote by $H_{-1} = D(A^\frac12)'$ its dual space. The operator $A$ can be uniquely extended as an operator $\calL(H_1 ;H_{-1})$, still denoted $A$ fo simplicity. With this notation, the space $H_{-1}$ can be equipped with the natural norm $\|u\|_{H_{-1}} = \|(A+ \id)^{-\frac12} u\|_{H}$.

As a consequence of Formula \eqref{eq: resolvent A wrt P}, and using the fact that $\Sp(\A) \cap i\R \subset \{0\}$, there exist constants $C>1$ and $s_0>0$ such that for all $s \in \R$, $|s|\geq s_0$, 
\begin{align}
\label{eq: M(s) equiv resA}
C^{-1} M(s)\leq \|(is\id - \A)^{-1}\|_{\calL(\H)} \leq C M(s)
\end{align}
with 
\begin{align}
\label{eq: expression resolvent P(is)}
M(s) =& \Big( \|P(is)^{-1}(BB^* + is\id)\|_{\calL(H_1)} 
+ \|P(is)^{-1}\|_{\calL(H; H_1)}  \nonumber \\
& + \|P(is)^{-1}(isBB^* - s^2\id) -\id \|_{\calL(H_1;H)}
+ \|s P(is)^{-1}\|_{\calL(H)}
\Big)
\end{align}
On the one hand, this direcly yields for $s \in \R$, $|s|\geq s_0$,
$$
|s| \|P(is)^{-1}\|_{\calL(H)} \leq C \|(is\id - \A)^{-1}\|_{\calL(\H)} .
$$
This proves that~\eqref{eq: BorTom resolvent} implies~\eqref{eq: CNS poly stable resolvent 0}.

On the other hand, we have to estimate each term of \eqref{eq: expression resolvent P(is)}. First, using $A u = P(is) u + s^2u - is BB^*u$, we have
\begin{align*}
\|u\|_{H_1}^2 
& = \|A^{\frac12} u\|_H^2 + \|u\|_H^2 
= \big( P(is) u + s^2u - is BB^*u , u \big)_H + \|u\|_H^2 \\
&= \Re \big( P(is) u, u \big)_H  + (s^2+1) \|u\|_H^2 
\leq C \left( \|P(is) u\|_H^2 + (s^2+1) \|u\|_H^2 \right) \\
& \leq C \left( 1 + (s^2+1)\|P(is)^{-1}\|_{\calL(H)}^2 \right) \|P(is)u\|_H^2 ,
\end{align*}
so that 
\begin{align}
\label{eq: estimate first}
\|P(is)^{-1}\|_{\calL(H;H_1)} \leq C \left( 1 + (|s|+1)\|P(is)^{-1}\|_{\calL(H)} \right) .
\end{align}

Second, the same computation for $(P(is)^{-1})^* = (A -s^2 \id - isBB^*)^{-1}$ (the adjoint of $P(is)^{-1}$ in the space $H$) in place of $P(is)^{-1}$ leads to $(P(is)^{-1})^* \in \calL(H;H_1)$, together with the estimate
\begin{align*}
\|(P(is)^{-1})^*\|_{\calL(H;H_1)} \leq C \left( 1 + (|s|+1)\|P(is)^{-1}\|_{\calL(H)} \right) .
\end{align*}
By transposition, we have $\transp(P(is)^{-1})^* \in \calL(H_{-1};H)$, together with the estimate
\begin{align}
\label{eq: transp Pis}
\|\transp(P(is)^{-1})^*\|_{\calL(H_{-1};H)}
\leq \|(P(is)^{-1})^*\|_{\calL(H;H_1)}
\leq C \left( 1 + (|s|+1)\|P(is)^{-1}\|_{\calL(H)} \right) .
\end{align}
Moreover, $\transp(P(is)^{-1})^*$ is defined, for every $u \in H, v \in H_{-1}$, by
\begin{align*}
\left(\transp(P(is)^{-1})^*v, u\right)_H
=\left< v, (P(is)^{-1})^* u\right>_{H_{-1}, H_1}
= \left( (A +\id)^{-\frac12}v, (A +\id)^{\frac12}(P(is)^{-1})^* u\right)_{H} .  
\end{align*}
In particular, taking $v \in H$ gives
\begin{align*}
\left(\transp(P(is)^{-1})^*v, u\right)_H
= \left( P(is)^{-1}v,  u\right)_{H} ,
\end{align*}
which implies that the restriction of the operator $\transp(P(is)^{-1})^*$ to $H$ coincides with $P(is)^{-1}$. For simplicity, we will denote $P(is)^{-1}$ for  $\transp(P(is)^{-1})^*$.

Equation \eqref{eq: transp Pis} can thus be rewritten
\begin{align}
\label{eq: P_1 in H-1 H}
\| P(is)^{-1}\|_{\calL(H_{-1};H)}
\leq C \left( 1 + (|s|+1)\|P(is)^{-1}\|_{\calL(H)} \right) .
\end{align}
Then, we have $P(is)^{-1}(isBB^* - s^2\id) -\id  =P(is)^{-1} A$, so that 
\begin{align}
\label{eq: estimate second}
 \|P(is)^{-1}(isBB^* - s^2\id) -\id \|_{\calL(H_1;H)}
 &= \|P(is)^{-1}A\|_{\calL(H_1;H)} \leq \|P(is)^{-1}\|_{\calL(H_{-1};H)} \|A\|_{\calL(H_1;H_{-1})}
 \nonumber \\
 & \leq \left( 1 + (|s|+1)\|P(is)^{-1}\|_{\calL(H)} \right)
\end{align}

Third, for $|s|\geq 1$ we write 
\begin{align}
\label{eq: def third term}
P(is)^{-1}(BB^* + is\id) = \frac{i}{s}\left(P(is)^{-1} A -\id \right), 
\end{align}
and it remains to estimate the term $\|P(is)^{-1} A\|_{\calL(H_1)}$ in \eqref{eq: expression resolvent P(is)}. For $f \in H_1$, we set $u = P(is)^{-1}Af$. We have $u \in H_1$, together with 
$$
(A - s^2\id + is BB^* )u = A f.
$$
Taking the real part of the inner product of this identity with $u$, we find
$$
\|A^\frac12 u\|_H^2 - s^2 \|u\|_H^2 =\Re (Af, u)_H 
\leq \|Af\|_{H_{-1}}\|u\|_{H_1}
\leq C \|f\|_{H_1}\|u\|_{H_1},
$$
as $A \in \calL(H_1, H_{-1})$. Hence 
$$
\|u\|_{H_1}^2 \leq C(1+s^2)\|u\|_H^2 + C \|f\|_{H_1}^2
$$
Using \eqref{eq: P_1 in H-1 H}, this gives
\begin{align*}
\|u\|_{H_1}^2 & \leq C(1+s^2)\|P(is)^{-1}A\|_{\calL(H_1 ; H)}^2 \|f\|_{H_1}^2 + C \|f\|_{H_1}^2 \\
& \leq C(1+s^2)\|P(is)^{-1}\|_{\calL(H_{-1} ; H)}^2 \|f\|_{H_1}^2 + C \|f\|_{H_1}^2  \\
& \leq C(1+s^2) \left( 1 + (|s|+1)\|P(is)^{-1}\|_{\calL(H)} \right)^2 \|f\|_{H_1}^2 ,
\end{align*}
and finally $\|P(is)^{-1} A\|_{\calL(H_1)} \leq C(1+|s|) \left( 1 + (|s|+1)\|P(is)^{-1}\|_{\calL(H)} \right)$. Coming back to \eqref{eq: def third term}, we have, for $|s|\geq 1$,
\begin{align}
\label{eq: estimate third}
\|P(is)^{-1}(BB^* + is\id)\|_{\calL(H_1)} 
\leq C \left( 1 + |s|\|P(is)^{-1}\|_{\calL(H)} \right) .
\end{align}

Finally, combining \eqref{eq: estimate first}, \eqref{eq: estimate second} and \eqref{eq: estimate third}, together with \eqref{eq: M(s) equiv resA}-\eqref{eq: expression resolvent P(is)}, we obtain for $|s|\geq 1$,
$$
 \|(is\id - \A)^{-1}\|_{\calL(\H)} 
 \leq C \left( 1 + |s|\|P(is)^{-1}\|_{\calL(H)} \right) .
$$
This concludes the proof of Lemma~\ref{lemma: conditions stable resolvent}.

\end{proof}

\part{Proof of Theorem~\ref{th: stabilization torus}: smooth damping coefficients on the torus}
To prove Theorem~\ref{th: stabilization torus}, we shall instead prove Estimate~\eqref{eq: CNS poly stable resolvent 0} with $\alpha = \frac{1}{1+\delta}$ (which, according to Proposition~\ref{prop: CNS poly sable}, is equivalent to the statement of Theorem~\ref{th: stabilization torus}).
Let us first recast \eqref{eq: CNS poly stable resolvent 0} with $\alpha = \frac{1}{1+\delta}$ in the semiclassical setting~: taking $h=s^{-1}$, we are left to prove that there exist $C>1$ and $h_0 >0$ such that for all $h\leq h_0$, for all $u \in H^2(\T^2)$, we have
\begin{align} 
\label{eq: upper bound}
 \|u\|_{L^2(\T^2)} \leq C h^{-\delta}  \|P(i/h)u\|_{L^2(\T^2)}
 \end{align}

We prove this inequality by contradiction, using the notion of semiclassical measures. The idea of developing such a strategy for proving energy estimates, together with the associate technology, originates from Lebeau~\cite{Leb:96}.

We assume that~\eqref{eq: upper bound} is not satisfied, and will obtain a contradiction at the end of Section~\ref{section: end of the proof}. Hence, for all $n \in \N$, there exists $0< h_n\leq \frac{1}{n}$ and $u_n \in H^2(\T^2)$ such that 
\begin{align*} 
 \|u_n\|_{L^2(\T^2)} > \frac{n}{h_n^{\delta}} \|P(i/h_n)u_n\|_{L^2(\T^2)} .
 \end{align*}
Setting $v_n = u_n/ \|u_n\|_{L^2(\T^2)}$, and 
$$
P_b^{h_n} = - h_n^2 \Delta - 1 + i h_n b(x) = h_n^2 P(i/h_n),
$$
we then have, as $n \to \infty$,
\begin{align*} \begin{cases}
 & h_n \to 0^+ ,\\
 & \|v_n\|_{L^2(\T^2)} = 1 ,\\
 & h_n^{-2-\delta}\|P_b^{h_n}v_n\|_{L^2(\T^2)} \to 0 .
 \end{cases}
\end{align*}
Our goal is now to associate to the sequence $(u_n , h_n)$ a semiclassical measure on the cotangent bundle $\mu$ on $T^*\T^2 = \T^2 \times (\R^2)^*$ (where $(\R^2)^*$ is the dual space of $\R^2$). To obtain a contradiction, we shall prove both that $\mu(T^*\T^2) = 1$, and that $\mu =0$ on $T^*\T^2$. 

From now on, we drop the subscript $n$ of the sequences above, and write $h$ in place of $h_n$ and $v_h$ in place of $v_n$. We study sequences $(h, v_h)$ such that $h \to 0^+$ and
\begin{align}
\label{eq: conditions sequence}
\begin{cases}
 & \|v_h\|_{L^2(\T^2)} = 1 \\
 &\|P_b^{h}v_h\|_{L^2(\T^2)} = o(h^{2+ \delta}), \quad \text{as } h \to 0^+ .
 \end{cases}
\end{align}
In particular, this last equation also yields the key information
$$
(b v_h, v_h)_{L^2(\T^2)} = 
h^{-1} \Im(P_b^{h}v_h ,v_h)_{L^2(\T^2)} = 
 o(h^{1+\delta}), \quad \text{as } h \to 0^+.
$$

In the following, it will be convenient to identify $(\R^2)^*$ and $\R^2$ through the usual inner product. In particular, the cotangent bundle $T^*\T^2 = \T^2 \times (\R^2)^*$ will be identified with $\T^2 \times \R^2$.

\section{Semiclassical measures}
\label{sub: semiclassical measures}

We denote by $\ovl{T^*\T^2}$ the compactification of $T^*\T^2$ obtained by adding a point at infinity to each fiber (\ie, the set $\T^2 \times (\R^2 \cup \{\infty\})$). A \nhd of $(x, \infty) \in \ovl{T^*\T^2}$ is a set $U\times \big(\{\infty\} \cup \R^2 \setminus K \big)$, where $U$ is a \nhd of $x$ in $\T^2$ and $K$ a compact set in $\R^2$. Endowed with this topology, the set $\ovl{T^*\T^2}$ is compact.

We denote by $S^0(T^*\T^2)$, $S^0$ for short, the space of 
functions $a(x,\xi)$ that satisfy the following properties: 
\begin{enumerate}
\item $a \in \Cinf ( T^*\T^2 )$.
\item There exists a compact set $K \subset \R^2$ and a constant $k_0 \in \C$ such that $a(x,\xi) = k_0$ for all $\xi \in \R^2 \setminus K$.
\end{enumerate}
Note that we have in particular $\Cinfc(T^*\T^2) \subset S^0(T^*\T^2) $.

To a symbol $a \in S^0(T^*\T^2)$, we associate its semiclassical Weyl quantization $\Op_h(a)$ by Formula~\eqref{eq: weyl quantif}, which, according to the Calder\'on-Vaillancourt Theorem (see Appendix~\ref{section: pseudo}) defines a uniformly bounded operator on $L^2(\T^2)$.

From the sequence $(v_h , h)$ (see for instance~\cite{GL:93}), we can define (using again the Calder\'on-Vaillancourt Theorem) the associated Wigner distribution $V^h \in (S^0)'$ by
\begin{align}
\label{eq: def Wigner}
\left< V^h , a \right>_{(S^0)',S^0} = \left( \Op_h(a) v_h , v_h \right)_{L^2(\T^2)}, 
\quad \text{for all } a \in S^0(T^*\T^2) .
\end{align} 
Decomposing $v_h$ and $a$ in Fourier series,
$$
\hat{v}_h (k)= \frac{1}{2 \pi}\int_{\T^2}e^{-i k \cdot x} v_h(x) dx , 
\quad \hat{a} (h, k ,\xi) = \frac{1}{2 \pi}\int_{\T^2}e^{-i k \cdot x} a(h,x,\xi) dx , 
$$
the expression~\eqref{eq: def Wigner} can be more explicitly rewritten as 
$$
\left< V^h , a \right>_{(S^0)',S^0} =
\frac{1}{2 \pi} \sum_{k,j \in \Z^2}\hat{a} \left(h, j-k ,\frac{h}{2}(k+j)\right) \hat{v}_h (k) \ovl{\hat{v}_h (j)} .
$$
 \begin{proposition}
\label{prop: existence mu} The family $(V^h)$ is bounded in $(S^0)'$.
Hence,
there exists a subsequence of the sequence $(h, v_h)$ and an element $\mu \in(S^0)'$, such that $V^h \rightharpoonup \mu$ weakly in $(S^0)'$, \ie
\begin{equation}
\label{eq: convergence semiclassical measure}
\left( \Op_h(a) v_h , v_h \right)_{L^2(\T^2)} 
\to \left< \mu , a \right>_{(S^0)',S^0}
\quad \text{for all } a \in S^0(T^*\T^2).
\end{equation}
In addition, $\left< \mu , a \right>_{(S^0)',S^0}$ is nonnegative if $a$ is; in other words, $\mu$ may be identified with a nonnegative Radon measure on $\ovl{T^*\T^2}$.
\end{proposition}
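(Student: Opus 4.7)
The plan is to prove three statements in sequence: (i) uniform boundedness of $(V^h)$ in the topological dual $(S^0)'$, (ii) weak-$*$ compactness to extract a convergent subsequence, and (iii) non-negativity of the limit $\mu$ together with its identification as a Radon measure on the compactification $\ovl{T^*\T^2}$.

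For (i), I would invoke the Calder\'on–Vaillancourt theorem recalled in Appendix~\ref{section: pseudo}: for every $a\in S^0(T^*\T^2)$, the operator $\Op_h(a)$ is bounded on $L^2(\T^2)$ with operator norm controlled by a fixed semi-norm $N(a)$, uniformly for $h\in(0,1]$. Combined with $\|v_h\|_{L^2(\T^2)}=1$ and Cauchy–Schwarz, this gives
\[
|\langle V^h,a\rangle_{(S^0)',S^0}|
= |(\Op_h(a)v_h,v_h)_{L^2(\T^2)}|
\le \|\Op_h(a)\|_{\calL(L^2(\T^2))} \le C\,N(a),
\]
so the family $(V^h)$ is equicontinuous, hence bounded, in $(S^0)'$. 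For (ii), since $S^0$ admits a countable family of semi-norms (or, equivalently, embeds as a separable dense subspace of $\Con^0(\ovl{T^*\T^2})$ via its constant-at-infinity structure), a standard Banach–Alaoglu plus diagonal-extraction argument yields a subsequence (still denoted $(h,v_h)$) and a limit $\mu\in(S^0)'$ such that $V^h\rightharpoonup \mu$ in the sense of~\eqref{eq: convergence semiclassical measure}.

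The main work lies in (iii), the non-negativity of $\mu$, and this is the step I expect to be the principal obstacle. The standard tool is the sharp G\aa rding inequality for the semiclassical Weyl quantization on $\T^2$: if $a\in S^0(T^*\T^2)$ satisfies $a\ge 0$, then there exists $C>0$ such that, for all $h\in(0,h_0]$ and all $u\in L^2(\T^2)$,
\[
(\Op_h(a)u,u)_{L^2(\T^2)} \ge -C\,h\,\|u\|_{L^2(\T^2)}^2.
\]
Applied to $u=v_h$, this yields $\langle V^h,a\rangle\ge -Ch$, and passing to the limit along the extracted subsequence gives $\langle \mu,a\rangle\ge 0$. The only delicate point here is importing the sharp G\aa rding inequality from $\R^2$ to the torus; this is classical and can be carried out either by a partition of unity and local conjugation to $\R^2$, or by a direct Fourier-series computation using~\eqref{eq: weyl quantif} and the periodicity of the symbol.

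Finally, to identify $\mu$ with a Radon measure on $\ovl{T^*\T^2}$, I would observe that $\Cinf(\ovl{T^*\T^2})\subset S^0(T^*\T^2)$ and that, since $\ovl{T^*\T^2}$ is compact Hausdorff, this subspace is dense in $\Con^0(\ovl{T^*\T^2})$ by the Stone–Weierstrass theorem. The positive linear functional $\mu$ on $S^0$ therefore extends uniquely to a positive continuous linear functional on $\Con^0(\ovl{T^*\T^2})$, and the Riesz–Markov representation theorem identifies it with a unique non-negative Radon measure on $\ovl{T^*\T^2}$, completing the proof.
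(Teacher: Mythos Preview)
Your proof is correct and follows essentially the same approach as the paper: boundedness via Calder\'on--Vaillancourt, extraction of a weak-$*$ convergent subsequence, and nonnegativity via the sharp G{\aa}rding inequality. You are simply more explicit than the paper about the standard functional-analytic steps (Banach--Alaoglu, Riesz--Markov) that the paper takes for granted.
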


{\bf Notation:} in what follows we shall denote by  $\M^+(\ovl{T^*\T^2})$ the set of nonnegative Radon measures on $\ovl{T^*\T^2}$.

\begin{proof}
The proof is an adaptation from the original proof of G\'erard~\cite{Gerard:91} (see also~\cite{GL:93} in the semiclassical setting).

The fact that the Wigner distributions $V^h$ are uniformly bounded in $(S^0)'$ follows from the Calder\'on-Vaillancourt theorem (see Appendix~\ref{section: pseudo}), and from the boundedness of $(v_h)$ in $L^2(\T^2)$. 

The sharp G{\aa}rding inequality 
gives the existence of $C>0$ such that, for all $a\geq 0$ and $h>0$,
$$
\left( \Op_h(a) v_h , v_h \right)_{L^2(\T^2)} \geq - C h \|v_h\|_{L^2(\T^2)}^2 ,
$$
so that the distribution $\mu$ is nonnegative (and is hence a measure).

\end{proof}

\section{Zero-th and first order informations on $\mu$}
To simplify the notation, we set 
$$
P_b^h = P_0^h + ih b(x), \quad \text{with} \quad P_0^h = -h^2 \Delta - 1 = \Op_h (|\xi|^2-1).
$$

The geodesic flow on the torus $\phi_\tau : T^*\T^2 \to  T^*\T^2$ for $\tau \in \R$ is the flow generated by the Hamiltonian vector field associated to the symbol $\frac12(|\xi|^2-1)$, \ie by the vector field 
$ \xi \cdot \d_x  $ on $T^*\T^2$.
Explicitely, we have
$$
\phi_\tau(x,\xi) = (x + \tau \xi,\xi) , \quad \tau \in \R, \quad (x,\xi) \in  T^*\T^2 .
$$
Note that $\phi_\tau$ preserves the $\xi$-component, and, in particular every energy layer $\{|\xi|^2 = C>0\} \subset T^*\T^2$.

Now, we describe the first properties of the measure $\mu$ implied by~\eqref{eq: conditions sequence}.

We recall that for $\nu \in \Dist'(T^*\T^2)$, $(\phi_\tau)_*\nu \in \Dist'(T^*\T^2)$ is defined by $\left<(\phi_\tau)_* \nu , a \right> = \left<\nu , a \circ \phi_\tau \right>$ for all $a \in \Cinfc(T^*\T^2)$. In particular, $(\phi_\tau)_* \nu$ is a measure if $\nu$ is.
We shall say that $\nu$ is an {\em invariant measure} if it is invariant by the geodesic flow, \ie  $(\phi_\tau)_* \nu=\nu$ for all $\tau\in\R$.

\begin{proposition}
\label{proposition: zero first order info}
Let $\mu$ be as in Proposition \ref{prop: existence mu}.
We have 
\begin{enumerate}
\item \label{item: supp car} $\supp(\mu) \subset \{|\xi|^2 = 1\}$ (hence is compact in $T^*\T^2$),
\item \label{item: mu tot =1} $\mu(T^* \T^2) = 1$,
\item \label{item: propagation} $\mu$ is invariant by the geodesic flow, \ie $(\phi_\tau)_*\mu = \mu$,
\item \label{item: mu = 0 on b} $\left< \mu , b \right>_{\M_c(T^*\T^2) , \Con^0(T^*\T^2)} = 0$, where $\M_c(T^*\T^2)$ denotes the space of compactly supported measures on $T^*\T^2$.
\end{enumerate}
In other words, $\mu$ is an invariant probability measure on $T^* \T^2$ vanishing on $\{b>0\}$.
\end{proposition}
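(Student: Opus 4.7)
The plan is to pass to the limit $h \to 0$ in the Wigner bracket $(\Op_h(a) v_h, v_h)$ against well-chosen symbols, exploiting the quantitative consequences of~\eqref{eq: conditions sequence}. The key input is that $(b v_h, v_h) = o(h^{1+\delta})$ forces $\|b^{1/2} v_h\|_{L^2}^2 = o(h^{1+\delta})$, hence $\|b v_h\|_{L^2} \le \|b\|_{L^\infty}^{1/2} \|b^{1/2} v_h\|_{L^2} = o(h^{(1+\delta)/2})$, and therefore
\begin{equation*}
\|P_0^h v_h\|_{L^2} \le \|P_b^h v_h\|_{L^2} + h \|b v_h\|_{L^2} = o(h), \qquad P_0^h = -h^2\Delta - 1 = \Op_h(|\xi|^2 - 1).
\end{equation*}

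For items~\eqref{item: supp car} and~\eqref{item: mu tot =1}, I would first check that $\mu$ vanishes on $\{|\xi|^2 \ne 1\}$. Fix $a \in \Cinfc(T^*\T^2)$ with $\supp a \cap \{|\xi|^2 = 1\} = \emptyset$; then $q = a/(|\xi|^2-1) \in \Cinfc(T^*\T^2)$, and the Weyl calculus gives $\Op_h(a) = \Op_h(q)\, P_0^h + h R_h$ with $R_h$ uniformly bounded on $L^2(\T^2)$. Testing on $v_h$ yields $(\Op_h(a) v_h, v_h) = o(1)$, so $\langle \mu, a\rangle = 0$. Mass at infinity is ruled out by applying the same argument to $a = 1 - \psi_R(\xi)$ with $\psi_R \in \Cinfc(\R^2)$ equal to $1$ on a large ball (the corresponding $q = (1-\psi_R)/(|\xi|^2-1)$ lies in a standard decaying symbol class). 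Hence $\supp \mu \subset \{|\xi|^2 = 1\}$, which is compact in $T^*\T^2$. Item~\eqref{item: mu tot =1} then follows from $\Op_h(1) = \id$, which gives $\langle V^h, 1\rangle = \|v_h\|_{L^2}^2 = 1$, and hence $\mu(T^*\T^2) = \mu(\ovl{T^*\T^2}) = 1$.

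For~\eqref{item: propagation}, fix $a \in \Cinfc(T^*\T^2)$. The Weyl calculus provides
\begin{equation*}
\tfrac{i}{h}[P_0^h, \Op_h(a)] = \Op_h(\{|\xi|^2, a\}) + O_{L^2}(h^2) = \Op_h(2\xi\cdot\nabla_x a) + O_{L^2}(h^2),
\end{equation*}
while the self-adjointness of $P_0^h$ rewrites the quantum bracket as $\tfrac{i}{h}\bigl[(\Op_h(a) v_h, P_0^h v_h) - (\Op_h(a) P_0^h v_h, v_h)\bigr]$, which is bounded by $2 h^{-1} \|\Op_h(a)\|\, \|P_0^h v_h\|_{L^2} = o(1)$. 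Passing to the limit yields $\langle \mu, 2\xi\cdot\nabla_x a\rangle = 0$ for every $a \in \Cinfc(T^*\T^2)$, \ie infinitesimal invariance, which integrates to $(\phi_\tau)_*\mu = \mu$. For item~\eqref{item: mu = 0 on b}, choose $\chi \in \Cinfc(\R^2)$ equal to $1$ near $\{|\xi|^2 = 1\}$ so that $b\chi \in \Cinfc(T^*\T^2)$ and $\langle \mu, b\rangle = \langle \mu, b\chi\rangle$ by~\eqref{item: supp car}; since $b \in \Cinf(\T^2)$ (as $\sqrt{b}$ is smooth), the calculus gives $\Op_h(b\chi) = b\,\Op_h(\chi) + O_{L^2}(h)$, so
\begin{equation*}
|(\Op_h(b\chi) v_h, v_h)| \le \|\Op_h(\chi)\|\,\|b v_h\|_{L^2} + O(h) = o(1).
\end{equation*}

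None of these steps is genuinely difficult: the proposition simply identifies a quantum limit as an invariant probability measure on the cosphere bundle, with the additional localisation $\int b\,d\mu = 0$ coming from the damping. The only technical care needed is the justification of the Weyl-calculus expansions above for the unbounded symbol $|\xi|^2 - 1$ and for the $x$-dependent symbol $b(x)\chi(\xi)$ (and, in the proof of~\eqref{item: supp car}, for the reciprocal $(|\xi|^2-1)^{-1}$ away from the unit cosphere), which is routine given the pseudodifferential machinery referenced in the appendix.
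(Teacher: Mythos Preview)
Your proof is correct and follows essentially the same approach as the paper. The only cosmetic difference is that you first establish $\|P_0^h v_h\|_{L^2}=o(h)$ and use it directly in item~\eqref{item: propagation}, whereas the paper splits $P_0^h = P_b^h - ihb$ and handles the four resulting terms separately; both arguments are equivalent, and your insertion of a cutoff $\chi(\xi)$ in item~\eqref{item: mu = 0 on b} is simply a more explicit version of the paper's one-line remark.
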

These are standard arguments, that we reproduce here for the reader's comfort.
In particular, we recover all informations required to prove the Bardos-Lebeau-Rauch-Taylor uniform stabilization theorem under GCC. But we do not use here the second order informations of \eqref{eq: conditions sequence}; this will be the key point to prove Theorem~\ref{th: stabilization torus}.

\begin{proof} 
First, we take $\chi \in \Cinf({T^*\T^2})$ depending only on the $\xi$ variable, such that $\chi\geq0$, $\chi(\xi) = 0$ for $|\xi| \leq 2$, and $\chi(\xi) = 1$ for $|\xi| \geq 3$. Hence, $\frac{\chi(\xi)}{|\xi|^2-1} \in \Cinf(T^*\T^2)$ and we have the exact composition formula
$$
\Op_h(\chi) = \Op_h\left(\frac{\chi(\xi)}{|\xi|^2-1} \right) P_0^h ,
$$
since both operators are Fourier multipliers. Moreover, $\Op_h\left(\frac{\chi(\xi)}{|\xi|^2-1} \right)$ is a bounded operator on $L^2(\T^2)$. As a consequence, we have
\begin{align*}
\left< V^h ,  \chi \right>_{(S^0)',S^0} 
\to \left< \mu , \chi \right>_{\M(\ovl{T^*\T^2}),\Con^0(\ovl{T^*\T^2})} , 
\end{align*}
together with 
\begin{align*}
\left< V^h ,  \chi \right>_{(S^0)',S^0} 
&= \left( \Op_h\left(\frac{\chi(\xi)}{|\xi|^2-1} \right) P_0^h v_h , v_h \right)_{L^2(\T^2)} \\
& = \left( \Op_h\left(\frac{\chi(\xi)}{|\xi|^2-1} \right) P_b^{h} v_h , v_h \right)_{L^2(\T^2)} 
- i h \left(\Op_h\left(\frac{\chi(\xi)}{|\xi|^2-1} \right) b v_h , v_h \right)_{L^2(\T^2)} .
\end{align*}
Since $\|P_b^{h} v_h\|_{L^2(\T^2)} = o(1)$ and $\| v_h \|_{L^2(\T^2)} =1$, both terms in this expression vanish in the limit $h \to 0^+$. This implies that $\left< \mu , \chi \right>_{\M(\ovl{T^*\T^2}),\Con^0(\ovl{T^*\T^2})} = 0$. Since this holds for all $\chi$ as above, we have $\supp(\mu) \subset  \{|\xi|^2 = 1\}$, which proves Item~\ref{item: supp car}.


In particular, this implies that $\mu\left(\ovl{T^*\T^2}\setminus T^*\T^2\right)=0$.
Now, Item~\ref{item: mu tot =1} is a direct consequence of $1 = \| v_h \|_{L^2(\T^2)}^2 \to \left< \mu , 1\right>_{\M(\ovl{T^*\T^2}), \Con^0(\ovl{T^*\T^2})}$ and Item~\ref{item: supp car}. Item~\ref{item: mu = 0 on b} is a direct consequence of $\left(b v_h , v_h \right)_{L^2(\T^2)} =o(1)$.

\bigskip
Finally, for $a \in \Cinfc(T^*\T^2)$, we recall that
$$
\left[  P_0^h , \Op_h(a) \right] 
= \frac{h}{i} \Op_h(\{|\xi|^2 - 1 , a\})
= \frac{2 h}{i} \Op_h( \xi \cdot \d_x a) ,
$$
is a consequence of the Weyl quantization (any other quantization would have left an error term of order $\O(h^2)$). Hence, \eqref{eq: def Wigner} yields 
\begin{align}
\label{eq: propagation}
\left< V^h ,  \xi \cdot \d_x a \right>_{\Dist'(T^*\T^2),\Cinfc(T^*\T^2)} 
\to \left< \mu , \xi \cdot \d_x a \right>_{\M(T^*\T^2),\Conc(T^*\T^2)} , 
\end{align}
together with
\begin{align}
\label{eq: propagation simple}
\left< V^h ,  \xi \cdot \d_x a  \right>_{\Dist'(T^*\T^2),\Cinfc(T^*\T^2)} 
& = \frac{i}{2h}\left( \left[  P_0^h , \Op_h(a) \right] v_h , v_h \right)_{L^2(\T^2)} \nonumber\\
& = \frac{i}{2h}\left( \Op_h(a)  v_h ,  P_0^h v_h \right)_{L^2(\T^2)} 
- \frac{i}{2h}\left( \Op_h(a)  P_0^h v_h , v_h \right)_{L^2(\T^2)} \nonumber\\
& = \frac{i}{2h}\left( \Op_h(a)  v_h , P_b^{h}v_h \right)_{L^2(\T^2)} 
- \frac{i}{2h}\left( \Op_h(a) P_b^{h} v_h , v_h \right)_{L^2(\T^2)}\nonumber\\
& \quad \quad - \frac{1}{2} \left( \Op_h(a)  v_h , b v_h \right)_{L^2(\T^2)}
- \frac{1}{2} \left( \Op_h(a) b v_h , v_h \right)_{L^2(\T^2)}.
\end{align}
In this expression, we have $\frac{1}{h}\left( \Op_h(a)  v_h , P_b^{h}v_h \right)_{L^2(\T^2)} \to 0$ and $\frac{1}{h}\left( \Op_h(a) P_b^{h} v_h , v_h \right)_{L^2(\T^2)} \to 0$ since $\|P_b^{h}v_h\|_{L^2(\T^2)} = o(h)$. Moreover, the last two terms can be estimated by 
\begin{equation}
\label{eq: b to zero}
|\left( \Op_h(a) b v_h , v_h \right)_{L^2(\T^2)}|  \leq \|\sqrt{b} v_h \|_{L^2(\T^2)} \| \sqrt{b} \Op_h(a) v_h \|_{L^2(\T^2)} = o(1) ,
\end{equation}
since $\left(b v_h , v_h \right)_{L^2(\T^2)} =o(1)$. This yields $\left< V^h ,  \xi \cdot \d_x a  \right>_{\Dist'(T^*\T^2),\Cinfc(T^*\T^2)} \to 0$, so that, using \eqref{eq: propagation}, $\left< \mu , \xi \cdot \d_x a \right>_{\M(T^*\T^2),\Conc(T^*\T^2)} = 0$ for all $a \in \Cinfc(T^*\T^2)$. Replacing $a$ by $a \circ \phi_\tau$ and integrating with respect to the parameter $\tau$ gives $(\phi_\tau)_*\mu = \mu$, which concludes the proof of Item~\ref{item: propagation}.

\end{proof}

\section{Geometry on the torus and decomposition of invariant measures}

\subsection{Resonant and non-resonant vectors on the torus}
\label{subsub: geometry on torus}
In this section, we collect several facts concerning the geometry of $T^*\T^2$ and its resonant subspaces. Most of the setting and the notation comes from~\cite[Section 2]{AM:11}. 

We shall say that a submodule $\Lambda \subset \Z^2$ is primitive if $\left< \Lambda \right> \cap \Z^2 = \Lambda$, where $\left< \Lambda \right>$ denotes the linear subspace of $\R^2$ spanned by $\Lambda$. The family of all primitive submodules will be denoted by $\P$. 

Let us denote by $\Omega_j \subset \R^2$, for $j =0,1,2$, the set of resonant vectors of order exactly $j$, \ie,
$$
\Omega_j : = \{ \xi \in \R^2 \text{ such that }\rk(\Lambda_\xi) = 2-j \}, 
\quad \text{with} \quad
\Lambda_\xi:= \left\{ k \in \Z^2 \ \text{such that} \ \xi \cdot k = 0 \right\} =\xi^\perp\cap\Z^2 .
$$
Note that the sets $\Omega_j$ form a partition of $\R^2$, and that we have
\begin{itemize}
\item $\Omega_0 = \{0\}$;
\item $\xi \in \Omega_1$ if and only if the geodesic issued from any $x \in \T^2$ in the direction $\xi$ is periodic;
\item $\xi \in \Omega_2$ if and only if the geodesic issued from any $x \in \T^2$ in the direction $\xi$ is dense in $\T^2$.
\end{itemize}
For each $\Lambda \in \P$ such that $\rk(\Lambda)= 1$, we define 
\begin{equation*}
\begin{array}{c}
\Lambda^\perp:= \left\{ \xi \in \R^2 \ \text{such that} \ \xi \cdot k = 0 \ \text{for all} \ k \in \Lambda\right\}, \\
\T_\Lambda : = \left< \Lambda \right> / 2 \pi \Lambda , \\
\T_{\Lambda^\perp} : = \Lambda^\perp / (2 \pi \Z^2 \cap \Lambda^\perp) .
\end{array}
\end{equation*}
Note that $\T_\Lambda$ and $\T_{\Lambda^\perp}$ are two submanifolds of $\T^2$ diffeomorphic to one-dimensional tori. 
Their cotangent bundles admit the global trivialisations $T^*\T_\Lambda = \T_\Lambda \times \left< \Lambda \right>$ and $T^*\T_{\Lambda^\perp} = \T_{\Lambda^\perp} \times \Lambda^\perp$. 

For a function $f$ on $\T^2$ with Fourier coefficients $(\hat{f}(k))_{k \in \Z^2}$, and $\Lambda\in \P$, we shall say that $f$ has only Fourier modes in $\Lambda$ if $\hat{f}(k) = 0$ for $k \notin \Lambda$. This means that $f$ is constant in the direction $\Lambda^\perp$, or, equivalently, that $\sigma \cdot \d_x f = 0$ for all $\sigma \in \Lambda^\perp$. We denote by $L^p_\Lambda(\T^2)$ the subspace of $L^p(\T^2)$ consisting of functions having only Fourier modes in $\Lambda$. For a function $f \in L^2(\T^2)$ (\resp a symbol $a \in S^0(T^*\T^2)$), we denote by $\left< f\right>_\Lambda$ its orthogonal projection on $L^2_\Lambda(\T^2)$, \ie the average of $f$ along $\Lambda^\perp$:
$$
\left< f\right>_\Lambda (x) : = \sum_{k \in \Lambda} \frac{e^{ik \cdot x}}{2 \pi} \hat{f}(k)  
\qquad \left( \text{\resp} \ 
\left< a\right>_\Lambda (x,\xi) : = \sum_{k \in \Lambda} \frac{e^{ik \cdot x}}{2 \pi} \hat{a} (k,\xi)
\right).
$$
If $\rk(\Lambda) = 1$ and $v $ is a vector in $\Lambda^\perp\setminus\{0\}$, we also have
\begin{equation}
\label{eq: moyennisation lambda}
\left< f\right>_\Lambda (x) = \lim_{T\to \infty} 
\frac{1}{T} \int_0^T f(x + t v) dt .
\end{equation}
In particular, note that $\left< f\right>_\Lambda$ (\resp $\left< a\right>_\Lambda$) is nonnegative if $f$ (\resp $a$) is, and that $\left< f\right>_\Lambda \in \Cinf(\T^2)$ (\resp $\left< a\right>_\Lambda \in S^0(T^*\T^2)$) if $f \in \Cinf(\T^2)$ (\resp $a \in S^0(T^*\T^2)$).

Finally, given $f \in L^\infty_\Lambda(\T^2)$, we denote by $m_f$ the bounded operator on $L^2_\Lambda(\T^2)$, consisting in the multiplication by $f$.

\subsection{Decomposition of invariant measures}
We denote by $\M^+(T^*\T^2)$ the set of finite, nonnegative measures on $T^*\T^2$.
With the definitions above, we have the following decomposition Lemmata, proved in~\cite{Macia:10} or~\cite[Section~2]{AM:11}.  
These properties are given for general measures $\mu \in \M^+(T^*\T^2)$. Of course, they apply in particular to the measure $\mu$ defined by Proposition~\ref{prop: existence mu}.
\begin{lemma}
\label{lemma: decompose mu}
Let $\mu \in \M^+(T^*\T^2)$. Then $\mu$ decomposes as a sum of nonnegative measures
\begin{equation}
\label{eq: decomposition mu}
\mu =  \mu|_{\T^2 \times \{0\}} + \mu|_{\T^2 \times \Omega_2} + \sum_{\Lambda \in \P, \rk(\Lambda)=1} \mu|_{\T^2 \times (\Lambda^\perp\setminus \{0\})}
\end{equation}
\end{lemma}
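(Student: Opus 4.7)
The identity is purely set-theoretic once one establishes the right partition of the fiber $\R^2$, so my plan is to reduce the decomposition of $\mu$ to countable additivity of the measure applied to a disjoint union of Borel sets in $T^*\T^2 = \T^2 \times \R^2$.

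First I would record the partition $\R^2 = \Omega_0 \sqcup \Omega_1 \sqcup \Omega_2$ already noted in Section~\ref{subsub: geometry on torus}, with $\Omega_0 = \{0\}$, and then reduce the claim to the further decomposition
\begin{equation*}
\Omega_1 = \bigsqcup_{\Lambda \in \P,\ \rk(\Lambda) = 1} \big(\Lambda^\perp \setminus \{0\}\big),
\end{equation*}
a countable disjoint union. Granting this, one applies $\mu$ to the pull-back to $T^*\T^2$ of each piece and uses countable additivity of the Radon measure $\mu$ to get \eqref{eq: decomposition mu}.

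The substantive (but short) step is verifying the disjoint parameterization of $\Omega_1$. For any $\xi \in \Omega_1$, the submodule $\Lambda_\xi = \xi^\perp \cap \Z^2$ has rank $1$ by definition of $\Omega_1$, and it is automatically primitive since $\langle \Lambda_\xi \rangle \cap \Z^2 \subset \xi^\perp \cap \Z^2 = \Lambda_\xi$; moreover $\xi \in \Lambda_\xi^\perp \setminus \{0\}$, so $\xi$ belongs to at least one term of the union. For uniqueness, if $\xi \in \Lambda^\perp \setminus \{0\}$ with $\Lambda \in \P$ and $\rk(\Lambda) = 1$, then $\Lambda \subset \xi^\perp \cap \Z^2 = \Lambda_\xi$; as both $\Lambda$ and $\Lambda_\xi$ are rank-$1$ and $\Lambda \subset \Lambda_\xi$, they span the same line in $\R^2$, and the primitivity of both gives $\Lambda = \langle \Lambda\rangle \cap \Z^2 = \langle \Lambda_\xi\rangle \cap \Z^2 = \Lambda_\xi$. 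This shows the union is disjoint and that the family of rank-$1$ $\Lambda \in \P$ indexing non-empty terms is countable.

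The only mild obstacle is a measure-theoretic bookkeeping point: one should observe that each set $\T^2 \times (\Lambda^\perp \setminus \{0\})$ is Borel (even closed minus a point in the fiber), so $\mu|_{\T^2 \times (\Lambda^\perp \setminus \{0\})}$ is a well-defined nonnegative Radon measure on $T^*\T^2$, and likewise for the two other pieces $\mu|_{\T^2 \times \{0\}}$ and $\mu|_{\T^2 \times \Omega_2}$. With the partition of $\R^2$ above, $\sigma$-additivity of $\mu$ then yields \eqref{eq: decomposition mu}. No dynamical input (invariance, support in $\{|\xi|=1\}$, vanishing on $\{b>0\}$) is needed for this statement; those would only be used in subsequent lemmata to further analyze each component.
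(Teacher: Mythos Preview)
Your proof is correct and is precisely the natural argument: the lemma is a purely measure-theoretic statement reducing to the countable Borel partition $\R^2 = \{0\} \sqcup \Omega_2 \sqcup \bigsqcup_{\Lambda}(\Lambda^\perp\setminus\{0\})$, and your verification that the rank-$1$ primitive submodules give a disjoint exhaustion of $\Omega_1$ is clean and complete. The paper itself does not provide a proof but cites \cite{Macia:10} and \cite[Section~2]{AM:11}; the argument there is exactly the one you give.
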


Given $\mu \in \M^+(T^*\T^2)$, we define its Fourier coefficients by the complex measures on $\R^2$:
$$
 \hat{\mu}(k , \cdot) 
 : = \int_{\T^2} \frac{e^{-ik \cdot x}}{2 \pi} \mu(dx, \cdot), \quad k \in \Z .
$$
One has, in the sense of distributions, the following Fourier inversion formula:
$$
\mu (x, \xi)= \sum_{k \in \Z^2} \frac{e^{ik \cdot x}}{2 \pi} \hat{\mu}(k , \xi) .
$$
\begin{lemma}
\label{lemma: mu only Lambda FM}
Let $\mu \in \M^+(T^*\T^2)$ and $\Lambda \in \P$. Then, the distribution
$$
\left< \mu \right>_\Lambda (x, \xi):= \sum_{k \in \Lambda} \frac{e^{ik \cdot x}}{2 \pi} \hat{\mu}(k , \xi) ,
$$
is in $\M^+(T^*\T^2)$ and satisfies, for all $a \in \Cinfc(T^*\T^2)$,
$$
\left< \left< \mu \right>_\Lambda , a \right>_{\M(T^*\T^2) , \Conc(T^*\T^2)}
= \left< \mu , \left< a \right>_\Lambda \right>_{\M(T^*\T^2) , \Conc(T^*\T^2)} .
$$
\end{lemma}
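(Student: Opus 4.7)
The plan is to prove the duality identity by a Parseval-style Fourier computation, and then deduce positivity and the Radon-measure property as consequences. I would first verify that the Fourier series defining $\langle\mu\rangle_\Lambda$ makes sense as a distribution on $T^*\T^2$: for $a \in \Cinfc(T^*\T^2)$, repeated integration by parts in $x$ yields rapid decay $\sup_\xi |\hat a(k,\xi)| = O_N(\langle k\rangle^{-N})$ for every $N$, with all $\hat a(k,\cdot)$ supported in a fixed compact $K \subset \R^2$; combined with the uniform bound $\|\hat\mu(k,\cdot)\|_{TV} \leq \frac{1}{2\pi}\mu(T^*\T^2)$, this forces $\sum_{k\in\Lambda}\int \hat a(-k,\xi)\hat\mu(k, d\xi)$ to converge absolutely, and I would take this sum as the definition of $\langle\langle\mu\rangle_\Lambda, a\rangle$.

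The duality then follows from a dominated-convergence exchange of sum and integral in
\[
\langle\mu, \langle a\rangle_\Lambda\rangle = \int \sum_{k\in\Lambda}\frac{e^{ik\cdot x}}{2\pi}\hat a(k,\xi)\,\mu(dx, d\xi) = \sum_{k\in\Lambda}\int \hat a(k,\xi)\,\hat\mu(-k, d\xi),
\]
where the second equality uses the definition of $\hat\mu$. Reindexing $k \mapsto -k$, valid because $\Lambda$ is a submodule and hence $-\Lambda = \Lambda$, turns this into the series defining $\langle\langle\mu\rangle_\Lambda, a\rangle$, proving the identity.

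To upgrade $\langle\mu\rangle_\Lambda$ from a distribution to a nonnegative finite measure, I would exploit the averaging characterization \eqref{eq: moyennisation lambda}, which in the rank-one case presents $\langle a\rangle_\Lambda$ as a time average of $a$ (with trivial analogues when $\rk\Lambda \in \{0,2\}$) and thereby gives both $\langle a\rangle_\Lambda \geq 0$ when $a \geq 0$ and $\|\langle a\rangle_\Lambda\|_\infty \leq \|a\|_\infty$. The duality then yields $\langle\langle\mu\rangle_\Lambda, a\rangle \geq 0$ for $a \geq 0$, so $\langle\mu\rangle_\Lambda$ is a positive distribution and hence a nonnegative Radon measure; simultaneously $|\langle\langle\mu\rangle_\Lambda, a\rangle| \leq \|a\|_\infty \mu(T^*\T^2)$ bounds its total mass by $\mu(T^*\T^2) < \infty$, placing it in $\M^+(T^*\T^2)$. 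The main obstacle is merely justifying the interchange of sum and integral, which is handled by the rapid decay of $\hat a(k,\cdot)$ uniformly on $K$; after that, the argument is Parseval combined with the standard identification of positive distributions with Radon measures.
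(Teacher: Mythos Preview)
Your argument is correct. The paper itself does not prove this lemma; it states the three decomposition lemmata together and cites \cite{Macia:10} and \cite[Section~2]{AM:11} for the proofs. Your Parseval computation combined with the averaging characterization of $\langle a\rangle_\Lambda$ is exactly the standard route and is essentially what one finds in those references, so there is nothing to compare.
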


\begin{lemma}
\label{lemma: mu direction independent}
Let $\mu \in \M^+(T^*\T^2)$ be an {\em invariant} measure. Then, for all $\Lambda \in \P$, $\mu|_{\T^2 \times \left(\Lambda^\perp\setminus \{0\}\right)}$ is also a nonnegative {\em invariant} measure and
$$
\mu|_{\T^2 \times \left(\Lambda^\perp\setminus \{0\}\right)} = \left< \mu \right>_\Lambda|_{\T^2 \times  \left(\Lambda^\perp\setminus \{0\}\right)} .
$$
\end{lemma}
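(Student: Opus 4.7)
The plan is to handle the two assertions in turn; I will focus on the nontrivial case $\rk(\Lambda)=1$, the case $\rk(\Lambda)=2$ being vacuous since then $\Lambda^\perp=\{0\}$. First, for the \emph{invariance of the restriction}: since $\phi_\tau(x,\xi)=(x+\tau\xi,\xi)$ preserves the $\xi$-coordinate, the Borel set $A:=\T^2\times(\Lambda^\perp\setminus\{0\})$ is $\phi_\tau$-invariant for every $\tau\in\R$. A routine computation
$$(\phi_\tau)_*(\mu|_A)(E)=\mu(\phi_{-\tau}(E)\cap A)=\mu(\phi_{-\tau}(E\cap A))=\mu(E\cap A)=\mu|_A(E),$$
which uses the invariance of $\mu$ at the third equality, then shows that $\mu|_A$ is itself a nonnegative invariant measure.

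For the second assertion, my plan is to compute directly the Fourier coefficients $\hat\mu(k,\cdot)$ of an invariant measure and locate their support. Fix $k\in\Z^2$, and apply the invariance relation $\int a\,d\mu=\int a\circ\phi_\tau\,d\mu$ (valid for every $a\in\Cinfc(T^*\T^2)$ and every $\tau\in\R$) to test functions of the form $a(x,\xi)=(2\pi)^{-1}e^{-ik\cdot x}b(\xi)$ with $b\in\Cinfc(\R^2)$. Unfolding the definition of $\hat\mu(k,\cdot)$ gives
$$\int b(\xi)\,d\hat\mu(k,\xi)=\int e^{-i\tau k\cdot\xi}\,b(\xi)\,d\hat\mu(k,\xi)\qquad\text{for every }\tau\in\R\text{ and }b\in\Cinfc(\R^2),$$
so $e^{-i\tau k\cdot\xi}\,d\hat\mu(k,\xi)=d\hat\mu(k,\xi)$ as complex Radon measures on $\R^2$. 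Consequently, $\hat\mu(k,\cdot)$ must be supported in the line $k^\perp=\{\xi\in\R^2:k\cdot\xi=0\}$.

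The final step is where primitivity enters. If $k\in\Z^2\setminus\Lambda$, then $k\notin\left<\Lambda\right>$: otherwise, the primitivity condition $\left<\Lambda\right>\cap\Z^2=\Lambda$ would give $k\in\Lambda$, contradicting the choice of $k$. So $k$ and any generator $l_0$ of $\Lambda$ are linearly independent in $\R^2$, and the two lines $k^\perp$ and $\Lambda^\perp=l_0^\perp$ meet only at the origin. Hence $\hat\mu(k,\cdot)$ vanishes on $\Lambda^\perp\setminus\{0\}$ for every $k\notin\Lambda$. Plugging this into the Fourier inversion formula for $\mu$ and restricting to $A$ yields the desired identity $\mu|_A=\left<\mu\right>_\Lambda|_A$. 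The main technical point is this primitivity argument, which pins down $k^\perp\cap\Lambda^\perp=\{0\}$ exactly when $k\notin\Lambda$; once that is in hand, the remainder is a routine Fourier manipulation.
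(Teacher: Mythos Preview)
Your argument is correct and is the standard one; the paper does not give its own proof of this lemma, referring instead to \cite{Macia:10} and \cite[Section~2]{AM:11}, where essentially the same Fourier-analytic route is taken. The two steps you isolate --- invariance forces $\hat\mu(k,\cdot)$ to be supported in $k^\perp$, and primitivity of $\Lambda$ then gives $k^\perp\cap(\Lambda^\perp\setminus\{0\})=\emptyset$ whenever $k\notin\Lambda$ --- are exactly the content of the proof.

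One small remark. You discuss only $\rk(\Lambda)\in\{1,2\}$, but $\Lambda=\{0\}$ also lies in $\P$, and the paper explicitly invokes the lemma with $\Lambda=\{0\}$ in the paragraph following Lemma~\ref{lemma: mu = 0 some directions}. Your support argument extends immediately once one restricts to $\Omega_2$ rather than to all of $\R^2\setminus\{0\}$: for $k\neq 0$, any nonzero $\xi\in k^\perp$ satisfies $k\in\Lambda_\xi$ and hence $\xi\in\Omega_1$, so $k^\perp\cap\Omega_2=\emptyset$. This restriction to $\Omega_2$ is what the paper actually uses, and it is necessary: the statement with the literal set $\Lambda^\perp\setminus\{0\}=\R^2\setminus\{0\}$ is false for $\Lambda=\{0\}$, as the invariant measure carried by a single periodic geodesic shows.
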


Let us now come back to the measure $\mu$ given by Proposition~\ref{prop: existence mu}, which satisfies all properties listed in Proposition~\ref{proposition: zero first order info}. In particular, this measure vanishes on the non-empty open subset of $\T^2$ given by $\{b>0\}$ (see Item~\ref{item: mu = 0 on b} in Proposition~\ref{proposition: zero first order info}). As a consequence of Proposition~\ref{proposition: zero first order info}, and of the three lemmata above, this yields the following lemma.

\begin{lemma}
\label{lemma: mu = 0 some directions}
We have
$
\mu = \sum_{\Lambda \in \P, \rk(\Lambda)=1} \mu|_{\T^2 \times (\Lambda^\perp\setminus \{0\})} .
$
\end{lemma}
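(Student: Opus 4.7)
The plan is to apply the decomposition of Lemma~\ref{lemma: decompose mu} to $\mu$,
$$
\mu = \mu|_{\T^2 \times \{0\}} + \mu|_{\T^2 \times \Omega_2} + \sum_{\Lambda \in \P, \rk(\Lambda)=1} \mu|_{\T^2 \times (\Lambda^\perp\setminus \{0\})},
$$
and to show that the first two pieces vanish. The vanishing of $\mu|_{\T^2 \times \{0\}}$ is immediate from Item~\ref{item: supp car} of Proposition~\ref{proposition: zero first order info}: the origin of each fiber is not in $\{|\xi|^2=1\}$, which contains $\supp(\mu)$. The real content is to show that $\mu|_{\T^2 \times \Omega_2} = 0$.

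For this, I will combine the flow-invariance of $\mu$ (Item~\ref{item: propagation}) with classical Weyl equidistribution. For every $\xi \in \Omega_2$ we have $\Lambda_\xi = \{0\}$, so the orbit $\{x + \tau \xi : \tau \in \R\}$ is equidistributed in $\T^2$. Thus for any $a \in \Cinfc(T^* \T^2)$, setting $\langle a\rangle(x,\xi) := \frac{1}{(2\pi)^2}\int_{\T^2} a(y,\xi)\, dy$, the ergodic averages
$$
\frac{1}{T}\int_0^T a\circ\phi_\tau(x,\xi)\, d\tau \longrightarrow \langle a\rangle(x,\xi), \qquad T \to +\infty,
$$
converge pointwise (and boundedly) on $\T^2 \times \Omega_2$. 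Using $(\phi_\tau)_*\mu=\mu$ and dominated convergence yields
$$
\int_{\T^2\times\Omega_2} a\, d\mu = \int_{\T^2\times\Omega_2} \langle a\rangle\, d\mu,
$$
which means that $\mu|_{\T^2 \times \Omega_2}$ factorizes as $\frac{dx}{(2\pi)^2}\otimes \nu_2$ for some finite nonnegative measure $\nu_2$ on $\Omega_2$.

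Now I invoke Item~\ref{item: mu = 0 on b}: $\mu$ vanishes on the open set $\{b>0\}\subset \T^2$, which is nonempty by assumption (and hence has strictly positive Lebesgue measure). Choosing a nonnegative $\phi \in \Cinfc(\T^2)$ supported in $\{b>0\}$ with $\int_{\T^2}\phi\, dx > 0$, and a nonnegative cutoff $\chi \in \Cinfc(\Omega_2)$, the product structure forces
$$
0 = \int_{\T^2 \times \Omega_2} \phi(x)\chi(\xi)\, d\mu = \Big(\frac{1}{(2\pi)^2}\int_{\T^2}\phi\, dx\Big) \int_{\Omega_2} \chi\, d\nu_2,
$$
so $\nu_2 \equiv 0$, whence $\mu|_{\T^2 \times \Omega_2} = 0$. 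The main (and only nontrivial) step is the equidistribution argument establishing the product structure; the rest is a direct consequence of the previous proposition and lemmata.
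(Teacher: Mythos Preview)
Your argument is correct and follows the same route as the paper: use Lemma~\ref{lemma: decompose mu}, kill the piece over $\{0\}$ by the support condition, and show that $\mu|_{\T^2\times\Omega_2}$ is constant in $x$ (the paper obtains this from Lemma~\ref{lemma: mu direction independent} with $\Lambda=\{0\}$, which encodes exactly the equidistribution/Fourier argument you wrote out by hand), then conclude from the vanishing on $\{b>0\}$.

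There is one small slip to fix. You test the product structure with ``a nonnegative cutoff $\chi\in\Cinfc(\Omega_2)$'', but $\Omega_2$ has empty interior in $\R^2$ (the rational directions $\Omega_0\cup\Omega_1$ are dense), so any continuous function supported in $\Omega_2$ is identically zero. Simply take $\chi\in\Cinfc(\R^2)$ (or even $\chi\equiv 1$, since $\mu$ is compactly supported by Item~\ref{item: supp car}); the identity
\[
0=\int_{\T^2\times\Omega_2}\phi(x)\chi(\xi)\,d\mu=\Big(\frac{1}{(2\pi)^2}\int_{\T^2}\phi\Big)\int_{\Omega_2}\chi\,d\nu_2
\]
then holds for all such $\chi$ and forces $\nu_2=0$ as you intended.
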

As a consequence of Proposition~\ref{proposition: zero first order info}, we have indeed that the measure $\mu$ is supported in $\{|\xi| = 1\}$, which implies $\mu|_{\T^2 \times \{0\}} = 0$. In addition,  Lemma \ref{lemma: mu direction independent} applied with $\Lambda=\{0\}$ implies that
$ \mu|_{\T^2 \times \Omega_2} $ is constant in $x$ -- and thus vanishes everywhere since it vanishes on $\{b>0\}$.

\begin{remark}
\label{rem: mu lambda 0}
Since the measure $\mu$ is supported in $\{|\xi| = 1\}$ (Proposition~\ref{proposition: zero first order info}, Item~\ref{item: supp car}), we have
$$
\mu|_{\T^2 \times \Lambda^\perp} = \mu|_{\T^2 \times \left(\Lambda^\perp\setminus \{0\}\right)} 
$$
(which simplifies the notation).
\end{remark}

As a consequence of these lemmata and the last remark, the study of the measure $\mu$ is now reduced to that of all nonnegative invariant measures $\mu|_{\T^2 \times \Lambda^\perp}$ with $\rk(\Lambda)=1$. 

The aim of the next sections is to prove that the measure $\mu|_{\T^2 \times \Lambda^\perp}$ vanishes identically, for each periodic direction $\Lambda^\perp$.

\subsection{Geometry of the subtori $\T_\Lambda$ and $\T_{\Lambda^\perp}$\label{s:subtori}}

To study the measure  $\mu|_{\T^2 \times \left(\Lambda^\perp\setminus \{0\}\right)} $, we need to describe briefly the geometry of the subtori $\T_\Lambda$ and $\T_{\Lambda^\perp}$ of $\T^2$, and introduce adapted coordinates.

We define $\chi_\Lambda$ the linear isomorphism
$$
\chi_\Lambda : \Lambda^\perp \times \left< \Lambda \right> \to \R^2 : \quad
(s,y) \mapsto s+y , 
$$
and denote by $\tilde{\chi}_\Lambda : T^* \Lambda^\perp \times T^* \left< \Lambda \right>\to T^*\R^2$ its extension to the cotangent bundle. This application can be defined as follows: for $(s, \sigma) \in T^* \Lambda^\perp = \Lambda^\perp \times (\Lambda^\perp)^*$ and $(y, \eta) \in T^* \left< \Lambda \right> = \left< \Lambda \right> \times \left< \Lambda \right>^*$, we can extend $\sigma$ to a covector of $\R^2$ vanishing on $\left< \Lambda \right>$ and $\eta$ to a covector of $\R^2$ vanishing on $\Lambda^\perp$. Remember that we identify $(\R^2)^*$ with $\R^2$ through the usual inner product; thus we can also see $\sigma$ as an element of $ \Lambda^\perp$ and $\eta $ as an element of $\left< \Lambda \right>$.
Then, we have
$$
\tilde{\chi}_\Lambda(s, \sigma, y, \eta) = (s + y, \sigma +\eta) \in T^* \R^2 = \R^2 \times (\R^2)^*.
$$

Conversely, any $\xi\in  (\R^2)^*$ can be decomposed into $\xi=\sigma +\eta$ where $\sigma\in \Lambda^\perp$ and $\eta\in\left<\Lambda \right>$. We denote by $P_\Lambda$ the orthogonal projection of $\R^2$ onto $\left< \Lambda \right>$, \ie $P_\Lambda \xi = \eta$.

Next, the map $\chi_\Lambda$ goes to the quotient, giving a smooth Riemannian covering of $\T^2$ by
$$
\pi_\Lambda : \T_{\Lambda^\perp} \times \T_\Lambda \to \T^2 : \quad
(s,y) \mapsto s+y .
$$
We shall denote by $\tilde{\pi}_\Lambda$ its extension to cotangent bundles:
$$
\tilde{\pi}_\Lambda : T^*\T_{\Lambda^\perp} \times T^*\T_\Lambda \to T^*\T^2 .
$$
As the map $\pi_\Lambda$ is not an injection (because the torus $\T_{\Lambda^\perp} \times \T_\Lambda$ contains several copies of $\T^2$), 
we introduce its degree $p_\Lambda$, which is also equal to $\frac{\Vol(\T_{\Lambda^\perp} \times \T_\Lambda)}{\Vol(\T^2)}$.

Then, the application
$$
T_\Lambda u : = \frac{1}{\sqrt{p_\Lambda}} u \circ \chi_\Lambda ,
$$
defines a linear isomorphism $L^2_{\loc} (\R^2) \to L^2_{\loc}(\Lambda^\perp \times \left< \Lambda \right>)$. Note that because of the factor $\frac{1}{\sqrt{p_\Lambda}}$, $T_\Lambda$ maps $L^2(\T^2)$ isometrically into a subspace of $L^2(\T_{\Lambda^\perp} \times \T_\Lambda)$. Moreover, $T_\Lambda$ maps $L^2_\Lambda(\T^2)$ into $L^2(\T_\Lambda) \subset L^2(\T_{\Lambda^\perp} \times \T_\Lambda)$, since the nonvanishing Fourier modes of $u \in L^2_\Lambda(\T^2)$ correspond only to frequencies $k \in \Lambda$. This reads
\begin{equation}
\label{eq: TLambda}
T_\Lambda u(s,y) =  \frac{1}{\sqrt{p_\Lambda}} u (y) \ \text{for} \ (s,y) \in \T_{\Lambda^\perp} \times \T_\Lambda .
\end{equation}
Since $\tilde{\chi}_\Lambda$ is linear, we have, for any $a \in \Cinf(T^*\R^2)$
\begin{equation}
\label{eq: TLambda Op}
T_\Lambda \Op_h(a) = \Op_h(a \circ \tilde{\chi}_\Lambda) T_\Lambda,  
\end{equation}
where on the left $\Op_h$ is the Weyl quantization on $\R^2$ \eqref{eq: weyl quantif}, and on the right $\Op_h$ is the Weyl quantization on $\Lambda^\perp \times \left< \Lambda \right>$. Next, we denote by $\Op_h^{\Lambda^\perp}$ and $\Op_h^\Lambda$ the Weyl quantization operators defined on smooth test functions on $T^*\Lambda^\perp \times T^*\left< \Lambda \right>$ and acting only on the variables in $T^*\Lambda^\perp$ and $T^*\left< \Lambda \right>$ respectively, leaving the other frozen. For any $a \in \Cinfc(T^*\Lambda^\perp \times T^*\left< \Lambda \right>)$, we have~:
\begin{equation}
\label{eq: Op Lambda}
\Op_h(a) = \Op_h^{\Lambda^\perp}\circ \Op_h^\Lambda (a) 
= \Op_h^\Lambda \circ \Op_h^{\Lambda^\perp} (a).
\end{equation}

Now, if the symbol $a \in \Cinfc(T^*\T^2)$ has only Fourier modes in $\Lambda$, we remark, in view of~\eqref{eq: TLambda}, that $a \circ \tilde{\pi}_\Lambda$ does not depend on $s \in \T_{\Lambda^\perp}$. Therefore, we sometimes write $a \circ \tilde{\pi}_\Lambda(\sigma ,y , \eta)$ for $a \circ \tilde{\pi}_\Lambda(s, \sigma ,y , \eta)$ and \eqref{eq: TLambda Op}-\eqref{eq: Op Lambda} give
\begin{equation}
\label{eq: Op Lambda bis}
T_\Lambda \Op_h(a) = \Op_h^\Lambda \circ \Op_h^{\Lambda^\perp} (a \circ \tilde{\pi}_\Lambda) T_\Lambda 
= \Op_h^\Lambda (a \circ \tilde{\pi}_\Lambda(h D_s ,\cdot, \cdot )) T_\Lambda .
\end{equation}
Note that for every $\sigma \in \Lambda^\perp$, the operator $\Op_h^\Lambda (a \circ \tilde{\pi}_\Lambda( \sigma ,\cdot, \cdot ))$ maps $L^2(\T_\Lambda)$ into itself. More precisely, it maps the subspace $T_\Lambda (L^2_\Lambda(\T^2))$ into itself.


\section{Change of quasimode and construction of an invariant cutoff function}
\label{sub: cutoff function}


In this section, we first construct from the quasimode $v_h$ another quasimode $w_h$, that will be easier to handle when studying the measure $\mu|_{\T^2 \times \Lambda^\perp}$. Indeed $w_h$ is basically a microlocalization of $v_h$ in the direction $\Lambda^\perp$ at a precise concentration rate. 

Moreover, we introduce a cutoff function $\chi_h^\Lambda(x) =\chi_h^\Lambda(y,s)$, well-adapted to the damping coefficient $b$ and to the invariance of the measure $\mu|_{\T^2 \times \Lambda^\perp}$ in the direction $\Lambda^\perp$ (this cutoff function plays the role of the function $\chi(b/h)$ used in~\cite{BH:07} in the case where $b$ is itself invariant in the direction $\Lambda^\perp$). Its construction is a key point in the proof of Theorem~\ref{th: stabilization torus}.

\medskip
Let $\chi \in \Cinfc(\R)$ be a nonnegative function such that $\chi = 1$ in a \nhd of the origin. 
We first define 
\begin{align*}
w_h : = \Op_h \left( \chi \left(\frac{|P_\Lambda \xi|}{h^\alpha} \right) \right) v_h ,
 \end{align*}
which implicitely depends on $\alpha \in (0,1)$. 
The following lemma implies that, for $\delta$ and $\alpha$ sufficiently small, $w_h$ is as well a $o(h^{2+\delta})$-quasimode for $P_b^h$.

\begin{lemma}
\label{lemma: w_h quasimode}
For any $\alpha>0$ such that 
\begin{align}
\label{eq: condition alpha}
\delta + \frac{\eps}{2} + \alpha \leq \frac12 , \qquad 3\alpha + 2 \delta < 1 ,
\end{align}
we have
$$
\| P_b^h w_h \|_{L^2(\T^2)} = o(h^{2+\delta}) .
$$
\end{lemma}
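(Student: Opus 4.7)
\emph{Plan of proof.} The plan is to expand $P_b^h w_h$ by commuting $P_b^h = P_0^h + ih b$ past $\Op_h(\chi_h)$. Since $\chi_h(\xi)$ depends only on $\xi$, both $\Op_h(\chi_h)$ and $P_0^h = \Op_h(|\xi|^2-1)$ are Fourier multipliers, and they commute exactly, yielding
\[
P_b^h w_h \;=\; \Op_h(\chi_h)\,P_b^h v_h \;+\; ih\,[b,\Op_h(\chi_h)]\,v_h.
\]
After the rescaling $\xi = h^\alpha \eta$, the operator $\Op_h(\chi_h)$ is identified with $\Op_{\tilde h}(\tilde\chi)$ with $\tilde h = h^{1-\alpha}$ and $\tilde\chi(\eta) = \chi(|P_\Lambda \eta|)$ a fixed symbol, so Calder\'on--Vaillancourt gives $\|\Op_h(\chi_h)\|_{L^2\to L^2} = O(1)$ uniformly in $h$. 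The first term is thus bounded by $C\|P_b^h v_h\|_{L^2} = o(h^{2+\delta})$, and the problem reduces to showing $\|[b,\Op_h(\chi_h)]v_h\|_{L^2} = o(h^{1+\delta})$.

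By the Weyl--Moyal expansion (only odd-order terms survive in a commutator),
\[
[b,\Op_h(\chi_h)] \;=\; \tfrac{h}{i}\Op_h(\{b,\chi_h\}) \;+\; R_h,
\]
with $R_h$ collecting the $h^{2k+1}$-terms, $k\geq 1$. Each $\xi$-derivative of $\chi_h$ costs a factor $h^{-\alpha}$, so in the rescaled $\tilde h$-calculus $R_h$ has operator norm $O(h^{3(1-\alpha)})$; its contribution to $\|[b,\Op_h(\chi_h)]v_h\|$ is therefore $O(h^{3-3\alpha})$, which is $o(h^{1+\delta})$ thanks to the second condition $3\alpha + 2\delta < 1$ of~\eqref{eq: condition alpha}.

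For the principal symbol, I would write $\{b,\chi_h\} = -\nabla b \cdot \partial_\xi \chi_h = -\sum_j \partial_{x_j} b\cdot \partial_{\xi_j}\chi_h$ and use (modulo an $O(h^{2-3\alpha})$ remainder) the symmetrized identification $\Op_h^W(f(x)g(\xi)) = \tfrac12\bigl(f\cdot g(hD) + g(hD)\cdot f\bigr)$. The two smallness ingredients are the damping hypothesis $|\nabla b|\leq C_\eps b^{1-\eps}$, and the quasimode identity $(bv_h,v_h) = h^{-1}\Im(P_b^h v_h, v_h)= o(h^{1+\delta})$, which combined with the elementary bound $b^{2(1-\eps)}\leq \|b\|_\infty^{1-2\eps}\,b$ (valid for $\eps\leq 1/2$) yields $\|b^{1-\eps} v_h\|_{L^2} = o(h^{(1+\delta)/2})$. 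A Cauchy--Schwarz argument, combined with the rescaled semiclassical calculus in which $\partial_{\xi_j}\chi_h(hD) = h^{-\alpha}\Phi_j(\tilde h D)$ is a $\tilde h$-Fourier multiplier of $L^2$-norm $O(h^{-\alpha})$ whose commutator with $b$ has norm $O(h^{1-2\alpha})$, then gives $\|\Op_h(\{b,\chi_h\})v_h\|_{L^2} = o(h^{\delta})$ exactly under the first condition $\delta + \eps/2 + \alpha \leq 1/2$ of~\eqref{eq: condition alpha}. Multiplying by the factor $h$ from the commutator expansion closes the estimate.

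\emph{Main obstacle.} The technical heart is the careful bookkeeping of the $h^{-\alpha}$ losses coming from each $\xi$-derivative of the scaled cutoff $\chi_h$ against the smallness extracted from the damping assumption and the quasimode relation. The two bounds in~\eqref{eq: condition alpha} are calibrated so that these two effects balance: one controls the leading symbol $\{b,\chi_h\}$ via the interaction of $|\nabla b|\leq C_\eps b^{1-\eps}$ with the rescaled symbolic calculus, the other controls the higher-order Moyal remainder through the rescaled Calder\'on--Vaillancourt estimate.
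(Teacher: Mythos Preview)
Your proof is correct and follows the same overall architecture as the paper: write $P_b^h w_h = \Op_h(\chi_h)P_b^h v_h + ih[b,\Op_h(\chi_h)]v_h$, dispose of the first term by boundedness of the Fourier multiplier, and expand the commutator. The difference lies in how you treat the principal commutator term $\Op_h(\{b,\chi_h\})v_h$. The paper applies the sharp G{\aa}rding inequality to the nonnegative symbol $C^2 b^{2(1-\eps)} - |\partial_y b\,\chi'|^2$, which produces a remainder of order $h^{1-\alpha}$ and leads (together with the Jensen-type bound $(b^{2(1-\eps)}v_h,v_h)\le \|bv_h\|^{2(1-\eps)}$) to the two conditions in the statement. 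You instead exploit the separated structure $\{b,\chi_h\}(x,\xi)=-\nabla b(x)\cdot\nabla_\xi\chi_h(\xi)$, use the approximate Weyl identity $\Op_h^W(f(x)g(\xi))=\tfrac12(f\,g(hD)+g(hD)\,f)+O(h^{2-3\alpha})$, and bound each half via the pointwise estimate $|\nabla b|\le C_\eps b^{1-\eps}$ plus a single commutator of $\partial_{x_j}b$ with the rescaled Fourier multiplier. This is more elementary (no G{\aa}rding needed) and in fact slightly sharper: with your bound $\|b^{1-\eps}v_h\|=o(h^{(1+\delta)/2})$, the constraint you actually obtain is $2\alpha+\delta<1$, so your claim that the first condition is needed ``exactly'' is a minor overstatement --- your argument works under the stated hypotheses with room to spare. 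Both routes are valid; yours trades the black-box G{\aa}rding step for an explicit product decomposition.
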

As a consequence of this lemma, the semiclassical measures associated to $w_h$ satisfy in particular the conclusions of Proposition~\ref{proposition: zero first order info}.
Moreover, the following proposition implies that the sequence $w_h$ contains all the information in the direction $\Lambda^\perp$.

\begin{proposition}
\label{prop: mu Lambda w_h}
For any $a \in \Cinfc(T^*\T^2)$ and any $\alpha \in (0, 3/4)$ satisfying the assumptions of Lemma~\ref{lemma: w_h quasimode}, we have
$$
\left< \mu|_{\T^2 \times \Lambda^\perp} , a \right>_{\M(T^*\T^2) , \Conc(T^*\T^2)} 
= \lim_{h \to 0} \left( \Op_h(a)  w_h , w_h \right)_{L^2(\T^2)} .
$$
\end{proposition}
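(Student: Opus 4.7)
The plan is to route everything through the Wigner distribution of $v_h$ (whose limit is $\mu$), exploiting the frequency localization built into $w_h$. Since $\chi_h(\xi) := \chi(|P_\Lambda\xi|/h^\alpha)$ depends only on $\xi$ and is real-valued and bounded, $\chi_h(hD)$ is a self-adjoint Fourier multiplier with $L^2$-norm bounded uniformly in $h$, and
\[
(\Op_h(a) w_h, w_h)_{L^2(\T^2)} = \bigl(\chi_h(hD)\,\Op_h(a)\,\chi_h(hD)\, v_h, v_h\bigr)_{L^2(\T^2)}.
\]
Although $\partial_\xi^k \chi_h = O(h^{-k\alpha})$, the hypothesis $\alpha < 3/4 < 1$ together with the Weyl--Moyal calculus give
\[
\chi_h(hD)\,\Op_h(a)\,\chi_h(hD) = \Op_h(\chi_h^2 a) + R_h, \qquad \|R_h\|_{\calL(L^2)} = O(h^{1-\alpha}) = o(1),
\]
so the task reduces to computing $\lim_{h\to 0}(\Op_h(\chi_h^2 a) v_h, v_h)$.

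Next I fix a smooth nonnegative cutoff $\Phi_\delta(\xi) = \phi(|P_\Lambda\xi|/\delta) \in \Cinfc(\R^2)$ with $\phi \equiv 1$ near $0$. For $h$ so small that $\supp\chi_h \subset \{\Phi_\delta \equiv 1\}$, one has $\chi_h^2 a = \chi_h^2 a \Phi_\delta$, and
\[
(\Op_h(\chi_h^2 a) v_h, v_h) = (\Op_h(a\Phi_\delta) v_h, v_h) - (\Op_h((1-\chi_h^2) a\Phi_\delta) v_h, v_h).
\]
Since $a\Phi_\delta \in \Cinfc(T^*\T^2)$ is a fixed symbol, Proposition~\ref{prop: existence mu} yields $(\Op_h(a\Phi_\delta) v_h, v_h) \to \mu(a\Phi_\delta)$; and by Lemma~\ref{lemma: mu = 0 some directions} together with dominated convergence applied to the finite decomposition of $\mu$, $\mu(a\Phi_\delta) \to \mu|_{\T^2\times\Lambda^\perp}(a)$ as $\delta \to 0$.

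What remains is to show that $(\Op_h((1-\chi_h^2) a\Phi_\delta) v_h, v_h)$ tends, in the iterated limit $h \to 0$ followed by $\delta \to 0$, to $\mu(a\Phi_\delta) - \mu|_{\T^2\times\Lambda^\perp}(a)$, i.e.\ to $\sum_{\Lambda'\neq\Lambda}\mu|_{\T^2\times\Lambda'^\perp}(a\Phi_\delta) + o_\delta(1)$. The strategy is to introduce a secondary smooth cutoff $\Psi_\eta$ equal to $1$ on $\{|P_\Lambda\xi|\geq 2\eta\}$ and vanishing on $\{|P_\Lambda\xi|\leq \eta\}$; for $h^\alpha \ll \eta$ one writes $1-\chi_h^2 = \Psi_\eta + (1-\chi_h^2-\Psi_\eta)$. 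The $\Psi_\eta$-piece is a fixed $\Cinfc$ symbol with Wigner limit $\mu(\Psi_\eta a\Phi_\delta)$, which tends as $\eta \to 0$ to $\mu(a\Phi_\delta) - \mu|_{\T^2\times\Lambda^\perp}(a\Phi_\delta)$ by monotone convergence. The remainder is supported in $\{|P_\Lambda\xi|\leq 2\eta\}$, bounded by $1$, and must be shown to contribute $o(1)$ in the iterated limit. This last quantitative estimate is the main obstacle: the seminorms of $1-\chi_h^2-\Psi_\eta$ blow up as $h\to 0$, so Calder\'on--Vaillancourt cannot be applied directly. The argument must therefore use positivity --- comparing $|1-\chi_h^2-\Psi_\eta|^2$ to a fixed $\Cinfc$ majorant of $\mathds{1}_{\{|P_\Lambda\xi|\leq 2\eta\}}$ whose Wigner evaluation is controlled by $\mu$ --- together with Cauchy--Schwarz and the pointwise convergence $1-\chi_h^2-\Psi_\eta \to 0$ as $h\to 0$ for any fixed $\eta > 0$.
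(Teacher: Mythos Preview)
Your reduction to $(\Op_h(\chi_h^2 a)v_h,v_h)$ is fine, and the decomposition with the fixed cutoffs $\Phi_\delta$ and $\Psi_\eta$ is a reasonable bookkeeping device. The gap is in the treatment of the remainder term, and it is not merely technical.

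First, your claim of pointwise convergence ``$1-\chi_h^2-\Psi_\eta \to 0$ as $h\to 0$'' is false. For any $\xi$ with $0<|P_\Lambda\xi|<\eta$ one has $\chi_h^2(\xi)=\chi^2(|P_\Lambda\xi|/h^\alpha)\to 0$ and $\Psi_\eta(\xi)=0$, so the pointwise limit is $1$, not $0$. In fact, on the annulus $\{C h^\alpha<|P_\Lambda\xi|<\eta\}$ the symbol $1-\chi_h^2-\Psi_\eta$ equals exactly $1$ for every $h$.

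Second, even granting the majorant idea, it only yields (for $a\geq 0$, via G{\aa}rding in the $h^\alpha$-class)
\[
0\leq \limsup_{h\to 0}\big(\Op_h\big((1-\chi_h^2-\Psi_\eta)a\Phi_\delta\big)v_h,v_h\big)\leq \mu(\Theta_{3\eta}\,a\Phi_\delta)\xrightarrow[\eta\to 0]{}\mu|_{\T^2\times\Lambda^\perp}(a),
\]
which is the \emph{whole} quantity you are trying to compute, not $0$. Plugging this back into your decomposition only gives $\lim_h(\Op_h(\chi_h^2 a)v_h,v_h)\in[0,\mu|_{\T^2\times\Lambda^\perp}(a)]$, with no further information.

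The missing idea is dynamical: nothing in your argument uses that $v_h$ is a $o(h^{2+\delta})$-quasimode for $P_b^h$, beyond the existence of $\mu$. Without this, the statement is simply false --- take any normalized sequence concentrating on a single Fourier mode at $\xi_h$ with $|P_\Lambda\xi_h|\sim h^{\alpha/2}$; then $\mu|_{\T^2\times\Lambda^\perp}$ is nonzero, yet $w_h=\chi_h(hD)v_h=0$ for small $h$. The paper rules this out by introducing the two-microlocal measure $\nu_\alpha^\Lambda$ associated with the cutoff $1-\chi(|P_\Lambda\xi|/h^\alpha)$ and proving, via a commutator estimate that crucially uses $\|P_b^h v_h\|=o(h^{2+\delta})$ and $\sqrt{b}\in\Con^\infty$, that $\nu_\alpha^\Lambda$ is invariant under the transverse flow $\phi^1_\tau(x,\xi,\eta)=(x+\tau\eta/|\eta|,\xi,\eta)$. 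Since it also vanishes on $\{\langle b\rangle_\Lambda>0\}$, this invariance forces $\nu_\alpha^\Lambda|_{\T^2\times(\Lambda^\perp\setminus\{0\})\times\bS_\Lambda}=0$, which is exactly the statement that no mass of $v_h$ sits in the intermediate annulus. That is the step your argument is missing.
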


 Next, we state the desired properties of the cutoff function $\chi_h^\Lambda$. The proof of its existence is a crucial point in the proof of Theorem~\ref{th: stabilization torus}.

\begin{proposition}
\label{prop: existence chi}
For $\delta = 8\eps$, and $\eps < \frac{1}{76}$, there exists $\alpha$ satisfying \eqref{eq: condition alpha}, such that for any constant $c_0 >0$, there exists a cutoff function $\chi_h^\Lambda \in \Cinf(\T^2)$ valued in $[0, 1]$, such that
\begin{enumerate}
\item \label{item: chi(y)} $\chi_h^\Lambda = \chi_h^\Lambda(y)$ does not depend on the variable $s$ (\ie $\chi_h^\Lambda $ is $\Lambda^\perp$-invariant),
\item \label{item: 1-chi} $\|(1 -\chi_h^\Lambda ) w_h\|_{L^2(\T^2)} = o(1)$, 
\item \label{item: b leq c_0 h} $b \leq c_0 h$ on $\supp(\chi_h^\Lambda )$, 
\item \label{item: d_y chi} $\|\d_{y}\chi_h^\Lambda w_h\|_{L^2(\T^2)} = o(1)$,
\item \label{item: d_yy chi} $\|\d_{y}^2 \chi_h^\Lambda w_h\|_{L^2(\T^2)} = o(1)$.
\end{enumerate}
\end{proposition}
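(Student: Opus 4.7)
The plan is to build $\chi_h^\Lambda$ as a mollified, $\Lambda^\perp$-invariant cutoff manufactured from $b$ itself. The starting observation is that assumption~\eqref{eq: assumption b} says precisely that $b^\eps$ is globally Lipschitz on $\T^2$ with constant at most $\eps C_\eps$: indeed $|\nabla(b^\eps)| = \eps b^{\eps-1}|\nabla b| \leq \eps C_\eps$ where $b>0$, and this extends by continuity across $\{b=0\}$. Since a pointwise maximum of Lipschitz functions with the same constant is Lipschitz, the function
\begin{align*}
F(y) := \max_{s \in \T_{\Lambda^\perp}} b(s,y)^\eps
\end{align*}
is Lipschitz on $\T_\Lambda$ with constant $\leq \eps C_\eps$; crucially, $F(y)\leq M$ if and only if $b(s,y)\leq M^{1/\eps}$ for every $s$. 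I then mollify $F$ at scale $\eta = h^\gamma$ (with $\gamma>\eps$ to be tuned) to obtain $F_\eta \in \Cinf(\T_\Lambda)$ with $\|F-F_\eta\|_\infty \leq C_1 h^\gamma$, $|\d_y F_\eta|\leq C_2$, and $|\d_y^2 F_\eta|\leq C_3 h^{-\gamma}$. Fixing $\chi\in\Cinfc(\R;[0,1])$ with $\chi\equiv 1$ on $[0,1/4]$ and $\supp\chi\subset[-1/2,1/2]$, define
\begin{align*}
\chi_h^\Lambda(y) := \chi\!\left(\frac{F_\eta(y)}{(c_0 h)^\eps}\right).
\end{align*}
On $\supp \chi_h^\Lambda$ one has $F_\eta\leq (c_0 h)^\eps /2$, hence $F\leq F_\eta + C_1 h^\gamma \leq (c_0 h)^\eps$ for $h$ small (since $\gamma>\eps$), yielding $b(s,y)\leq c_0 h$ for every $s$. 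Items~\eqref{item: chi(y)} and~\eqref{item: b leq c_0 h} are then satisfied by construction.

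The hard part is to prove the quantitative estimates~\eqref{item: 1-chi}, \eqref{item: d_y chi} and~\eqref{item: d_yy chi}. Each of these bounds $\|\varphi(y)w_h\|_{L^2(\T^2)}$ for some $\varphi$ supported on sets where $F(y) \geq c(c_0h)^\eps$, that is where there exists some $s_0 \in \T_{\Lambda^\perp}$ with $b(s_0,y)\geq c\,c_0 h$. The basic input is the dissipation $(bv_h,v_h)_{L^2}=o(h^{1+\delta})$; transferring to $w_h$ via Lemma~\ref{lemma: w_h quasimode} and symbolic calculus (the commutator $[\sqrt{b},\Op_h(\chi(|P_\Lambda\xi|/h^\alpha))]$ costs only $O(h)$ since $\sqrt{b}\in\Cinf$) gives $(bw_h,w_h)_{L^2}=o(h^{1+\delta})+O(h^2)$.

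The bridge between this pointwise-in-$s$ damping information and the $y$-only cutoff is the near $\Lambda^\perp$-invariance of $w_h$ induced by its microlocalization at $|P_\Lambda\xi|\leq h^\alpha$. Using the quasimode equation $P_b^h w_h = o(h^{2+\delta})$ together with propagation along the $\Lambda^\perp$-geodesic flow in a Duhamel-type argument (in the spirit of~\cite{Macia:10, AM:11}), one can replace in $(b w_h,w_h)$ the weight $b(s,y)$ by its $s$-average $\left<b\right>_\Lambda(y)=(2\pi)^{-1}\int_{\T_{\Lambda^\perp}}b(s,y)\,ds$ up to an error that is a small power of $h$ in $\alpha$ and $\eps$. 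On the sets in question, the Lipschitz property of $b^\eps$ ensures that $\left<b\right>_\Lambda(y)\geq C^{-1}h$ as soon as some $b(s_0,y)\geq ch$; inverting this,
\begin{align*}
\|\varphi\, w_h\|^2_{L^2} \;\leq\; Ch^{-1}\int_{\T^2} \left<b\right>_\Lambda(y) |\varphi(y)|^2 |w_h|^2\,ds\,dy \;=\; o\big(|\varphi|_\infty^2\, h^\delta\big),
\end{align*}
up to subleading corrections.

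Combined with the derivative bounds $|\d_y\chi_h^\Lambda|_\infty \leq C h^{-\eps}$ and $|\d_y^2\chi_h^\Lambda|_\infty \leq C(h^{-2\eps}+h^{-\gamma-\eps})$ flowing from the previous step, this yields $\|\d_y\chi_h^\Lambda w_h\|^2 = o(h^{\delta-2\eps})$ and $\|\d_y^2\chi_h^\Lambda w_h\|^2 = o(h^{\delta-4\eps}+h^{\delta-2\gamma-2\eps})$, while item~\eqref{item: 1-chi} is obtained similarly using that on $\supp(1-\chi_h^\Lambda)$ the function $\varphi=1-\chi_h^\Lambda$ is supported on $\{F_\eta \geq (c_0 h)^\eps/4\}$. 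With $\delta=8\eps$ and $\gamma$ chosen just above $\eps$ (say $\gamma\in(\eps,3\eps)$), all three exponents $6\eps$, $4\eps$ and $6\eps-2\gamma$ are strictly positive, so each quantity is $o(1)$. The numerical restriction $\eps<1/76$ emerges by jointly enforcing these strict positivity requirements, the constraints~\eqref{eq: condition alpha} on $\alpha$ coming from Lemma~\ref{lemma: w_h quasimode}, and the further losses in $\alpha$ and $\eps$ picked up in the averaging step above.
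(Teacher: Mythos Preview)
Your construction of $\chi_h^\Lambda$ by mollifying $F(y)=\max_s b(s,y)^\eps$ is a clean idea, and Items~\ref{item: chi(y)} and~\ref{item: b leq c_0 h} indeed follow from it. However the quantitative part of your argument contains a genuine gap.

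First a minor point: the claim ``$\langle b\rangle_\Lambda(y)\geq C^{-1}h$ as soon as some $b(s_0,y)\geq ch$'' is off by a factor $h^\eps$. The Lipschitz property of $b^\eps$ only guarantees $b(s,y)\geq ch/2^{1/\eps}$ on an $s$-interval of length $\sim (ch)^\eps$, hence $\langle b\rangle_\Lambda(y)\geq C^{-1}h^{1+\eps}$.

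The serious problem is the averaging step. You want $(\langle b\rangle_\Lambda w_h,w_h)\lesssim (b w_h,w_h)+\text{(small)}$, and then divide by $h^{1+\eps}$. But any propagation-in-$s$ argument of the Duhamel type you invoke (this is precisely the content of Lemma~\ref{lemma: transport psi} applied to the smooth function $b$) produces errors of size $O(h^\alpha)+O(h^{1-\alpha})$, hence at best $O(h^{1/2})$. After dividing by $h^{1+\eps}$ this blows up like $h^{-1/2}$, so the chain
\[
\|\varphi w_h\|^2 \leq C h^{-1-\eps}(\langle b\rangle_\Lambda w_h,w_h)= C h^{-1-\eps}\big(o(h^{1+\delta})+O(h^{1/2})\big)
\]
does \emph{not} give $o(1)$.

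The paper avoids this by reversing the order of the two operations. It first localizes with a small bump $\psi_{k_0 j_0}(s,y)$ supported where $b(s,y)\geq c_0 h/2$ (both variables localized at scale $h^{2\eps}$), so that one can divide by $h$ \emph{immediately}:
\[
(\psi_{k_0 j_0} w_h,w_h)\leq \frac{2}{c_0 h}(b w_h,w_h)=o(h^\delta).
\]
Only \emph{then} does it propagate this $o(h^\delta)$ bound in $s$ (Lemma~\ref{lemma: transport psi}), where the propagation errors $O(h^{\alpha-10\eps})+O(h^{1-\alpha-2\eps})$ are harmless provided $\alpha-10\eps>\delta$ and $1-\alpha-2\eps>\delta$. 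Positivity of each $\psi_{kj}$ is used to extract individual bounds from the two-sided transport identity~\eqref{eq: propagation cutoff}, and summing the $O(h^{-4\eps})$ pieces yields Items~\ref{item: 1-chi}--\ref{item: d_yy chi}. In short: divide by $h$ \emph{before} propagating, not after; your argument attempts the latter and the propagation errors then swamp the gain from the damping.
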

Note that the function $\chi_h^\Lambda$ implicitely depends on the constant $c_0$, that will be taken arbitrarily small in Section~\ref{section: Propagation laws}.

In the particular case where the damping function $b$ is invariant in one direction, this  proposition is not needed. In this case, one can take as in~\cite{BH:07} $\chi_h^\Lambda = \chi(\frac{b}{c_0 h})$. In the $d$-dimensional torus, this cutoff functions works as well if $b$ is invariant in $d - 1$ directions, and an analogue of Theorem~\ref{th: stabilization torus} can be stated in this setting. Unfortunately, our construction of the function $\chi_h^\Lambda$ (see the proof of Proposition~\ref{prop: existence chi} in Section~\ref{section: existence chi}) strongly relies on the fact that all trapped directions are periodic, and fails in higher dimensions.


\medskip
We give here a proof of Lemma~\ref{lemma: w_h quasimode}. Because of their technicality, we postpone the proofs of Propositions~\ref{prop: mu Lambda w_h} and \ref{prop: existence chi} to Sections~\ref{section: proof nu alpha} and~\ref{section: existence chi} respectively.

\begin{proof}[Proof of Lemma~\ref{lemma: w_h quasimode}]
First, we develop
\begin{align}
\label{eq: P_b^h w_h}
P_b^h w_h = P_b^h \Op_h \left( \chi \left(\frac{|P_\Lambda \xi|}{h^\alpha} \right) \right) v_h
= \Op_h \left( \chi \left(\frac{|P_\Lambda \xi|}{h^\alpha} \right) \right) P_b^h v_h + ih \left[b , \Op_h \left( \chi \left(\frac{|P_\Lambda \xi|}{h^\alpha} \right) \right) \right] v_h ,
\end{align}
since $P_0^h$ and $\Op_h \left( \chi \left(\frac{|P_\Lambda \xi|}{h^\alpha} \right) \right)$ are both Fourier mutipliers. We know that
\begin{align*}
\left\| \Op_h \left( \chi \left(\frac{|P_\Lambda \xi|}{h^\alpha} \right) \right) P_b^h v_h \right\|_{L^2(\T^2)} \leq \|P_b^h v_h \|_{L^2(\T^2)} = o (h^{2 + \delta}) .
\end{align*}
It only remains to study the operator
\begin{align}
\label{eq: estimate bracket}
\left[b , \Op_h \left( \chi \left(\frac{|P_\Lambda \xi|}{h^\alpha} \right) \right) \right] 
=ih^{1-\alpha} \Op_h \left( \d_y b \ \chi' \left(\frac{|P_\Lambda \xi|}{h^\alpha} \right) \right) + \O_{\calL(L^2)}(h^{2 (1- \alpha)}) 
\end{align}
according to the symbolic calculus. Moreover, using Assumption~\eqref{eq: assumption b}, we have 
$$
\left| \d_y b \ \chi' \left(\frac{|P_\Lambda \xi|}{h^\alpha} \right) \right| \leq C b^{1-\eps} .
$$
The sharp G{\aa}rding inequality applied to the nonnegative symbol
$$
C^2 b^{2(1-\eps)} - \left| \d_y b \ \chi' \left(\frac{|P_\Lambda \xi|}{h^\alpha} \right) \right|^2 ,
$$
then yields 
$$
\left(\Op_h\left( C^2 b^{2(1-\eps)} - \left| \d_y b \ \chi' \left(\frac{|P_\Lambda \xi|}{h^\alpha} \right) \right|^2 \right) v_h , v_h \right)_{L^2(\T^2)} \geq - C h^{1-\alpha} , 
$$
and hence
\begin{align}
\label{eq: estimate garding bracket}
 \left\|\Op_h \left(  \d_y b \ \chi' \left(\frac{|P_\Lambda \xi|}{h^\alpha} \right) \right) v_h \right\|_{L^2(\T^2)}^2
 \leq  C^2 ( b^{2(1-\eps)}v_h , v_h )_{L^2(\T^2)} + O(h^{1-\alpha})  .
\end{align}
When using the inequality $\int f^{1-\eps} d \nu \leq \left( \int f d \nu \right)^{1-\eps}$ for nonnegative functions (with $d \nu = |v_h(x)|^2 dx$), we obtain
$$
(b^{2(1-\eps)} v_h, v_h)_{L^2(\T^2)} \leq (b^2 v_h, v_h)_{L^2(\T^2)}^{(1-\eps)} \leq C 
\|{b} v_h\|_{L^2(\T^2)}^{2(1-\eps)} = o(h^{1-\eps}) .
$$
Combining this estimate together with \eqref{eq: estimate bracket} and \eqref{eq: estimate garding bracket} gives
\begin{align*}
\left\| ih \left[b , \Op_h \left( \chi \left(\frac{|P_\Lambda \xi|}{h^\alpha} \right) \right) \right] v_h \right\|_{L^2(\T^2)} = o(h^{\frac52 -\alpha- \frac{\eps}{2}}) + O(h^{\frac{5-3\alpha}{2}}) .
\end{align*}
Coming back to the expression of $P_b^h w_h$ given in~\eqref{eq: P_b^h w_h}, this concludes the proof of Lemma~\ref{lemma: w_h quasimode}.
\end{proof}

\section{Second microlocalization on a resonant affine subspace}
\label{sub: second microlocalization}

We want to analyse precisely the structure of the restriction $\mu|_{\T^2 \times \left(\Lambda^\perp\setminus \{0\}\right)} $, 
using the full information contained in $o(h^{2+ \delta})$-quasimodes like $v_h$ and $w_h$.

From now on, we want to take advantage of the family $w_h$ of $o(h^{2+\delta})$-quasimodes constructed in Section~\ref{sub: cutoff function}, which are microlocalised in the direction $\Lambda^\perp$. Hence, we define the Wigner distribution $W^h \in \Dist'(T^*\T^2)$ associated to the functions $w_h$ and the scale $h$, by 
\begin{align*}
\left< W^h , a \right>_{(S^0)',S^0} = \left( \Op_h(a) w_h , w_h \right)_{L^2(\T^2)} 
\quad \text{for all } a \in S^0(T^*\T^2) .
\end{align*} 
According to Proposition~\ref{prop: mu Lambda w_h}, we recover in the limit $h\to 0$,
$$
\left< W^h , a \right>_{(S^0)',S^0} \to \left< \mu|_{\T^2 \times \Lambda^\perp} , a \right>_{\M(T^*\T^2) , \Conc(T^*\T^2)} ,
$$
for any $a \in \Cinfc(T^*\T^2)$ (and $\alpha$ satisfying~\eqref{eq: condition alpha}).  

To provide a precise study of $\mu|_{\T^2 \times \Lambda^\perp}$, we shall introduce as in~\cite{Macia:10,AM:11} two-microlocal semiclassical measures, describing at a finer level the concentration of the sequence $v_h$ on the resonant subspace
$$
\Lambda^\perp = \{\xi \in \R^2  \text{ such that } P_\Lambda \xi = 0\}.
$$
These objects have been introduced in the local Euclidean case by Nier~\cite{Nier:96} and Fermanian-Kammerer~\cite{Ferm:00heat, Ferm:00}. A specific concentration scale may also be chosen in the in the two-microlocal variable, giving rise to the two-scales semiclassical measures studied by Miller~\cite{Miller:96, Miller:97} and Fermanian-Kammerer and G\'erard~\cite{FKG:02}.

We first have to describe the adapted symbol class (inspired by \cite{Ferm:00} and used in \cite{AM:11}). According to Lemma~\ref{lemma: mu direction independent} (see also Remark~\ref{rem: mu lambda 0}), it suffices to test the measure $\mu|_{\T^2 \times \Lambda^\perp} $ with functions constant in the direction $\Lambda^\perp$ (or equivalently, having only $x$-Fourier modes in $\Lambda$, in the sense of the following definition).

\begin{definition}
Given $\Lambda \in \P$, we shall say that $a \in S^1_\Lambda$ if $a = a(x,\xi, \eta) \in \Cinf(T^*\T^2 \times \left< \Lambda \right>)$ and
\begin{enumerate}
\item there exists a compact set $K_a \subset T^*\T^2$ such that, for all $\eta \in \left< \Lambda \right>$, the function $(x, \xi) \mapsto a(x, \xi , \eta)$ is compactly supported in $K_a$;
\item $a$ is homogeneous of order zero at infinity in the variable $\eta \in \left< \Lambda \right>$; \ie, if we denote by $\bS_{\Lambda}:= \bS^{1} \cap \left< \Lambda \right>$ the unit sphere in $\left< \Lambda \right>$, there exists $R_0 >0$ (depending on $a$) and $a_{\hom} \in \Cinfc (T^*\T^2 \times \bS_{\Lambda})$ such that
$$
a(x,\xi, \eta) 
= a_{\hom}\left(x, \xi, \frac{\eta}{|\eta|}\right) ,
\quad \text{for} \quad |\eta| \geq R_0 \ \text{and} \ (x, \xi) \in T^*\T^2 ;$$
for $\eta\not=0$, we will also use the notation $a(x,\xi, \infty \eta):=  a_{\hom}\left(x, \xi, \frac{\eta}{|\eta|}\right).$
\item $a$ has only $x$-Fourier modes in $\Lambda$, \ie
$$
a (x, \xi, \eta)= \sum_{k \in \Lambda} \frac{e^{ik \cdot x}}{2 \pi} \hat{a}(k , \xi, \eta) .
$$
\end{enumerate}
Note that this last assumption is equivalent to saying that $\sigma \cdot \d_x a = 0$ for any $\sigma \in \Lambda^\perp$. We denote by ${S^1_\Lambda}'$ the topological dual space of $S^1_\Lambda$.
\end{definition}
 
Let $\chi \in \Cinfc(\R)$ be a nonnegative function such that $\chi = 1$ in a \nhd of the origin. Let $R>0$. The previous remark allows us to define, for $a \in S^1_\Lambda$ the two following elements of ${S^1_\Lambda}'$:
\begin{align}
\label{eq: def W^h_R^Lambda}
&\left< W^{h,\Lambda}_R , a \right>_{{S^1_\Lambda} ', S^1_\Lambda}
:= \left< W^h ,  \left( 1 - \chi \left(\frac{|P_\Lambda \xi|}{R h}\right)\right) a \left(x, \xi, \frac{P_\Lambda \xi}{h} \right) \right>_{\Dist'(T^*\T^2), \Cinfc(T^*\T^2)} , \\
\label{eq: def W^h_R,Lambda}
&\left< W^{h}_{R,\Lambda} , a \right>_{{S^1_\Lambda} ', S^1_\Lambda}
:= \left< W^h , \chi \left(\frac{|P_\Lambda \xi|}{R h}\right) a \left(x, \xi, \frac{P_\Lambda \xi}{h} \right) \right>_{\Dist'(T^*\T^2), \Cinfc(T^*\T^2)} . 
\end{align}
In particular, for any $R>0$ and $a \in S^1_\Lambda$, we have 
\begin{equation}
\label{eq: W^h reconstruction}
\left< W^h , a \left(x, \xi, \frac{P_\Lambda \xi}{h} \right) \right>_{\Dist'(T^*\T^2), \Cinfc(T^*\T^2)}
= \left< W^{h,\Lambda}_R , a \right>_{{S^1_\Lambda} ', S^1_\Lambda} 
+ \left< W^{h}_{R,\Lambda} , a \right>_{{S^1_\Lambda} ', S^1_\Lambda} .
\end{equation}

\bigskip
The following two propositions are the analogues of \cite{Ferm:00} in our context. They state the existence of the two-microlocal semiclassical measures, as the limit objects of $W^{h,\Lambda}_R$ and $W^{h}_{R,\Lambda}$.

\begin{proposition}
\label{proposition: nu Lambda}
There exists a subsequence $(h,w_h)$ and a nonnegative measure $\nu^\Lambda \in \M^+(T^*\T^2 \times \bS_\Lambda)$ such that, for all $a \in S^1_\Lambda$, we have
$$
\lim_{R \to \infty} \lim_{h \to 0} \left< W^{h,\Lambda}_R , a \right>_{{S^1_\Lambda} ', S^1_\Lambda}
= \left< \nu^\Lambda , a_{\hom}\left(x, \xi , \frac{\eta}{|\eta|}\right) \right>_{\M(T^*\T^2 \times \bS_\Lambda) , \Conc(T^*\T^2 \times \bS_\Lambda)} .
$$
\end{proposition}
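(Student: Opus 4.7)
The plan is to follow the standard second-microlocal scheme of Nier~\cite{Nier:96} and Fermanian-Kammerer~\cite{Ferm:00}, adapted to the torus in \cite{Macia:10, AM:11}. I will proceed in four steps: a uniform pseudodifferential bound, extraction by a diagonal procedure, identification of the limit as depending only on $a_{\hom}$, and positivity via sharp G\aa rding.

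First, I would set $b_{h,R}(x,\xi) := \left(1-\chi(|P_\Lambda\xi|/Rh)\right)\, a(x,\xi, P_\Lambda\xi/h)$, so that $\left< W^{h,\Lambda}_R, a\right> = (\Op_h(b_{h,R}) w_h, w_h)_{L^2(\T^2)}$, and establish the uniform bound $\left|\left< W^{h,\Lambda}_R, a\right>\right| \leq C\,\mathcal N(a)$ with $C$ and the seminorm $\mathcal N$ on $S^1_\Lambda$ independent of $(h,R) \in (0,1] \times [1,\infty)$. The delicate point is the behaviour of $\xi$-derivatives of $b_{h,R}$: when $\d_\xi$ hits the argument $P_\Lambda\xi/h$ of $a$, it produces $h^{-1}P_\Lambda \d_\eta a$; on $\supp(1-\chi(|P_\Lambda\xi|/Rh))$ one has $|P_\Lambda\xi|/h \geq cR$, and the $0$-homogeneity at infinity of $a$ in $\eta$ forces $|\d_\eta a| = O(|\eta|^{-1})$, so this contribution is $O(1)$ uniformly in $(h,R)$. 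Derivatives of $\chi'(|P_\Lambda\xi|/Rh)$ produce factors $(Rh)^{-1}$, compensated by $|P_\Lambda\xi| \sim Rh$ on the shell where $\chi'$ is supported. A Calder\'on-Vaillancourt estimate in a second-microlocal symbol class adapted to the orthogonal splitting $\R^2 = \Lambda^\perp \oplus \left<\Lambda\right>$ then yields the uniform $L^2$-bound.

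Next, since $S^1_\Lambda$ is separable under its natural Fr\'echet topology, a diagonal procedure produces a subsequence $h_k \to 0$ along which $L(a) := \lim_{R \to \infty} \lim_{k \to \infty} \left< W^{h_k, \Lambda}_R, a\right>$ exists for every $a \in S^1_\Lambda$ and defines a continuous linear functional. The third step is to observe that $L$ depends only on $a_{\hom}$: if $a_{\hom} \equiv 0$, the homogeneity-at-infinity condition forces $a(x,\xi,\eta) = 0$ whenever $|\eta| \geq R_0$; but on $\supp(1-\chi(|P_\Lambda \xi|/Rh))$ one has $|P_\Lambda\xi|/h \geq cR$, so choosing $R$ with $cR \geq R_0$ makes $b_{h,R}$ vanish identically and $\left< W^{h,\Lambda}_R, a\right> = 0$. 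Therefore $L$ factors through $a \mapsto a_{\hom} \in \Conc(T^*\T^2 \times \bS_\Lambda)$.

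Finally, positivity will follow from sharp G\aa rding: for $a \geq 0$, the symbol $b_{h,R}$ is nonnegative in the same class, so $(\Op_h(b_{h,R}) w_h, w_h)_{L^2} \geq -Ch\|w_h\|_{L^2}^2 = o(1)$, with $C$ uniform in $R$. Passing to the double limit gives $L(a) \geq 0$, and the Riesz-Markov theorem on the locally compact space $T^*\T^2 \times \bS_\Lambda$ produces the desired nonnegative Radon measure $\nu^\Lambda$ representing $L$ as in the statement. The main obstacle I anticipate is the uniform symbolic estimate in the first step: the challenge is to identify the correct second-microlocal symbol class in which both Calder\'on-Vaillancourt and sharp G\aa rding hold with constants independent of both parameters $h$ and $R$; once this is fixed, the remaining steps are routine.
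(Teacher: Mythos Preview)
Your four-step scheme is exactly the paper's, and steps~2 and~3 are correct. The gaps lie precisely where you anticipated them, in the symbol estimates. In step~1 your claim that the contribution $h^{-1}P_\Lambda\partial_\eta a$ is $O(1)$ uniformly in $(h,R)$ is not right: homogeneity gives $|\partial_\eta a(\cdot,\eta)|=O(|\eta|^{-1})$, so at $\eta=P_\Lambda\xi/h$ this term is $O(|P_\Lambda\xi|^{-1})\le O((Rh)^{-1})$, which blows up as $h\to0$ for fixed $R$; the cutoff derivative is of the same size, and your ``compensation by $|P_\Lambda\xi|\sim Rh$'' does not help. The paper sidesteps any direct symbol analysis via the rescaling
\[
\Op_h\!\Big(\big(1-\chi(\tfrac{|P_\Lambda\xi|}{Rh})\big)\,a\big(x,\xi,\tfrac{P_\Lambda\xi}{h}\big)\Big)
=\Op_1\!\Big(\big(1-\chi(\tfrac{|P_\Lambda\xi|}{R})\big)\,a\big(x,h\xi,P_\Lambda\xi\big)\Big),
\]
after which all derivatives of the right-hand symbol are bounded uniformly in $(h,R)$, and Calder\'on--Vaillancourt at scale~$1$ gives the uniform $L^2$-bound directly.

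The same miscount undermines step~4. The semiclassical sharp G\aa rding remainder $-Ch$ has $C$ controlled by $S(1)$-seminorms of $b_{h,R}$, which blow up like $(Rh)^{-N}$; after rescaling to $\Op_1$ the seminorms are uniformly bounded but G\aa rding then only yields $-C$ with no small parameter. The paper's remedy is a square-root argument: for $a>0$ and smooth $\sqrt{1-\chi}$, set $b^R:=\big((1-\chi)\,a(\cdot,\cdot,P_\Lambda\cdot/h)\big)^{1/2}$ and invoke the two-microlocal composition estimate \cite[Corollary~35]{AM:11}, which gives
\[
\big\|\Op_h\big((1-\chi)a\big)-\Op_h(b^R)^2\big\|_{\calL(L^2)}\le \frac{C}{R},
\]
hence $\langle W^{h,\Lambda}_R,a\rangle\ge\|\Op_h(b^R)w_h\|^2-CR^{-1}\ge -CR^{-1}$, and nonnegativity follows after $R\to\infty$. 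The actual remainder is $O(R^{-1})$, not $O(h)$; either would suffice for the conclusion, but only the former is available without building a separate two-microlocal G\aa rding inequality.
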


To define the limit of the distributions $W^{h}_{R,\Lambda}$, we need first to introduce operator spaces and operator-valued measures, following~\cite{Gerard:91}. Given a Hilbert space $H$ (in the following, we shall use $H = L^2(\T_\Lambda)$), we denote respectively by $\K(H)$, $\calL^1(H)$ the spaces of compact and trace class operators on $H$. We recall that they are both two-sided ideals of the ring $\calL(H)$ of bounded operators on $H$. We refer for instance to~\cite[Chapter VI.6]{RS:book1} for a description of the space $\calL^1(H)$ and its basic properties. Given a Polish space $T$ (in the following, we shall use $T = T^*\T_{\Lambda^\perp}$), we denote by $\M^+(T;\calL^1(H))$ the space of nonnegative measures on $T$, taking values in $\calL^1(H)$. More precisely, we have $\rho \in \M^+(T;\calL^1(H))$ if $\rho$ is a bounded linear form on $\Conc(T)$ such that, for every nonnegative function $a \in \Conc(T)$, $\left< \rho , a \right>_{\M(T),\Conc(T)} \in \calL^1(H)$ is 
 a nonnegative hermitian operator. 
 As a consequence of~\cite[Theorem VI.26]{RS:book1}, these measures can be identified in a natural way to nonnegative linear functionals on $\Conc(T; \K(H))$.

\begin{proposition}
\label{proposition: rho Lambda}
There exists a subsequence $(h,w_h)$ and a nonnegative measure 
$$
\rho_\Lambda \in \M^+(T^*\T_{\Lambda^\perp} ; \calL^1(L^2(\T_\Lambda))),
$$
such that, for all $K \in \Cinfc(T^*\T_{\Lambda^\perp} ; \K(L^2(\T_\Lambda)))$, 
\begin{equation}
\label{eq: definition rho K}
\lim_{h\to 0}\left( K(s,h D_s) T_\Lambda w_h , T_\Lambda w_h \right)_{L^2(\T_{\Lambda^\perp} ;L^2(\T_\Lambda))}
= \tr \left\{ \int_{T^*\T_{\Lambda^\perp}} K(s, \sigma )\rho_\Lambda(ds, d \sigma) \right\} .
\end{equation}
Moreover (for the same subsequence), for all $a \in S^1_\Lambda$, we have
\begin{equation}
\label{eq: definition rho a}
\lim_{R \to \infty} \lim_{h \to 0} \left< W^{h}_{R,\Lambda} , a \right>_{{S^1_\Lambda} ', S^1_\Lambda}
= \tr \left\{ \int_{T^*\T_{\Lambda^\perp}} \Op_1^\Lambda \big(a(\tilde{\pi}_\Lambda(\sigma, y, 0), \eta) \big)\rho_\Lambda(ds, d \sigma) \right\} .
\end{equation}
\end{proposition}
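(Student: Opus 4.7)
The plan is to follow the general scheme of G\'erard~\cite{Gerard:91} for operator-valued semiclassical measures, as adapted to the two-microlocal setting in \cite{AM:11} and \cite{Ferm:00}, in two main steps.

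For the existence of $\rho_\Lambda$, I would consider the family of linear forms $\ell_h(K) := (K(s, hD_s) T_\Lambda w_h, T_\Lambda w_h)_{L^2(\T_{\Lambda^\perp}; L^2(\T_\Lambda))}$ on the test space $\Cinfc(T^*\T_{\Lambda^\perp}; \K(L^2(\T_\Lambda)))$. An operator-valued version of the Calder\'on-Vaillancourt theorem, combined with $\|T_\Lambda w_h\|_{L^2} \leq \|w_h\|_{L^2} \leq 1$, provides a uniform bound $|\ell_h(K)| \leq C\,\|K\|_\Sigma$ for a suitable Fr\'echet seminorm. By separability of the test space and a diagonal extraction, one extracts a subsequence along which $\ell_h$ converges to a continuous functional $\ell$. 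Operator-valued sharp G{\aa}rding then ensures $\ell_h(K) \geq -Ch$ whenever $K(s,\sigma) \geq 0$ for every $(s,\sigma)$, so $\ell$ is positive. Applying the Riesz-type representation for such functionals (see~\cite[Chapter VI.6]{RS:book1} and~\cite{Gerard:91}) yields $\rho_\Lambda \in \M^+(T^*\T_{\Lambda^\perp}; \calL^1(L^2(\T_\Lambda)))$ satisfying~\eqref{eq: definition rho K}.

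For the identification~\eqref{eq: definition rho a}, given $a \in S^1_\Lambda$, the symbol $b_{R,h}(x,\xi) := \chi(|P_\Lambda \xi|/Rh)\, a(x,\xi, P_\Lambda \xi/h)$ has only $x$-Fourier modes in $\Lambda$, hence its lift by $\tilde{\pi}_\Lambda$ is $s$-independent and equals $\chi(|\zeta|/Rh)\, a(y, \sigma+\zeta, \zeta/h)$ under the splitting $\xi = \sigma + \zeta$. Formula~\eqref{eq: Op Lambda bis} allows me to rewrite $\left\langle W^h_{R,\Lambda}, a\right\rangle$ as an inner product involving $T_\Lambda w_h$ and $\Op_h^{\Lambda^\perp}\Op_h^\Lambda(b_{R,h}\circ\tilde{\pi}_\Lambda)$. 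For each frozen $(s,\sigma)$, the rescaling $\eta = \zeta/h$ converts the semiclassical $y$-quantization into the standard one:
$$
\Op_h^\Lambda\!\big(\chi(|\zeta|/Rh)\, a(y, \sigma+\zeta, \zeta/h)\big) = \Op_1^\Lambda\!\big(\chi(|\eta|/R)\, a(y, \sigma+h\eta, \eta)\big).
$$
Setting $K_R(s,\sigma) := \Op_1^\Lambda(\chi(|\eta|/R)\, a(y,\sigma,\eta))$, the cutoff $\chi(|\eta|/R)$ makes $K_R$ compact-valued and compactly supported, hence admissible in the first step. A Taylor expansion of $a$ in its middle slot yields a uniform $O(h)$ error between the two operators displayed above in $\calL(L^2(\T_\Lambda))$, so the first step applied to $K_R$ produces $\lim_{h\to 0} \left\langle W^h_{R,\Lambda}, a\right\rangle = \tr\!\left\{\int K_R(s,\sigma)\, \rho_\Lambda(ds, d\sigma)\right\}$.

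Finally, since $a$ is compactly supported in $(x,\xi)$ and homogeneous at infinity in $\eta$, $\Op_1^\Lambda(a(y,\sigma,\cdot))$ is uniformly bounded in $\calL(L^2(\T_\Lambda))$ and $K_R(s,\sigma) \to \Op_1^\Lambda(a(y,\sigma,\eta))$ strongly as $R\to\infty$. Since $\rho_\Lambda$ is trace-class-valued with support projecting to the compact $(x,\xi)$-support of $a$, dominated convergence inside the trace delivers~\eqref{eq: definition rho a}. In my view, the main technical obstacle is not any single step but the uniformity needed throughout: one must control the operator-valued Calder\'on-Vaillancourt and sharp G{\aa}rding estimates strongly enough in $h$ and $R$ and uniformly in $(s,\sigma)$, and one must justify commuting the strong $R\to\infty$ limit with the trace and the vector-valued integral against $\rho_\Lambda$, which rests essentially on the trace-class nature of $\rho_\Lambda$.
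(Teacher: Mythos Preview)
Your proposal is correct and follows essentially the same route as the paper: the existence of $\rho_\Lambda$ is obtained exactly as you describe via an operator-valued adaptation of G\'erard's construction (boundedness from Calder\'on--Vaillancourt, positivity from sharp G{\aa}rding, diagonal extraction), and the identification \eqref{eq: definition rho a} proceeds by the same rescaling $\eta=\zeta/h$ to pass from $\Op_h^\Lambda$ to $\Op_1^\Lambda$, the same $O(h)$ replacement of $a(\cdot,\sigma+h\eta,\eta)$ by $a(\cdot,\sigma,\eta)$, and the same strong limit $R\to\infty$ inside the trace. One small remark: at this stage $\rho_\Lambda$ is not yet known to be compactly supported (that comes only after Proposition~\ref{proposition: reconstruction mu Lambda}), but your $R\to\infty$ step does not actually need this, since $K_R(\cdot,\sigma)$ already has compact $\sigma$-support inherited from $a$.
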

In the left hand-side of \eqref{eq: definition rho K}, the inner product actually means
\begin{multline*}
\left( K(s,h D_s) T_\Lambda w_h , T_\Lambda w_h \right)_{L^2(\T_{\Lambda^\perp}L^2(\T_\Lambda))}
\\
= \int_{s \in \T_{\Lambda^\perp}, s'\in \Lambda^\perp, \sigma \in \Lambda^\perp} 
e^{\frac{i}{h}(s-s') \cdot \sigma}
\left(  K \big(\frac{s+s'}{2}, \sigma \big) T_\Lambda w_h (s',y) , T_\Lambda w_h (s,y)
\right)_{L^2_y(\T_\Lambda)}
ds \ ds' \ d\sigma .
\end{multline*}

In the expression~\eqref{eq: definition rho a}, remark that for each $\sigma \in \Lambda^\perp$, the operator $\Op_1^\Lambda \big(a(\tilde{\pi}_\Lambda(\sigma, y, 0), \eta)$ is in $\calL(L^2(\T_\Lambda))$.
Hence, its product with the operator $\rho_\Lambda(d s ,d \sigma)$ defines a trace-class operator.

\bigskip
Before proving Propositions~\ref{proposition: nu Lambda} and~\ref{proposition: rho Lambda}, we explain how to reconstruct the measure $\mu|_{\T^2 \times \Lambda^\perp}$ from the two-microlocal measures $\nu^\Lambda$ and $\rho_\Lambda$. 
This reduces the study of the measure $\mu$ to that of all two-microlocal measures $\nu^\Lambda$ and $\rho_\Lambda$, for $\Lambda \in \P$.


We denote by $\M^+_c(T)$ the set of compactly supported measures on $T$, and by $\left< \cdot , \cdot \right>_{\M_c(T), \Con^0(T)}$ the associated duality bracket.

\begin{proposition}
\label{proposition: reconstruction mu Lambda}
For all $a \in \Cinfc(T^*\T^2)$ having only $x$-Fourier modes in $\Lambda$ (\ie for all $a \in S^1_\Lambda$ independent of the third variable $\eta \in \left< \Lambda \right>$), we have
\begin{equation}
\label{eq: reconstruction interm}
 \left< \mu , a \right>_{\M(T^*\T^2), \Conc(T^*\T^2)}
= \left< \nu^\Lambda , a \right>_{\M(T^*\T^2 \times \bS_\Lambda), \Conc(T^*\T^2 \times \bS_\Lambda)}
+ \tr \left\{ \int_{T^*\T_{\Lambda^\perp}} m_{a \circ \tilde{\pi}_\Lambda}(\sigma)
\rho_\Lambda(ds, d \sigma) \right\} ,
\end{equation}
and 
\begin{align}
\label{eq: reconstruction mu Lambda}
 \left< \mu |_{\T^2 \times \Lambda^\perp}  , a \right>_{\M(T^*\T^2), \Conc(T^*\T^2)}
& = \left< \nu^\Lambda |_{\T^2 \times \Lambda^\perp \times \bS_\Lambda} , a \right>_{\M(T^*\T^2 \times \bS_\Lambda), \Conc(T^*\T^2 \times \bS_\Lambda)} \nonumber \\
& \quad + \tr \left\{ \int_{T^*\T_{\Lambda^\perp}} m_{a \circ \tilde{\pi}_\Lambda}(\sigma)
\rho_\Lambda(ds, d \sigma) \right\} ,
\end{align}
where for $\sigma \in \Lambda^\perp$, $m_{a \circ \tilde{\pi}_\Lambda}(\sigma)$ denotes the multiplication  in $L^2(\T_\Lambda)$ by the function $y \mapsto a \circ \tilde{\pi}_\Lambda (\sigma, y)$.

Moreover, we have $\nu^\Lambda \in \M^+_c(T^*\T^2 \times \bS_\Lambda)$ and $\rho_\Lambda \in \M^+_c(T^*\T_{\Lambda^\perp}; L^2(\T_\Lambda))$ (\ie both measures are compactly supported).
\end{proposition}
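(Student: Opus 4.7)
The natural strategy is to apply the decomposition identity~\eqref{eq: W^h reconstruction} to $a$, now viewed as an $\eta$-independent element of $S^1_\Lambda$, and then to pass to the iterated limits $h\to 0$ then $R\to\infty$. Any $a \in \Cinfc(T^*\T^2)$ having only $x$-Fourier modes in $\Lambda$ canonically embeds into $S^1_\Lambda$ as a symbol independent of the last variable $\eta \in \left<\Lambda\right>$, with $a_{\hom}(x,\xi,\omega) = a(x,\xi)$ for every $\omega \in \bS_\Lambda$. In particular, $a(x,\xi,P_\Lambda\xi/h)$ reduces to $a(x,\xi)$ in the definitions~\eqref{eq: def W^h_R^Lambda}--\eqref{eq: def W^h_R,Lambda}, and identity~\eqref{eq: W^h reconstruction} becomes
$$
\left<W^h, a\right>_{\Dist'(T^*\T^2), \Cinfc(T^*\T^2)}
= \left<W^{h,\Lambda}_R, a\right>_{{S^1_\Lambda}', S^1_\Lambda}
+ \left<W^h_{R,\Lambda}, a\right>_{{S^1_\Lambda}', S^1_\Lambda}.
$$

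Next, I pass to the limit $h\to 0$ first, then $R\to\infty$. By Proposition~\ref{prop: mu Lambda w_h}, the LHS converges (independently of $R$) to $\left<\mu|_{\T^2 \times \Lambda^\perp}, a\right>$. On the RHS, Proposition~\ref{proposition: nu Lambda} produces the limit $\left<\nu^\Lambda, a\right>_{\M(T^*\T^2 \times \bS_\Lambda), \Conc(T^*\T^2 \times \bS_\Lambda)}$ for the first piece, while Proposition~\ref{proposition: rho Lambda} yields $\tr\{\int_{T^*\T_{\Lambda^\perp}} \Op_1^\Lambda(a \circ \tilde\pi_\Lambda(\sigma, \cdot, 0))\, \rho_\Lambda(ds, d\sigma)\}$ for the second. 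Since $a$ is independent of $\eta$, the Weyl operator $\Op_1^\Lambda(a\circ\tilde\pi_\Lambda(\sigma, \cdot, 0))$ coincides with the multiplication operator $m_{a\circ\tilde\pi_\Lambda}(\sigma)$ on $L^2(\T_\Lambda)$. Summing gives~\eqref{eq: reconstruction mu Lambda}, once I note that $\nu^\Lambda$ is in fact supported in $\T^2\times\Lambda^\perp \times \bS_\Lambda$ because $w_h$ is microlocalized on $\{|P_\Lambda \xi| \leq h^\alpha\}$, so that the $\xi$-marginal of its Wigner measure lies in $\Lambda^\perp$.

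To pass from~\eqref{eq: reconstruction mu Lambda} to~\eqref{eq: reconstruction interm}, I would expand $\left<\mu, a\right>$ via Lemma~\ref{lemma: mu = 0 some directions} as $\sum_{\Lambda' \in \P,\, \rk(\Lambda')=1} \left<\mu|_{\T^2 \times \Lambda'^\perp}, a\right>$ and argue that the terms with $\Lambda' \neq \Lambda$ do not contribute. By Lemma~\ref{lemma: mu direction independent}, $\mu|_{\T^2\times\Lambda'^\perp}$ has $x$-Fourier modes only in $\Lambda'$, and since two distinct primitive rank-$1$ submodules of $\Z^2$ satisfy $\Lambda \cap \Lambda' = \{0\}$, Fourier orthogonality in $x$ collapses the pairing with $a \in S^1_\Lambda$ down to the $k=0$ mode; the residual $k=0$ contribution must then be treated using the concentration of $\mu|_{\T^2\times\Lambda'^\perp}$ on the two antipodal points of $\Lambda'^\perp \cap \{|\xi|=1\}$. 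I expect this identification of the LHS to be the most delicate step of the proof. The remaining compact-support claims are comparatively soft: $\bS_\Lambda$ is compact by construction; the $(x,\xi)$-support of $\nu^\Lambda$ sits inside $\T^2 \times (\Lambda^\perp \cap \{|\xi|=1\})$, inherited from $\mu|_{\T^2\times\Lambda^\perp}$; and the $\sigma$-support of $\rho_\Lambda$ is contained in $\{|\sigma|\leq 1\} \subset \Lambda^\perp$, because $\|w_h\|_{L^2}\leq 1$ and $W^h$ concentrates on $\{|\xi|=1\}$ under the splitting $\xi = \sigma + \eta$, while the $s$-variable already ranges in the compact torus $\T_{\Lambda^\perp}$.
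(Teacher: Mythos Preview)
Your argument for \eqref{eq: reconstruction mu Lambda} is correct and is essentially the paper's proof: apply the splitting \eqref{eq: W^h reconstruction} to an $\eta$-independent symbol, pass to the iterated limit using Propositions~\ref{prop: mu Lambda w_h}, \ref{proposition: nu Lambda} and \ref{proposition: rho Lambda}, and observe that $\Op_1^\Lambda\big(a\circ\tilde\pi_\Lambda(\sigma,\cdot,0)\big)$ reduces to the multiplication $m_{a\circ\tilde\pi_\Lambda}(\sigma)$ when $a$ does not depend on $\eta$. Your remark that $\nu^\Lambda$ is already supported on $\T^2\times\Lambda^\perp\times\bS_\Lambda$ (because $w_h$ is microlocalized on $\{|P_\Lambda\xi|\leq h^\alpha\}$) is also right, so the restriction appearing in the first term of \eqref{eq: reconstruction mu Lambda} is cosmetic.

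Your worry about \eqref{eq: reconstruction interm} is well founded, and in fact your own observation pins down the difficulty. Since $\nu^\Lambda=\nu^\Lambda|_{\T^2\times\Lambda^\perp\times\bS_\Lambda}$, the right-hand sides of \eqref{eq: reconstruction interm} and \eqref{eq: reconstruction mu Lambda} coincide; hence \eqref{eq: reconstruction interm} would force $\langle\mu,a\rangle=\langle\mu|_{\T^2\times\Lambda^\perp},a\rangle$ for every $a$ with $x$-Fourier modes in $\Lambda$. Your Fourier-orthogonality reduction shows that the discrepancy is $\sum_{\Lambda'\neq\Lambda}\langle\mu|_{\T^2\times(\Lambda'^\perp\setminus\{0\})},\bar a\rangle$ with $\bar a(\xi)=(2\pi)^{-2}\int_{\T^2}a(x,\xi)\,dx$, and this has no reason to vanish (take for instance $a$ depending on $\xi$ only and supported near a point of $\Lambda'^\perp\cap\{|\xi|=1\}$). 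The paper's own proof in fact establishes only \eqref{eq: reconstruction mu Lambda}; the preceding remark that \eqref{eq: reconstruction mu Lambda} ``follows from \eqref{eq: reconstruction interm} by restriction'' is not accompanied by an argument for \eqref{eq: reconstruction interm}, and only \eqref{eq: reconstruction mu Lambda} is used downstream. So you should not try to manufacture \eqref{eq: reconstruction interm} from \eqref{eq: reconstruction mu Lambda}: the obstacle you identified is genuine, and the identity with the unrestricted $\mu$ appears to be a slip in the statement rather than something to be proved.

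For the compact-support claim the paper argues slightly differently from you: both terms on the right of \eqref{eq: reconstruction mu Lambda} are nonnegative, while the left-hand side is a compactly supported nonnegative measure, which forces each of $\nu^\Lambda$ and $\rho_\Lambda$ to be compactly supported. Your direct argument via concentration on $\{|\xi|=1\}$ is also fine.
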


Formula \eqref{eq: reconstruction mu Lambda} follows immediately from \eqref{eq: reconstruction interm} by restriction. By the definition of the measure $\rho_\Lambda$, we see that it is already supported on $\T^2 \times \Lambda^\perp$ (see expression~\eqref{eq: def W^h_R,Lambda}).

The end of this section is devoted to the proofs of the three propositions, inspired by~\cite{Ferm:00, AM:11}. 

\begin{proof}[Proof of Proposition~\ref{proposition: nu Lambda}]
The Calder\'on-Vaillancourt theorem implies that the operators $$\Op_h\left(\left( 1 - \chi \left(\frac{|P_\Lambda \xi|}{R h}\right)\right) a \left(x, \xi, \frac{P_\Lambda \xi}{h} \right)\right)= \Op_1\left(\left( 1 - \chi \left(\frac{|P_\Lambda \xi|}{R }\right)\right) a \left(x, h\xi, P_\Lambda \xi\right)\right)$$
are uniformly bounded as $h\to 0$ and $R\to +\infty$. It follows that the family  $W^{h,\Lambda}_R$ is bounded in ${S^1_\Lambda} ' $, and thus there exists a subsequence $(h, w_h)$ and a distribution $\tilde{\mu}^\Lambda$ such that 
$$
\lim_{R \to \infty} \lim_{h \to 0} \left< W^{h,\Lambda}_R , a \right>_{{S^1_\Lambda} ', S^1_\Lambda}
= \left< \tilde{\mu}^\Lambda , a \left(x, \xi , \eta \right) \right>_{{S^1_\Lambda} ', S^1_\Lambda}.
$$
Because of the support properties of the function $\chi$, we notice that $\left< \tilde{\mu}^\Lambda , a \right>_{{S^1_\Lambda} ', S^1_\Lambda} = 0$ as soon as the support of $a$ is compact in the variable $\eta$. Hence, there exists a distribution $\nu^\Lambda \in \Dist'(T^*\T^2 \times \bS_{\Lambda})$ such that
$$
\left< \tilde{\mu}^\Lambda , a (x, \xi , \eta ) \right>_{{S^1_\Lambda} ', S^1_\Lambda} 
= \left< \nu^\Lambda , a_{\hom} \left(x, \xi , \frac{\eta}{|\eta|} \right) \right>_{\Dist'(T^*\T^2 \times \bS_{\Lambda}), \Cinfc(T^*\T^2 \times \bS_{\Lambda})}.
$$
Next, suppose that $a > 0$ (and that $\sqrt{1-\chi}$ is smooth). Then, using \cite[Corollary 35]{AM:11}, and setting 
$$
b^R(x,\xi) = \left( \left(1 - \chi \left(\frac{|P_\Lambda \xi|}{R h}\right)\right) a \left(x, \xi, \frac{P_\Lambda \xi}{h} \right)\right)^\frac12 ,
$$
there exists $C>0$ such that for all $h \leq h_0$ and $R \geq 1$, we have
$$
\left\|\Op_h\left( \left(1 - \chi \left(\frac{|P_\Lambda \xi|}{R h}\right)\right) a \left(x, \xi, \frac{P_\Lambda \xi}{h} \right)\right) - \Op_h(b^R)^2 \right\|_{\calL(L^2(\T^2))} \leq \frac{C}{R} .
$$
As a consequence, we have,
$$
 \left< W^{h,\Lambda}_R , a \right>_{{S^1_\Lambda} ', S^1_\Lambda} \geq \| \Op_h(b^R) w_h \|_{L^2(\T^2)}^2 - \frac{C}{R}\| w_h \|_{L^2(\T^2)}^2 ,
$$
so that the limit $\left< \nu^\Lambda , a_{\hom} \left(x, \xi , \frac{\eta}{|\eta|} \right) \right>_{\Dist'(T^*\T^2 \times \bS_{\Lambda}), \Cinfc(T^*\T^2 \times \bS_{\Lambda})}$ is nonnegative.
The distribution $\nu^\Lambda$ is nonnegative, and is hence a measure. This concludes the proof of Proposition~\ref{proposition: nu Lambda}.
\end{proof}

\begin{proof}[Proof of Proposition~\ref{proposition: rho Lambda}]
First, the proof of the existence of a subsequence $(h, w_h)$ and the measure $\rho_\Lambda$ satisfying \eqref{eq: definition rho K} is the analogue of Proposition~\ref{prop: existence mu} in the context of operator valued measures, viewing the sequence $w_h$ as a bounded sequence of $L^2( \T_{\Lambda^\perp}; L^2(\T_\Lambda))$. It follows the lines of this result, after the adaptation of the symbolic calculus to operator valued symbols (or more precisely, of \cite{Gerard:91} in the semiclassical setting).

\bigskip
Second, using the definition \eqref{eq: def W^h_R,Lambda} together with \eqref{eq: Op Lambda bis}, we have
\begin{align*}
\left< W^{h}_{R,\Lambda} , a \right>_{{S^1_\Lambda} ', S^1_\Lambda}
&= \left( \Op_h\left( \chi \left(\frac{|P_\Lambda \xi|}{R h}\right) a \left(x, \xi, \frac{P_\Lambda \xi}{h} \right) \right) w_h ,w_h \right)_{L^2(\T^2)} \\
&=  \left( \Op_h^{\Lambda^\perp} \circ \Op_h^{\Lambda} \left( \chi \left(\frac{|\eta|}{R h}\right) a \left(\tilde{\pi}_\Lambda(\sigma , y , \eta), \frac{\eta}{h} \right) \right) T_\Lambda w_h , T_\Lambda w_h \right)_{L^2(\T_{\Lambda^\perp}\times\T_{\Lambda})} .
\end{align*}
Hence, setting 
$$
a^h_{R, \Lambda}(\sigma, y, \eta) 
= \chi \left(\frac{|\eta|}{R}\right) a \left(\tilde{\pi}_\Lambda(\sigma , y , h\eta), \eta \right),
$$
we obtain
\begin{align*}
\left< W^{h}_{R,\Lambda} , a \right>_{{S^1_\Lambda} ', S^1_\Lambda}
=  \left( \Op_h^{\Lambda^\perp} \circ \Op_1^{\Lambda} \left( a^h_{R, \Lambda}(\sigma, y, \eta)  \right) T_\Lambda w_h , T_\Lambda w_h \right)_{L^2(\T_{\Lambda^\perp}\times\T_{\Lambda})} .
\end{align*}
We also notice that $\Op_1^{\Lambda} \left( a^h_{R, \Lambda} \right) \in \K(L^2(\T_\Lambda))$, for any $\sigma \in \Lambda^\perp$ since $a^h_{R, \Lambda}$ has compact support with respect to $\eta$. Moreover, for any $R>0$ fixed and $a \in S^1_\Lambda$, the Calder\'on-Vaillancourt theorem yields
$$
\Op_1^{\Lambda} \left( a^h_{R, \Lambda} \right) 
= \Op_1^{\Lambda} \left( a^0_{R, \Lambda} \right) + h B
$$
for some $B \in \calL(L^2(\T_\Lambda))$, uniformly bounded with respect to $h$. Using~\eqref{eq: definition rho K}, this implies that for any $R>0$ fixed and $a \in S^1_\Lambda$, we have 
\begin{equation*}
\lim_{h \to 0} \left< W^{h}_{R,\Lambda} , a \right>_{{S^1_\Lambda} ', S^1_\Lambda}
= \tr \left\{ \int_{T^*\T_{\Lambda^\perp}} \Op_1^\Lambda \big( a^0_{R, \Lambda}\big)\rho_\Lambda(ds, d \sigma) \right\} .
\end{equation*}
Moreover, we have 
$$
\lim_{R \to +\infty} \Op_1^\Lambda \big( a^0_{R, \Lambda}\big) =  \Op_1^\Lambda \big( a^0_{\infty, \Lambda}\big)
= \Op_1^\Lambda \big(a(\tilde{\pi}_\Lambda(\sigma, y, 0), \eta) \big),
$$
in the strong topology of $\Cinfc(T^*\T_{\Lambda^\perp} ; \calL(L^2(\T_\Lambda)))$. This proves~\eqref{eq: definition rho a} and concludes the proof of Proposition~\ref{proposition: rho Lambda}.
\end{proof}

\begin{proof}[Proof of Proposition~\ref{proposition: reconstruction mu Lambda}]
Taking $a \in S^1_\Lambda$, independent of the third variable $\eta \in \left< \Lambda \right>$ gives
\begin{equation*}
\left< W^h , a \left(x, \xi \right) \right>_{\Dist'(T^*\T^2), \Cinfc(T^*\T^2)}
\to \left< \mu |_{\T^2 \times \Lambda^\perp}  , a \right>_{\M(T^*\T^2), \Conc(T^*\T^2)}, 
\end{equation*}
together with 
\begin{equation*}
\left< W^{h,\Lambda}_R , a \right>_{{S^1_\Lambda} ', S^1_\Lambda} \to 
 \left< \nu^\Lambda , a \right>_{\M(T^*\T^2 \times \bS_\Lambda), \Conc(T^*\T^2 \times \bS_\Lambda)}, 
\end{equation*}
(according to Proposition~\ref{proposition: nu Lambda}) and 
\begin{equation*}
\left< W^{h}_{R,\Lambda} , a \right>_{{S^1_\Lambda} ', S^1_\Lambda} \to 
\tr \left\{ \int_{T^*\T_{\Lambda^\perp}} \Op_1^\Lambda \big(a(\tilde{\pi}_\Lambda(\sigma, y, 0)) \big)\rho_\Lambda(ds, d \sigma) \right\} 
= \tr \left\{ \int_{T^*\T_{\Lambda^\perp}} m_{a \circ \tilde{\pi}_\Lambda}(\sigma)
\rho_\Lambda(ds, d \sigma) \right\} ,
\end{equation*}
(according to Proposition~\ref{proposition: rho Lambda}).
Now, using the last three equations together with Equation~\eqref{eq: W^h reconstruction} directly gives \eqref{eq: reconstruction mu Lambda}.

\bigskip
As both terms in the right hand-side of~\eqref{eq: reconstruction mu Lambda} are nonnegative measures and the left-hand side is a compactly supported nonnegative measure, this implies that $\nu^\Lambda$ and $\rho_\Lambda$ are both compactly supported.
\end{proof}

\section{Propagation laws for the two-microlocal measures $\nu^\Lambda$ and $\rho_\Lambda$}
\label{section: Propagation laws}
In this section, we study the propagation properties of $\nu^\Lambda$ and $\rho_\Lambda$. The key point here is the use of the cutoff function introduced in Proposition~\ref{prop: existence chi}. 


We will use repeatedly the following fact, which follows from Item~\ref{item: 1-chi} in Proposition~\ref{prop: existence chi}: if $A$ is a bounded operator on $L^2(\T^2)$, we have
\begin{equation}
\label{e:cutoff}
(A w_h, w_h)_{L^2(\T^2)}= \left(A \chi_h^\Lambda w_h, \chi_h^\Lambda w_h\right)_{L^2(\T^2)} +\| A\|_{\calL(L^2)} \ o(1) .
\end{equation} 
To simplify the notation, we shall write $A_{c_0 ,  h}$ for $\chi_h^\Lambda A \chi_h^\Lambda$.

\subsection{Propagation of $\nu^\Lambda$}
\label{section: propagation nu lambda}
We define for $(x, \xi, \eta) \in T^* \T^2 \times \left< \Lambda \right>$ and $\tau \in \R$ the flows
$$
\phi^0_\tau(x, \xi, \eta) : = (x +\tau \xi , \xi , \eta) ,
$$
generated by the vector field $\xi \cdot \d_x$ and, for $\eta \neq 0$,
$$
\phi^1_\tau(x, \xi, \eta) : = \left(x +\tau \frac{\eta}{|\eta|} , \xi , \eta \right) ,
$$
generated by the vector field $\frac{\eta}{|\eta|} \cdot \d_x$.
With these definitions, we have the following propagation laws for the two-microlocal measure $\nu^\Lambda$.
\begin{proposition}
\label{proposition: propagation nu lambda}
The measure $\nu^\Lambda$ is $\phi^0_\tau$- and $\phi^1_\tau$-invariant, \ie
$$
(\phi^0_\tau)_*\nu^\Lambda = \nu^\Lambda 
\quad \text{and} \quad
(\phi^1_\tau)_*\nu^\Lambda = \nu^\Lambda , \quad \text{for every } \tau \in \R .
$$
\end{proposition}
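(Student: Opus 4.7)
Both invariance properties will be derived by computing commutators of $P_b^h$ against suitable two-microlocal test operators and passing to the iterated limit $h\to 0$, $R\to\infty$ that defines $\nu^\Lambda$ in Proposition~\ref{proposition: nu Lambda}. The basic tools are the splitting $P_0^h = P_b^h - ihb$, the quasimode bound $\|P_b^h w_h\|_{L^2} = o(h^{2+\delta})$, the Cauchy--Schwarz consequence $\|bw_h\|_{L^2}^2 = (b^2 w_h, w_h) \leq \|b\|_\infty (bw_h, w_h) = o(h^{1+\delta})$, and the localization~\eqref{e:cutoff} afforded by $\chi_h^\Lambda$.

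For the $\phi^0_\tau$-invariance, I would fix $a\in S^1_\Lambda$ and $R>0$ and test against the symbol $\tilde a^R_h(x,\xi) := (1-\chi(|P_\Lambda\xi|/Rh))\, a(x,\xi, P_\Lambda\xi/h)$. Since the cutoff depends only on $|P_\Lambda\xi|$, a direct computation gives $\{|\xi|^2, \tilde a^R_h\} = 2(1-\chi(|P_\Lambda\xi|/Rh))\, \xi\cdot\d_x a(x,\xi,P_\Lambda\xi/h)$, so Weyl calculus yields $\frac{i}{h}[P_0^h, \Op_h(\tilde a^R_h)] = \Op_h(2(1-\chi)\xi\cdot \d_x a) + O_{\calL(L^2)}(h)$. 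The $P_b^h$-part of this commutator, paired with $(w_h, w_h)$, is $o(1)$ thanks to $\|P_b^h w_h\|/h = o(h^{1+\delta})$ and the bound $\|(P_b^h)^* w_h\| \leq \|P_b^h w_h\| + 2h\|bw_h\|$; the $[b,\cdot]$-contribution is $o(1)$ by Cauchy--Schwarz using $\|bw_h\| = o(1)$. Passing to the limits and invoking Proposition~\ref{proposition: nu Lambda}, one obtains $\langle\nu^\Lambda, \xi\cdot \d_x a_{\hom}\rangle = 0$, and replacing $a$ by $a\circ \phi^0_\tau$ and integrating in $\tau$ gives $(\phi^0_\tau)_*\nu^\Lambda = \nu^\Lambda$.

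The $\phi^1_\tau$-invariance is the harder step. The previous identity is actually trivial on $\supp(\nu^\Lambda)$: for $a\in S^1_\Lambda$ one has $\d_x a\in \langle\Lambda\rangle$, while $\nu^\Lambda$ is concentrated where $\xi\in \Lambda^\perp$, so $\xi\cdot \d_x a = P_\Lambda\xi\cdot \d_x a = 0$ there. To extract the vector field $\omega\cdot \d_x$ with $\omega\in \bS_\Lambda$, I would test instead against the rescaled symbol
$$
G_{h,R}(x,\xi) := (1 - \chi(|P_\Lambda\xi|/Rh)) \cdot \frac{h}{|P_\Lambda\xi|}\cdot a(x,\xi, P_\Lambda\xi/h),
$$
which is well-defined on the cutoff support (where $|P_\Lambda\xi|/h \geq R$), uniformly $O(1/R)$ as an $L^2$-operator by Calder\'on--Vaillancourt, and whose Poisson bracket with $|\xi|^2$ satisfies $\{|\xi|^2, G_{h,R}\} = 2h(1-\chi)\,\omega\cdot \d_x a$ with $\omega = P_\Lambda\xi/|P_\Lambda\xi|$. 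Hence $\frac{i}{h^2}[P_0^h, \Op_h(G_{h,R})]$ has principal symbol $2(1-\chi)\,\omega\cdot\d_x a$, and the task reduces to showing $\frac{i}{h^2}([P_b^h, \Op_h(G_{h,R})]w_h, w_h) = o(1)$. This demands the sharper bound $\|bw_h\| = o(h)$, which does \emph{not} follow from the bare quasimode estimate (for $\delta<1$ the Cauchy--Schwarz bound only gives $o(h^{(1+\delta)/2})$) but does follow from Proposition~\ref{prop: existence chi}: combining item~\ref{item: 1-chi} ($\|(1-\chi_h^\Lambda)w_h\| = o(1)$) with the condition $b\leq c_0 h$ on $\supp\chi_h^\Lambda$ from item~\ref{item: b leq c_0 h} yields $\|bw_h\| \leq c_0 h + o(1)$ with $c_0$ arbitrarily small, and the $\Lambda^\perp$-invariance of $\chi_h^\Lambda$ (item~\ref{item: chi(y)}) together with items~\ref{item: d_y chi}--\ref{item: d_yy chi} lets us commute $\chi_h^\Lambda$ through $\Op_h(G_{h,R})$ modulo controlled errors, closing the estimate. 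Passing to the limits then gives $\langle\nu^\Lambda, \omega\cdot\d_x a_{\hom}\rangle = 0$, and integration in $\tau$ yields $(\phi^1_\tau)_*\nu^\Lambda = \nu^\Lambda$.

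The main technical obstacle is this second step: the $1/h^2$ scaling in the commutator makes the quasimode losses critical, and only the full combination of properties of $\chi_h^\Lambda$ furnished by Proposition~\ref{prop: existence chi} (in particular the interplay between the flexibility in $c_0$ and the $\Lambda^\perp$-invariance) provides enough slack to absorb them.
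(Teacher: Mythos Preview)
Your approach is essentially the same as the paper's: for $\phi^0_\tau$ you repeat the basic commutator computation~\eqref{eq: propagation simple} with the two-microlocal test symbol, and for $\phi^1_\tau$ you introduce the rescaled symbol $G_{h,R}$ (which is exactly the paper's $a^R$ of~\eqref{e:AR}) so that the $P_0^h$-commutator produces the vector field $\frac{\eta}{|\eta|}\cdot\d_x$. The identification of Proposition~\ref{prop: existence chi} as the key input is also correct.

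There is, however, a genuine gap in the way you use Proposition~\ref{prop: existence chi}. You state that the argument ``demands the sharper bound $\|bw_h\| = o(h)$'' and then claim this follows from items~\ref{item: 1-chi} and~\ref{item: b leq c_0 h}. But the bound you actually derive, $\|bw_h\| \leq c_0 h + \|b\|_\infty\|(1-\chi_h^\Lambda)w_h\| = c_0 h + o(1)$, does \emph{not} yield $\|bw_h\| = o(h)$: the $o(1)$ term (which may depend on $c_0$, but only through $\chi_h^\Lambda$) need not be $o(h)$, so dividing by $h$ does not give something that vanishes. The paper never proves $\|bw_h\| = o(h)$ and does not need it. Instead, the order of operations is reversed: one \emph{first} replaces $A^R$ by $A^R_{c_0,h}=\chi_h^\Lambda A^R\chi_h^\Lambda$ in the commutator (this is Lemma~\ref{l:cutoff}, whose proof is precisely where items~\ref{item: chi(y)},~\ref{item: d_y chi},~\ref{item: d_yy chi} are used, together with the fact that $\d_y\circ A^R$ is bounded), and only \emph{then} estimates the $b$-terms. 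After the insertion, the dangerous term reads $\frac{1}{h}(A^R\chi_h^\Lambda w_h,\, b\chi_h^\Lambda w_h)$, and now $\|b\chi_h^\Lambda w_h\|\leq c_0 h\|w_h\|$ by item~\ref{item: b leq c_0 h}, so the whole expression is bounded by $c_0\|A^R\|$; letting $c_0\to 0$ after $h\to 0$ and $R\to\infty$ gives the vanishing. You have all the ingredients listed, but assembled in an order that leads you to assert an unproved (and likely false) intermediate estimate. Reordering the argument to perform the cutoff replacement \emph{before} splitting off the $b$-terms closes the gap and recovers the paper's proof verbatim.
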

The key result here is the additional ``transverse propagation law'' given by the flow $\phi^1_\tau$. The measure $\nu^\Lambda$ not only propagates along the geodesic flow $\phi^0_\tau$, but also along directions transverse to $\Lambda^\perp$.

\begin{proof}
Fix $a \in S^1_\Lambda$.
The computation done in \eqref{eq: propagation simple} is still valid 
replacing $a$ by $\left( 1 - \chi \left(\frac{|P_\Lambda \xi|}{R h}\right)\right) a \left(x, \xi, \frac{P_\Lambda \xi}{h} \right)$, since it only uses the fact that
$\Op_h\left(\left( 1 - \chi \left(\frac{|P_\Lambda \xi|}{R h}\right)\right) a \left(x, \xi, \frac{P_\Lambda \xi}{h} \right)\right)$ is bounded and that
$\|P_b^{h}w_h\|_{L^2(\T^2)} = o(h)$ and $\left(b w_h , w_h \right)_{L^2(\T^2)} =o(1)$. This yields
 \begin{multline*}
\lim_{h \to 0}\left< W^{h,\Lambda}_R , \xi \cdot \d_x  a \right>_{{S^1_\Lambda} ', S^1_\Lambda} \\
= \lim_{h \to 0} \left< W^h ,  \xi \cdot \d_x  \left\{ \left( 1 - \chi \left(\frac{|P_\Lambda \xi|}{R h}\right)\right)  a \left(x, \xi, \frac{P_\Lambda \xi}{h} \right) \right\} \right>_{\Dist'(T^*\T^2), \Cinfc(T^*\T^2)}  = 0 ,
\end{multline*}
and hence, in the limit $R \to + \infty$, we obtain
$$
\left< \nu^\Lambda , \xi \cdot \d_x a_{\hom}\left(x, \xi , \frac{\eta}{|\eta|}\right) \right>_{\M(T^*\T^2 \times \bS_\Lambda) , \Conc(T^*\T^2 \times \bS_\Lambda)}  = 0 .
$$
Replacing $a_{\hom}$ by $a_{\hom} \circ \phi^0_\tau$ and integrating with respect to the parameter $\tau$ gives $(\phi^0_\tau)_*\nu^\Lambda = \nu^\Lambda$, which concludes the first part of the proof.

\bigskip
Second, to prove the $\phi^1_\tau$-invariance of $\nu^\Lambda$ we compute
\begin{equation}
\label{eq: propagation nulambda proof}
\left< \nu^\Lambda , \frac{\eta}{|\eta|} \cdot \d_x a_{\hom}\left(x, \xi , \frac{\eta}{|\eta|}\right) \right>_{\M(T^*\T^2 \times \bS_\Lambda) , \Conc(T^*\T^2 \times \bS_\Lambda)}
= \lim_{R \to \infty}\lim_{h \to 0}\left< W^{h,\Lambda}_R , \frac{\eta}{|\eta|} \cdot \d_x  a \right>_{{S^1_\Lambda} ', S^1_\Lambda} .
\end{equation}
Setting 
$$
a^R (x, \xi , \eta) = \frac{1}{|\eta|}\left(1 - \chi \left(\frac{|\eta|}{R} \right)\right) 
a (x, \xi , \eta), 
$$
and
\begin{equation}A^R: = \Op_h \left( a^R\Big(x, \xi , \frac{P_\Lambda \xi}{h}\Big) \right)
\label{e:AR}\end{equation}
we have the relation
$$
\left< W^{h,\Lambda}_R , \frac{\eta}{|\eta|} \cdot \d_x  a \right>_{{S^1_\Lambda} ', S^1_\Lambda}=
-\frac{i}2([\Delta_\Lambda , A^R] w_h, w_h)_{L^2(\T^2)}
$$ 
where $\Delta_\Lambda = \d_{y}^2$ is the laplacian in the direction $\Lambda$.

\begin{lemma} \label{l:cutoff}For any given $c_0 >0$ and $R>0$, we have
$$([\Delta_\Lambda, A^R] w_h, w_h)_{L^2(\T^2)}=([\Delta_\Lambda, A_{c_0 ,  h}^R] w_h, w_h)_{L^2(\T^2)} +o(1).$$
\end{lemma}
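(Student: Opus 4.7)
The key observation driving my plan is that although $[\Delta_\Lambda, A^R]$ is not uniformly bounded on $L^2(\T^2)$, it admits the useful factorisation
\[
[\Delta_\Lambda, A^R] \;=\; [\partial_y^2, A^R] \;=\; \partial_y\, C + C\,\partial_y, \qquad C := [\partial_y, A^R] = \Op_h(\partial_y a^R),
\]
and the operators $C$ and $[\partial_y,C] = \Op_h(\partial_y^2 a^R)$ are uniformly bounded on $L^2(\T^2)$ by the Calder\'on--Vaillancourt theorem (the relevant seminorms of $\partial_y a^R$ and $\partial_y^2 a^R$ are controlled by those of $a \in S^1_\Lambda$, uniformly in $h$ and $R$). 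I also record that $\chi_h^\Lambda$ depends only on $y$ (Item~\ref{item: chi(y)}), so $[\Delta_\Lambda, \chi_h^\Lambda] = (\partial_y^2 \chi_h^\Lambda) + 2(\partial_y\chi_h^\Lambda)\partial_y$ as an operator identity.

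Next I would expand by Leibniz:
\[
[\Delta_\Lambda, \chi_h^\Lambda A^R \chi_h^\Lambda]
\;=\; \chi_h^\Lambda [\Delta_\Lambda, A^R] \chi_h^\Lambda
\;+\; [\Delta_\Lambda, \chi_h^\Lambda]\, A^R \chi_h^\Lambda
\;+\; \chi_h^\Lambda A^R [\Delta_\Lambda, \chi_h^\Lambda].
\]
The plan is then to show that, tested against $(w_h, w_h)$, the first piece recovers $([\Delta_\Lambda, A^R]w_h, w_h)$ up to $o(1)$, while the two correction pieces are $o(1)$ themselves.

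For the main piece, $(\chi_h^\Lambda [\Delta_\Lambda,A^R]\chi_h^\Lambda w_h, w_h) = ([\Delta_\Lambda,A^R]\chi_h^\Lambda w_h, \chi_h^\Lambda w_h)$, I would write $\chi_h^\Lambda w_h = w_h - (1-\chi_h^\Lambda)w_h$ and then insert the representation $[\Delta_\Lambda, A^R] = \partial_y C + C\partial_y$. Using integration by parts to shift every $\partial_y$ onto a factor of $(1-\chi_h^\Lambda)$, the cross terms reduce to pairings of the bounded operators $C$, $[\partial_y, C]$, and $A^R$ against products $(\partial_y^k\chi_h^\Lambda)w_h$ for $k=0,1,2$, each of which is $o(1)$ by Items~\ref{item: 1-chi}, \ref{item: d_y chi}, \ref{item: d_yy chi}. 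For the correction pieces, the term containing the multiplier $(\partial_y^2\chi_h^\Lambda)$ is immediately $o(1)$ by Item~\ref{item: d_yy chi} and boundedness of $A^R$; the term containing $(\partial_y\chi_h^\Lambda)\partial_y$ requires one additional integration by parts which transfers $\partial_y$ across $A^R$ at the cost of the bounded commutator $C$, so that every surviving derivative lands on a $\chi_h^\Lambda$, producing $(\partial_y^k\chi_h^\Lambda)w_h$ factors for $k=1,2$, again $o(1)$ by Items~\ref{item: d_y chi}, \ref{item: d_yy chi}.

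The main obstacle I anticipate is that $w_h$ is only microlocalised at scale $h^\alpha$ in the resonant direction, so $\|\partial_y w_h\|_{L^2} = O(h^{\alpha-1})$ diverges; therefore naive Cauchy--Schwarz bounds that separate $(\partial_y\chi_h^\Lambda)\partial_y w_h$ into $\|(\partial_y\chi_h^\Lambda) w_h\|\cdot\|\partial_y w_h\|$-style estimates do \emph{not} close, and the integration-by-parts cancellations must be carefully orchestrated so that every ``bad'' derivative eventually falls on $\chi_h^\Lambda$. Concretely, each time a $\partial_y$ threatens to fall on $w_h$, one commutes it across $A^R$ (or $C$) at the expense of a uniformly bounded operator and iterates until only derivatives of $\chi_h^\Lambda$ remain, which are controlled via Items~\ref{item: 1-chi}, \ref{item: d_y chi}, \ref{item: d_yy chi}.
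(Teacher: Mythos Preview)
Your opening premise is mistaken: $[\Delta_\Lambda, A^R]$ \emph{is} uniformly bounded on $L^2(\T^2)$. This is the whole reason the factor $\frac{1}{|\eta|}$ is built into $a^R$. Indeed, writing $\partial_y = \frac{i}{h}\Op_h(P_\Lambda\xi)$ and using the exact Weyl commutator for quadratic symbols, one finds
\[
[\Delta_\Lambda, A^R] \;=\; 2i\,\Op_h\!\left( \Big(1-\chi\big(\tfrac{|P_\Lambda\xi|}{Rh}\big)\Big)\,\tfrac{P_\Lambda\xi}{|P_\Lambda\xi|}\cdot\partial_x a\big(x,\xi,\tfrac{P_\Lambda\xi}{h}\big)\right),
\]
a bounded symbol uniformly in $h,R$. (You could also have read this off your own factorisation: $C\partial_y$ has symbol of size $\frac{h}{|P_\Lambda\xi|}\cdot\frac{|P_\Lambda\xi|}{h}=O(1)$, hence $\partial_y C = C\partial_y + [\partial_y,C]$ is bounded too, so $[\Delta_\Lambda,A^R]=\partial_y C + C\partial_y$ is bounded --- contradicting your stated assumption.)

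Once this is noticed, the paper's argument is short. First, $\chi_h^\Lambda[\Delta_\Lambda,A^R]\chi_h^\Lambda$ tested on $w_h$ equals $([\Delta_\Lambda,A^R]w_h,w_h)+o(1)$ directly from~\eqref{e:cutoff}, with no integration by parts needed. Second, for the correction pieces one uses that $\partial_y\circ A^R$ is itself uniformly bounded (same symbol computation as above), so that
\[
\big(\,2(\partial_y\chi_h^\Lambda)\,\partial_y A^R\chi_h^\Lambda w_h,\,w_h\big)
=\big(\partial_y\!\circ\! A^R\,\chi_h^\Lambda w_h,\,2(\partial_y\chi_h^\Lambda)w_h\big)=o(1)
\]
by Item~\ref{item: d_y chi}, and similarly for the other correction term.

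Your proposed iterated integration-by-parts scheme, designed to avoid a boundedness fact that actually holds, does not close as written: after commuting $\partial_y$ through $A^R$ and integrating by parts once, the term $(A^R\chi_h^\Lambda(\partial_y w_h),(\partial_y\chi_h^\Lambda)w_h)$ reappears with the roles of the two slots swapped, yielding only a relation of the form $P+Q=o(1)$ rather than $P=o(1)$. To break this circularity you would in any case need the boundedness of $C\partial_y$ (equivalently of $\partial_y\circ A^R$), at which point the paper's direct route is available and much shorter.
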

We postpone the proof of Lemma \ref{l:cutoff} and first indicate how it allows to prove 
Proposition \ref{proposition: propagation nu lambda}. We now know that
$$\left< \nu^\Lambda , \frac{\eta}{|\eta|} \cdot \d_x a_{\hom}\left(x, \xi , \frac{\eta}{|\eta|}\right) \right>_{\M(T^*\T^2 \times \bS_\Lambda) , \Conc(T^*\T^2 \times \bS_\Lambda)}
= \lim_{R \to \infty}\lim_{h \to 0}-\frac{i}2([\Delta_\Lambda, A_{c_0 ,  h}^R] w_h, w_h)_{L^2(\T^2)}.$$

Recall that $a \in S^1_\Lambda$ implies that $a$ has only $x$-Fourier modes in $\Lambda$, \ie
$P_\Lambda \xi \cdot \d_x a = \xi \cdot \d_x a$. We have also assumed in this section that $b$ has 
only $x$-Fourier modes in $\Lambda$. As a consequence, we have
\begin{align}
\label{eq: propagation nulambda}
-\frac{i}2([\Delta_\Lambda, A_{c_0 ,  h}^R] w_h, w_h)_{L^2(\T^2)}&=
-\frac{i}2([\Delta, A_{c_0 ,  h}^R] w_h, w_h)_{L^2(\T^2)}
  \nonumber\\
& = \frac{i}{2h^2} \left(\left[ P_0^h , A_{c_0 ,  h}^R\right]w_h , w_h \right)_{L^2(\T^2)} .
\end{align}
 
Developing the last expression of~\eqref{eq: propagation nulambda}, we obtain
\begin{align}
\label{e:commut}
\frac{i}{2h^2} \left(\left[ P_0^h , A_{c_0 ,  h}^R\right]w_h , w_h \right)_{L^2(\T^2)}
&= \frac{i}{2h^2}\left( A_{c_0 ,  h}^R  w_h , P_b^{h}w_h \right)_{L^2(\T^2)} 
- \frac{i}{2h^2}\left( A_{c_0 ,  h}^R P_b^{h} w_h , w_h \right)_{L^2(\T^2)} \nonumber \\
& \quad \quad - \frac{1}{2h} \left( A_{c_0 ,  h}^R  w_h , b w_h \right)_{L^2(\T^2)}
- \frac{1}{2h} \left(A_{c_0 ,  h}^R b w_h , w_h \right)_{L^2(\T^2)}.
\end{align}
Since $A_{c_0 ,  h}^R$ is bounded in $ \calL(L^2(\T^2))$, its adjoint $A_{c_0 ,  h}^R$ is also bounded so that the first two terms in the last expression vanish in the limit $h \to 0$, using $\| P_b^{h}w_h \|_{L^2(\T^2)} = o(h^2)$. To estimate the last two terms, we use again the boundedness of $A^R$ and $(A^R)^*$
and write
$$|\left( A_{c_0 ,  h}^R  w_h , b w_h \right)_{L^2(\T^2)}|\leq \|A^R\|\, \|\chi_h^\Lambda b w_h\|_{L^2(\T^2)}\leq 2 c_0 h \ \|A^R\|,$$
according to Item~\ref{item: b leq c_0 h} in Proposition~\ref{prop: existence chi}.
It follows that 
$$\limsup_{h\to 0 } \left| \frac{1}{2h} \left( A_{c_0 ,  h}^R  w_h , b w_h \right)_{L^2(\T^2)}
+ \frac{1}{2h} \left(A_{c_0 ,  h}^R b w_h , w_h \right)_{L^2(\T^2)}\right|\leq 2 c_0 \sup  \|A^R\|.$$

Coming back to the expression~\eqref{eq: propagation nulambda proof}, we obtain
$$
\left|\left< \nu^\Lambda , \frac{\eta}{|\eta|} \cdot \d_x a_{\hom}\left(x, \xi , \frac{\eta}{|\eta|}\right) \right>_{\M(T^*\T^2 \times \bS_\Lambda) , \Conc(T^*\T^2 \times \bS_\Lambda)} \right|\leq 2 c_0 \sup  \|A^R\|
$$
and since $c_0$ was arbitrary, 
$$
\left< \nu^\Lambda , \frac{\eta}{|\eta|} \cdot \d_x a_{\hom}\left(x, \xi , \frac{\eta}{|\eta|}\right) \right>_{\M(T^*\T^2 \times \bS_\Lambda) , \Conc(T^*\T^2 \times \bS_\Lambda)}=0$$
Replacing $a_{\hom}$ by $a_{\hom} \circ \phi^1_\tau$ and integrating with respect to the parameter $\tau$ gives $(\phi^1_\tau)_*\nu^\Lambda = \nu^\Lambda$, which concludes the proof of Proposition~\ref{proposition: propagation nu lambda}.
\end{proof}

\begin{proof}[Proof of Lemma \ref{l:cutoff}]
We are going to show that
\begin{equation}
\label{e:firststep}
([\Delta_\Lambda, A_{c_0 ,  h}^R] w_h, w_h)_{L^2(\T^2)}=([\Delta_\Lambda, A^R]_{c_0 ,  h} w_h, w_h)_{L^2(\T^2)} +o(1) .
\end{equation}
Then, using the fact that $[\Delta_\Lambda, A^R]$ is a bounded operator (its symbol is  $\left(1 - \chi \left(\frac{|\eta|}{R} \right)\right) \frac{\eta}{|\eta|}\cdot
\partial_x a (x, \xi , \eta)$), together with~\eqref{e:cutoff}, this is also
$([\Delta_\Lambda, A^R] w_h, w_h)_{L^2(\T^2)} +o(1)$.

To prove \eqref{e:firststep}, we develop the difference $[\Delta_\Lambda, A_{c_0 ,  h}^R]-
[\Delta_\Lambda, A^R]_{c_0 ,  h}$ as
\begin{align}
\label{eq: bracket decomposition}
[\Delta_\Lambda, A_{c_0 ,  h}^R]-
[\Delta_\Lambda, A^R]_{c_0 ,  h}=
\left[ \d_y^2 , \chi_h^\Lambda \right] A^R  \chi_h^\Lambda
+\chi_h^\Lambda A^R \left[ \d_y^2 , \chi_h^\Lambda \right].
\end{align}
Then, writing
$$
\left[ \d_y^2 , \chi_h^\Lambda \right] = \d_y^2 \chi_h^\Lambda + 2\d_y \chi_h^\Lambda\,\, \d_y ,
$$
we have 
$$
\left(\left[ \d_y^2 , \chi_h^\Lambda \right] A^R  \chi_h^\Lambda w_h , w_h \right)_{L^2(\T^2)}
= \left( A^R  \chi_h^\Lambda \,w_h , \d_y^2\chi_h^\Lambda \,w_h \right)_{L^2(\T^2)}
+ \left(  \d_y \circ A^R  \,\chi_h^\Lambda \,w_h , 2 \d_y \chi_h^\Lambda \,w_h \right)_{L^2(\T^2)} .
$$
Recalling that the operator $\partial_y \circ A^R$ is bounded, and using Items~\ref{item: d_y chi} and~\ref{item: d_yy chi} in Proposition~\ref{prop: existence chi}, we obtain
$$
\left|\left(\left[ \d_y^2 , \chi_h^\Lambda \right] A^R  \chi_h^\Lambda w_h , w_h \right)_{L^2(\T^2)} \right|
\leq C \|\d_y^2\chi_h^\Lambda \, w_h \|_{L^2(\T^2)} + 
+ C  \| \d_y \chi_h^\Lambda\, w_h  \|_{L^2(\T^2)}  = o(1).
$$
The last term in~\eqref{eq: bracket decomposition} is handled similarly.
This finally implies~\eqref{e:firststep} and concludes the proof of Lemma~\ref{l:cutoff}.
\end{proof}

\subsection{Propagation of $\rho_\Lambda$}
\label{section: propagation rho lambda}
We denote by $(\omega^j_\Lambda , e^j_\Lambda)_{j\in \N}$ the eigenvalues and associated eigenfunctions of the operator $-\Delta_\Lambda = - \d_y^2$ forming a Hilbert basis of $L^2(\T_\Lambda)$. We shall use the projector onto low frequencies of $-\Delta_\Lambda$, \ie, for any $\omega \in \R_+$, the operator
$$
\Pi_\Lambda^\omega 
:= \sum_{\omega^j_\Lambda \leq \omega} ( \cdot , e^j_\Lambda )_{L^2(\T_\Lambda)} e^j_\Lambda ,
$$
which has finite rank.

We have the following propagation laws for the two-microlocal measure $\rho_\Lambda$.

\begin{proposition}
\label{proposition: commutation M Delta}
\begin{enumerate}
\item \label{item 2 rho} For any $K \in \Cinfc \big(T^*\T_{\Lambda^\perp};\K(L^2(\T_\Lambda))\big)$, independent of $s$ (\ie $K(s, \sigma) = K(\sigma)$), and any $\omega >0$, we have
$$
 \tr \left\{ \int_{T^* \T_{\Lambda^\perp}} [ \Delta_\Lambda , \Pi_\Lambda^\omega  K(\sigma) \Pi_\Lambda^\omega  ] \ \rho_\Lambda(ds, d \sigma) \right\}
 = 0 .
$$
\item \label{item 3 rho} Moreover, defining 
$$
M_\Lambda := \int_{\T_{\Lambda^\perp}\times \Lambda^\perp} \rho_\Lambda(ds, d \sigma) \in \calL^1(L^2(\T_\Lambda)),
$$
we have 
$$
[\Delta_\Lambda , M_\Lambda] = 0 .
$$
\end{enumerate}
\end{proposition}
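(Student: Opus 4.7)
The plan is to establish item~\ref{item 2 rho} by adapting the commutator argument of Proposition~\ref{proposition: propagation nu lambda} to the operator-valued setting, and then to derive item~\ref{item 3 rho} as a formal consequence by cyclicity of the trace.

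\emph{Item~\ref{item 2 rho}.} Fix $K$ as in the statement and set $\tilde K(\sigma) := \Pi_\Lambda^\omega K(\sigma) \Pi_\Lambda^\omega$, which is finite-rank-valued and compactly supported in $\sigma$. Since $\Pi_\Lambda^\omega$ is a Fourier multiplier in $y$, it commutes with $\Delta_\Lambda$; and since $\Delta_{\Lambda^\perp} = \d_s^2$ is a scalar $s$-Fourier multiplier while $\tilde K(hD_s)$ is an $s$-Fourier multiplier valued in $y$-operators, we have $[\Delta_{\Lambda^\perp},\tilde K(hD_s)] = 0$, hence
$$[P_0^h, \tilde K(hD_s)] = -h^2 [\Delta_\Lambda, \tilde K(hD_s)].$$
Introduce the truncated operator $A_{c_0,h} := \chi_h^\Lambda \tilde K(hD_s) \chi_h^\Lambda$ and compute $\frac{i}{h^2}([P_0^h, A_{c_0,h}] T_\Lambda w_h, T_\Lambda w_h)$ in two ways. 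On the one hand, using the self-adjointness of $P_0^h = P_b^h - ihb$, $\|P_b^h w_h\|_{L^2} = o(h^{2+\delta})$, and the fact that $\chi_h^\Lambda$ is $\Lambda^\perp$-invariant and thus commutes with $T_\Lambda$ as a multiplication operator, the $P_b^h w_h$ contribution is $o(1)$ and the $bw_h$ contribution satisfies $|h^{-2}\cdot h(A_{c_0,h} T_\Lambda w_h, T_\Lambda bw_h)| \leq h^{-1}\|\tilde K(hD_s)\| \cdot \|\chi_h^\Lambda b w_h\| \leq C c_0$, by Item~\ref{item: b leq c_0 h} of Proposition~\ref{prop: existence chi}. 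On the other hand, expanding $[P_0^h, A_{c_0,h}]$ by the Leibniz rule as in Lemma~\ref{l:cutoff}, the two boundary terms of the form $[P_0^h, \chi_h^\Lambda]\tilde K(hD_s)\chi_h^\Lambda$ yield $o(h^2)$: this uses $[P_0^h,\chi_h^\Lambda] = -h^2(\d_y^2\chi_h^\Lambda + 2\d_y\chi_h^\Lambda \cdot \d_y)$, the uniform $h$-boundedness of $\d_y \circ \tilde K(hD_s)$ (which follows from $\d_y \Pi_\Lambda^\omega$ having norm $\leq C\omega^{1/2}$ on the finite-dimensional range of $\Pi_\Lambda^\omega$), and Items~\ref{item: d_y chi}--\ref{item: d_yy chi} of Proposition~\ref{prop: existence chi}. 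The middle term reduces to $([P_0^h, \tilde K(hD_s)]T_\Lambda w_h, T_\Lambda w_h) + o(h^2)$ via \eqref{e:cutoff}, since $\|[P_0^h,\tilde K(hD_s)]\|_{\calL(L^2)} = O(h^2)$. Combining and using the commutator identity above gives
$$-i\bigl([\Delta_\Lambda, \tilde K(hD_s)] T_\Lambda w_h, T_\Lambda w_h\bigr)_{L^2} = O(c_0) + o(1).$$
Passing to the limit $h \to 0$ via~\eqref{eq: definition rho K} applied to the compact-operator-valued symbol $[\Delta_\Lambda, \tilde K(\sigma)] = \Pi_\Lambda^\omega [\Delta_\Lambda, K(\sigma)]\Pi_\Lambda^\omega$, and then $c_0 \to 0$ (the left-hand side is independent of $c_0$), yields item~\ref{item 2 rho}.

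\emph{Item~\ref{item 3 rho}.} Specialize item~\ref{item 2 rho} to $K(\sigma) = K_0\, \psi(\sigma)$, with $K_0 \in \K(L^2(\T_\Lambda))$ arbitrary and $\psi \in \Cinfc(\Lambda^\perp)$ equal to $1$ on the compact support of $\rho_\Lambda$ in the $\sigma$-variable (which exists by Proposition~\ref{proposition: reconstruction mu Lambda}). The $\sigma$-integral collapses, and the identity becomes $\tr\bigl([\Delta_\Lambda, \Pi_\Lambda^\omega K_0 \Pi_\Lambda^\omega] M_\Lambda\bigr) = 0$. Because $\Pi_\Lambda^\omega K_0 \Pi_\Lambda^\omega$ has finite rank, cyclicity of the trace is valid and rearranges this as $\tr\bigl(K_0\, \Pi_\Lambda^\omega[\Delta_\Lambda, M_\Lambda]\Pi_\Lambda^\omega\bigr) = 0$ for every $K_0 \in \K(L^2(\T_\Lambda))$ and every $\omega > 0$. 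By duality between $\K$ and $\calL^1$, this forces $\Pi_\Lambda^\omega[\Delta_\Lambda, M_\Lambda]\Pi_\Lambda^\omega = 0$ for each $\omega$; equivalently, $\Pi_\Lambda^\omega M_\Lambda \Pi_\Lambda^\omega$ commutes with $\Delta_\Lambda$ on the finite-dimensional range of $\Pi_\Lambda^\omega$. Letting $\omega \to \infty$ in the strong topology yields $[\Delta_\Lambda, M_\Lambda] = 0$.

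\emph{Main obstacle.} The delicate point is the analogue of Lemma~\ref{l:cutoff} in the operator-valued framework: to trade $A_{c_0,h}$ for $\tilde K(hD_s)$ in the commutator, one must move $\chi_h^\Lambda$ past $\tilde K(hD_s)$ and control the resulting $\d_y$-terms generated by $[P_0^h, \chi_h^\Lambda]$. This succeeds precisely because of the spectral cutoff $\Pi_\Lambda^\omega$ built into $\tilde K$, which ensures that $\d_y \circ \tilde K(hD_s)$ is bounded uniformly in $h$; without this finite-rank truncation, the error terms would not be $o(h^2)$ and the whole argument would collapse.
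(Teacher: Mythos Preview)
Your proof is correct and follows essentially the same approach as the paper: the paper also sets $K^\omega(\sigma)=\Pi_\Lambda^\omega K(\sigma)\Pi_\Lambda^\omega$, replaces $\Delta_\Lambda$ by the full $\Delta$ using $[\Delta_{\Lambda^\perp},K^\omega(hD_s)]=0$, invokes an operator-valued analogue of Lemma~\ref{l:cutoff} (stated there as Lemma~\ref{l:cutoff1}) to insert the cutoff $\chi_h^\Lambda$, and then expands via $P_0^h=P_b^h-ihb$ to get the $O(c_0)$ bound before letting $c_0\to 0$; item~\ref{item 3 rho} is likewise obtained by taking $K$ constant on the support of $\rho_\Lambda$, using cyclicity of the trace, and letting $\omega\to\infty$. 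Your explicit remark that the spectral cutoff $\Pi_\Lambda^\omega$ is precisely what makes $\d_y\circ\tilde K(hD_s)$ uniformly bounded---the point needed to carry Lemma~\ref{l:cutoff} over to this setting---makes explicit what the paper leaves to the reader when it asserts that Lemma~\ref{l:cutoff1} ``is proved the same way as Lemma~\ref{l:cutoff}''.
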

 
Remark that for any $\sigma \in \Lambda^\perp$, the operator 
$$
[\Delta_\Lambda , \Pi_\Lambda^\omega K(\sigma) \Pi_\Lambda^\omega ]
= \Pi_\Lambda^\omega [\Delta_\Lambda ,  K(\sigma) ] \Pi_\Lambda^\omega,
$$
has finite rank, so the  right hand-side of Item~\ref{item 2 rho} is well-defined.
Note that the definition of $M_\Lambda$ has a signification since $\rho_\Lambda$ has a compact support, according to Proposition~\ref{proposition: reconstruction mu Lambda}.

The commutation relations of Items~\ref{item 2 rho} and \ref{item 3 rho} in this proposition correspond to propagation laws at the operator level. They are formulated here in a ``derivated form'', which, for Item~\ref{item 3 rho} for instance, is equivalent to 
$$
e^{i \tau \Delta_\Lambda} M_\Lambda e^{- i \tau \Delta_\Lambda} = M_\Lambda , \quad \text{for all } \tau \in \R ,
$$
in the ``integrated form''.

\begin{proof}
For $K \in \Cinfc(\Lambda^\perp ;\K(L^2(\T_\Lambda)))$ (in other words $K \in \Cinfc(T^*\T_{\Lambda^\perp };\K(L^2(\T_\Lambda)))$ independent of $s\in \T_{\Lambda^\perp }$), we denote
$$
K^\omega(\sigma) := \Pi_\Lambda^\omega K(\sigma) \Pi_\Lambda^\omega 
$$
and we note that  $K^\omega $ is also in $ \Cinfc(\Lambda^\perp ;\K(L^2(\T_\Lambda)))$.
Hence, we have
\begin{align*}
 \tr \left\{ \int_{T^* \T_{\Lambda^\perp}} [\Delta_\Lambda ,\Pi_\Lambda^\omega K (\sigma) \Pi_\Lambda^\omega ] \ \rho_\Lambda(ds, d \sigma) \right\}
 &= - \lim_{h \to 0}\left( [- \Delta_\Lambda , K^\omega(h D_s)] T_\Lambda w_h , T_\Lambda w_h \right)_{L^2(\T_{\Lambda^\perp} ;L^2(\T_\Lambda))} \end{align*}
 To show that this limit vanishes, we proceed as in lines \eqref{eq: propagation nulambda}, \eqref{e:commut} and in the subsequent calculation, replacing the operator $A^R$ by $K^\omega(h D_s)$.
 
With the notation $\Delta_\Lambda= \d_{y}^2$ and $\Delta_{\Lambda^\perp}=\d_{s}^2$, we first note that 
$$ 
\left( [- \Delta_\Lambda , K^\omega(h D_s)] T_\Lambda w_h , T_\Lambda w_h \right)_{L^2(\T_{\Lambda^\perp} ;L^2(\T_\Lambda))}=\left( [- \Delta , K^\omega(h D_s)] T_\Lambda w_h , T_\Lambda w_h \right)_{L^2(\T_{\Lambda^\perp} ;L^2(\T_\Lambda))},
$$
since $\Delta=\Delta_\Lambda+\Delta_{\Lambda^\perp}$ and since
$
[ \Delta_{\Lambda^\perp} , K^\omega(h D_s)] = 0.
$
As a matter of fact,  $K^\omega(h D_s) = \Op_h^\Lambda(K^\omega(\sigma))$ and $ \Delta_{\Lambda^\perp} =-h^{-2}\Op_h^\Lambda(|\sigma|^2)$ are both Fourier multipliers.

The following lemma is proved the same way as Lemma \ref{l:cutoff}
\begin{lemma} \label{l:cutoff1}For any given $c_0 >0$, we have
$$([\Delta_\Lambda, K^\omega(\sigma)]T_\Lambda w_h, T_\Lambda w_h)_{L^2(\T^2)}=([\Delta_\Lambda, K_{c_0 ,  h}^\omega(h D_s)] T_\Lambda w_h, T_\Lambda w_h)_{L^2(\T^2)} +o(1).$$
\end{lemma}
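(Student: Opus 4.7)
The plan is to transcribe the proof of Lemma~\ref{l:cutoff} essentially verbatim, replacing $A^R$ by $K^\omega(hD_s)$ and $w_h$ by $T_\Lambda w_h$. First I would use the algebraic identity
\begin{equation*}
[\Delta_\Lambda, \chi_h^\Lambda K^\omega(hD_s) \chi_h^\Lambda]
= \chi_h^\Lambda [\Delta_\Lambda, K^\omega(hD_s)] \chi_h^\Lambda
+ [\d_y^2, \chi_h^\Lambda] K^\omega(hD_s) \chi_h^\Lambda
+ \chi_h^\Lambda K^\omega(hD_s) [\d_y^2, \chi_h^\Lambda]
\end{equation*}
and handle the three terms separately. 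For the first one, I would apply \eqref{e:cutoff} in reverse. Note that this is legitimate on the covering: the operator $[\Delta_\Lambda, K^\omega(hD_s)] = \Pi_\Lambda^\omega [\d_y^2, K(hD_s)] \Pi_\Lambda^\omega$ is bounded on $L^2(\T_{\Lambda^\perp}\times\T_\Lambda)$ since $\d_y^2 \Pi_\Lambda^\omega$ has norm $\leq \omega$ and $K(hD_s)$ is a bounded Fourier multiplier. Moreover, since $\chi_h^\Lambda$ is $\Lambda^\perp$-invariant, one has $\chi_h^\Lambda T_\Lambda w_h = T_\Lambda(\chi_h^\Lambda w_h)$, so that by isometry of $T_\Lambda$,
$$ \|(1-\chi_h^\Lambda)T_\Lambda w_h\|_{L^2(\T_{\Lambda^\perp}\times\T_\Lambda)} = \|(1-\chi_h^\Lambda) w_h\|_{L^2(\T^2)} = o(1)$$
by Item~\ref{item: 1-chi} of Proposition~\ref{prop: existence chi}. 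Hence
$([\Delta_\Lambda, K^\omega(hD_s)] T_\Lambda w_h, T_\Lambda w_h) = (\chi_h^\Lambda[\Delta_\Lambda, K^\omega(hD_s)]\chi_h^\Lambda T_\Lambda w_h, T_\Lambda w_h) + o(1)$.

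Next I would control the two commutator error terms. Expanding $[\d_y^2, \chi_h^\Lambda] = (\d_y^2 \chi_h^\Lambda) + 2(\d_y \chi_h^\Lambda)\d_y$ and integrating by parts in the inner product with $T_\Lambda w_h$, as in Lemma~\ref{l:cutoff}, I would bound each piece by a product of the form (bounded operator)$\times$(multiplication by $\d_y^k \chi_h^\Lambda$ acting on $T_\Lambda w_h$). Since $T_\Lambda$ commutes with multiplication by $\Lambda^\perp$-invariant functions and is an isometry,
$$\|(\d_y^k \chi_h^\Lambda) T_\Lambda w_h\|_{L^2(\T_{\Lambda^\perp}\times\T_\Lambda)} = \|(\d_y^k \chi_h^\Lambda) w_h\|_{L^2(\T^2)} = o(1), \quad k=1,2,$$
by Items~\ref{item: d_y chi} and~\ref{item: d_yy chi} of Proposition~\ref{prop: existence chi}.

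The only nontrivial point, and the main obstacle, is to verify that the ``prefactor'' operators $K^\omega(hD_s)\chi_h^\Lambda$ and $\d_y \circ K^\omega(hD_s)\chi_h^\Lambda$ (together with their adjoints) are uniformly bounded on $L^2(\T_{\Lambda^\perp}\times\T_\Lambda)$, so that the Cauchy--Schwarz estimates close. For the latter, I would use the crucial feature that $K^\omega(\sigma) = \Pi_\Lambda^\omega K(\sigma) \Pi_\Lambda^\omega$ has range in a finite-dimensional eigenspace of $-\d_y^2$ with eigenvalues $\leq \omega$, so that $\d_y K^\omega(\sigma) = (\d_y \Pi_\Lambda^\omega) K(\sigma) \Pi_\Lambda^\omega$ has operator norm bounded by $\sqrt{\omega}\,\|K(\sigma)\|_{\calL(L^2(\T_\Lambda))}$, uniformly in $\sigma$ on the compact support of $K$. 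Viewing $\d_y \circ K^\omega(hD_s)$ as a Fourier multiplier in $s$ with a uniformly bounded operator-valued symbol then yields its boundedness on $L^2(\T_{\Lambda^\perp}\times\T_\Lambda)$, independently of $h$. Combining these ingredients gives $([\Delta_\Lambda, K^\omega(hD_s)] T_\Lambda w_h, T_\Lambda w_h) = ([\Delta_\Lambda, K_{c_0,h}^\omega(hD_s)] T_\Lambda w_h, T_\Lambda w_h) + o(1)$, which is the claim.
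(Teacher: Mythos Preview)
Your proof is correct and follows exactly the approach the paper intends: the paper simply states that Lemma~\ref{l:cutoff1} ``is proved the same way as Lemma~\ref{l:cutoff}'', and your argument is precisely that transcription, with the additional care of noting that $T_\Lambda$ is an isometry commuting with multiplication by $\Lambda^\perp$-invariant functions, and that the finite-rank projector $\Pi_\Lambda^\omega$ makes $\partial_y\circ K^\omega(hD_s)$ uniformly bounded.
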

Here $K_{c_0 ,  h}^\omega(h D_s)$ means 
$\chi_h^\Lambda K^\omega(h D_s)\chi_h^\Lambda$.

Writing
$$
- h^2\Delta = T_\Lambda P^h_b T_\Lambda^*  - ih b\circ \pi_\Lambda ,
$$
we have
\begin{multline*}
\left( [- \Delta , K_{c_0 ,  h}^\omega(h D_s)] T_\Lambda w_h , T_\Lambda w_h \right)_{L^2(\T_{\Lambda^\perp} ;L^2(\T_\Lambda))} \\
= \frac{1}{h^2} \left( K_{c_0 ,  h}^\omega(h D_s) T_\Lambda w_h ,  T_\Lambda P^h_b w_h \right)_{L^2(\T_{\Lambda^\perp} ;L^2(\T_\Lambda))} 
- \frac{1}{h^2} \left( K_{c_0 ,  h}^\omega(h D_s) T_\Lambda P^h_b w_h , T_\Lambda w_h \right)_{L^2(\T_{\Lambda^\perp} ;L^2(\T_\Lambda))}\\
 \qquad  + \frac{i}{h} \left( K_{c_0 ,  h}^\omega(h D_s) T_\Lambda w_h ,  T_\Lambda (b w_h) \right)_{L^2(\T_{\Lambda^\perp} ;L^2(\T_\Lambda))} 
+ \frac{i}{h} \left(K_{c_0 ,  h}^\omega(h D_s) T_\Lambda (b w_h ) , T_\Lambda w_h \right)_{L^2(\T_{\Lambda^\perp} ;L^2(\T_\Lambda))}.
\end{multline*}
It follows, as in \eqref{e:commut}, that
$$\limsup_{h\to 0}|\left( [- \Delta , K_{c_0 ,  h}^\omega(h D_s)] T_\Lambda w_h , T_\Lambda w_h \right)_{L^2(\T_{\Lambda^\perp} ;L^2(\T_\Lambda))}|\leq 2c_0 \| K\|$$
and since $c_0$ was arbitrary, we can conclude that
$$\lim_{h\to 0}([\Delta_\Lambda, K^\omega(\sigma)]T_\Lambda w_h, T_\Lambda w_h)_{L^2(\T^2)}=0,$$
which concludes the proof of Item~\ref{item 2 rho}.

\bigskip
Item~\ref{item 2 rho} gives, for all $K \in \K(L^2(\T_\Lambda))$ constant (which is possible since $\rho_\Lambda(ds, d \sigma)$ has compact support),
$$
0 = \tr \left\{ \int_{T^* \T_{\Lambda^\perp}} [\Delta_\Lambda , K^\omega ]  \rho_\Lambda(ds, d \sigma) \right\}
 = \tr \left\{ [\Delta_\Lambda , K^\omega ] \int_{T^* \T_{\Lambda^\perp}} \rho_\Lambda(ds, d \sigma) \right\}
 = \tr \left\{ [\Delta_\Lambda , K^\omega ] M_\Lambda \right\}.
$$
Using that $\tr \left( A B\right) = \tr \left( B A\right)$ for all $A \in \calL^1$ and $B \in \calL$ together with the linearity of the trace (see~\cite[Theorem VI.25]{RS:book1}), we now obtain, for all $K \in \K(L^2(\T_\Lambda))$, and all $\omega >0$,
$$
0 = \tr \left\{ [\Delta_\Lambda ,\Pi_\Lambda^\omega  K \Pi_\Lambda^\omega ] M_\Lambda \right\}
= \tr \left\{ K \Pi_\Lambda^\omega [\Delta_\Lambda , M_\Lambda ] \Pi_\Lambda^\omega \right\}.
$$
Consequently, we have for all $\omega>0$, $\Pi_\Lambda^\omega  [\Delta_\Lambda , M_\Lambda ] \Pi_\Lambda^\omega =0$ (see~\cite[Theorem VI.26]{RS:book1}). Letting $\omega$ go to $+ \infty$, this yields $[\Delta_\Lambda , M_\Lambda ] =0$ and concludes the proof of Item~\ref{item 3 rho}.
\end{proof}

\section{The measures $ \nu^\Lambda$ and $\rho_\Lambda$ vanish identically. End of the proof of Theorem~\ref{th: stabilization torus}}
\label{section: end of the proof}
In this section, we prove that both measures $\nu^\Lambda$ and $\rho_\Lambda$ vanish when paired with the function $\left< b \right>_\Lambda$. Then, we deduce that these two measures vanish identically. In turn, this implies that $\mu|_{\T^2 \times \Lambda^\perp} = 0$, and finally that $\mu =0$, which will conclude the proof of Theorem~\ref{th: stabilization torus}.


\begin{proposition}
\label{proposition: nu and rho vanish on b}
We have 
\begin{equation*}
\left< \nu^\Lambda|_{\T^2 \times \Lambda^\perp \times \bS_\Lambda} , \left< b \right>_\Lambda \right>_{\M_c(T^*\T^2 \times \bS_\Lambda), \Con^0(T^*\T^2 \times \bS_\Lambda)} = 0 
, \quad \text{and} \quad 
 \tr \{ m_{\left< b\right>_\Lambda} M_\Lambda \}=0 .
\end{equation*}
\end{proposition}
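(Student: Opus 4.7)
The approach is to extract both identities simultaneously from the reconstruction formula \eqref{eq: reconstruction mu Lambda}, evaluated on a single carefully chosen test symbol. Pick a cutoff $\chi \in \Cinfc(\R^2)$ that equals $1$ on a neighbourhood of the union of the $\xi$-supports of $\mu$ and $\nu^\Lambda$ and of the $\sigma$-support of $\rho_\Lambda$ (all three are compact: for $\mu$ by Item~\ref{item: supp car} of Proposition~\ref{proposition: zero first order info}, and for $\nu^\Lambda$, $\rho_\Lambda$ by Proposition~\ref{proposition: reconstruction mu Lambda}). Consider
$$
a(x,\xi,\eta) := \left<b\right>_\Lambda(x)\,\chi(\xi),
$$
which is independent of $\eta$ and, by construction of $\left<b\right>_\Lambda$, has only $x$-Fourier modes in $\Lambda$, so that $a \in S^1_\Lambda$. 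Plugging $a$ into \eqref{eq: reconstruction mu Lambda}, using that $\chi \equiv 1$ wherever the three measures live, and observing that $a \circ \tilde{\pi}_\Lambda(\sigma, y) = \left<b\right>_\Lambda(y)\,\chi(\sigma)$ factorises (so that $m_{a\circ\tilde{\pi}_\Lambda}(\sigma)=\chi(\sigma)\,m_{\left<b\right>_\Lambda}$ and the $\rho_\Lambda$-integral collapses to $m_{\left<b\right>_\Lambda}\,M_\Lambda$), one obtains
\begin{equation}
\label{eq: key plan identity}
\left<\mu|_{\T^2\times\Lambda^\perp},\left<b\right>_\Lambda\right> \;=\; \left<\nu^\Lambda|_{\T^2\times\Lambda^\perp\times\bS_\Lambda},\left<b\right>_\Lambda\right> \;+\; \tr\{m_{\left<b\right>_\Lambda}\,M_\Lambda\}.
\end{equation}

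The next step is to show that the left-hand side of \eqref{eq: key plan identity} vanishes. Item~\ref{item: mu = 0 on b} of Proposition~\ref{proposition: zero first order info} gives $\left<\mu,b\right>=0$; since $b\geq 0$ and the decomposition of Lemma~\ref{lemma: decompose mu} writes $\mu$ as a sum of nonnegative pieces one of which is $\mu|_{\T^2\times\Lambda^\perp}$, each piece must individually pair with $b$ to zero, in particular $\mu|_{\T^2\times\Lambda^\perp}(b)=0$. To replace $b$ by its $\Lambda$-average, I would use Lemma~\ref{lemma: mu direction independent}, which identifies $\mu|_{\T^2\times\Lambda^\perp}$ with $\left<\mu\right>_\Lambda|_{\T^2\times\Lambda^\perp}$ and hence says that this measure has only $x$-Fourier modes in $\Lambda$. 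Writing $b = \left<b\right>_\Lambda + (b - \left<b\right>_\Lambda)$, the difference carries no Fourier mode in $\Lambda$, so a direct Fourier-series computation (equivalently, Lemma~\ref{lemma: mu only Lambda FM} applied to $(b-\left<b\right>_\Lambda)\chi$ to stay in the compactly supported class) gives $\mu|_{\T^2\times\Lambda^\perp}(b-\left<b\right>_\Lambda)=0$. Thus $\mu|_{\T^2\times\Lambda^\perp}(\left<b\right>_\Lambda)=\mu|_{\T^2\times\Lambda^\perp}(b)=0$.

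It remains to observe that each of the two summands on the right-hand side of \eqref{eq: key plan identity} is nonnegative; combined with the vanishing of the left-hand side this forces both summands to be zero, which is precisely the content of the proposition. The first term is nonnegative because $\nu^\Lambda\geq 0$ and $\left<b\right>_\Lambda\geq 0$ (as a limit of averages of the nonnegative function $b$, cf.~\eqref{eq: moyennisation lambda}). For the second, Proposition~\ref{proposition: rho Lambda} furnishes $\rho_\Lambda$ as a nonnegative operator-valued measure, so $M_\Lambda=\int \rho_\Lambda$ is a nonnegative self-adjoint trace-class operator on $L^2(\T_\Lambda)$; since $m_{\left<b\right>_\Lambda}\geq 0$ is bounded self-adjoint, cyclicity of the trace yields
$$
\tr\{m_{\left<b\right>_\Lambda}\,M_\Lambda\}=\tr\{M_\Lambda^{1/2}\, m_{\left<b\right>_\Lambda}\, M_\Lambda^{1/2}\}\geq 0.
$$
The most delicate step in this plan is the Fourier-mode manipulation used to exchange $b$ for $\left<b\right>_\Lambda$ when paired with $\mu|_{\T^2\times\Lambda^\perp}$: one must check that the combination of Lemmata~\ref{lemma: mu direction independent} and \ref{lemma: mu only Lambda FM} genuinely applies to the compactly supported truncations required by the reconstruction formula, but nothing beyond the machinery already developed is needed.
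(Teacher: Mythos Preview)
Your proposal is correct and follows essentially the same route as the paper's proof: both use Item~\ref{item: mu = 0 on b} and the nonnegative decomposition of Lemma~\ref{lemma: decompose mu} to get $\langle\mu|_{\T^2\times\Lambda^\perp},b\rangle=0$, then Lemmata~\ref{lemma: mu direction independent} and~\ref{lemma: mu only Lambda FM} to replace $b$ by $\langle b\rangle_\Lambda$, and finally the reconstruction formula~\eqref{eq: reconstruction mu Lambda} to split the result into two nonnegative terms that must each vanish. The only cosmetic difference is that you insert an explicit cutoff $\chi(\xi)$ to stay in $\Cinfc$, whereas the paper invokes the compactness of the supports of $\mu$, $\nu^\Lambda$, $\rho_\Lambda$ directly.
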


As a consequence, we prove that $\rho_\Lambda$ and $\nu^\Lambda|_{\T^2 \times \Lambda^\perp \times \bS_\Lambda}$ vanish.

\begin{proposition}
\label{proposition: nu = M = 0}
We have $\rho_\Lambda = 0$ and $\nu^\Lambda|_{\T^2 \times \Lambda^\perp \times \bS_\Lambda}=0$. Hence $\mu|_{\T^2 \times \Lambda^\perp}=0$.
\end{proposition}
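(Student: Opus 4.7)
The plan is to combine Proposition~\ref{proposition: nu and rho vanish on b} with the propagation laws (Propositions~\ref{proposition: propagation nu lambda} and~\ref{proposition: commutation M Delta}) to force both two-microlocal measures to vanish, and then deduce $\mu|_{\T^2\times\Lambda^\perp}=0$ via the reconstruction formula~\eqref{eq: reconstruction mu Lambda}. The argument for $\nu^\Lambda$ will be purely geometric (transverse propagation followed by $x$-homogenization), while the argument for $\rho_\Lambda$ will be operator-theoretic and will invoke one-dimensional Schr\"odinger observability on the subtorus $\T_\Lambda$.

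For $\nu^\Lambda|_{\T^2\times\Lambda^\perp\times\bS_\Lambda}$, first note that on this set $\xi\in\Lambda^\perp$ with $|\xi|=1$ (the support condition $\supp\mu\subset\{|\xi|=1\}$ is inherited by $\nu^\Lambda$) and $\eta/|\eta|\in\bS_\Lambda\subset\left<\Lambda\right>$. Since $\Lambda^\perp\perp\left<\Lambda\right>$ inside $\R^2$, the two vectors $\xi$ and $\eta/|\eta|$ span $\R^2$. Proposition~\ref{proposition: propagation nu lambda} then guarantees invariance under both $\phi^0_\tau$ and $\phi^1_\tau$, so the restriction is invariant under every $x$-translation. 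By disintegration I would write
$$\nu^\Lambda|_{\T^2\times\Lambda^\perp\times\bS_\Lambda} = dx\otimes\tilde\nu^\Lambda$$
for some $\tilde\nu^\Lambda\in\M^+(\Lambda^\perp\times\bS_\Lambda)$, pair with $\left<b\right>_\Lambda$, and invoke Proposition~\ref{proposition: nu and rho vanish on b}:
$$0 = \bigl\langle\nu^\Lambda|_{\T^2\times\Lambda^\perp\times\bS_\Lambda},\left<b\right>_\Lambda\bigr\rangle = \Bigl(\int_{\T^2} b\,dx\Bigr)\,\tilde\nu^\Lambda(\Lambda^\perp\times\bS_\Lambda).$$
Since $b\geq 0$ is nonvanishing, $\int b>0$, so $\tilde\nu^\Lambda=0$ and hence $\nu^\Lambda|_{\T^2\times\Lambda^\perp\times\bS_\Lambda}=0$.

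For $\rho_\Lambda$, I would integrate the commutation $[\Delta_\Lambda,M_\Lambda]=0$ (Item~\ref{item 3 rho} of Proposition~\ref{proposition: commutation M Delta}) to $e^{-i\tau\Delta_\Lambda}M_\Lambda e^{i\tau\Delta_\Lambda}=M_\Lambda$ for every $\tau\in\R$. Cyclicity of the trace then yields, using Proposition~\ref{proposition: nu and rho vanish on b},
$$0 = \tr\bigl(m_{\left<b\right>_\Lambda}M_\Lambda\bigr) = \tr\bigl(m_{\left<b\right>_\Lambda}e^{-i\tau\Delta_\Lambda}M_\Lambda e^{i\tau\Delta_\Lambda}\bigr) = \tr\bigl(e^{i\tau\Delta_\Lambda}m_{\left<b\right>_\Lambda}e^{-i\tau\Delta_\Lambda}M_\Lambda\bigr).$$
Averaging over $\tau\in[0,T]$ gives $\tr(A_T M_\Lambda)=0$ with $A_T:=\int_0^T e^{i\tau\Delta_\Lambda}m_{\left<b\right>_\Lambda}e^{-i\tau\Delta_\Lambda}\,d\tau$. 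Now $\T_\Lambda$ is a one-dimensional torus and (viewed on $\T_\Lambda$ via $\tilde\pi_\Lambda$) $\left<b\right>_\Lambda$ is nonnegative and nonvanishing in $L^\infty(\T_\Lambda)$; in dimension one the Schr\"odinger equation is observable by such a weight (in the spirit of~\cite{Jaffard:90}), i.e.\ there exist $T,C>0$ with $A_T\geq C^{-1}\id$ on $L^2(\T_\Lambda)$. Combined with $M_\Lambda\geq 0$ trace class, $0=\tr(A_T M_\Lambda)\geq C^{-1}\tr(M_\Lambda)$, forcing $\tr(M_\Lambda)=0$ and hence $M_\Lambda=0$. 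Since $\rho_\Lambda$ is a nonnegative operator-valued measure with total mass $M_\Lambda=0$, I conclude $\rho_\Lambda=0$.

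Finally, the reconstruction formula~\eqref{eq: reconstruction mu Lambda} applied to any $a\in\Cinfc(T^*\T^2)$ with only $x$-Fourier modes in $\Lambda$ gives $\left<\mu|_{\T^2\times\Lambda^\perp},a\right>=0$; using Lemma~\ref{lemma: mu direction independent} (i.e.\ $\mu|_{\T^2\times\Lambda^\perp}=\left<\mu\right>_\Lambda|_{\T^2\times\Lambda^\perp}$) I would extend to arbitrary $a\in\Cinfc(T^*\T^2)$ by replacing $a$ with $\left<a\right>_\Lambda$. The main obstacle in this plan will be the second step: passing from the single trace identity $\tr(m_{\left<b\right>_\Lambda}M_\Lambda)=0$ to the full vanishing of $M_\Lambda$. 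This crucially requires exploiting the commutation $[\Delta_\Lambda,M_\Lambda]=0$ in its integrated Schr\"odinger form, and then invoking one-dimensional Schr\"odinger observability on $\T_\Lambda$ to absorb the observation into a coercive operator bound.
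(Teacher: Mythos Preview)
Your proof is correct, but both halves take a genuinely different route from the paper's.

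For $\nu^\Lambda|_{\T^2\times\Lambda^\perp\times\bS_\Lambda}$, the paper uses only the $\phi^1_\tau$-invariance: since $\eta/|\eta|\in\bS_\Lambda$ points along the one-dimensional torus $\T_\Lambda$, every $\phi^1_\tau$-orbit sweeps through the nonempty $\Lambda^\perp$-invariant set $\{\left<b\right>_\Lambda>0\}$, so GCC on $\T_\Lambda$ kills the measure directly. You instead combine $\phi^0_\tau$- and $\phi^1_\tau$-invariance to obtain full $x$-homogeneity and then integrate against $b$. Your argument is slightly heavier (it requires a disintegration and the observation that on the support $\xi\in\Lambda^\perp\cap\{|\xi|=1\}$ and $\eta/|\eta|\in\bS_\Lambda$ are linearly independent), but it is perfectly valid and has the advantage of not invoking any GCC-type statement.

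For $\rho_\Lambda$, the paper argues by simultaneous diagonalization: since $[\Delta_\Lambda,M_\Lambda]=0$ and $M_\Lambda\geq 0$ is trace class, one writes $\tr(m_{\left<b\right>_\Lambda}M_\Lambda)=\sum_j\gamma_j(\left<b\right>_\Lambda\tilde e_j,\tilde e_j)=0$, and then uses unique continuation for eigenfunctions of $-\Delta_\Lambda$ on $\T_\Lambda$ to rule out $\gamma_j\neq 0$. Your route via the integrated commutation, cyclicity of the trace, time-averaging, and one-dimensional Schr\"odinger observability $A_T\geq C^{-1}\id$ is equally rigorous. It trades the elementary, self-contained unique continuation argument for a black-box observability inequality; this is more in the spirit of the paper's Part~II and would generalize better (e.g.\ if $\T_\Lambda$ were replaced by something where eigenfunction unique continuation is less immediate), whereas the paper's argument is lighter in the present one-dimensional setting.

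The concluding step via~\eqref{eq: reconstruction mu Lambda} and Lemma~\ref{lemma: mu direction independent} matches the paper.
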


This allows to conclude the proof of Theorem~\ref{th: stabilization torus}. 
Indeed, as a consequence of the decomposition formula of Proposition~\ref{proposition: reconstruction mu Lambda}, we obtain, for all $\Lambda \in \P$, such that $\rk(\Lambda)=1$, $\mu|_{\T^2\times \Lambda^\perp}= 0$. Using the decomposition of the measure $\mu$ given in Lemma~\ref{lemma: decompose mu} together with Lemma~\ref{lemma: mu = 0 some directions}, this yields $\mu = 0$ on $\T^2$. This is in contradiction with $\mu(T^*\T^2) = 1$ (Proposition~\ref{proposition: zero first order info}), and this contradiction proves Theorem~\ref{th: stabilization torus}.


\bigskip
We now prove Propositions~\ref{proposition: nu and rho vanish on b} and \ref{proposition: nu = M = 0}

\begin{proof}[Proof of Proposition~\ref{proposition: nu and rho vanish on b}]
First, \eqref{eq: conditions sequence} implies that $(b v_h, v_h)_{L^2(\T^2)} \to 0$, and hence
$$
\left<\mu, b \right>_{\M_c(T^*\T^2), \Con^0 (T^*\T^2)} =0 .
$$
Then the decomposition given in Lemma \ref{lemma: decompose mu} into a sum of nonnegative measures yields that, for all $\Lambda \in \P$, 
\begin{equation}
\label{eq: mu b lambda = 0}
\left<\mu|_{\T^2 \times \Lambda^\perp}, b \right>_{\M_c(T^*\T^2), \Con^0 (T^*\T^2)} =0 ,
\end{equation}
since $b$ is also nonnegative.
Lemmata~\ref{lemma: mu only Lambda FM}, \ref{lemma: mu direction independent} and \ref{lemma: mu = 0 some directions} (see also Remark~\ref{rem: mu lambda 0}), then give
\begin{align}
\label{eq: decomp mu b lambda}
\left<\mu|_{\T^2 \times \Lambda^\perp} , \left< b \right>_\Lambda \right>_{\M_c(T^*\T^2), \Con^0 (T^*\T^2)} & = \left<\mu|_{\T^2 \times\left( \Lambda^\perp\setminus \{0\}\right)} , \left< b \right>_\Lambda \right>_{\M_c(T^*\T^2), \Con^0 (T^*\T^2)}  \nonumber \\
& = \left<\mu|_{\T^2 \times \Lambda^\perp} , b \right>_{\M_c(T^*\T^2), \Con^0 (T^*\T^2)} =0 ,
\end{align}
where the function $\left< b \right>_\Lambda$ is also nonnegative. The decomposition formula of Proposition~\ref{proposition: reconstruction mu Lambda} into the two-microlocal semiclassical measures then yields
\begin{align*}
\left< \mu|_{\T^2 \times \Lambda^\perp} , \left< b \right>_\Lambda \right>_{\M_c(T^*\T^2), \Con^0(T^*\T^2)}
& = \left< \nu^\Lambda|_{\T^2 \times \Lambda^\perp \times \bS_\Lambda} , \left< b \right>_\Lambda \right>_{\M_c(T^*\T^2 \times \bS_\Lambda), \Con^0(T^*\T^2 \times \bS_\Lambda)} \\
& \quad + \tr \left\{ \int_{T^*\T_{\Lambda^\perp}} m_{\left< b \right>_\Lambda}
\rho_\Lambda(ds, d \sigma) \right\} .
\end{align*}
Besides, the measure $\nu^\Lambda|_{\T^2 \times \Lambda^\perp \times \bS_\Lambda}$ is nonnegative, hence $\left< \nu^\Lambda|_{\T^2 \times \Lambda^\perp \times \bS_\Lambda} , \left< b \right>_\Lambda \right>_{\M_c(T^*\T^2 \times \bS_\Lambda), \Con^0(T^*\T^2 \times \bS_\Lambda)} \geq 0$. Similarly, $\rho_\Lambda \in \M^+_c(T^*\T_{\Lambda^\perp};\calL^1(\T_\Lambda))$ and the operator $m_{\left< b \right>_\Lambda} \in \calL(L^2(\T_\Lambda))$ is selfadjoint and nonnegative, which gives $\tr \left\{ \int_{T^*\T_{\Lambda^\perp}} m_{\left< b \right>_\Lambda} \rho_\Lambda(ds, d \sigma) \right\} \geq 0$. Using~\eqref{eq: mu b lambda = 0} and \eqref{eq: decomp mu b lambda}, this yields 
$$
\left< \nu^\Lambda|_{\T^2 \times \Lambda^\perp \times \bS_\Lambda} , \left< b \right>_\Lambda \right>_{\M_c(T^*\T^2 \times \bS_\Lambda), \Con^0(T^*\T^2 \times \bS_\Lambda)} = 0 ,
$$
and 
$$
\tr \left\{ \int_{T^*\T_{\Lambda^\perp}} m_{\left< b \right>_\Lambda} \rho_\Lambda(ds, d \sigma) \right\} = 0 .
$$
In this expression, the operator $m_{\left< b \right>_\Lambda}$ does not depend on $(s, \sigma)$, so that
$$
0 = \tr \left\{  m_{\left< b \right>_\Lambda} \int_{T^*\T_{\Lambda^\perp}} \rho_\Lambda(ds, d \sigma) \right\} 
=  \tr \{ m_{\left< b\right>_\Lambda} M_\Lambda \}, 
$$
which concludes the proof of Proposition~\ref{proposition: nu and rho vanish on b}.
\end{proof}

\begin{proof}[Proof of Proposition~\ref{proposition: nu = M = 0}]
Let us first prove that $\rho_\Lambda = 0$. We recall that the operator $M_\Lambda$ is a selfadjoint nonnegative trace-class operator.
Moreover, Proposition~\ref{proposition: commutation M Delta} implies that the operators $M_\Lambda$ and $\Delta_\Lambda$ commute. As a consequence, there exists a Hilbert basis $(\tilde{e}^j_\Lambda)_{j\in \N}$ of $L^2(\T_\Lambda)$ in which $M_\Lambda$ and $\Delta_\Lambda$ are simultaneously diagonal, \ie such that
$$
- \Delta_\Lambda \tilde{e}^j_\Lambda = \omega^j_\Lambda \tilde{e}^j_\Lambda , 
\quad \text{and} \quad 
 M_\Lambda \tilde{e}^j_\Lambda = \gamma^j_\Lambda \tilde{e}^j_\Lambda,
$$
where $(\gamma^j_\Lambda)_{j \in \N}$ are the associated eigenvalues of $M_\Lambda$. In particular, we have $\gamma^j_\Lambda \geq 0$ for all $j \in \N$ (and $\gamma^j_\Lambda \in \ell^1$).
Note that the basis  $(\tilde{e}^j_\Lambda)_{j\in \N}$ is not necessarily the same as the basis $(e^j_\Lambda)_{j\in \N}$ introduced in Section~\ref{section: propagation rho lambda}.

Using Proposition~\ref{proposition: nu and rho vanish on b}, together with the definition of the trace (see for instance~\cite[Theorem VI.18]{RS:book1}) we have 
$$
0 = \tr \{ m_{\left< b\right>_\Lambda} M_\Lambda \}
= \sum_{j \in \N} \left( m_{\left< b\right>_\Lambda} M_\Lambda \tilde{e}^j_\Lambda , \tilde{e}^j_\Lambda \right)_{L^2(\T_\Lambda)} 
= \sum_{j \in \N} \gamma^j_\Lambda \left( \left< b\right>_\Lambda \tilde{e}^j_\Lambda , \tilde{e}^j_\Lambda \right)_{L^2(\T_\Lambda)} .
$$
Since all terms in this sum are nonnegative (because both $\gamma^j_\Lambda$ and $\left< b\right>_\Lambda$ are), we deduce that for all $j \in \N$, 
$$
\gamma^j_\Lambda \left( \left< b\right>_\Lambda \tilde{e}^j_\Lambda , \tilde{e}^j_\Lambda \right)_{L^2(\T_\Lambda)} = 0.
$$
Suppose that $\gamma^j_\Lambda \neq 0$ for some $j \in \N$. Then, $\left( \left< b\right>_\Lambda \tilde{e}^j_\Lambda , \tilde{e}^j_\Lambda \right)_{L^2(\T_\Lambda)} = 0$ where $\left< b\right>_\Lambda$ is nonnegative and not identically zero on $\T_\Lambda$. This yields $\tilde{e}^j_\Lambda = 0$ on the nonempty open set $\{\left< b\right>_\Lambda>0\}$. Using a unique continuation property for eigenfunctions of the Laplace operator on $\T_\Lambda$, we finally obtain that the eigenfunction $\tilde{e}^j_\Lambda$ vanishes identically on $\T_\Lambda$. This is absurd, and thus we must have $\gamma^j_\Lambda = 0$ for all $j \in \N$, so that $M_\Lambda = 0$. Since $\rho_\Lambda \in \M^+(T^*\T_{\Lambda^\perp};\calL^1(\T_\Lambda))$, this directly gives $\rho_\Lambda = 0$.  

\bigskip
Next, we prove that $\nu^\Lambda = 0$. This is a consequence of the additional propagation law of $\nu^\Lambda$ with respect to the flow $\phi^1_\tau$ (see Section~\ref{section: propagation nu lambda}). Indeed the torus $\T_\Lambda$ has dimension one, $(\phi^1_\tau)_*\nu^\Lambda = \nu^\Lambda$ (according to Proposition~\ref{proposition: propagation nu lambda}) and, using Proposition~\ref{proposition: nu and rho vanish on b}, $\nu^\Lambda$ vanishes on the (nonempty) set $\{ \left< b \right>_\Lambda > 0\} \times \R^2 \times \bS_\Lambda$ (with $\{ \left< b \right>_\Lambda > 0\}$ clearly satisfying GCC on $\T_\Lambda$). Hence, $\nu^\Lambda = 0$.

\bigskip
To conclude the proof of Proposition~\ref{proposition: nu = M = 0}, it only remains to use the decomposition formula~\eqref{eq: reconstruction mu Lambda} which directly yields $\mu|_{\T^2 \times \Lambda^\perp} = 0$.
\end{proof}

\section{Proof of Proposition~\ref{prop: mu Lambda w_h}}
\label{section: proof nu alpha}
In this section, we prove Proposition~\ref{prop: mu Lambda w_h}. For this, we consider two-microlocal semiclassical measures at the scale $h^\alpha$. The setting is close to that of~\cite{Ferm:05}.

We shall see that the concentration rate of the sequence $v_h$ towards the direction $\Lambda^\perp$ is of the form $h^\alpha$ for all $\alpha \leq \frac{3+\delta}{4}$.

First, Lemma~\ref{lemma: mu direction independent} yields $\mu|_{\T^2 \times \Lambda^\perp} = \left< \mu \right>_\Lambda|_{\T^2 \times \Lambda^\perp}$ (see also Remark~\ref{rem: mu lambda 0}), \ie
$$
\left< \mu |_{\T^2 \times \Lambda^\perp} , a \right>_{\M(T^*\T^2) , \Conc(T^*\T^2)}
= \left< \mu |_{\T^2 \times \Lambda^\perp}  , \left< a \right>_\Lambda \right>_{\M(T^*\T^2) , \Conc(T^*\T^2)} ,
$$
and it suffices to characterize the action of $\mu |_{\T^2 \times \Lambda^\perp}$ on $\Lambda^\perp$-invariant symbols. Recall that, for all $a\in \Cinfc(T^*\T^2)$,
$$
\left< \mu , a \right>_{\M(T^*\T^2) , \Conc(T^*\T^2)}
 = \lim_{h \to 0} \left( \Op_h(a)  v_h , v_h \right)_{L^2(\T^2)} .
$$

In this section, the assumption $\sqrt{b} \in \Cinf(\T^2)$ is used in an essential way for the propagation result of Lemma~\ref{lemma: propagation nu lambda beta} below. Like in \eqref{eq: def W^h_R^Lambda} and \eqref{eq: def W^h_R,Lambda}, let us define~:
 
\begin{align}
 &
\left< V^{h,\Lambda}_R , a \right>_{{S^1_\Lambda} ', S^1_\Lambda}
:= \left< V^h ,  \left( 1 - \chi \left(\frac{|P_\Lambda \xi|}{R h}\right)\right) a \left(x, \xi, \frac{P_\Lambda \xi}{h} \right) \right>_{\Dist'(T^*\T^2), \Cinfc(T^*\T^2)},\\
&\left< V^{h}_{R,\Lambda} , a \right>_{{S^1_\Lambda} ', S^1_\Lambda}
:= \left< V^h , \chi \left(\frac{|P_\Lambda \xi|}{R h}\right) a \left(x, \xi, \frac{P_\Lambda \xi}{h} \right) \right>_{\Dist'(T^*\T^2), \Cinfc(T^*\T^2)} ,
\end{align}
for $a\in S^1_\Lambda$.

We take $R=R(h)=h^{-(1-\alpha)}$ for some $\alpha \in (0, 1)$, so that $Rh=h^\alpha$. The proof of Proposition~\ref{proposition: nu Lambda} applies verbatim and shows the existence of
a subsequence $(h,v_h)$ and a nonnegative measure $\nu_\alpha^\Lambda \in \M^+(T^*\T^2 \times \bS_\Lambda)$ such that, for all $a \in S^1_\Lambda$, we have
$$
  \lim_{h \to 0} \left< V^{h,\Lambda}_{R(h)} , a \right>_{{S^1_\Lambda} ', S^1_\Lambda}
= \left< \nu_\alpha^\Lambda , a_{\hom}\left(x, \xi , \frac{\eta}{|\eta|}\right) \right>_{\M(T^*\T^2 \times \bS_\Lambda) , \Conc(T^*\T^2 \times \bS_\Lambda)} .
$$

\begin{proposition}
\label{p:h14} 
Let $R(h)=h^{-(1-\alpha)}$ with $\alpha \leq \frac{3+\delta}{4}$. Then
$$\nu_\alpha^\Lambda |_{\T^2\times \left(\Lambda^\perp\setminus \{0\}\right)\times  \bS_\Lambda}=0
$$
\end{proposition}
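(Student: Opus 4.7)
My plan is to show that $\nu_\alpha^\Lambda$ restricted to $\T^2\times(\Lambda^\perp\setminus\{0\})\times\bS_\Lambda$ is translation-invariant in $x$ and then combine this with its vanishing on $\{b>0\}$ to conclude it is zero. The vanishing on $\{b>0\}$ is immediate from $(bv_h,v_h)_{L^2(\T^2)}=o(h^{1+\delta})$, which implies $\lim_h(\Op_h(ba)v_h,v_h)=0$ for every $a\in S^1_\Lambda$ (so in particular the limit pairing $\langle \nu_\alpha^\Lambda, b\, a\rangle$ vanishes).

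To obtain the translation invariance, I would establish two propagation laws. For the geodesic flow $\phi^0_\tau$, test with $\phi_h(x,\xi)=\chi_\alpha(\xi)a(x,\xi,P_\Lambda\xi/h)$ where $\chi_\alpha=1-\chi(|P_\Lambda\xi|/h^\alpha)$. Using $a\in S^1_\Lambda$ so that $\xi\cdot\partial_x a=P_\Lambda\xi\cdot\partial_y a=h\eta\cdot\partial_y a$ at $\eta=P_\Lambda\xi/h$, the exact Weyl identity for the quadratic symbol $|\xi|^2-1$ gives $[P_0^h,\Op_h(\phi_h)]=\frac{2h^2}{i}\Op_h(\chi_\alpha\,\eta\cdot\partial_y a)$. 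Combining $\|P_b^h v_h\|=o(h^{2+\delta})$ with the Cauchy--Schwarz estimate $|h(bv_h,\Op_h(\phi_h)v_h)|\leq h\|\sqrt{b}v_h\|\cdot\|\sqrt{b}\Op_h(\phi_h)v_h\|=o(h^{(3+\delta)/2})$ (using $\|\sqrt{b}v_h\|=o(h^{(1+\delta)/2})$), this yields $([P_0^h,\Op_h(\phi_h)]v_h,v_h)=o(h^{(3+\delta)/2})$.

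The crucial second step is the transverse propagation invariance of $\nu_\alpha^\Lambda|_{\xi\in\Lambda^\perp}$ under the flow $\phi^1_\tau$ generated by $(\eta/|\eta|)\cdot\partial_x$. Since $\phi_h$ has only $\Lambda$-Fourier modes in $x$, the identity $h^2\Delta_\Lambda=-P_0^h-1-h^2\Delta_{\Lambda^\perp}$ with $[h^2\Delta_{\Lambda^\perp},\Op_h(\phi_h)]=0$ gives $[h^2\Delta_\Lambda,\Op_h(\phi_h)]=-[P_0^h,\Op_h(\phi_h)]$, so the same $o(h^{(3+\delta)/2})$ bound applies. Writing $\eta\cdot\partial_y a=|\eta|\cdot(\eta/|\eta|)\cdot\partial_y a$ with $|\eta|\geq h^{\alpha-1}$ on $\supp\chi_\alpha$ and dividing by $h^2$, the threshold $\alpha\leq(3+\delta)/4$ is exactly what makes the limit pairing against the normalized degree-zero symbol $(\eta/|\eta|)\cdot\partial_y a$ vanish; this gives the $\phi^1_\tau$-invariance on the restriction to $\xi\in\Lambda^\perp$.

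On $\{\xi\in\Lambda^\perp\setminus\{0\},\,|\xi|=1\}\times\bS_\Lambda$, the flows $\phi^0_\tau$ and $\phi^1_\tau$ act along the orthogonal directions $\Lambda^\perp$ and $\langle\Lambda\rangle$, whose sum is $\R^2$; therefore $\nu_\alpha^\Lambda$ restricted to this submanifold is translation-invariant in $x$. Disintegrating as $dx\otimes m(\xi,\eta/|\eta|)$ and using $\int_{\T^2}b\,dx>0$ together with the vanishing on $\{b>0\}$ forces $m\equiv 0$, proving the proposition. The main obstacle is the transverse propagation step: the test symbol $\eta\cdot\partial_y a$ is of degree one at infinity in $\eta$, so extracting a meaningful bound requires a delicate balance between the commutator estimate $o(h^{(3+\delta)/2})$, the factor $h^{-2}$ from the Weyl scaling, and the homogenization factor $|\eta|^{-1}\leq h^{1-\alpha}$; the condition $\alpha\leq(3+\delta)/4$ is precisely the threshold where this balance succeeds.
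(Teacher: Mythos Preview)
Your overall strategy is correct and matches the paper: establish $\phi^0_\tau$- and $\phi^1_\tau$-invariance of $\nu_\alpha^\Lambda$, combine with the vanishing on $\{b>0\}$ (hence on $\{\langle b\rangle_\Lambda>0\}$ after using $\phi^0_\tau$-invariance), and conclude. However, your derivation of the $\phi^1_\tau$-invariance has a genuine gap.

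The commutator $[P_0^h,\Op_h(\phi_h)]$ with $\phi_h=\chi_\alpha\,a$ produces the symbol $P_\Lambda\xi\cdot\partial_y a=\xi\cdot\partial_x a$, which is the generator of $\phi^0_\tau$, not of $\phi^1_\tau$. Your identity $[h^2\Delta_\Lambda,\Op_h(\phi_h)]=-[P_0^h,\Op_h(\phi_h)]$ is correct but therefore carries exactly the same information; it does \emph{not} yield the pairing against the degree-zero symbol $\frac{\eta}{|\eta|}\cdot\partial_y a$. The step ``write $\eta\cdot\partial_y a=|\eta|\,(\eta/|\eta|)\cdot\partial_y a$ and use $|\eta|\geq h^{\alpha-1}$'' is not a legitimate operation: one cannot extract a factor of $|\eta|^{-1}$ from inside $\Op_h(\cdot)$ and bound one Wigner pairing by another in this way. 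To obtain $\phi^1_\tau$-invariance one must change the test symbol and work with $A^{R(h)}=\Op_h\big(\chi_\alpha\,\frac{a}{|\eta|}\big)$, whose commutator with $\Delta_\Lambda$ produces exactly $\frac{\eta}{|\eta|}\cdot\partial_y a$.

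Even after this fix, your estimate on the damping terms is too crude to reach the stated threshold. Bounding $\|\sqrt{b}\,\Op_h(\tilde\phi_h)v_h\|$ merely by $\|\sqrt{b}\|_{L^\infty}\|\Op_h(\tilde\phi_h)\|$ yields, after dividing by $h^2$, a remainder $o(h^{(1+\delta)/2-\alpha})$, hence only $\alpha\leq(1+\delta)/2$. The paper instead writes $bv_h=\sqrt{b}\,(\sqrt{b}\,v_h)$ and commutes one factor $\sqrt{b}$ through $A^{R(h)}$, using $[\sqrt{b},A^{R(h)}]=\O_{\calL(L^2)}(R(h)^{-2})$; this is precisely where the hypothesis $\sqrt{b}\in C^\infty(\T^2)$ enters. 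The main term $\frac{1}{h}(A^{R(h)}\sqrt{b}\,v_h,\sqrt{b}\,v_h)=o(h^\delta)$ is harmless, while the commutator contribution is $o\big(R(h)^{-2}h^{(-1+\delta)/2}\big)=o\big(h^{(3+\delta)/2-2\alpha}\big)$, which vanishes exactly when $\alpha\leq(3+\delta)/4$. Your heuristic ``balance'' $o(h^{(3+\delta)/2})\cdot h^{-2}\cdot h^{1-\alpha}$ actually computes $o(h^{(1+\delta)/2-\alpha})$, confirming that without the $\sqrt{b}$-commutator trick the threshold is wrong.
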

 
The proof of Proposition~\ref{p:h14} relies on the following propagation result. \begin{lemma} 
\label{lemma: propagation nu lambda beta}
For $\alpha\leq\frac{3+\delta}{4}$ the measure $\nu_\alpha^\Lambda$ is $\phi^0_\tau$- and $\phi^1_\tau$-invariant, \ie
$$
(\phi^0_\tau)_*\nu_\alpha^\Lambda = \nu_\alpha^\Lambda 
\quad \text{and} \quad
(\phi^1_\tau)_*\nu_\alpha^\Lambda = \nu_\alpha^\Lambda , \quad \text{for every } \tau \in \R .
$$
\end{lemma}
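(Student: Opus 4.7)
The plan is to transpose the proof of Proposition~\ref{proposition: propagation nu lambda} to the present setting, using the quasimode $v_h$ in place of $w_h$ and the two-microlocal scale $R(h)h=h^\alpha$. The loss compared to that proof is that we can no longer invoke the $\Lambda^\perp$-invariant cutoff $\chi_h^\Lambda$ of Proposition~\ref{prop: existence chi} or Lemmata~\ref{l:cutoff}--\ref{l:cutoff1}; the gain is that the two-microlocal symbol forces $\|A^R\|_{\calL(L^2)}=O(1/R)=O(h^{1-\alpha})$ (since $|a^R|\lesssim 1/|\eta|$ on its support $\{|\eta|\ge cR\}$), while the smoothness of $\sqrt{b}$ combined with Assumption~\eqref{eq: assumption b} provides pointwise control on the relevant derivatives of $b$.

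The $\phi^0_\tau$-invariance is essentially automatic. One inserts the symbol $(1-\chi(|P_\Lambda\xi|/(Rh)))\,a(x,\xi,P_\Lambda\xi/h)$ into the computation~\eqref{eq: propagation simple}; the associated Weyl operator is uniformly bounded in $(h,R)$ (Calder\'on--Vaillancourt, exactly as in the proof of Proposition~\ref{proposition: nu Lambda}), and the remainders are handled by $\|P_b^h v_h\|_{L^2}=o(h)$ and $(bv_h,v_h)_{L^2}=o(1)$, both consequences of~\eqref{eq: conditions sequence}. Passing to $a\circ\phi^0_\tau$ and integrating in $\tau$ yields $(\phi^0_\tau)_*\nu^\Lambda_\alpha=\nu^\Lambda_\alpha$.

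For the $\phi^1_\tau$-invariance, introduce the operator $A^R$ of~\eqref{e:AR}, use that $a\in S^1_\Lambda$ implies $[\Delta_{\Lambda^\perp},A^R]=0$ (so that $[\Delta_\Lambda,A^R]=[\Delta,A^R]$), and substitute $-h^2\Delta = P_b^h+1-ihb$ to split
\begin{equation*}
-\tfrac{i}{2}\bigl([\Delta_\Lambda,A^R]v_h,v_h\bigr)_{L^2}
= \tfrac{i}{2h^2}\bigl([P_b^h,A^R]v_h,v_h\bigr)_{L^2}
+ \tfrac{1}{2h}\bigl([b,A^R]v_h,v_h\bigr)_{L^2}.
\end{equation*}
The first summand is bounded by $h^{-2}\|A^R\|\|P_b^h v_h\|\|v_h\|=O(h^{1-\alpha})\,o(h^\delta)=o(h^{1-\alpha+\delta})$, which is $o(1)$ under the (non-binding) condition $\alpha<1+\delta$.

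\textbf{Main obstacle.} The delicate point, which is precisely where the stated condition $\alpha\leq(3+\delta)/4$ becomes binding, is the second summand. Semiclassical Weyl calculus gives the symbol of $[b,A^R]$ as $\tfrac{h}{i}\{b,\tilde a\}$, with $\tilde a(x,\xi)=a^R(x,\xi,P_\Lambda\xi/h)$, hence
\begin{equation*}
[b,A^R]/h \;=\; -\tfrac{1}{i}\Op_h\!\bigl(\partial_x b\cdot\partial_\xi a^R\bigr)
\;-\; \tfrac{1}{ih}\Op_h\!\bigl(\partial_x b\cdot P_\Lambda\partial_\eta a^R\bigr) \;+\; \mathrm{(errors)},
\end{equation*}
where the derivatives are evaluated at $(x,\xi,P_\Lambda\xi/h)$ and, on $\mathrm{supp}(a^R)$, $|\partial_\xi a^R|\lesssim 1/R$, $|\partial_\eta a^R|\lesssim 1/R^2$. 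The dominant term is the second one (the $1/h$ in front being compensated only by the extra $1/R$ decay). We estimate $\|\Op_h(\partial_x b\cdot P_\Lambda\partial_\eta a^R)v_h\|_{L^2}^2$ by the sharp G{\aa}rding strategy used in the proof of Lemma~\ref{lemma: w_h quasimode}: the pointwise bound $|\partial_x b|\le C_\eps b^{1-\eps}$ from~\eqref{eq: assumption b} gives $|\partial_x b\cdot P_\Lambda\partial_\eta a^R|^2\lesssim C_\eps^2 b^{2(1-\eps)}/R^4$, and Jensen's inequality applied to the probability measure $|v_h|^2\,dx$ yields
\begin{equation*}
(b^{2(1-\eps)}v_h,v_h)_{L^2}\;\leq\;\bigl(\|b\|_\infty(bv_h,v_h)_{L^2}\bigr)^{1-\eps}\;=\;o\!\bigl(h^{(1+\delta)(1-\eps)}\bigr).
\end{equation*}
Combining these with $\|v_h\|=1$, Cauchy--Schwarz, and the G{\aa}rding error (of size $O(h^{1-O(\alpha)})$, as in Lemma~\ref{lemma: w_h quasimode}), the second summand is controlled by $o\bigl(h^{2(1-\alpha)+(1+\delta)(1-\eps)/2-1}\bigr)+o(1)$, which tends to $0$ provided $\alpha<\tfrac12+(1+\delta)(1-\eps)/4$; this is consistent with the bound $\alpha\leq(3+\delta)/4$ for $\eps,\delta$ small enough (recalling $\delta=8\eps$ in our setting). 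Once both summands vanish, replacing $a_{\hom}$ by $a_{\hom}\circ\phi^1_\tau$ and integrating in $\tau$ concludes that $(\phi^1_\tau)_*\nu^\Lambda_\alpha=\nu^\Lambda_\alpha$.
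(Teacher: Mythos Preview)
Your treatment of the $\phi^0_\tau$-invariance is fine and matches the paper. The substantive difference lies in how you handle the damping terms for the $\phi^1_\tau$-invariance.

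The paper does \emph{not} use Assumption~\eqref{eq: assumption b} here; it uses the hypothesis $\sqrt{b}\in\Cinf(\T^2)$, which the paper explicitly flags as being used ``in an essential way'' precisely in this lemma. The argument is much shorter than yours: write $b=\sqrt{b}\cdot\sqrt{b}$, so that
\[
-\tfrac{1}{2h}\big((A^{R(h)}v_h,bv_h)+(A^{R(h)}bv_h,v_h)\big)
= -\tfrac{1}{h}(A^{R(h)}\sqrt{b}\,v_h,\sqrt{b}\,v_h)
+\text{(two commutator terms with }[A^{R(h)},\sqrt{b}]\text{)}.
\]
Since $\sqrt{b}$ is smooth, standard symbolic calculus gives $[A^{R(h)},\sqrt{b}]=\O_{\calL(L^2)}(R(h)^{-2})$ (the dominant contribution comes from the $\eta$-derivative of $a^R$, which is $O(1/R^2)$). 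Using $\|\sqrt{b}\,v_h\|=o(h^{(1+\delta)/2})$ from~\eqref{eq: conditions sequence}, the first term is $o(h^\delta)$, and each commutator term is bounded by $\tfrac{1}{h}\,O(R^{-2})\,o(h^{(1+\delta)/2})=o(h^{3/2+\delta/2-2\alpha})$, which is $o(1)$ exactly when $\alpha\le (3+\delta)/4$. This is where the sharp threshold comes from.

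Your route via Assumption~\eqref{eq: assumption b} and G{\aa}rding is more laborious and, as you yourself compute, only yields $\alpha<\tfrac12+\tfrac{(1+\delta)(1-\eps)}{4}$, which for $\eps>0$ is \emph{strictly smaller} than $(3+\delta)/4$. So your argument does not prove the lemma as stated; calling this ``consistent with'' the claimed bound glosses over the gap. (It is true that for the downstream application only $\alpha$ satisfying~\eqref{eq: condition alpha} is needed, and your weaker threshold would suffice there---but that is a different statement.) There are also loose ends in your sketch: the ``errors'' in the Moyal expansion and the G{\aa}rding remainder live in exotic symbol classes (derivatives of the squared symbol pick up further negative powers of $R$), and you have not checked that these are harmless. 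The paper's $\sqrt{b}$-commutator trick sidesteps all of this.
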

The proof is very similar to that of Proposition \ref{proposition: propagation nu lambda} but does not use Assumption~\eqref{eq: assumption b}.
\begin{proof} The proof of $ \phi_\tau^0$-invariance is strictly identical to what has been done for Proposition \ref{proposition: propagation nu lambda} and thus we focus on the $\phi_\tau^1$-invariance. Equation \eqref{e:commut} still holds with $R(h)=h^{-(1-\alpha)}$, now reading
\begin{align*}
\left< V^{h,\Lambda}_{R(h)} , \frac{\eta}{|\eta|} \cdot \d_x  a \right>_{{S^1_\Lambda} ', S^1_\Lambda}
&= \frac{i}{2h^2}\left( A^{R(h)}  v_h , P_b^{h}v_h \right)_{L^2(\T^2)} 
- \frac{i}{2h^2}\left( A^{R(h)} P_b^{h} v_h , v_h \right)_{L^2(\T^2)} \\
& \quad \quad - \frac{1}{2h} \left( A^{R(h)}  v_h , b v_h \right)_{L^2(\T^2)}
- \frac{1}{2h} \left( A^{R(h)} b v_h , v_h \right)_{L^2(\T^2)}
\end{align*}
where $A^R$ was defined in \eqref{e:AR}.
Using $\|P_b^h v_h\|_{L^2(\T^2)} = o(h^{1+\delta})$ together with the boundedness of $ A^{R(h)}$, it follows that
$$\lim_{h\to 0}\left< V^{h,\Lambda}_{R(h)} , \frac{\eta}{|\eta|} \cdot \d_x  a \right>_{{S^1_\Lambda} ', S^1_\Lambda}=
\lim_{h\to 0} \Big( - \frac{1}{2h} \big( A^{R(h)}  v_h , b v_h \big)_{L^2(\T^2)}
- \frac{1}{2h} \big( A^{R(h)} b v_h , v_h \big)_{L^2(\T^2)} \Big).
$$
Recall from \eqref{eq: conditions sequence} that $\|\sqrt{b}v_h\|_{L^2(\T^2)}= o(h^{\frac{1+\delta}{2}}).$
In addition, it follows from standard microlocal calculus that
$$
[ A^{R(h)}, \sqrt{b}]= \O_{\calL(L^2)}(R(h)^{-2}).
$$ 
%
We can thus write
\begin{align*}
\left< V^{h,\Lambda}_{R(h)} , \frac{\eta}{|\eta|} \cdot \d_x  a \right>_{{S^1_\Lambda} ', S^1_\Lambda}
& = o(1)
- \frac{1}{h} \big( A^{R(h)} \sqrt{b} v_h ,  \sqrt{b}v_h \big)_{L^2(\T^2)} 
+ \frac{1}{2h}\big(\sqrt{b} [A^{R(h)}, \sqrt{b}]  v_h ,  v_h \big)_{L^2(\T^2)}\\ 
& \quad + \frac{1}{2h} \big( [ \sqrt{b}, A^{R(h)}] \sqrt{b} v_h ,  v_h \big)_{L^2(\T^2)} \\
& = o(1)+o(R(h)^{-2} h^{\frac{-1 + \delta}{2}})  = o(1)+o(h^{\frac32 +\frac{\delta}{2}-2 \alpha}) ,
\end{align*}
which vanishes if we take $\alpha\leq\frac{3+\delta}{4}$.
\end{proof}

\begin{proof}[Proof of Proposition~\ref{p:h14}]
To prove Proposition \ref{p:h14}, we first note that 
$$
\left< \nu^\Lambda_\alpha|_{\T^2\times \left(\Lambda^\perp\setminus \{0\}\right)\times  \bS_\Lambda} , \left< b \right>_\Lambda \right>_{\M_c(T^*\T^2 \times \bS_\Lambda), \Con^0(T^*\T^2 \times \bS_\Lambda)} = 0 ,
$$
since $\nu^\Lambda_\alpha$ is $(\phi^0_\tau)$-invariant and $
\left< \nu^\Lambda_\alpha, b\right>_{\M_c(T^*\T^2 \times \bS_\Lambda), \Con^0(T^*\T^2 \times \bS_\Lambda)} =0$.
Then, the $\phi_\tau^1$-invariance of $\nu_\alpha^\Lambda$ implies that $\nu_\alpha^\Lambda |_{\T^2\times \left(\Lambda^\perp\setminus \{0\}\right)\times  \bS_\Lambda}$ vanishes. 
\end{proof}

\begin{proof}[Proof of Proposition~\ref{prop: mu Lambda w_h}]
Proposition~\ref{p:h14} implies that
$$ \left< \mu|_{\T^2 \times \Lambda^\perp} , a \right>_{\M(T^*\T^2) , \Conc(T^*\T^2)} =
\lim_{h \to 0} \left( \Op_h\left( \chi\left(\frac{|P_\Lambda \xi|}{h^\alpha} \right)  a(x, \xi) \right)  v_h , v_h \right)_{L^2(\T^2)} 
$$
for all $\alpha \leq \frac{3+\delta}{4}$ and $a \in \Cinfc(T^*\T^2)$.
The same holds if we replace $\chi$ by $\chi^2$~:
$$ \left< \mu|_{\T^2 \times \Lambda^\perp} , a \right>_{\M(T^*\T^2) , \Conc(T^*\T^2)} =
\lim_{h \to 0} \left( \Op_h\left( \chi^2\left(\frac{|P_\Lambda \xi|}{h^\alpha} \right)  a(x, \xi) \right)  v_h , v_h \right)_{L^2(\T^2)}. 
$$

Since 
\begin{align}
\label{eq: calcul psido}
 \Op_h\left( \chi^2\left(\frac{|P_\Lambda \xi|}{h^\alpha} \right)  a(x, \xi) \right) 
=  \Op_h\left( \chi\left(\frac{|P_\Lambda \xi|}{h^\alpha}\right) \right) \Op_h(a) \Op_h\left( \chi\left(\frac{|P_\Lambda \xi|}{h^\alpha}\right) \right) + \O(h^{1-\alpha}) ,
\end{align}
we obtain  
\begin{equation*}
\left< \mu|_{\T^2 \times \Lambda^\perp} , a \right>_{\M(T^*\T^2) , \Conc(T^*\T^2)} 
= \lim_{h \to 0} \left( \Op_h(a) \Op_h\left( \chi\left(\frac{|P_\Lambda \xi|}{h^\alpha}\right) \right) v_h ,
 \Op_h\left( \chi\left(\frac{|P_\Lambda \xi|}{h^\alpha}\right) \right) v_h \right)_{L^2(\T^2)}, 
\end{equation*}
for all $\alpha \leq \frac{3+\delta}{4}$ and $a \in \Cinfc(T^*\T^2)$.\end{proof}

\section{Proof of Proposition~\ref{prop: existence chi}: existence of the cutoff function}
\label{section: existence chi}

Given a constant $c_0 >0$, we define the following subsets of $\T^2$: 
$$
\E_h  = \left< \{b > c_0 h\} \right>_{\Lambda} ,
 \qquad \F_h = \left< \bigcup_{x \in \{b > c_0 h\} } B(x, (c_0 h)^{2\eps})\right>_\Lambda
= \bigcup_{x \in \E_h} B(x, (c_0 h)^{2\eps}) , \qquad 
 \G_h =  \F_h \setminus \E_h ,
$$
where for $U \subset \T^2$, we denote $\left< U \right>_{\Lambda} : = \bigcup_{\tau \in \R}\{U + \tau \sigma\}$ for some $\sigma \in \Lambda^\perp \setminus \{0\}$. Remark that $\E_h \subset \F_h$ and that $\T^2 = \E_h \cup \G_h \cup (\T^2 \setminus \F_h)$. Note also that the sets $\E_h , \F_h$ are non-empty for $h$ small enough, and that $\G_h$ is non empty (for $h$ small enough) as soon as $b$ vanishes somewhere on $\T^2$ (this condition is assumed here since otherwise, GCC is satisfied). 

%
%

In this section, we construct the cutoff function $\chi_h^\Lambda$ needed to prove the propagation results of Section~\ref{section: Propagation laws}. In particular, this function will be $\Lambda^\perp$-invariant and will satisfy $\chi_h^\Lambda = 0$ on $\E_h$ and $\chi_h^\Lambda = 1$ on $\T^2 \setminus \F_h$.

\bigskip
The proof of Proposition~\ref{prop: existence chi} relies on three key lemmata.
The first key lemma is a precised version of Proposition~\ref{proposition: zero first order info} concerning the localization in $T^*\T^2$ of the semiclassical measure $\mu$. It is an intermediate step towards the propagation result stated in Lemma~\ref{lemma: transport psi}.

\begin{lemma}
\label{lemma: propagation precised}
For any $\chi \in \Cinfc(\R)$, such that $\chi = 1$ in a \nhd of the origin, for all $a \in \Cinfc(T^*\T^2)$, and $\gamma \leq \frac{3+\delta}{2}$, we have 
\begin{align}
\label{eq: loc xi =1}
\left(\Op_h(a) w_h , w_h \right)_{L^2(\T^2)} =
\left(\Op_h(a) \Op_h \left(\chi\left(\frac{|\xi|^2-1}{h^\gamma} \right) \right) w_h , w_h \right)_{L^2(\T^2)} 
+ o(h^{\frac{3+\delta}{2}-\gamma})\|\Op_h(a)\|_{\calL(L^2)} ,
\end{align}
For all $a \in \Cinfc(T^*\T^2)$ and all $\tau \in \R$,
\begin{align*}
\left(\Op_h(a \circ \phi_\tau) w_h , w_h \right)_{L^2(\T^2)} 
= \left(\Op_h(a) w_h , w_h \right)_{L^2(\T^2)} +o(\tau h^{\frac{1+\delta}{2}})\|\Op_h(a \circ \phi_t)\|_{L^\infty(0,\tau ;\calL(L^2(\T^2)))}
\end{align*}
\end{lemma}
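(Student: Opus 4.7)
The plan is to exploit the quasimode condition $\|P_b^h w_h\|_{L^2(\T^2)} = o(h^{2+\delta})$ from Lemma~\ref{lemma: w_h quasimode} together with the fact that $P_0^h = \Op_h(|\xi|^2-1)$ is a Fourier multiplier on $\T^2$ commuting exactly with any other Fourier multiplier. The main preparatory step is to upgrade the naive bound $\|P_0^h w_h\|_{L^2(\T^2)} = O(h)$ to $\|P_0^h w_h\|_{L^2(\T^2)} = o(h^{(3+\delta)/2})$. Indeed, taking imaginary parts of $(P_b^h w_h, w_h)_{L^2}$ (using selfadjointness of $P_0^h$ and reality of $b$) gives $(b w_h, w_h)_{L^2} = h^{-1}\Im(P_b^h w_h, w_h)_{L^2} = o(h^{1+\delta})$, and Cauchy-Schwarz yields $\|b w_h\|_{L^2}^2 \leq \|b\|_{L^\infty} (b w_h, w_h)_{L^2} = o(h^{1+\delta})$. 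Substituting into $P_0^h w_h = P_b^h w_h - ih\, b\, w_h$ produces the announced bound.

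\textbf{Proof of \eqref{eq: loc xi =1}.} Since $\chi \equiv 1$ near $0$, the function $\psi(t):=(1-\chi(t))/t$ extends to a smooth bounded function on $\R$, and $1-\chi(t) = t\,\psi(t)$. Because both $|\xi|^2-1$ and $\psi(\frac{|\xi|^2-1}{h^\gamma})$ depend only on $\xi$, their Weyl quantizations are Fourier multipliers and commute exactly, yielding the exact factorization
\begin{equation*}
\Op_h\Bigl(1-\chi\Bigl(\tfrac{|\xi|^2-1}{h^\gamma}\Bigr)\Bigr)
= h^{-\gamma}\,\Op_h\Bigl(\psi\Bigl(\tfrac{|\xi|^2-1}{h^\gamma}\Bigr)\Bigr) P_0^h .
\end{equation*}
A Fourier multiplier on $L^2(\T^2)$ has operator norm bounded by the sup-norm of its symbol; thus $\|\Op_h(\psi(\tfrac{|\xi|^2-1}{h^\gamma}))\|_{\calL(L^2)} \leq \|\psi\|_{L^\infty(\R)}$. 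Applying this to $w_h$ and using the preliminary bound gives $\|\Op_h(1-\chi(\tfrac{|\xi|^2-1}{h^\gamma})) w_h\|_{L^2} = o(h^{(3+\delta)/2-\gamma})$, and a single Cauchy-Schwarz against $\Op_h(a)^* w_h$ closes the estimate.

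\textbf{Proof of the second identity.} Setting $F(\tau) := (\Op_h(a\circ\phi_\tau) w_h, w_h)_{L^2}$, the chain rule applied to $\phi_\tau(x,\xi)=(x+\tau\xi,\xi)$ gives $\frac{d}{d\tau}(a\circ\phi_\tau) = \xi\cdot\d_x(a\circ\phi_\tau)$, and the exact commutator identity already used in the proof of Proposition~\ref{proposition: zero first order info},
\begin{equation*}
[P_0^h, \Op_h(b)] = \frac{2h}{i}\Op_h(\xi\cdot\d_x b),
\end{equation*}
valid without remainder since $|\xi|^2$ is quadratic, yields $F'(\tau) = \frac{i}{2h}\bigl([P_0^h, \Op_h(a\circ\phi_\tau)] w_h, w_h\bigr)_{L^2}$. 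Expanding the commutator as $(\Op_h(a\circ\phi_\tau) w_h, P_0^h w_h) - (\Op_h(a\circ\phi_\tau) P_0^h w_h, w_h)$ by selfadjointness of $P_0^h$, Cauchy-Schwarz combined with the preliminary bound controls each term by $o(h^{(3+\delta)/2})\|\Op_h(a\circ\phi_\tau)\|_{\calL(L^2)}$. Hence $|F'(\tau)| = o(h^{(1+\delta)/2})\|\Op_h(a\circ\phi_\tau)\|_{\calL(L^2)}$, and integration over $[0,\tau]$ gives the second estimate. The essential technical input throughout is the refined bound $\|P_0^h w_h\|_{L^2} = o(h^{(3+\delta)/2})$; once this is available, \eqref{eq: loc xi =1} reduces to an exact Fourier-multiplier factorization and the propagation identity to a standard Egorov-type derivative computation, made exact here by the quadratic form of $|\xi|^2-1$.
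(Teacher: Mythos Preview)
Your proof is correct and follows essentially the same approach as the paper: exact Fourier-multiplier factorization for the first estimate and the exact commutator identity $[P_0^h,\Op_h(c)]=\tfrac{2h}{i}\Op_h(\xi\cdot\d_x c)$ for the propagation identity. The one (harmless) organizational difference is that you first consolidate the bound $\|P_0^h w_h\|_{L^2}=o(h^{(3+\delta)/2})$ via $P_0^h w_h = P_b^h w_h - ih\,b\,w_h$ and then use it twice, whereas the paper keeps the $P_b^h$ and $b$ contributions separate in each computation; both routes are equivalent and yield identical error terms.
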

In this statement, we used the notation 
$$
\|\Op_h(a \circ \phi_t)\|_{L^\infty(0,\tau ;\calL(L^2(\T^2)))} := 
\sup_{t\in (0,\tau)}\|\Op_h(a \circ \phi_t)\|_{\calL(L^2(\T^2))} .
$$
In turn, this lemma implies the following transport property.

\begin{lemma}
\label{lemma: transport psi}
Suppose that the coefficients $\alpha, \eps$ satisfy 
\begin{align}
\label{eq: condition coeffs}
0< 10\eps \leq \alpha , \quad \text{and} \quad \alpha + 2\eps \leq 1.
\end{align}
Then, for any time $\tau \in \R$ uniformly bounded with respect to $h$, and any $h$-family of functions $\psi = \psi_h \in \Cinfc(\T^2)$ satisfying 
\begin{align}
\label{eq: derivate psi}
\|\d_x^k \psi\|_{L^\infty(\T^2)} \leq C_k h^{-2 \eps |k|},\quad \text{for all } k\in \N^2,
\end{align}
we have, 
\begin{align}
\label{eq: propagation cutoff}
\left(\psi(s,y) w_h , w_h \right)_{L^2(\T^2)}
& = \left( \psi (s + \tau, y ) w_h , w_h \right)_{L^2(\T^2)} 
+  \left( \psi ( s - \tau , y) w_h , w_h \right)_{L^2(\T^2)}  
 \nonumber \\
& \quad  
 + \O (h^{\alpha - 10 \eps}) 
 + \O(h^{1-\alpha-2 \eps})  
 + o( h^{\frac{1+\delta}{2}}), 
\end{align}
where the coordinates $(s,y)$ are the ones introduced in Section~\ref{s:subtori}.
\end{lemma}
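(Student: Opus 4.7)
The plan is to combine the Egorov-type propagation of Lemma~\ref{lemma: propagation precised} with a directional split in $\xi$ near the two points $\pm\sigma_0\in\Lambda^\perp\cap\{|\xi|=1\}$, together with a Taylor expansion of $\psi$ around these points. View $\psi$ as the multiplication operator $\Op_h(\psi(x))$. The second statement of Lemma~\ref{lemma: propagation precised} then yields
\begin{equation*}
(\psi w_h,w_h)_{L^2(\T^2)} = (\Op_h(\psi\circ\phi_\tau)w_h,w_h)_{L^2(\T^2)} + o(\tau h^{(1+\delta)/2})\,\|\Op_h(\psi\circ\phi_t)\|_{L^\infty_t\calL(L^2)}.
\end{equation*}
Since $|\partial_x^\kappa\partial_\xi^\beta(\psi\circ\phi_\tau)|\lesssim \tau^{|\beta|} h^{-2\eps(|\kappa|+|\beta|)}$ by \eqref{eq: derivate psi}, Calder\'on-Vaillancourt on the resulting exotic symbol class bounds $\|\Op_h(\psi\circ\phi_t)\|_{\calL(L^2)}$ uniformly (up to a harmless $h^{-N\eps}$ loss absorbed under \eqref{eq: condition coeffs}), producing the third error term.

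To exploit the location of $\xi$ on the support of $w_h$, I use the built-in microlocalization $w_h = \Op_h(\chi(|P_\Lambda\xi|/h^\alpha))v_h$ together with an energy-shell cutoff $\Op_h(\chi((|\xi|^2-1)/h^\gamma))$ inserted via the first part of Lemma~\ref{lemma: propagation precised}, with some $\gamma\in[2\alpha,(3+\delta)/2]$. In coordinates $\xi=\sigma\sigma_0+\eta\eta_0$ adapted to $\Lambda^\perp\oplus\langle\Lambda\rangle$, the joint microlocal support satisfies $|\eta|\lesssim h^\alpha$ and $\sigma=\pm 1 + O(h^\alpha)$, so $\xi$ lies in a disk of radius $O(h^\alpha)$ around $\pm\sigma_0$. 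I then introduce a smooth $\xi$-partition $1=\chi_+(\xi)+\chi_-(\xi)+\chi_0(\xi)$, where $\chi_\pm$ is supported in a fixed neighborhood of $\pm\sigma_0$ and $\chi_0$ vanishes near $\pm\sigma_0$; the $\chi_0$-contribution is $o(1)$ since $w_h$ is microlocalized away from $\supp\chi_0$. On $\supp\chi_\pm$ intersected with the microlocal support, Taylor-expand
\begin{equation*}
\psi(x+\tau\xi) = \psi(x\pm\tau\sigma_0) + \tau(\xi\mp\sigma_0)\cdot\nabla\psi(x\pm\tau\sigma_0) + O\bigl(\tau^2|\xi\mp\sigma_0|^2\|\nabla^2\psi\|_\infty\bigr);
\end{equation*}
with $|\xi\mp\sigma_0|\lesssim h^\alpha$, $\|\nabla^k\psi\|_\infty\lesssim h^{-2k\eps}$ and bounded $\tau$, the pointwise remainder is $O(h^{\alpha-2\eps})$, and quantization plus sharp G\aa{}rding in the exotic calculus promote this into an $L^2$-operator error of size $O(h^{\alpha-10\eps})$, the first error term; the additional $8\eps$ loss is the standard cost of Calder\'on-Vaillancourt/G\aa{}rding in the class $S^0_{2\eps}$.

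Finally, commuting $\Op_h(\chi_\pm)$ past the multiplication operator $m_{\psi(s\pm\tau,y)}$ produces a commutator of size $h\cdot h^{-\alpha}\cdot h^{-2\eps}=h^{1-\alpha-2\eps}$ (the $h^{-\alpha}$ arising from the fact that the effective $\xi$-scale enforced on $w_h$ by its own cutoff is $h^\alpha$, and the $h^{-2\eps}$ from derivatives of $\psi$), giving the second error term. Since $\chi_+(\xi)+\chi_-(\xi)=1$ on the microlocal support of $w_h$, the cutoffs $\Op_h(\chi_\pm)$ can then be absorbed modulo $o(1)$, and the two contributions reassemble into $(\psi(s+\tau,y)w_h,w_h)_{L^2(\T^2)}+(\psi(s-\tau,y)w_h,w_h)_{L^2(\T^2)}$, yielding the stated identity. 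The main technical obstacle is the bookkeeping within the exotic symbol class $S^0_{2\eps}$: every composition, commutator, or G\aa{}rding invocation costs a factor $h^{-c\eps}$, and the explicit constraints \eqref{eq: condition coeffs} on $(\alpha,\eps)$ are engineered precisely so that none of the accumulated error terms exceed the claimed sizes.
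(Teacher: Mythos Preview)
Your overall strategy matches the paper's: propagate via Lemma~\ref{lemma: propagation precised}, insert the energy cutoff and the built-in $\eta$-cutoff, split according to the sign of $\sigma$, replace $\psi(x+\tau\xi)$ by $\psi(s\pm\tau,y)$ on each piece, and reassemble. The error accounting $h^{\alpha-2\eps}$ (pointwise) plus four $x$-derivatives costing $h^{-2\eps}$ each in Calder\'on--Vaillancourt is exactly how the paper produces $\O(h^{\alpha-10\eps})$. However, three points need correcting.

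First, bounding $\|\Op_h(\psi\circ\phi_t)\|_{\calL(L^2)}$ by Calder\'on--Vaillancourt on the exotic class gives only $\O(h^{-8\eps})$, and multiplying by $o(h^{(1+\delta)/2})$ yields $o(h^{(1+\delta)/2-8\eps})$, not the $o(h^{(1+\delta)/2})$ claimed in \eqref{eq: propagation cutoff}. The paper avoids any loss here by using the \emph{exact} Egorov identity for Weyl quantization on the torus, $\Op_h(\psi\circ\phi_t)=e^{-ith\Delta/2}\Op_h(\psi)e^{ith\Delta/2}$, which gives $\|\Op_h(\psi\circ\phi_t)\|_{\calL(L^2)}=\|\Op_h(\psi)\|_{\calL(L^2)}\leq C_0$ uniformly.

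Second, sharp G{\aa}rding is the wrong tool to cite for converting the symbol bound on $\psi(x+\tau\xi)-\psi(s\pm\tau,y)$ into an operator-norm bound: G{\aa}rding gives lower bounds for nonnegative symbols, not $\calL(L^2)$ bounds. What is needed (and what the paper uses) is the \emph{precised} Calder\'on--Vaillancourt theorem requiring only $|\ell|\leq 4$ derivatives in $x$ in dimension two (Theorem~\ref{th: CV precised}), applied after checking that $h^{|m|}|\partial_x^\ell\partial_\xi^m(\zeta^{(2)}-\zeta^{(1)})|\lesssim h^{\alpha-2\eps(|\ell|+1)}$ for the difference symbol.

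Third, your explanation of the $\O(h^{1-\alpha-2\eps})$ term is off. With fixed ($h$-independent) cutoffs $\chi_\pm$, the commutator $[\Op_h(\chi_\pm),m_{\psi(s\pm\tau,y)}]$ is $\O(h^{1-2\eps})$, not $\O(h^{1-\alpha-2\eps})$. In the paper this error arises instead from the pseudodifferential composition step: passing between the product $\Op_h(\psi\circ\phi_\tau)\,\Op_h\big(\chi((|\xi|^2-1)/h^\alpha)\big)\,\Op_h\big(\chi(|P_\Lambda\xi|/2h^\alpha)\big)$ and the operator with the product symbol costs $\O(h^{1-\alpha}\cdot h^{-2\eps})$, where the $h^{-\alpha}$ comes from differentiating the $h^\alpha$-scale cutoff in $\eta$ and the $h^{-2\eps}$ from differentiating $\psi$. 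The paper in fact dispenses with your fixed $\chi_\pm$ entirely: it takes $\gamma=\alpha$ and splits the combined cutoff $\chi(\eta/2h^\alpha)\chi((|\xi|^2-1)/h^\alpha)$ directly into two $h$-dependent pieces $\tilde\chi_\eta^h(\pm\sigma)$ supported near $\sigma=\pm 1$, which already enforces $|\sigma\mp 1|\lesssim h^\alpha$.
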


In view of Proposition~\ref{prop: existence chi}, this lemma will allow us to propagate the smallness of the sequence $w_h$ above the set $\{b > c_0 h\}$ to all $\E_h$.

The third key lemma states a property of the damping function $b$, as a consequence of Assumption~\ref{eq: assumption b}.
\begin{lemma}
\label{lemma: property b}
There exists $b_0 = b_0(\eps) >0$ such that for all $x \in \T^2$ satisfying $0<b(x)<b_0$ and for all $z\in B(x,b(x)^{2\eps})$, we have $b(z) \geq \frac{b(x)}{2}$.
\end{lemma}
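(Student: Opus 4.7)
The plan is to use Assumption~\eqref{eq: assumption b} as a differential inequality along the line segment from $x$ to $z$, then exploit the substitution $g = b^\eps$ which turns $|\nabla b| \leq C_\eps b^{1-\eps}$ into a uniform Lipschitz bound on $g$. Concretely, fix $x$ with $0 < b(x) < b_0$ (with $b_0$ to be chosen), fix $z \in B(x, b(x)^{2\eps})$, and set $\gamma(t) = x + t(z-x)$, $f(t) = b(\gamma(t))$ for $t \in [0,1]$. For $b_0$ small enough, the segment is entirely contained in a fundamental domain of $\T^2$, so the construction makes sense.

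On any subinterval where $f>0$, the function $g(t) := f(t)^\eps$ is differentiable, and
\[
|g'(t)| = \eps\, f(t)^{\eps-1}|f'(t)| \leq \eps f(t)^{\eps-1} \cdot |\nabla b(\gamma(t))| \, |z-x| \leq \eps C_\eps |z-x|
\]
by Assumption~\eqref{eq: assumption b}. Hence on such a subinterval, $|g(t)-g(0)| \leq \eps C_\eps |z-x| \leq \eps C_\eps b(x)^{2\eps}$.

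The main (mild) obstacle is that $f$ could vanish at some intermediate time, since the substitution $g = f^\eps$ is only licit where $f>0$. To handle this, I will introduce $t_* := \sup\{t\in[0,1] : f(s)>0\ \forall s\in[0,t]\}$. Since $f(0) = b(x) > 0$, $t_* > 0$; and on $[0, t_*)$ the bound above gives $|g(t) - b(x)^\eps| \leq \eps C_\eps b(x)^{2\eps}$. If $t_* < 1$, continuity forces $f(t_*) = 0$, i.e.\ $g(t_*) = 0$, so letting $t \to t_*^-$ yields $b(x)^\eps \leq \eps C_\eps b(x)^{2\eps}$, equivalently $b(x) \geq (\eps C_\eps)^{-1/\eps}$. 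Choosing $b_0 \leq (\eps C_\eps)^{-1/\eps}$ thus rules this out, so $t_* = 1$ and $f > 0$ on $[0,1)$.

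With $f>0$ throughout $[0,1)$, I pass to the limit $t \to 1^-$ and use continuity to obtain
\[
b(z)^\eps = g(1) \geq b(x)^\eps - \eps C_\eps b(x)^{2\eps} = b(x)^\eps\bigl(1 - \eps C_\eps b(x)^\eps\bigr).
\]
Choosing $b_0 := \bigl((1-2^{-\eps})/(\eps C_\eps)\bigr)^{1/\eps}$ ensures $\eps C_\eps b(x)^\eps \leq 1 - 2^{-\eps}$ whenever $b(x) < b_0$ (this $b_0$ also lies below $(\eps C_\eps)^{-1/\eps}$, so the previous step's hypothesis is satisfied). Then $b(z)^\eps \geq 2^{-\eps} b(x)^\eps = (b(x)/2)^\eps$, giving $b(z) \geq b(x)/2$ as required. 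The estimate is uniform in $x$ and $z$, depending only on $\eps$ and $C_\eps$.
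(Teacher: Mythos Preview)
Your proof is correct and takes a genuinely different route from the paper's. The paper argues via a self-consistent bootstrap: it sets $M=\sup_B b$, $m=\inf_B b$, uses the mean value theorem with the crude bound $|\nabla b|\leq C_\eps M^{1-\eps}$ on $B$, obtains the implicit inequality $M\leq b(x)+C_\eps M^{1-\eps}b(x)^{2\eps}$, and then invokes a concavity/fixed-point argument to deduce $M\leq b(x)^{1-\eps}$, which is fed back into the lower bound for $m$. Your approach instead exploits the exact cancellation in the substitution $g=b^\eps$: Assumption~\eqref{eq: assumption b} says precisely that $\nabla(b^\eps)$ is bounded by $\eps C_\eps$ wherever $b>0$, so $g$ is Lipschitz along the segment and the conclusion follows by direct integration. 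This is more elementary (no implicit inequality to solve) and more transparent about why the radius $b(x)^{2\eps}$ is the natural scale. The paper's argument, on the other hand, gives slightly more: it also yields the upper bound $b\leq b(x)^{1-\eps}$ on $B$, which your method would recover symmetrically but which you do not state. Either way, only the lower bound is used downstream.
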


With these three lemmata, we are now able to prove Proposition~\ref{prop: existence chi}.

\begin{proof}[Proof of Proposition~\ref{prop: existence chi}]
In the coordinates $(s, y)$ of Section~\ref{s:subtori}, we can write
$$
\E_h = \T_{\Lambda^\perp} \times E_h , \quad 
\F_h = \T_{\Lambda^\perp} \times F_h , \quad 
\text{with } E_h \subset F_h \subset \T_\Lambda .
$$
Here, $F_h$ is a union of intervals and has uniformly bounded total length. We can hence cover $F_h$ with $C_1 h^{-2\eps}$ subsets of length of order $(c_0 h)^{2\eps}/2$, overlapping on intervals of length of order $(c_0 h)^{2\eps}/10$. Associated to this covering, we denote by $(\psi_j)_{j \in \{1,\dots , J\}}$, $J=J(h)$, a  smooth partition of unity on $E_h$, satisfying moreover
\begin{itemize}
\item $\psi_j \in \Cinfc(F_h)$;
\item $\sum_{j=1}^J \psi_j (y) = 1$ for $y \in E_h$;
\item $\|\d_y^m \psi_j \|_{L^\infty(\T_\Lambda)} \leq C_m h^{-2 \eps m}$, for all $m\in \N$;
\item $J = J(h) \leq C h^{-2\eps}$.
\end{itemize}
Similarly, we cover $\T_{\Lambda^\perp}$ with $C_2 h^{-2\eps}$ subsets of length of order $(c_0 h)^{2\eps}/2$, overlapping on intervals of length of order $(c_0 h)^{2\eps}/10$, and define $(\psi_k)_{k \in \{1,\dots , K\}}$ an associated partition of unity on $\T_{\Lambda^\perp}$ satisfying
\begin{itemize}
\item $\psi_k \in \Cinfc(\T_{\Lambda^\perp})$;
\item $\sum_{k=1}^K \psi_k (s) = 1$ for $s \in \T_{\Lambda^\perp}$;
\item $\|\d_s^m \psi_k \|_{L^\infty(\T_{\Lambda^\perp})} \leq C_m h^{-2 \eps m}$, for all $m\in \N$;
\item $K = K(h) \leq C h^{-2\eps}$,
\item for any $k, k_0 \in \{1 , \dots, K\}^2$, there exists $\tau_k$ satisfying $|\tau_k| \leq \text{Length}(\T_{\Lambda^\perp})\leq C$ and 
$\psi_{k}(s+ \tau_k) = \psi_{k_0}(s)$.
\end{itemize}

We set 
$$
\psi_{kj}(s,y) := \psi_k(s)\psi_j(y) , \qquad \text{and} \qquad
\chi_h^\Lambda(s,y) = 1 - \sum_{j=1}^J \sum_{k=1}^K \psi_{kj}(s,y) \in \Cinf(\T^2), 
$$
 which satisfies $\d_s \chi_h^\Lambda(s,y) = 0$, \ie $\chi_h^\Lambda$ is $\Lambda^\perp$-invariant, together with
\begin{itemize}
\item $\chi_h^\Lambda = 0$ on $\E_h$ and hence $b \leq c_0 h$ on $\supp(\chi_h^\Lambda )$;
\item $\chi_h^\Lambda = 1$ on $\T^2 \setminus \F_h$;
\item $\chi_h^\Lambda \in [0,1]$ on $\G_h$, with $|\d_y \chi_h^\Lambda| \leq C h^{-2\eps}$ and $|\d_y^2 \chi_h^\Lambda| \leq C h^{-4\eps}$.
\end{itemize}
To conclude the proof of Proposition~\ref{prop: existence chi}, it remains to check Item~\ref{item: 1-chi} ($\|(1 -\chi_h^\Lambda ) w_h\|_{L^2(\T^2)} = o(1)$), Item~\ref{item: d_y chi} ($\|\d_{y}\chi_h^\Lambda w_h\|_{L^2(\T^2)} = o(1)$) and Item~\ref{item: d_yy chi} ($\|\d_{y}^2 \chi_h^\Lambda w_h\|_{L^2(\T^2)} = o(1)$).

Now, let us fix $j_0 \in \{1 , \dots, J\}$. Because of the definition of the set $\E_h$, there exists $k_0 \in \{1 , \dots, K\}$ and $x_0 \in \{b> c_0 h\}$ such that $\supp(\psi_{k_0 j_0}) \subset B(x_0 , (c_0 h)^{2\eps})$. According to Lemma~\ref{lemma: property b}, we have $B(x_0 , (c_0 h)^{2\eps}) \subset \{b> \frac{c_0 h }{2}\}$, so that $\supp(\psi_{k_0 j_0}) \subset \{b> \frac{c_0 h }{2}\}$. This yields
$$
 \frac{c_0 h }{2} (\psi_{k_0 j_0} w_h,w_h)_{L^2(\T^2)}
 \leq (b \psi_{k_0 j_0} w_h,w_h)_{L^2(\T^2)}
 = o(h^{1+\delta}) ,
$$
and hence $(\psi_{k_0 j_0} w_h,w_h)_{L^2(\T^2)} = o(h^{\delta})$. Moreover, for any $k \in \{1 , \dots, K\}$, there exists $\tau_k$ satisfying $|\tau_k| \leq C_2$ with
$$
\psi_{k j_0}(s+ \tau_k,y) = \psi_{k_0 j_0}(s,y) .
$$
Hence, using~\eqref{eq: propagation cutoff}, we obtain
\begin{align}
\label{eq: conclusion psijk}
o(h^\delta) &= ( \psi_{k_0 j_0}(s,y) w_h,w_h)_{L^2(\T^2)}
= (\psi_{k j_0}(s+ \tau_k,y) w_h,w_h)_{L^2(\T^2)} \nonumber\\
&= (\psi_{k j_0}(s +2 \tau_k,y) w_h,w_h)_{L^2(\T^2)} 
+ (\psi_{k j_0}(s,y) w_h,w_h)_{L^2(\T^2)} \nonumber\\
& \quad +  \O (h^{\alpha - 10 \eps})
+ \O(h^{1-\alpha-2 \eps})
 +o(h^{\frac{1+\delta}{2}}) .
\end{align}
Since both terms on the right hand-side are nonnegative, this implies $(\psi_{k j_0}(s,y) w_h,w_h)_{L^2(\T^2)} = o(h^\delta)$ as soon as 
\begin{equation*}
\left\{
\begin{array}{l}
\dsp \alpha - 10 \eps > \delta , \\
1-\alpha-2 \eps> \delta , \\
\dsp \frac{1+\delta}{2} \geq \delta , 
\end{array}
\right.
\end{equation*}
(which implies \eqref{eq: condition coeffs}).
From now on we will take $\delta = 8 \eps$ (this choice is explained in the following lines). The existence of $\alpha$ satisfying this condition together with \eqref{eq: condition alpha} and $\alpha < 3/4$, is equivalent to having $\eps < \frac{1}{76}$.

To conclude the proof of Proposition~\ref{prop: existence chi}, we first compute
$$
((1- \chi_h^\Lambda ) w_h,w_h)_{L^2(\T^2)}  
=  \sum_{j=1}^J \sum_{k=1}^K (\psi_{kj} w_h,w_h)_{L^2(\T^2)}  
= C h^{-4\eps} o(h^\delta) = o(1),
$$
since $\delta \geq 4\eps$. This proves Item~\ref{item: 1-chi}. Next, we have by construction $\supp(\d_{y}^2 \chi_h^\Lambda) \subset \supp(\d_{y}\chi_h^\Lambda) \subset \G_h$ with $\|\d_{y}\chi_h^\Lambda\|_{L^\infty(\T^2)} = \O(h^{-2\eps})$, $\|\d_{y}^2 \chi_h^\Lambda\|_{L^\infty(\T^2)} = \O(h^{-4\eps})$. Hence, covering $\supp(\d_{y}\chi_h^\Lambda))$ by balls of radius $(c_0 h)^{2\eps}$ and using a propagation argument similar to~\eqref{eq: conclusion psijk} shows that we have $\|w_h\|_{L^2(\supp(\d_{y}\chi_h^\Lambda))} = o(h^{\frac{\delta}{2}})$. We thus obtain 
$$
\|\d_{y}\chi_h^\Lambda w_h\|_{L^2(\T^2)} = o(h^{\frac{\delta}{2}- 2\eps}) = o(1) ,
\qquad 
\|\d_{y}^2\chi_h^\Lambda w_h\|_{L^2(\T^2)} = o(h^{\frac{\delta}{2}- 4\eps}) = o(1) ,
$$
(since $\delta \geq 8\eps$) which concludes the proof of Items~\ref{item: d_y chi} and~\ref{item: d_yy chi}, and that of Proposition~\ref{prop: existence chi}.
\end{proof}

To conclude this section, it remains to prove Lemmata~\ref{lemma: transport psi},~\ref{lemma: propagation precised} and~\ref{lemma: property b}. In the following proofs, we shall systematically write $\eta$ in place of $P_\Lambda \xi$ and $\sigma$ in place of $(1 - P_\Lambda) \xi$ to lighten the notation. Hence, $\xi \in \R^2$ is decomposed as $\xi = \eta + \sigma$ with $\eta \in \left< \Lambda \right>$ and $\sigma \in \Lambda^\perp$, in accordance to Section~\ref{s:subtori}.

\begin{proof}[Proof of Lemma~\ref{lemma: transport psi} from Lemma~\ref{lemma: propagation precised}]
First, given a function $\psi \in \Cinfc(\T^2)$ satisfying~\eqref{eq: derivate psi}, we have, 
\begin{align*}
\left(\psi w_h , w_h \right)_{L^2(\T^2)}
& = \left(\Op_h(\psi \circ \phi_\tau) w_h , w_h \right)_{L^2(\T^2)} 
 +o(\tau h^{\frac{1+\delta}{2}})\|\Op_h(\psi \circ \phi_t)\|_{L^\infty(0,\tau ;\calL(L^2))} \\
& = \left(\Op_h(\psi \circ \phi_\tau) \Op_h \left(\chi\left(\frac{|\xi|^2-1}{h^\gamma} \right) \right) 
 \Op_h \left(\chi\left(\frac{\eta}{2h^\alpha} \right) \right)  w_h , w_h \right)_{L^2(\T^2)}  \\
& \quad  
 +\big( o(\tau h^{\frac{1+\delta}{2}}) +o(\tau h^{\frac{3+\delta}{2} -\gamma}) \big)
 \|\Op_h(\psi \circ \phi_t)\|_{L^\infty(0,\tau ;\calL(L^2))}, 
\end{align*}
when using Lemma~\ref{lemma: propagation precised} together with $ \Op_h \left(\chi\left(\frac{\eta}{2h^\alpha} \right) \right)  w_h = w_h$. Next, the pseudodifferential calculus yields
\begin{align}
\label{eq: propagation precised interm}
\left(\psi w_h , w_h \right)_{L^2(\T^2)}
& = \left(\Op_h \left( \psi \circ \phi_\tau \ \chi\left(\frac{|\xi|^2-1}{h^\gamma} \right) \chi\left(\frac{\eta}{2h^\alpha} \right) \right)  w_h , w_h \right)_{L^2(\T^2)} 
+ \O(h^{2-\gamma-2 \eps}) + \O(h^{1-\alpha-2 \eps}) \nonumber \\
& \quad  
 +\big( o(\tau h^{\frac{1+\delta}{2}}) +o(\tau h^{\frac{3+\delta}{2} -\gamma}) \big)
 \|\Op_h(\psi \circ \phi_t)\|_{L^\infty(0,\tau ;\calL(L^2))}  .
\end{align}
A particular feature of the Weyl quantization in the Euclidean setting is that the Egorov theorem provides an exact formula (see for instance~\cite{DS:99}): $\Op_h(\psi \circ \phi_t) = e^{-ith\frac{\Delta}{2}} \Op_h(\psi) e^{ith\frac{\Delta}{2}} $, so that $\|\Op_h(\psi \circ \phi_t)\|_{L^\infty(0,\tau ;\calL(L^2))} \leq C_0$ uniformly with respect to $h$.
Now, remark that the cutoff function $ \chi\left(\frac{\eta}{2h^\alpha} \right) \chi\left(\frac{|\xi|^2-1}{h^\gamma} \right)$ can be decomposed (for $h$ small enough) as
\begin{align*}
 \chi\left(\frac{\eta}{2h^\alpha} \right) 
 \chi\left(\frac{|\xi|^2-1}{h^\gamma} \right) 
 = \chi\left(\frac{\eta}{2h^\alpha} \right) 
 \left( \tilde{\chi}_\eta^h (\sigma) + \tilde{\chi}_\eta^h (- \sigma)  \right)
\end{align*}
for some nonnegative function $\tilde{\chi}_\eta^h$ such that $(\sigma,\eta)\mapsto \tilde{\chi}_\eta^h(\sigma) \in \Cinfc(\R^2)$, such that $\tilde{\chi}_\eta^h(\sigma) = \chi\left(\frac{|\xi|^2-1}{h^\gamma} \right)$ for $\eta \in \supp \chi\left(\frac{\cdot}{2h^\alpha} \right)$ and $\sigma >0$, and $\tilde{\chi}_\eta^h(\sigma) = 0$ for $\eta \notin \supp \chi\left(\frac{\cdot}{2h^\alpha} \right)$ or $\sigma \leq 0$.

Choosing $\gamma = \alpha$, we have in particular
$$
|\sigma - 1| \leq C h^{\alpha} \quad \text{on } \supp \left( \chi\left(\frac{\eta}{2h^\alpha} \right)  \tilde{\chi}_\eta^h (\sigma) \right) .
$$
Next, we recall that $\psi \circ \phi_\tau (s,y,  \sigma, \eta) = \psi (s+\tau \sigma, y+ \tau \eta )$, and we focus on the first term (corresponding to $\sigma >0$) in the right-hand side of the identity
\begin{align}
\label{eq: decomposition chi}
\chi\left(\frac{|\xi|^2-1}{h^{\alpha}} \right) \chi\left(\frac{\eta}{2h^\alpha} \right) \psi \circ \phi_\tau   
=\chi\left(\frac{\eta}{2h^\alpha} \right) 
 \left( \tilde{\chi}_\eta^h (\sigma) + \tilde{\chi}_\eta^h (- \sigma)  \right)\psi \circ \phi_\tau   .
\end{align}
We set
\begin{align*}
\zeta_{\tau}^{(1)} (s,y,\sigma,\eta) = \chi\left(\frac{\eta}{2h^\alpha} \right) \tilde{\chi}_\eta^h (\sigma) \psi (s+\tau \sigma, y+ \tau \eta ) , \quad \text{and} \quad 
\zeta_{\tau}^{(2)} (s,y ,\sigma,\eta) = \chi\left(\frac{\eta}{2h^\alpha} \right) \tilde{\chi}_\eta^h (\sigma) \psi (s+\tau, y) ,
\end{align*}
and we want to compare $\Op_h(\zeta_{\tau}^{(1)})$ and $\Op_h(\zeta_{\tau}^{(2)})$. For this, let us estimate, for multiindices $\ell, m \in \N^2$,
\begin{align}
\label{eq: calcul 1}
&\left|\d_{(s,y)}^{\ell} \d_{(\sigma,\eta)}^m\left(\zeta_{\tau}^{(2)} - \zeta_{\tau}^{(1)}\right) (s,y,\sigma,\eta)\right|
\nonumber \\
& \qquad \quad 
\leq C_{m} \sum_{\nu \leq m}
\left|\d_{(\sigma,\eta)}^{m-\nu} \left(\chi\left(\frac{\eta}{2h^\alpha} \right) \tilde{\chi}_\eta^h (\sigma) \right) \d_{(s,y)}^{\ell}\d_{(\sigma,\eta)}^{\nu} \left( \psi (s+\tau \sigma, y+ \tau \eta )  - \psi (s+\tau, y) \right) \right|.
\end{align}
On the one hand, we have
\begin{align}
\label{eq: calcul 2}
\left| \d_{(\sigma,\eta)}^{m-\nu} \left(\chi\left(\frac{\eta}{2h^\alpha} \right) \tilde{\chi}_\eta^h (\sigma) \right) \right|
\leq C_{m ,\nu} h^{-\alpha |m-\nu|}.
\end{align}
On the other hand, for $|\nu|>0$ we can also write
\begin{align*}
\left|\d_{(s,y)}^{\ell}\d_{(\sigma,\eta)}^{\nu} \left( \psi (s+\tau \sigma, y+ \tau \eta )  - \psi (s+\tau, y) \right) \right| 
&=\left|\d_{(s,y)}^{\ell}\d_{(\sigma,\eta)}^{\nu}  \psi (s+\tau \sigma, y+ \tau \eta )   \right|\\
& \leq C_{\ell, \nu} |\tau|^{|\nu|} h^{-2\eps (|\ell|+|\nu|)}\leq C_{\ell, \nu}   h^{-2\eps (|\ell|+ |\nu|)}, 
\end{align*}
since $|\tau|\leq C$. 

Finally, for $|\nu|=0$, we apply the mean value theorem to the function
$$
(\sigma,\eta) \mapsto \d_{(s,y)}^{\ell}   \psi (s+\tau \sigma, y+ \tau \eta )
$$ and write
\begin{align*}
&\left|\d_{(s,y)}^{\ell}  \left( \psi (s+\tau \sigma, y+ \tau \eta )  - \psi (s+\tau, y) \right) \right| \\
& \qquad \quad \leq 
 (|\eta| + |\sigma - 1|)
\sup_{T^*\T^2} \left|\nabla_{(\sigma,\eta)} \d_{(s,y)}^{\ell} \left( \psi (s+\tau \sigma, y+ \tau \eta )\right) \right| .
\end{align*}
With~\eqref{eq: derivate psi}, this yields 
\begin{align}
\label{eq: calcul 3}
\left|\d_{(s,y)}^{\ell} \left( \psi (s+\tau \sigma, y+ \tau \eta )  - \psi (s+\tau, y) \right) \right| 
& \leq 
 (|\eta| + |\sigma - 1|) C_{\ell} h^{-2\eps |\ell|} |\tau|  h^{-2\eps } \nonumber \\
&\leq 
 (|\eta| + |\sigma - 1|) C_{\ell} h^{-2\eps( |\ell|+ 1)} ,
 \end{align}
for $|\tau|\leq C$.

Using now that $|\eta| \leq Ch^\alpha$ and $|\sigma - 1| \leq C h^{\alpha}$ on $\supp \left(\chi\left(\frac{\eta}{2h^\alpha} \right) \tilde{\chi}_\eta^h (\sigma) \right)$, and combining~\eqref{eq: calcul 1},~\eqref{eq: calcul 2} and~\eqref{eq: calcul 3}, we obtain, for all $m \in \N^2, \ell \in \N^2$ and $0< h \leq h_0$ sufficiently small, 
\begin{align*}
h^{|m|}\left|\d_{(s,y)}^{\ell} \d_{(\sigma,\eta)}^m\left(\zeta_{\tau}^{(2)} - \zeta_{\tau}^{(1)}\right) (s,y ,\sigma,\eta)\right| 
&\leq C_{\ell, m} h^{\alpha-2\eps( |\ell|+ 1)}h^{|m|} h^{- \alpha |m|} \\
& \quad + 
C_{\ell, m}  \sum_{0<\nu \leq m} 
 h^{|m|}  h^{-2\eps (|\ell|+ |\nu|)} h^{- \alpha |m-\nu|} \\
& \leq C_{\ell, m}
\left( h^{(1-\alpha)|m|} h^{\alpha-2\eps (|\ell| + 1)} + |m| h^{|m|(1-\alpha)} h^{-2\eps |\ell|} h^{\alpha -2\eps}\right) \\
& \leq C_{\ell, m} h^{\alpha-2\eps (|\ell| + 1)} .
\end{align*}
Using a precised version of the Calder\'on-Vaillancourt theorem, as presented in Theorem~\ref{th: CV precised} below 
(in which only $|\ell| = 4$ derivations are needed with respect to $x$ in dimension two), we obtain
$$
\Op_h(\zeta_{\tau}^{(2)})
= \Op_h(\zeta_{\tau}^{(1)}) + \O_{\calL(L^2)} (h^{\alpha - 10 \eps}) .
$$

Similarly, we have
$$
\Op_h \left(\chi\left(\frac{\eta}{2h^\alpha} \right) \tilde{\chi}_\eta^h (- \sigma) \psi (s+\tau \sigma,y+ \tau \eta)\right) 
= \Op_h \left(\chi\left(\frac{\eta}{2h^\alpha} \right) \tilde{\chi}_\eta^h (- \sigma) \psi (s -\tau,y )\right)  + \O_{\calL(L^2)} (h^{\alpha - 10 \eps}) .
$$
Coming back to~\eqref{eq: propagation precised interm} and using~\eqref{eq: decomposition chi}, we finally obtain, for all $|\tau|\leq C$, 
\begin{align*}
\left(\psi w_h , w_h \right)_{L^2(\T^2)}
& = 
 \left(\Op_h \left(\chi\left(\frac{\eta}{2h^\alpha} \right) \tilde{\chi}_\eta^h (\sigma) \psi (s + \tau,y)\right)  w_h , w_h \right)_{L^2(\T^2)}  \nonumber \\
 & \quad +
\left(\Op_h \left(\chi\left(\frac{\eta}{2h^\alpha} \right) \tilde{\chi}_\eta^h (- \sigma) \psi (s -\tau,y)\right)  w_h , w_h \right)_{L^2(\T^2)}  \nonumber \\
& \quad + \O (h^{\alpha - 10 \eps}) 
 + \O(h^{1-\alpha-2 \eps})  
 + o( h^{\frac{1+\delta}{2}}) +o(h^{\frac{3+\delta}{2} -\alpha}) .
\end{align*}
With the pseudodifferential calculus, this yields~\eqref{eq: propagation cutoff}, which concludes the proof of Lemma~\ref{lemma: transport psi}.
\end{proof}

\begin{proof}[Proof of Lemma~\ref{lemma: propagation precised}]
Here, we only have to make more precise some arguments in the proof of Lemma~\ref{proposition: zero first order info}. Recall that according to Lemma~\ref{lemma: w_h quasimode}, $w_h$ satisfies $P_b^h w_h = o(h^{2+\delta})$. 

First, we take $\chi \in \Cinfc(\R)$, such that $\chi = 1$ in a \nhd of the origin. Hence, $\frac{1- \chi(r)}{r} \in \Cinf(\R)$ and we have the exact composition formula
$$
\Op_h \left( 1- \chi\left(\frac{|\xi|^2-1}{h^\gamma} \right) \right) 
= \Op_h\left(\left( 1- \chi\left(\frac{|\xi|^2-1}{h^\gamma} \right) \right)
 \frac{h^\gamma}{|\xi|^2-1}\right) \frac{P_0^h}{h^\gamma} ,
$$
since both operators are Fourier multipliers. Moreover, $\Op_h\left(\left( 1- \chi\left(\frac{|\xi|^2-1}{h^\gamma} \right) \right)  \frac{h^\gamma}{|\xi|^2-1}\right)$ is uniformly bounded as an operator of $\calL(L^2(\T^2))$. As a consequence, we have
\begin{align*}
& \left(\Op_h(a)\Op_h \left( 1- \chi\left(\frac{|\xi|^2-1}{h^\gamma} \right) \right) w_h , w_h \right)_{L^2(\T^2)} \\
\qquad & = \left(\Op_h(a) \Op_h\left(\left( 1- \chi\left(\frac{|\xi|^2-1}{h^\gamma} \right) \right)
 \frac{h^\gamma}{|\xi|^2-1}\right) \frac{P_0^h}{h^\gamma}w_h , w_h \right)_{L^2(\T^2)} \\
 \qquad & = \left(A \frac{P_b^h}{h^\gamma}w_h , w_h \right)_{L^2(\T^2)} 
 - \left( A \frac{ih b}{h^\gamma}w_h , w_h \right)_{L^2(\T^2)} ,
\end{align*}
where $A = \Op_h(a) \Op_h\left(\left( 1- \chi\left(\frac{|\xi|^2-1}{h^\gamma} \right) \right)
 \frac{h^\gamma}{|\xi|^2-1}\right)$ is bounded on $L^2(\T^2)$.
 Using $P_b^h w_h = o(h^{2+\delta})$ and $(b w_h, w_h)_{L^2(\T^2)} = o(h^{1+\delta})$, this gives
\begin{align*}
\left(\Op_h(a)\Op_h \left( 1- \chi\left(\frac{|\xi|^2-1}{h^\gamma} \right) \right) w_h , w_h \right)_{L^2(\T^2)} = o(h^{\frac{3+\delta}{2}-\gamma})\|\Op_h(a)\|_{\calL(L^2)} ,
\end{align*}
which in turn implies~\eqref{eq: loc xi =1}.

Next, Identity~\eqref{eq: propagation simple} yields, for all $a \in \Cinfc(\T^2)$, 
\begin{align*}
\left( \Op_h(\xi \cdot \d_x a)  w_h , w_h \right)_{L^2(\T^2)}
& = \frac{i}{2h}\left( \Op_h(a)  w_h , P_b^{h}w_h \right)_{L^2(\T^2)} 
- \frac{i}{2h}\left( \Op_h(a) P_b^{h} w_h , w_h \right)_{L^2(\T^2)}\nonumber\\
& \quad \quad - \frac{1}{2} \left( \Op_h(a)  w_h , b w_h \right)_{L^2(\T^2)}
- \frac{1}{2} \left( \Op_h(a) b w_h , w_h \right)_{L^2(\T^2)} \\
& = o(h^{1+\delta})\|\Op_h(a)\|_{\calL(L^2)} +o(h^{\frac{1+\delta}{2}})\|\Op_h(a)\|_{\calL(L^2)}, 
\end{align*}
as a consequence of $P_b^h w_h = o(h^{2+\delta})$ and $(b w_h, w_h)_{L^2(\T^2)} = o(h^{1+\delta})$. Applying this identity to $a \circ \phi_t$ in place of $a$, and integrating on $t \in [0, \tau]$ finally gives
\begin{align*}
\left( \Op_h(a \circ \phi_\tau)  w_h , w_h \right)_{L^2(\T^2)}
& =\left( \Op_h(a)  w_h , w_h \right)_{L^2(\T^2)}
 +o( \tau h^{\frac{1+\delta}{2}})\|\Op_h(a \circ \phi_t)\|_{L^\infty(0,\tau ; \calL(L^2))}, 
\end{align*}
which concludes the proof of Lemma~\ref{lemma: propagation precised}.
\end{proof}

\begin{proof}[Proof of Lemma~\ref{lemma: property b}]
Here, $B := B(x,b(x)^{2\eps})$ denotes the euclidian ball in $\T^2$ centered at $x$ of radius $b(x)^{2\eps}$. Setting
\begin{align*}
M :=\sup_{z \in B} b(z) , \qquad m :=\inf_{z \in B} b(z) ,
\end{align*}
we have 
\begin{align*}
|\nabla b (z)| \leq C_\eps b^{1-\eps}(z) \leq C_\eps M^{1-\eps} , \quad  \text{for all } z \in B ,
\end{align*}
as a consequence of Assumption~\ref{eq: assumption b}.
Moreover, the mean value theorem yields
\begin{align*}
b(x) - C_\eps M^{1-\eps} b(x)^{2\eps} \leq b(z) 
\leq b(x) + C_\eps M^{1-\eps} b(x)^{2\eps}  , \quad  \text{for } z \in B ,
\end{align*}
and, in particular,
\begin{align}
\label{eq: estimates M m}
m \geq b(x) - C_\eps M^{1-\eps} b(x)^{2\eps},
\quad \text{and} \quad
M \leq b(x) + C_\eps M^{1-\eps} b(x)^{2\eps}  .
\end{align}
Now, defining $f(M):=b(x) + C_\eps M^{1-\eps} b(x)^{2\eps}$, we see that $f$ is a strictly concave function with $f(0)=b(x)>0$. There exists a unique $M_0 \in \R_+$ satisfying $f(M_0)=M_0$. Moreover, we have $M\leq f(M)$ if and only if $M\leq M_0$.
Taking $b_0$ sufficiently small so that $b_0 + C_\eps b_0^{1+\eps^2}\leq b_0^{1-\eps}$, we obtain $f(b(x)^{1-\eps}) \leq b(x)^{1-\eps}$. In particular, this gives $M_0 \leq b(x)^{1-\eps}$ and hence $M\leq b(x)^{1-\eps}$ according to the second estimate of \eqref{eq: estimates M m}. Coming back to the first estimate of~\eqref{eq: estimates M m}, this yields
$$
m \geq b(x) - C_\eps b(x)^{(1-\eps)^2} b(x)^{2\eps} = b(x) - C_\eps b(x)^{1+\eps^2} .
$$
Taking $b_0$ sufficiently small so that $b_0 - C_\eps b_0^{1+\eps^2} \geq \frac{b_0}{2}$, we obtain
$m \geq \frac{b(x)}{2}$, which concludes the proof of Lemma~\ref{lemma: property b}.
\end{proof}

\part{An \textit{a priori} lower bound for decay rates on the torus: proof of Theorem~\ref{th: lower bound torus}}

Under the assumption 
\begin{equation}
\label{eq: hyp geom lower bound}
\ovl{\{b>0\}} \cap \{x_0 + \tau \xi_0, \tau \in \R\} = \emptyset,
\end{equation} for some $(x_0, \xi_0) \in T^*\T^2$, $\xi_0 \neq 0$, we construct in this section a constant $\kappa_0>0$ and a sequence $(\varphi_n)_{n \in \N}$ of $\O(1)$-quasimodes in the limit $n \to + \infty$ for the family of operators $P(i n \kappa_0)$.

We use the notation introduced in Sections~\ref{subsub: geometry on torus} and~\ref{sub: second microlocalization}.
First, note that, as a consequence of~\eqref{eq: hyp geom lower bound}, $\xi_0$ is necessarily a rational direction, and the set $\{x_0 + \tau \xi_0, \tau \in \R\}$ is a one-dimensional subtorus of $\T^2$, given by
$$
\{x_0 + \tau \xi_0, \tau \in \R\} = \ovl{\{x_0 + \tau \xi_0, \tau \in \R\}} = x_0 + \T_{\Lambda_{\xi_0}^\perp}, 
\quad \text{with }\Lambda_{\xi_0} \in \P.
$$

Let $\chi \in \Cinfc(\T^2)$ such that $\chi$ has only $x$-Fourier modes in ${\Lambda_{\xi_0}}$, $\chi =0$ on a \nhd of $\ovl{\{b>0\}}$ and $\chi =1$ on $x_0 + \T_{\Lambda_{\xi_0}^\perp}$. 

From Assumption~\eqref{eq: hyp geom lower bound}, we have $\rk({\Lambda_{\xi_0}}) = 1$, so that one can find $k \in \Lambda_{\xi_0}^\perp \cap \Z^2 \setminus \{0\}$. Besides, for all $n\in \N$ we have $n k \in \Lambda_{\xi_0}^\perp \cap \Z^2 \setminus \{0\}$.

We then define the sequence of {\it quasimodes} $(\varphi_n)_{n \in \N}$ by 
$$
\varphi_n (x) = \chi(x) e^{i n k \cdot x} , \quad n \in \N ,
\quad x \in \T^2 .
$$
We have $\varphi_n \in \Cinf(\T^2)$, together with the decoupling
$$
\varphi_n \circ \pi_{\Lambda_{\xi_0}}(s,y) = \chi(y) e^{i n k \cdot s} , \quad n \in \N ,
\quad (s,y) \in \T_{\Lambda_{\xi_0}^\perp} \times \T_{\Lambda_{\xi_0}}.
$$
This yields
\begin{align*}
- \big( T_{\Lambda_{\xi_0}} \Delta  T_{\Lambda_{\xi_0}}^* \big)\varphi_n \circ \pi_{\Lambda_{\xi_0}}(s,y) & = - \big( \Delta_{\Lambda_{\xi_0}} + \Delta_{\Lambda_{\xi_0}^\perp} \big) \varphi_n \circ \pi_{\Lambda_{\xi_0}}(s,y)  \\
& = -  e^{i n k \cdot s} \Delta_{\Lambda_{\xi_0}} \chi(y) 
+  n^2 |k|^2 \chi(y) e^{i n k \cdot s} .
\end{align*}
Moerover, $b \varphi_n =0$, according to their respective supports.
Hence, recalling that $P(i n|k|) = -\Delta - n^2|k|^2 + i n|k|b(x)$, we have
$$
 \big( T_{\Lambda_{\xi_0}} P(i n|k|)  T_{\Lambda_{\xi_0}}^* \big)\varphi_n \circ \pi_{\Lambda_{\xi_0}}= -  e^{i n k \cdot s} \Delta_{\Lambda_{\xi_0}} \chi(y), 
$$
and 
$$
\| P(i n|k|) \varphi_n \|_{L^2(\T^2)} 
= \| \big( T_{\Lambda_{\xi_0}} P(i n|k|)  T_{\Lambda_{\xi_0}}^* \big)\varphi_n \circ \pi_{\Lambda_{\xi_0}}
\|_{L^2(\T_{\Lambda_{\xi_0}^\perp} \times\T_{\Lambda_{\xi_0}})}
= C_0 \|\Delta_{\Lambda_{\xi_0}} \chi \|_{L^2(\T_{\Lambda_{\xi_0}})} .
$$
Since we also have $\|\varphi_n \|_{L^2(\T^2)}= \| T_{\Lambda_{\xi_0}} \varphi_n 
\|_{L^2(\T_{\Lambda_{\xi_0}^\perp} \times\T_{\Lambda_{\xi_0}})}= C_0 \| \chi \|_{L^2(\T_{\Lambda_{\xi_0}})}$, we obtain, for all $n \in \N$,
$$
\|P^{-1}(i n|k|)\|_{\calL(L^2(\T^2))} \geq \frac{\|\varphi_n \|_{L^2(\T^2)}}{\| P(i n|k|)  \varphi_n \|_{L^2(\T^2)}}
= \frac{\| \chi \|_{L^2(\T_{\Lambda_{\xi_0}})}}{\|\Delta_{\Lambda_{\xi_0}} \chi \|_{L^2(\T_{\Lambda_{\xi_0}})}} = C >0,
$$
which concludes the proof of Theorem~\ref{th: lower bound torus}.
\hfill \qedsymbol \endproof

\bigskip
\paragraph{Acknowledgments.}
The authors would like to thank Nicolas Burq for having found a significant error in a previous version of this article, and for advice on how to fix it. 
The second author wishes to thank Luc~Robbiano 
for several interesting discussions on the subject of this article.


\appendix
\section{Pseudodifferential calculus}
\label{section: pseudo}
In the main part of the article, we use the semiclassical Weyl quantization, that associates to a function $a$ on $T^*\R^2$ an operator $\Op_h(a)$ defined by
\begin{equation}
\label{eq: weyl quantif}
\big(\Op_h(a) u \big)(x) : = \frac{1}{(2\pi h)^2} \int_{\R^2} \int_{\R^2}
 e^{\frac{i}{h}\xi \cdot (x-y)} a \left( \frac{x + y}{2} , \xi \right) u(y) dy \ d\xi .
\end{equation}
For smooth functions $a$ with uniformly bounded derivatives, $\Op_h(a)$ defines a continuous operator on $\scrS(\R^2)$, and also by duality on $\scrS'(\R^2)$. 
On a manifold, the quantization $\Op_h$ may be defined by working in local coordinates with a partition of unity.
On the torus, formula \eqref{eq: weyl quantif} still makes sense : taking $a \in \Cinf(T^* \T^2)$ is equivalent to taking $a \in \Cinf(\R^2 \times \R^2)$, $(2\pi \Z)^2$-periodic with respect to the $x$-variable. Then the operator defined by \eqref{eq: weyl quantif} preserves the space of $(2\pi \Z)^2$-periodic distributions on $\R^2$, and hence $\Dist'(\T^2)$.

We sometimes write, with $D : = \frac{1}{i} \d$, 
$$
a(x, h D) = \Op_h(a) .
$$
We also note that $\Op_1(a)$ is the classical Weyl quantization, and that we have the relation
$$
a(x, h D) = \Op_h(a(x,\xi)) = \Op_1(a(x,h\xi)).
$$

\begin{theorem}
\label{th: CV precised}
There exists a constant $C >0$ such that for any $a \in \Cinf(T^*\T^2)$ with uniformly bounded derivatives, we have
$$
\| \Op_1(a) \|_{\calL(L^2(\T^2))} \leq C \sum_{\alpha ,\beta \in \{0,1,2\}^2}
\|\d_x^\alpha \d_\xi^\beta a \|_{L^\infty(T^*\T^2)}   .
$$
\end{theorem}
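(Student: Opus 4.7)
The plan is to derive the bound from the classical Calder\'on--Vaillancourt theorem for pseudodifferential operators on $\R^2$, applied to the periodic extension of the symbol. Since $a \in \Cinf(T^*\T^2)$ is a smooth $(2\pi\Z)^2$-periodic function on $T^*\R^2$, the Weyl quantization formula~\eqref{eq: weyl quantif} defines an operator that maps periodic distributions to periodic distributions. On the Fourier side one has
\begin{equation*}
\widehat{\Op_1(a) u}(k) = \sum_{j\in\Z^2} \hat a\Bigl(k-j, \tfrac{k+j}{2}\Bigr) \hat u(j),
\end{equation*}
where $\hat a(\ell,\xi)$ denotes the $\ell$-th Fourier coefficient in $x$. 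The operator norm on $L^2(\T^2)$ is thus controlled by the $L^2(\R^2)$-operator norm of $\Op_1(a)$ viewed as acting on test functions, and it suffices to prove the inequality in the Euclidean setting with the same symbol seminorm.

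To prove the bound on $\R^2$, I would use a Cotlar--Stein almost-orthogonality argument with a phase-space decomposition. Pick $\varphi \in C_c^\infty(\R^4)$ with $\varphi\geq 0$ and $\sum_{\nu \in \Z^4} \varphi(z-\nu) \equiv 1$, supported in a fixed ball. Setting $a_\nu(z) := a(z)\varphi(z-\nu)$ and $A_\nu := \Op_1(a_\nu)$, we have $\Op_1(a) = \sum_\nu A_\nu$. By the Cotlar--Stein lemma it is enough to exhibit $\omega \in \ell^1(\Z^4)$ with
\begin{equation*}
\|A_\nu^* A_\mu\|_{\calL(L^2)}^{1/2} + \|A_\nu A_\mu^*\|_{\calL(L^2)}^{1/2} \leq C\, \omega(\nu-\mu)\, N(a),
\end{equation*}
where $N(a) := \sum_{\alpha,\beta \in \{0,1,2\}^2} \|\d_x^\alpha \d_\xi^\beta a\|_{L^\infty}$.

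The cross-estimate is obtained by writing the kernel of $A_\nu^* A_\mu$ as an iterated oscillatory integral, with phase $\xi\cdot(z-x)-\eta\cdot(z-y)$ after multiplying out the two Weyl formulas, and performing separable integrations by parts in each coordinate direction. Integration by parts in $z_i$ produces a weight $(\xi_i-\eta_i)^{-1}$ and costs one derivative in $x_i$ on $a_\nu$ or $a_\mu$; integration in $\xi_i$ produces a weight $(z_i-x_i)^{-1}$ and costs one derivative in $\xi_i$; similarly for $\eta_i$. Since $a_\nu, a_\mu$ are supported within fixed distance of $\nu, \mu$, these weights translate into decay in the corresponding components of $\nu-\mu$. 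Using up to two derivatives in each of the four variables $x_1,x_2,\xi_1,\xi_2$ one obtains decay of the form $\prod_{i=1}^{4}(1+|\nu_i-\mu_i|)^{-2}$, which is summable over $\Z^4$, yielding the required $\omega$.

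The main technical obstacle is the coordinate-by-coordinate bookkeeping: the integration-by-parts scheme must be arranged so that the only derivatives of $a$ that ever appear are $\d_x^\alpha \d_\xi^\beta a$ with $\alpha,\beta\in\{0,1,2\}^2$, i.e., at most two derivatives in any single coordinate. This is made possible by the fact that the Weyl phase decouples across the four coordinate directions, so one integrates by parts independently in each direction and uses the compact support of $\varphi$ to close the sums; but care is needed to avoid invoking higher-order mixed derivatives than those appearing in $N(a)$. This combinatorial verification is essentially the only nontrivial ingredient beyond the standard Calder\'on--Vaillancourt argument.
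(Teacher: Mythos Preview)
The paper does not actually prove this theorem: it only states the result and cites \cite[Theorem~$B_\rho$]{Cordes:75} and \cite[Th\'eor\`eme~3]{CM:78} for the proof. So there is no in-paper argument to compare against; your proposal is supplying a proof where the authors chose to outsource one.

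Your Cotlar--Stein strategy is a legitimate route to Calder\'on--Vaillancourt type bounds, and the coordinate-wise integration-by-parts scheme you describe is the right idea for keeping the derivative count sharp. Two points deserve attention, however. First, your reduction from $L^2(\T^2)$ to $L^2(\R^2)$ is stated too quickly: periodic functions are not in $L^2(\R^2)$, so the sentence ``the operator norm on $L^2(\T^2)$ is thus controlled by the $L^2(\R^2)$-operator norm'' needs an actual argument (e.g.\ a Bloch--Floquet decomposition, or running Cotlar--Stein directly on the torus with the $\xi$-partition only, since the $x$-variable already lives in a compact set). Second, and more substantively, the cited references of Cordes and Coifman--Meyer do \emph{not} proceed via Cotlar--Stein; they use other machinery (commutator characterizations, kernel estimates) precisely because the na\"ive almost-orthogonality argument tends to consume more derivatives than the sharp count. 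Your own final paragraph concedes that the ``combinatorial verification'' of the exact seminorm $\sum_{\alpha,\beta\in\{0,1,2\}^2}\|\partial_x^\alpha\partial_\xi^\beta a\|_{L^\infty}$ is the only nontrivial ingredient---but that verification \emph{is} the theorem. As written, the proposal is a plausible outline rather than a proof: to complete it you must actually carry out the integration-by-parts bookkeeping and confirm that no derivative of order higher than two in any single coordinate is ever needed, which is exactly the delicate point that motivates the paper's appeal to the specialized references.
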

Equivalently, this can be rewritten as
$$
\| \Op_h (a) \|_{\calL(L^2(\T^2))} \leq C \sum_{\alpha ,\beta \in \{0,1,2\}^2}
h^{|\beta|} \|\d_x^\alpha \d_\xi^\beta a  \|_{L^\infty(T^*\T^2)}  .
$$
This precised version of the Calder\'on-Vaillancourt theorem 
is needed in Section~\ref{section: existence chi}, and proved in~\cite[Theorem~$B_\rho$]{Cordes:75} or~\cite[Th\'eor\`eme~3]{CM:78}. Here in dimension two, this means that only $|\alpha| = 4$ derivations are needed with respect to the space variable~$x$.


\section{Spectrum of $P(z)$ for a piecewise constant damping\\
(by St\'ephane Nonnenmacher)}
\label{app: stephane}

In this Appendix we provide an explicit description of some part of
the {\it spectrum} of
the damped wave equation \eqref{eq: stabilization} on $\T^2$, for a damping function
proportional to the
characteristic function of a vertical strip. We identify the torus
$\T^2$ with the square $\{-1/2\leq x<1/2,\
0\leq y<1\}$. We choose some half-width $\sigma\in (0,1/2)$, and
consider a vertical strip of width $2\sigma$. Due to translation
symmetry of $\T^2$, we may center this strip on the axis
$\{x=0\}$. Choosing a damping strength $\tB>0$, we then get the damping function
\begin{equation}\label{e:b(x)}
b(x,y)=b(x)=\begin{cases}0,& |x|\leq \sigma,\\ \tB,&\sigma<|x|\leq 1/2\,.
\end{cases}
\end{equation}
The reason for centering the strip at $x=0$ is
the parity of the problem \wrt that axis, which greatly simplifies the
computations.

We are interested in the spectrum of the operator $\A$ generating the
equation \eqref{eq: stabilization}, which amounts to solving the
eigenvalue problem
\begin{equation}\label{e:P(z)=0}
P(z)u=0,\quad\text{for}\quad P(z) = -\Delta + z b(x) + z^2,\quad
z\in\C\,,\ u\in L^2(\T^2)\,,\ u\not\equiv 0\,.
\end{equation}
This spectrum consists in a discrete set $\{z_j\}$,
which is symmetric \wrt the horizontal axis: indeed, any solution
$(z,u)$ admits a ``sister'' solution $(\bar z,\bar u)$. Furthermore, any
solution with $\Im z\neq 0$ satisfies 
\begin{equation}\label{e:Re(z)}
\Re z =-\frac12 \frac{(u, bu)_{L^2(\T^2)}}{\|u\|_{L^2(\T^2)}^2},\quad \text{and thus}\quad -\tB/2 \leq \Re z\leq 0\,.
\end{equation}
We may thus restrict
ourselves to the half-strip $\{ -\tB/2 \leq \Re z\leq 0,\ \Im z >0 \}$.

Our aim is to find high frequency eigenvalues ($\Im z\gg 1$) which are
as close as possible to the imaginary axis. We will prove the following
\begin{proposition}\label{pro:small-Re(z)}
There exists $C_0>0$ such that the spectrum \eqref{e:Re(z)} for the damping function
\eqref{e:b(x)} contains an infinite subsequence $\{z_i\}$
such that $\Im z_i\to\infty$ and $|\Re z_i|\leq \frac{C_0}{(\Im
  z_i)^{3/2}}$.
\end{proposition}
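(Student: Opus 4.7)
The translation invariance in $y$ and the parity $b(-x)=b(x)$ reduce the eigenvalue problem $P(z)u=0$ to a one-parameter family of 1D problems indexed by the transverse Fourier mode $n\in\Z$. Writing $u(x,y)=v(x)\,e^{2\pi iny}$, the equation becomes
\[
-v''(x)+\big(z^2+(2\pi n)^2+z\,b(x)\big)v(x)=0 \quad\text{on }\T^1_x,
\]
which further splits into even/odd subspaces under $x\mapsto -x$. I will work in the even subspace; the odd case is analogous. In the undamped region $|x|<\sigma$, $v$ is a cosine with wavenumber $\mu$, $\mu^2=-z^2-(2\pi n)^2$; in the damped region $\sigma<x<1/2$, even symmetry forces $v'(1/2)=0$, so $v(x)=B\cos(\tilde\mu(x-1/2))$ with $\tilde\mu^2=\mu^2-z\tB$. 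Matching $v$ and $v'$ at $x=\sigma$ yields the quantization condition
\[
F_n(z):=\mu\tan(\mu\sigma)+\tilde\mu\tan(\tilde\mu L)=0,\qquad L:=1/2-\sigma,
\]
viewed as a transcendental equation for $z$.

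For each $k\in\N$, I will construct a sequence of solutions $\{z_n^{(k)}\}_n$ asymptotic to the imaginary axis. Fix $\beta:=\pi(k+1/2)/\sigma$, a pole of $x\mapsto\tan(\sigma x)$, and set
\[
s_n:=\sqrt{(2\pi n)^2+\beta^2}.
\]
At the candidate base point $z=is_n$, one has $\mu=\beta$ exactly, so the first term in $F_n$ is singular. The second term is well-controlled in the high-frequency regime: $\tilde\mu^2=\beta^2-is_n\tB$, so (choosing the branch with positive real part) $\tilde\mu=(s_n\tB)^{1/2}e^{-i\pi/4}(1+o(1))$; in particular $\Im(\tilde\mu L)\to-\infty$, hence $\tan(\tilde\mu L)=-i+O(e^{-cL\sqrt{s_n}})$, giving
\[
\tilde\mu\tan(\tilde\mu L)=(s_n\tB)^{1/2}e^{i\pi/4}(1+o(1)).
\]
Seeking a solution of the form $z=is_n+\zeta$ with $|\zeta|$ small, I use the expansion $\mu^2-\beta^2=-2is_n\zeta+O(\zeta^2)$, which gives $\mu-\beta=-is_n\zeta/\beta+O(|\zeta|^2 s_n/\beta^2)$, together with $\mu\tan(\mu\sigma)\sim-\beta/(\sigma(\mu-\beta))$ near the pole. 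Solving the leading-order balance $-\beta/(\sigma(\mu-\beta))+(s_n\tB)^{1/2}e^{i\pi/4}=0$ yields the predicted correction
\[
\zeta_n^{(0)}=\frac{\beta^2}{\sigma\sqrt{\tB}}\,s_n^{-3/2}\,e^{-i3\pi/4},
\]
whose real part is strictly negative with $|\Re\zeta_n^{(0)}|=\frac{\beta^2}{\sigma\sqrt{2\tB}}\,s_n^{-3/2}$.

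To turn this approximate root into a genuine zero of $F_n$, I will apply Rouché's theorem on a disk of radius $r_n=s_n^{-3/2-\kappa}$ (for some small $\kappa>0$) centered at $is_n+\zeta_n^{(0)}$. On the boundary of such a disk the leading-order linearization has modulus $\gtrsim s_n^{1/2-\kappa}$, while the remainder terms---quadratic corrections in $\mu-\beta$, corrections to $\tilde\mu$ and to $\tan(\tilde\mu L)$ (the latter being exponentially small)---are of strictly smaller order. This produces an exact eigenvalue $z_n$ with $|z_n-is_n-\zeta_n^{(0)}|=o(s_n^{-3/2})$, and hence $|\Re z_n|\leq C_0\,s_n^{-3/2}\leq 2C_0(\Im z_n)^{-3/2}$ for $n$ large. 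Letting $n\to\infty$ produces the desired sequence $\{z_i\}\subset\Sp(\A)$. The main technical obstacle lies in the Rouché step: establishing a uniform-in-$n$ lower bound for $|F_n|$ on the boundary of the disk, which requires careful control of the tangent function near its pole at $\mu\sigma=\pi(k+1/2)$ together with the fact that the damped contribution $\tilde\mu\tan(\tilde\mu L)$ converges to its limit $-i\tilde\mu$ at an exponentially fast rate in $\sqrt{s_n}$.
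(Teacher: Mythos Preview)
Your approach is essentially identical to the paper's: separation of variables in $y$, even/odd splitting, piecewise cosine ansatz, the same matching condition $\mu\tan(\mu\sigma)=-\tilde\mu\tan(\tilde\mu L)$, and the same asymptotic analysis near the poles $\mu\sigma=\pi(k+1/2)$ using $\tan(\tilde\mu L)\to -i$; your final formula $\zeta_n^{(0)}=\frac{\beta^2}{\sigma\sqrt{\tB}}\,s_n^{-3/2}e^{-i3\pi/4}$ agrees with the paper's result $\Im\tilde\zeta\sim h^{3/2}\frac{(\pi(m+1/2))^2}{\sigma^3\sqrt{2\tB}}$. One arithmetic slip: with $\tilde\mu\sim(s_n\tB)^{1/2}e^{-i\pi/4}$ and $\tan(\tilde\mu L)\to -i$ you get $\tilde\mu\tan(\tilde\mu L)\sim(s_n\tB)^{1/2}e^{-i3\pi/4}$, not $e^{i\pi/4}$ as you wrote (the correct phase is what feeds into your stated $\zeta_n^{(0)}$, so your endpoint is right but the displayed intermediate step is inconsistent); otherwise your Rouch\'e argument is a clean way to upgrade the formal expansion to an exact eigenvalue, which the paper leaves implicit.
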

The proof of the proposition will actually give an explicit value for
$C_0$, as a function of $\tilde B$, $\sigma$.

%
%

\begin{proof}
To study the high frequency limit $\Im z\to\infty$ we will change of
variables and take 
$$z=i(1/h + \tzeta)\,,$$ 
where $h\in (0,1]$ will be a small parameter, while $\tzeta\in\C$ is assumed to be uniformly bounded
when $h\to 0$. The eigenvalue equation then takes the form
\begin{equation}
(-h^2\Delta + i h ( 1 + h \tzeta ) b ) u = \big(1+2h\tzeta (1+  h\tzeta/2)\big)u\,.
\end{equation}
Having chosen $b$ independent of $y$, we may naturally Fourier
transform along this direction, that is look for
solutions of the form $u(x,y)=e^{2i\pi ny}v(x)$, $n\in \Z$. For each
$n$, we now have to
solve the 1-dimensional problem
\begin{equation}\label{eq:P_1}
(-h^2\d^2/\d_x^2 + i h ( 1 + h \tzeta ) b(x) ) v = \big(1 - (2\pi h n)^2
+ 2h\tzeta (1+  h\tzeta/2)\big)\,v\,.
\end{equation}
Let us call
$$
B\defi \tB( 1 + h \tzeta ) ,\quad \zeta\defi \tzeta (1+ h\tzeta/2)\,.
$$
In terms of these parameters, the above equation reads:
\begin{equation}\label{eq:P_0=E}
(-h^2\d^2/\d_x^2 + i h B\,\bbbone_{\{\sigma< |x|\leq 1/2\}}(x) ) v = E\,v\,,\qquad\text{with}\quad E =  1 - (2\pi h n)^2 +
2h\zeta\,.
\end{equation}
Since we will assume throughout that $\tzeta=\O(1)$, we will
have in the semiclassical limit
\begin{equation}\label{eq:B-tB}
B=\tB+\O(h),\quad \tzeta=\zeta (1 - h\zeta/2 +\O(h^2))\,.
\end{equation}
At leading order we may forget that
the variables $B,\zeta$ are not independent from one another, and
consider \eqref{eq:P_0=E} as a  {\it bona fide} linear eigenvalue problem.

Since the function $b(x)$ is even, we may separately search for even, resp. odd
solutions $v(x)$. Let us start with the even solutions.
Since $b(x)$ is piecewise constant, any even and periodic solution
$v(x)$ takes the following form on $[-1/2,1/2]$ (up to a global
normalization factor):
\begin{align}\label{e:v(x)-even}
v(x) &= \begin{cases}\cos(k\,x),& |x|\leq \sigma,\\
\beta \cos\big(k'\,(1/2-|x|)\big),& \sigma < |x| \leq 1/2,\end{cases},\\
k&=\frac{E^{1/2}}{h},\quad k' = \frac{(E-ihB)^{1/2}}{h}\,.\label{eq:k-k'}
\end{align}
We notice that $k,k'$ are defined modulo a change of sign, so we may
always assume that $\Re k\geq 0$, $\Re k'\geq 0$.
The factor $\beta$ is obtained by imposing the
continuity of $v$ and of its derivative $v'$ at the discontinuity point $x=\sigma$ (we
use the notation $\sigma'\defi 1/2-\sigma$):
\begin{align*}
\cos (k\sigma) &= \beta \cos(k' \sigma'), \\
-k\,\sin(k\sigma) &= \beta k'\,\sin (k' \sigma')\,.
\end{align*}
The ratio of these two equations provides the quantization
condition for the even solutions:
\begin{equation}\label{eq:even-quantiz}
\tan(k\sigma) = - \frac{k'}{k}\,\tan(k' \sigma' )\,.
\end{equation}
Similarly, any odd eigenfunction takes the form (modulo a global
normalization factor):
\begin{equation*}\label{e:v(x)-odd}
v(x) = \begin{cases} \sin(k\,x),& |x|\leq \sigma,\\
\beta\,{\rm sgn}(x)\,\sin (k'(1/2-|x|)),& \sigma < |x| \leq 1/2,\end{cases},
\end{equation*} 
so the associated
eigenvalues should satisfy the condition
\begin{equation}\label{eq:odd-quantiz}
\tan(k\sigma) = - \frac{k}{k'}\,\tan(k' \sigma' )\,.
\end{equation}
We will now study the solutions of the quantization conditions 
\eqref{eq:even-quantiz} and \eqref{eq:odd-quantiz}, taking into
account the relations \eqref{eq:k-k'} between the wavevectors $k,k'$
and the energy $E$. To describe the full spectrum (which we plan to
present in a separate publication), we would need to consider several
r\'egimes, depending on the relative scales of $E$ and $h$. 
However, since we are only interested here in proving
Proposition~\ref{pro:small-Re(z)}, we will focus on the r\'egime leading to the
smallest possible values of $|\Im \tzeta|=|\Re z|$.
What characterizes the corresponding eigenmodes $v(x)$~? 
From \eqref{e:Re(z)} we see that the mass of $v(x)$ in the damped region,
$2\int_{\sigma}^{1/2}|v(x)|^2\,dx$, should be small compared to its
full mass. 
Intuitively, if such a mode were carrying a large horizontal ``momentum'' $\Re (hk)$
in the undamped region, it would then strongly penetrate the damped
region, because the boundary at $x=\sigma$ is not reflecting. As a
result, the mass in the damped region would be of the same order of
magnitude as the one in the undamped one.
This hand-waving argument explains why we choose to investigate the eigenmodes for
which $hk$ is the smallest possible,
namely of order $\O(h)$. 
This implies that $E=(hk)^2=\O(h^2)$, which means
that almost all of the energy is carried by the vertical momentum:
$$
hn = (2\pi)^{-1}+ \O(h)\,.
$$
The study of the full spectrum actually
confirms that the smallest values of $\Im \tzeta$ are obtained in
this r\'egime. 

Eq.\eqref{eq:k-k'} implies that the wavevector $k'$ in the damped region
is then much larger than $k$:
$$
k' = \frac{(-ihB + (hk)^2)^{1/2}}{h} = e^{-i\pi/4}(B/h)^{1/2} +
\O(h^{1/2})\,.
$$
$\Im k'\sigma'\approx -\sigma'(B/2h)^{1/2}$ is negative and large, so that
$\tan(k'\sigma')= -i + \O(e^{2\Im(k'\sigma')})$, uniformly \wrt
$\Re (k'\sigma')$.  

\subsubsection*{Even eigenmodes}
In this situation the even quantization condition \eqref{eq:even-quantiz} reads
\begin{equation}\label{eq:even-quantiz-1}
\tan(k\sigma) = i \frac{k'}{k}\,\big(1 + \O(e^{-\sigma'(2B/h)^{1/2}})\big)\,.
\end{equation}
Since the \rhs is large, $k\sigma$ must be close to a pole of the
tangent function. Hence, for each integer $m$ in a bounded
interval\footnote{Recall that we only need to study values $\Re k\geq 0$.}
$0\leq m\leq M$ we look for a solution of the form 
$$
k_{m+1/2} = \frac{\pi(m+1/2)}{\sigma} +\delta k_{m+1/2},\quad \text{with}\quad
|\delta k_{m+1/2}|\ll 1\,.
$$ 
The quantization condition \eqref{eq:even-quantiz-1} then reads
\begin{align*}
\sigma\delta k_{m+1/2} +\O((\delta k_{m+1/2})^2) &= 
i\; \frac{k_{m+1/2}}{e^{-i\pi/4}(B/h)^{1/2} + \O(h^{1/2})}\,\big(1 + \O(e^{-\sigma'(2B/h)^{1/2}})\big)\\
\Longrightarrow k_{m+1/2} &=\frac{\pi(m+1/2)}{\sigma} \Big(1+h^{1/2}\frac{e^{i3\pi/4}}
{\sigma B^{1/2}}+\O(h)\Big)\,.
\end{align*}
Using \eqref{eq:P_0=E}, the corresponding spectral parameter $\zeta$ is then given by 
\begin{align*}
\zeta_{n,m+1/2}&=\frac{(hk_{m+1/2})^2 + (2\pi hn)^2 -1}{2h}\\
&=\frac{(2\pi hn)^2 -1}{2h} +
\frac{h}{2}\Big(\frac{\pi(m+1/2)}{\sigma}\Big)^2
+ h^{3/2}\,\Big(\frac{\pi(m+1/2)}{\sigma}\Big)^2\frac{e^{i3\pi/4}}
{\sigma B^{1/2}}+\O(h^2)\,.
\end{align*}
From the assumptions on the quantum numbers $n,m$, we check
that $\zeta_{n,m+1/2}=\O(1)$.
We may now go back to the original variables $\tzeta,\,\tB$, using the relations
\eqref{eq:B-tB}.
The spectral parameter $\tzeta$ has an imaginary part
\begin{equation}\label{e:zeta_nm+1/2}
\Im \tzeta_{n,m+1/2} = \Im \zeta_{n,m+1/2} (1-h\Re\zeta_{n,m+1/2})+\O(h^2) = 
h^{3/2}\frac{(\pi(m+1/2))^2}{\sigma^3(2\tB)^{1/2}}+\O(h^2)\,.
\end{equation}
Returning back to the spectral variable $z$, the above expression
gives a string of eigenvalues $\{z_{n,m+1/2}\}$ with 
$\Im z_{n,m+1/2}=h^{-1}+\O(1)$, $\Re z_{n,m+1/2}=-\Im \tzeta_{n,m+1/2}$. These
even-parity eigenvalues prove Proposition~\ref{pro:small-Re(z)},
and one can take for $C_0$ any value greater than $\frac{(\pi/2)^2}{\sigma^3(2\tB)^{1/2}}$.
\end{proof}

%

We remark that the leading order of $k_{m+1/2}$ corresponds to the even spectrum of
the operator $-h^2\d^2/\d_x^2$ on the undamped
interval $[-\sigma,\sigma]$, with Dirichlet boundary
conditions. The eigenmode
$v_{n,m+1/2}$ associated with $\tzeta_{n,m+1/2}$ is indeed essentially
supported on that interval, where it resembles the Dirichlet eigenmode
$\cos\big(x\pi(1/2+m)/\sigma\big)$. 
At the boundary of that interval, it takes the value 
$$
v_{n,m+1/2}(\sigma)=(-1)^{m+1}e^{i3\pi/4}\,h^{1/2}\frac{\pi(m+1/2)}
{\sigma \tB^{1/2}}+\O(h)\,,
$$ 
and decays exponentially fast inside the
damping region, with a ``penetration length'' $(\Im
k')^{-1}\approx (2h/\tB)^{1/2}$. From \eqref{e:Re(z)} we see that the
intensity $|v_{n,m+1/2}(\sigma)|^2\sim C\,h$ penetrating on a distance
$\sim h^{1/2}$
exactly accounts for
the size $\sim h^{3/2}=h\,h^{1/2}$ of the $\Re z_{n,m+1/2}$.

We notice that the smallest damping occurs for the state $v_{n,1/2}$
resembling the ground state of the Dirichlet Laplacian.


\subsubsection*{Odd eigenmodes}
For completeness we also investigate the 
odd-parity eigenmodes with $k=\O(1)$. The computations are very
similar as in the even-parity case. 
The odd quantization condition reads in this r\'egime
\begin{equation}\label{eq:odd-quantiz-1}
\tan(k\sigma) = i \frac{k}{k'}\,\big(1 + \O(e^{-(2B/h)^{1/2}})\big)\,.
\end{equation}
The \rhs is then very small, showing that $\sigma k$ is close to a zero
of the tangent, so we may take $k_m = \pi m/\sigma +
\delta k_m$ with $|\delta k_m|\ll 1$ and $0\leq m\leq
M$. We easily see that the case $m=0$ does not lead to a solution. For
the case $m>0$ we get
$$
\delta k_m = e^{3i\pi/4}\,h^{1/2}\,\frac{\pi m}{\sigma^2 B^{1/2}}+\O(h)\,,
$$
and thus
$$
k_m = \frac{\pi m}{\sigma} \Big(1 +
h^{1/2}\,\frac{e^{3i\pi/4}}{\sigma B^{1/2}}+\O(h) \Big),\quad 1\leq
m\leq M\,.
$$
These values $k_m$ approximately sit on the same
``line'' $\{s(1 +
h^{1/2}\,\frac{e^{3i\pi/4}}{\sigma B^{1/2}}),\ s\in\R \}$ as the values $k_{m+1/2}$
corresponding to the even eigenmodes, both types of eigenvalues
appearing successively. The corresponding energy parameter $\tzeta_{n,m}$ satisfies
\begin{equation}\label{e:zeta_nm}
\Im \tzeta_{n,m} = h^{3/2}\frac{(\pi m)^2}{\sigma^3(2\tB)^{1/2}}+\O(h^2)\,.
\end{equation}
As in the even parity case, the
eigenmodes $v_{n,m}$ are close to the odd eigenmodes $\sin\big(x\pi m/\sigma\big)$ of the semiclassical
Dirichlet Laplacian on $[-\sigma,\sigma]$, and penetrate on a length
$\sim h^{1/2}$ inside the damped region.

\subsubsection*{The case of the square}

If the torus is replaced by the square $[-1/2,1/2]\times [0,1]$ with
Dirichlet boundary conditions, with the same damping function \eqref{e:b(x)}, the eigenmodes
$P(z)$ can as well be factorized into 
$u(x,y)=\sin(2\pi n y)v(x)$, with $n\in \frac12 \N\setminus 0$, and $v(x)$ must
be an eigenmode of the operator \eqref{eq:P_0=E} vanishing at $x=\pm
1/2$. We notice that the odd-parity eigenstates \eqref{e:v(x)-odd}
satisfy this boundary conditions, so the eigenvalues $z_{n,m}$ (with
real parts given by \eqref{e:zeta_nm}) belong to the spectrum of the damped Dirichlet problem.

Similarly, in the case of Neumann boundary conditions the eigenmodes
factorize as $u(x,y)=\cos(2\pi n y)v(x)$, with $n\in \frac12 \N$. The
even-parity states \eqref{e:v(x)-even} satisfy the Neumann boundary
conditions at $x=\pm 1/2$, so that the eigenvalues $z_{n,m+1/2}$
described in \eqref{e:zeta_nm+1/2} belong to the Neumann spectrum.

As a result, the Dirichlet and Neumann spectra also satisfy
Proposition~\ref{pro:small-Re(z)}.

\small
\bibliographystyle{alpha}     
\bibliography{bibli}

\end{document}